\newcommand{\ra}{\rangle}
\newcommand{\la}{\langle}
\newcommand{\F}{{\mathbb F}}
\newcommand{\cB}{{\mathcal B}}
\newcommand{\cK}{{\mathcal K}}
\newcommand{\cQ}{{\mathcal Q}}
\newcommand{\cM}{{\mathcal M}}
\newcommand{\fg}{{\mathfrak g}}
\newcommand{\PG}{\textup{PG}}
\newcommand{\Aut}{\textup{Aut}}
\newcommand{\tr}{\textup{Tr}}
\newcommand{\PGaSp}{\textup{P}\Gamma\textup{Sp}}
\newcommand{\im}{\textup{Im}}
\newtheorem{thm}{Theorem}
\newtheorem{lemma}[thm]{Lemma}
\newtheorem{corollary}[thm]{Corollary}
\newtheorem{example}[thm]{Example}
\numberwithin{equation}{section}
\numberwithin{thm}{section}
\newtheorem{remark}[thm]{Remark}
\newtheorem{construction}[thm]{Construction}
\newtheorem{Notation}{Notation}[section]
\begin{document}
%\pagewiselinenumbers

\title{ The point regular automorphism groups of the Payne derived quadrangle of $W(q)$}
\author[add1]{Tao Feng} \ead{tfeng@zju.edu.cn}
\author[add1,add2]{Weicong Li}\ead{conglw@zju.edu.cn}
\address[add1]{School of Mathematical Sciences, Zhejiang University, Hangzhou 310027, Zhejiang, P.R. China}
\address[add2]{Department of Mathematics, Southern University of Science and Technology,	Shenzhen 518055, Guangdong, P.R. China}
%\classno{05B25 (primary),  05E18, 51E12 (secondary)}
%\extraline{Supported by National Natural Science Foundation of China under Grant No. 11771392.}

\begin{abstract}
	
In this paper, we completely determine the point regular automorphism groups of the Payne derived quadrangle of the symplectic quadrangle $W(q)$, $q$ odd. As a corollary, we show that the finite groups that act regularly on the points of a  finite generalized quadrangle can have unbounded nilpotency class.

\end{abstract}
\maketitle

\tableofcontents

\section{Introduction}

A finite \textit{generalized quadrangle} $\cQ$ of order $(s,t)$ is a point-line incidence structure such that each point is incident with $t+1$ lines, each line is incident with $s+1$ points, and for each non-incident point-line pair $(P,\ell)$ there is a unique point $Q$ incident with $\ell$ such that $P,\,Q$ are collinear. The quadrangle $\cQ$ is called \textit{thick} if $s>1$ and $t>1$. If we interchange the role of points and lines of a generalized quadrangle $\cQ$ of order $(s,t)$, we get the \textit{dual} quadrangle of order $(t,s)$. The \textit{classical} generalized quadrangles are those that arise as point-line incidence structures of finite classical polar spaces of rank $2$.  The standard textbook on finite generalized quadrangles is the monograph \cite{FGQ}.

The study of generalized quadrangles has close connections with group theory and other branches of mathematics. J. Tits \cite{JTits1959Sur} introduced the notion of generalized polygons in order to better understand the Chevalley groups of rank $2$. A generalized $3$-gon is a projective plane, and a generalized $4$-gon is a generalized quadrangle. In analogy with the study of finite projective planes, there has been extensive work on the classification of generalized quadrangles that exhibit high symmetry using a blend of geometric arguments and deep results in group theory. The reader is referred to the monographs \cite{Thas2006bookTGQ,KThas2004bookSFGQ} for an account of  the history and recent developments in this field.

Besides the classical examples and their duals, there are three further types of known finite generalized quadrangles up to duality: translation generalized quadrangles, flock generalized quadrangles, and the generalized quadrangles of order $(q-1,q+1)$. Most of the known examples except those with parameter $(q-1,q+1)$ can be described by their respective \textit{Kantor family} or \textit{$4$-gonal family}, a concept first introduced by Kantor in \cite{Kantor1980GQG2q}.  The first examples of generalized quadrangles of order $(q-1,q+1)$, $q$ a prime power, are due to  Ahrens and  Szekeres \cite{ASGQ1}, and independently to M. Hall, Jr. \cite{ASGQ2} for $q$ even. Payne has a general construction method, now known as the \textit{Payne derivation}, that yields a generalized quadrangle $\cQ^P$ of order $(q-1,q+1)$ from a generalized quadrangle $\cQ$ of order $(q,q)$ with a regular point $P$, cf.  \cite{Payne1971EquiCentainGQ,Payne1971NGQ,Payne1972}. The resulting quadrangle $\cQ^P$ is called a \textit{Payne derived quadrangle}. All the known generalized quadrangles of order $(q-1,q+1)$ arise as Payne derived quadrangles. The automorphism groups of Payne derived quadrangles were studied in \cite{SSymGQ1994} and \cite{De2007AutoGPaG}.

Ghinelli \cite{Ghinelli1992RGonGQ} was the first to study the  generalized quadrangles admitting a point regular group, where she used representation theory and difference sets to study the case where the generalized quadrangle has order $(s,s)$ with $s$ even. Further progress was made in \cite{Yoshiara2007GQwAGRP} and \cite{Swartz2019OGQPRG}. For instance, it is shown in \cite{Yoshiara2007GQwAGRP} that finite thick generalized quadrangles of order $(t^2,t)$ does not admit a point regular automorphism group. By combining the results in \cite{Ghinelli2012ChAS} and \cite{Yoshiara2007GQwAGRP}, it was shown in \cite{Bamberg2015AScon} that any skew-translation generalized quadrangle of order $(q,q)$, $q$ odd, is isomorphic to the classical symplectic quadrangle $W(q)$.  Swartz \cite{Swartz2019OGQPRG}  initiated the study of generalized quadrangles admitting an automorphism group that acts regularly on both points and lines. Up till 2011, all the known finite generalized quadrangles admitting a point regular group arise by  Payne derivation from a thick elation quadrangle $\cQ$ of order $(s,s)$ with a regular point, and their point regular groups are induced from the elation groups of $\cQ$. Motivated by this observation,  De Winter, K. Thas and  Shult attempted to show that there are no other examples in a series of papers \cite{De2006GQASG,De2007AutoGPaG,De2008GQaSTHG,SDeWinter2009SingerQuad}. In the preprint \cite{SDeWinter2009SingerQuad}, the known generalized quadrangles admitting a point regular automorphism group were classified in a combinatorial fashion.

A breakthrough in this direction is the classification of thick classical generalized quadrangles admitting a point regular group that leads to the discovery of three sporadic examples  and the construction of new point regular groups of the derived quadrangles of the symplectic quadrangle $W(q)$ in \cite{Bamberg2011PRG}. This corrected a small error in  \cite{SDeWinter2009SingerQuad} and disproved some conjectures in \cite{De2008GQaSTHG}. In the same paper, the authors calculated and listed the point regular automorphism groups of the derived quadrangle of $W(q)$ for small $q$ by Magma \cite{Magma}, and the results suggest that ``the problem is wild". In the case $q$ is odd, Chen \cite{Chen2013}, K. Thas and De Winter \cite{DeWinter2014Criterion} independently classified the linear case, i.e., the group is induced by a linear group of the ambient projective space of $W(q)$. There are also some constructions in the even characteristic case in \cite{DeWinter2014Criterion}. All the known finite groups that act regularly on the point set of a finite generalized quadrangle so far have nilpotency class at most $3$ except for some groups of small order. The point regular groups of the derived quadrangles have  applications in the constructions of uniform lattices in $\widetilde{C_2}$-buildings, cf. \cite{DeWinter2014Criterion,lattice}.

In this paper, we systematically study the point regular automorphism groups of the Payne derived quadrangle $\cQ^P$ of the symplectic quadrangle $\cQ=W(q)$ with respect to a regular point $P$. Every point of $W(q)$ is regular, and different choices of $P$'s yield isomorphic derived quadangles. By \cite[Corollary 2.4]{SSymGQ1994}, the full automorphism group of  $\cQ^P$ is the stabilizer of $P$ in $\PGaSp(4,q)$ when $q\geq 5$. We call a point regular group $G$ of $\cQ^P$ \textit{linear} if
$G$ is a subgroup of $\textup{PGL}(4,q)$, and call it \textit{nonlinear} otherwise. We completely determine all the  point regular groups of $\cQ^P$ for odd  $q$
and all the linear point regular groups for even $q$. This leads to four (\textit{resp.} two) constructions in the odd (\textit{resp.} even) characteristic case. We derive relatively tight upper and lower bounds on the nilpotency classes of the resulting groups in the case $q$ is odd.  Our result  contributes to the fundamental problem as for which finite groups can act regularly on the points of a  finite generalized quadrangle. In particular, we see that such a group can have arbitrarily large nilpotency class.

The	 paper is organized as follows. In Section 2.1, we introduce the model for a point regular group of the derived quadrangle $\cQ^P$. In Section 2.2, we give a brief summary of the main results in this paper, including the classification theorem for $q$ odd and bounds on the nilpotency classes of the point regular groups that we construct. We also give a brief description of our strategy in this subsection. In Section 3, we prove some technical lemmas about the arithmetic of finite fields. In Section \ref{sec_linear},  we completely determine the linear point regular groups of $\cQ^P$ for $q\ge 5$. In Section \ref{sec_structure}, we analyze the structure of a putative nonlinear point regular group $G$ of $\cQ^P$, and obtain information on the Frobenius part and the matrix part of $G$. In Section \ref{sec_sum_struc_qodd}, we summarize the structural results and state the classification theorem, Theorem \ref{thm_nonlinear}, for nonlinear point regular groups in the case $q$ is odd. Section \ref{sec_nonlinear_req0} and Section \ref{sec_nonlinear_req1} are devoted to the proof of Theorem \ref{thm_nonlinear}. In Section \ref{sec_iso}, we first simplify the constructions obtained in Sections \ref{sec_nonlinear_req0} and \ref{sec_nonlinear_req1} up to conjugacy and then calculate the various group invariants of the resulting point regular groups. In the last section, we conclude the paper with some problems for further research.

\section{Preliminaries}

The group theoretical terminology that we use is standard, cf. \cite{FGroup,ACouOnGp,Suzuki1986}. Let $G$ be a finite group. The \textit{exponent} of $G$, denoted $\exp(G)$,  is the smallest positive integer $n$ such that $g^n=1$ for all $g\in G$. For $g,\,h\in G$, their \textit{commutator} is $[g,h]=g^{-1}h^{-1}gh$. For $g \in G$, the \textit{centralizer} $C_G(g)$ of $g$ is the set of elements $h\in G$ such that $[g,h]=1$.   The \textit{center}  of $G$ is $Z(G)=\{g\in G:\,[g,h]=1,\,\forall\, h\in G\}$. For two subgroups $H_1,\,H_2$ of $G$, we use $[H_1,H_2]$ for the subgroup $\la [h_1,h_2]:\,h_1\in H_1,\,h_2\in H_2\ra$. In particular, the \textit{derived subgroup} $G'$ is the subgroup $[G,G]$. We use the symbol $\gamma_{i}(G)$ for the $i$-th term of the lower central series of $G$. Inductively, we have $\gamma_1(G)=G$ and $\gamma_{i+1}(G)=[\gamma_i(G),G]$ for $i\ge 1$. The group $G$ is   \textit{nilpotent} if $\gamma_{c+1}(G)=1$ for some integer $c$ and the smallest such integer is the \textit{nilpotency class} of $G$.  Similarly, we use the symbol $Z_{i}(G)$ for the $i$-th term of the upper central series of $G$, where $Z_0(G)=1$,  and $Z_{i+1}(G)$ is defined by the property $Z_{i+1}(G)/Z_{i}(G)=Z(G/Z_{i}(G))$ for $i\ge 0$. The upper central series and the lower central series of a finite nilpotent group have the same length. Let $d$ be the maximum size of an abelian subgroup of a finite $p$-group $G$. The \textit{Thompson subgroup} of $G$ is generated by all abelian subgroups of order $d$, and is denoted by $J(G)$.

\subsection{The Payne derived quadrangle $\cQ^P$ of $\cQ=W(q)$}\label{subsec_payneGQ}
We adopt the standard notions on generalized quadrangles as can be found in the monograph \cite{FGQ}.  Let $q=p^m$ be a prime power with $p$ prime. Let $\perp$ be a fixed symplectic polarity of $\PG(3,q)$. The classical generalized quadrangle $\cQ=W(q)$ has the same point set as $\PG(3,q)$ and has the totally isotropic lines of $\PG(3,q)$ as its lines.  The Payne derived quadrangle $\cQ^P$ of $\cQ$ with respect to a  point $P$ has points of $\PG(3,q)\setminus P^\perp$ as its points and has two types of lines: the totally isotropic lines not containing $P$, and the lines $\la P,\,Q\ra$ with $Q\not\in P^\perp$. The order of $\cQ^P$ is $(q-1,q+1)$. The automorphism group of $\cQ$ acts transitively on its point set, so the different choices of $P$ lead to isomorphic derived quadrangles. By \cite[Corollary 2.4]{SSymGQ1994}, the stabilizer of $P$ in $\PGaSp(4,q)$ is the full automorphism group of $\cQ^P$ when $q\geq 5$.

Let $V:=\F_q^4$ be a vector space over $\F_q$ equipped with the alternating form
$(x,y):=x_1y_4-x_4y_1+x_2y_3-x_3y_2$. Set $\delta:=\text{diag}(\gamma,\gamma,1,1)$, where $\gamma$ is a primitive element of $\F_q$. For $\sigma\in\Aut(\F_q)$, define $x^\sigma=(x_1^\sigma,\ldots,x_4^\sigma)$. Then we have $\Gamma\textup{Sp}(4,q)=\langle \textup{Sp}(4,q),\delta\rangle\rtimes\Aut(\F_q)$ by \cite[Section 1.7.1]{BHR407}. Each element of $\Gamma\textup{Sp}(4,q)$ has the form $x\mapsto x^\sigma A$, where $\sigma\in\Aut(\F_q)$ and $A\in \textup{GL}(4,q)$, and we denote this element by $(A,\sigma)$. We refer to $A$ as the \textit{matrix part} and refer to $\sigma$ as the \textit{Frobenius part} of the element $(A,\,\sigma)$ respectively.

Take a fixed projective point $P:=\langle(1,0,0,0)\rangle$. The stabilizer $\textup{Sp}(4,q)_P$ of $P$ in  $\textup{Sp}(4,q)$ consists of the matrices
\begin{equation}\label{eqn_SpP}
\begin{pmatrix}
\lambda&  0 & 0\\
-HJ\textbf{v}^{T} & H & 0\\
z & \textbf{v} & \lambda^{-1}
\end{pmatrix}
\end{equation}
with $H\in \textup{SL}(2,q)$, $\textbf{v}\in \F_q^2$, $z \in \F_q$ and $\lambda\in\F_q^*$,
where $J=\begin{pmatrix}0 & 1\\-1 & 0 \end{pmatrix}$. Together with $\delta$ and $\sigma$, they generate  $\Gamma\textup{Sp}(4,q)_P$. Modulo the center of $\Gamma\textup{Sp}(4,q)$, we obtain $\PGaSp(4,q)_P$, and we write $\delta$, $\sigma$ and $(A,\sigma)$ for their respective images by abuse of notation. We observe that $\delta$ has order $q-1$.

Let $G$ be a subgroup of $\PGaSp(4,q)_P$ that acts regularly on the points of $\cQ^P$. For each $g\in\PGaSp(4,q)_P$, $g$ stabilizes $\cQ^P$ and so $g^{-1}Gg$ is also a point regular group of $\cQ^P$. We thus assume without loss of generality that $G$ is contained in the Sylow $p$-subgroup of $\PGaSp(4,q)_P$ generated by the Sylow $p$-subgroup of $\Aut(\F_q)$ and the Sylow $p$-subgroup of $\textup{PSp}(4,q)_P$ consisting of lower triangular matrices with diagonal entries $1$. The element of $\textup{PSp}(4,q)_P$ in \eqref{eqn_SpP} is such a matrix if and only if $\lambda=1$, $H=\begin{pmatrix}1& 0\\t& 1 \end{pmatrix}$. Therefore, each element of $G$ is of the form $(E(a,b,c,t),\sigma)$, where $\sigma\in\Aut(\F_q)$ has order $p^i$ for some $i$, and
\begin{equation}\label{eqn_EMat}
E(a,b,c,t):=\begin{pmatrix}
1& 0& 0 & 0\\
-c& 1& 0& 0\\
b-ct&t&1&0\\
a & b&c & 1
\end{pmatrix},\quad a,\,b,\,c,\,t \in \F_q.
\end{equation}

The point set of $\cQ^P$ is $\{\la (a,b,c,1)\ra:\,a,\,b,\,c\in\F_q\}$. In order for $G$ to be point regular, for each triple $(a,b,c)$, there should be exactly one element of $G$ that maps $\la(0,0,0,1)\ra$ to $\la(a,b,c,1)\ra$, i.e., whose matrix part has $(a,b,c,1)$ as its last row. We denote such an element  by $\mathfrak{g}_{a,b,c}= (\cM_{a,b,c},\,\theta_{a,b,c})$,  where $\theta_{a,b,c}\in \Aut(\F_q)$ and
$\cM_{a,b,c}:=E\left(a,b,c,T(a,b,c)\right)$.
Here, $T$ is a function from $\F_q^3$ to $\F_q$. The  group multiplication $\circ$ of $G$ is
\begin{equation}\label{groupmulti}
\fg_{a,b,c}\circ \fg_{x,y,z}=(\cM_{a,b,c}^{\theta_{x,y,z}}\cdot \cM_{x,y,z},\,\theta_{a,b,c}\theta_{x,y,z}),
\end{equation}
where $\cM^{\theta}$ is the matrix obtained by applying $\theta$ to each entry of the matrix $\cM$, and $\cdot$ is the usual matrix multiplication.  To summarize, up to conjugacy in $\PGaSp(4,q)_P$ a point regular subgroup $G$ of $\cQ^P$ is of the form
$G=\{\fg_{a,b,c}:\, a,\,b,\,c \in \F_q \}$ for some functions $T:\,\F_q^3\rightarrow\F_q$ and $\theta:\,\F_q^3\rightarrow\Aut(\F_q)$, where $\theta(x,y,z)=\theta_{x,y,z}$.
\begin{thm} \label{Main}
Let $T:\,\F_q^3\rightarrow\F_q$ and $\theta:\,\F_q^3\rightarrow\Aut(\F_q)$ be two functions. Set
\[
\fg_{a,b,c}:=(\cM_{a,b,c},\,\theta_{a,b,c})\;\textup{ with } \cM_{a,b,c}:=E(a,b,c,T(a,b,c)),
\]
and write $\theta_{a,b,c}:=\theta(a,b,c)$. Define the set
$G:=\{\fg_{a,b,c}:\, a,\,b,\,c \in \F_q \}$. Then $G$ is a point regular group of the derived quadrangle $\cQ^P$ if and only if for any triples $(a,b,c)$ and $(x,y,z)$ we have $\fg_{a,b,c}\circ\fg_{x,y,z}=\fg_{u,v,w}$, which is equivalent to
\begin{align}
\theta_{a,b,c}\theta_{x,y,z}=&\theta_{u,v,w},\label{eqctheta}\\
T(a,b,c)^{\theta_{x,y,z}}+T(x,y,z)=&T(u,v,w),\label{eqc1}
\end{align}
where    $w=c^{\theta_{x,y,z}}+z$,  $v=b^{\theta_{x,y,z}}+y+c^{\theta_{x,y,z}}T(x,y,z)$ and
\begin{align*}
u&=a^{\theta_{x,y,z}}+x-b^{\theta_{x,y,z}}z+c^{\theta_{x,y,z}}y-c^{\theta_{x,y,z}}zT(x,y,z).
\end{align*}
\end{thm}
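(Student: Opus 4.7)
The plan is to reduce the statement to the closure of $G$ under the group operation of $\PGaSp(4,q)_P$ and then translate this closure into the two displayed equations by an explicit matrix computation. The whole proof is essentially mechanical once we have set things up correctly.

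First I would argue that point-regularity reduces to closure. The Payne derived quadrangle $\cQ^P$ has exactly $|\PG(3,q)| - |P^\perp| = q^3$ points, so $|G| = q^3$ matches the number of points. Moreover, since $\fg_{a,b,c}$ sends the base point $\la(0,0,0,1)\ra$ to $\la(a,b,c,1)\ra$, the map $\fg_{a,b,c}\mapsto \la(a,b,c,1)\ra$ is a bijection from $G$ onto the point set of $\cQ^P$. Hence once $G$ is a subgroup of $\PGaSp(4,q)_P$, it acts transitively on the points of $\cQ^P$, and transitivity together with $|G|=q^3$ forces regularity. Since $G$ is a finite subset of a group, being a subgroup is equivalent to being closed under $\circ$. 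Consequently, $G$ is point regular if and only if $\fg_{a,b,c}\circ\fg_{x,y,z}\in G$ for every choice of triples.

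Next I would compute the product directly using the multiplication rule \eqref{groupmulti}. The Frobenius parts multiply to $\theta_{a,b,c}\theta_{x,y,z}$. For the matrix part, observe that the diagonal entries of $E(a,b,c,t)$ lie in the prime field, so
\[
\cM_{a,b,c}^{\theta_{x,y,z}}=E\bigl(a^{\theta_{x,y,z}},b^{\theta_{x,y,z}},c^{\theta_{x,y,z}},T(a,b,c)^{\theta_{x,y,z}}\bigr).
\]
A direct matrix multiplication of two matrices of the form $E(\cdot,\cdot,\cdot,\cdot)$ yields a matrix of the same form. Indeed, reading off the last row of $\cM_{a,b,c}^{\theta_{x,y,z}}\cdot\cM_{x,y,z}$ produces exactly the triple $(u,v,w)$ in the theorem statement, and reading off the $(3,2)$-entry produces $T(a,b,c)^{\theta_{x,y,z}}+T(x,y,z)$. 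The remaining entries are then forced by the shape of $E$, which can be verified either by completing the multiplication or by invoking the fact that the lower-triangular unipotent matrices form the Sylow $p$-subgroup of $\textup{PSp}(4,q)_P$ containing $G$, hence a subgroup closed under products.

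Finally I would assemble the equivalence. If $\fg_{a,b,c}\circ\fg_{x,y,z}=\fg_{u',v',w'}$ for some element of $G$, then comparing last rows of the matrix parts forces $(u',v',w')=(u,v,w)$ with the formulas given. Matching the Frobenius parts yields \eqref{eqctheta}, and matching the $(3,2)$-entries of the matrix parts yields \eqref{eqc1}. Conversely, if \eqref{eqctheta} and \eqref{eqc1} hold, then the product $\fg_{a,b,c}\circ\fg_{x,y,z}$ has Frobenius part $\theta_{u,v,w}$ and matrix part $E(u,v,w,T(u,v,w))=\cM_{u,v,w}$, so it equals $\fg_{u,v,w}\in G$.

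I do not anticipate a real obstacle; the only step requiring a bit of care is the verification that the product of two $E$-matrices again has the form $E(\cdot,\cdot,\cdot,\cdot)$ with the $(3,1)$-entry determined by the same formula $v-wt$. This is a short calculation that could be done once and referenced, and it is in any case guaranteed by the subgroup property of the ambient lower-triangular unipotent group.
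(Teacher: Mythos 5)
Your proposal is correct and follows essentially the same route as the paper's proof: reduce point-regularity to closure of the finite set $G$ under $\circ$, compute the product via \eqref{groupmulti} and the $E$-matrix multiplication rule, and read off the conditions from the last row, the Frobenius part, and the $(3,2)$-entry. The only difference is that you spell out the counting/bijection argument for regularity and the entrywise action of $\theta$ in more detail than the paper does.
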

\begin{proof}
The element $\fg_{a,b,c}$ maps $\la(0,0,0,1)\ra$ to $\la(a,b,c,1)\ra$, so $G$ acts regularly on the points of $\cQ^P$ provided that it is indeed a group. Since $G$ is finite, it suffices to make sure that $G$ is closed under the multiplication $\circ$ in  \eqref{groupmulti}. By direct calculations, we deduce that the last row of the matrix part of $\fg_{a,b,c}\circ\fg_{x,y,z}$ is $(u,v,w,1)$ with $u,\,v,\,w$ as in the statement of the theorem. Therefore, we need to have $\fg_{a,b,c}\circ\fg_{x,y,z}=\fg_{u,v,w}$. The conditions in the theorem are obtained by comparing their Frobenius parts and the ($3,\,2$)-nd entry of their matrix parts.
\end{proof}

Let $G$ be a point regular automorphism group of the derived quadrangle $\cQ^P$ with associated functions $T$ and $\theta$ as in Theorem \ref{Main}. We define
\begin{align}
G_A&:=\{\fg_{a,0,0}:\,a\in \F_q\},\label{eqn_GAdef}\\
G_B&:=\{\fg_{0,b,0}:\,b\in \F_q\},\label{eqn_GBdef}\\
G_{A,B}&:=\{\fg_{a,b,0}:\,a,\,b \in \F_q\}.\label{eqn_GABdef}
\end{align}

\begin{corollary}\label{subgp}
Each of $G_A$, $G_B$ and $G_{A,B}$ is a subgroup of $G$.
\end{corollary}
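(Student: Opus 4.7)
The plan is to apply the explicit product formula from Theorem \ref{Main} to each of the three prescribed subsets and observe that the relevant coordinates vanish. Each set is nonempty and contained in the finite group $G$, so by a standard argument, closure under $\circ$ suffices.

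First I would verify that $\fg_{0,0,0}$ is the identity of $G$: setting $(a,b,c)=(x,y,z)=(0,0,0)$ in \eqref{eqctheta} gives $\theta_{0,0,0}^2=\theta_{0,0,0}$, whence $\theta_{0,0,0}=\mathrm{id}$, and then \eqref{eqc1} yields $2T(0,0,0)=T(0,0,0)$, so $T(0,0,0)=0$. Hence $\cM_{0,0,0}$ is the identity matrix and $\fg_{0,0,0}$ is indeed the identity element. In particular, $\fg_{0,0,0}\in G_A\cap G_B\cap G_{A,B}$, so all three sets are nonempty.

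Next I would check closure. For $G_A$, take $(a,b,c)=(a,0,0)$ and $(x,y,z)=(x,0,0)$ in the product formulas; since $b=c=y=z=0$, the formulas collapse to $w=0$, $v=0$, $u=a^{\theta_{x,0,0}}+x$, so $\fg_{a,0,0}\circ\fg_{x,0,0}=\fg_{u,0,0}\in G_A$. For $G_B$, take $(a,b,c)=(0,b,0)$ and $(x,y,z)=(0,y,0)$; every term involving $a,c,x,z$ vanishes, giving $w=0$, $u=0$, $v=b^{\theta_{0,y,0}}+y$, hence $\fg_{0,b,0}\circ\fg_{0,y,0}=\fg_{0,v,0}\in G_B$. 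For $G_{A,B}$, take $(a,b,c)=(a,b,0)$ and $(x,y,z)=(x,y,0)$; since $c=z=0$, the formulas reduce to $w=0$, $v=b^{\theta_{x,y,0}}+y$, and $u=a^{\theta_{x,y,0}}+x$, so $\fg_{a,b,0}\circ\fg_{x,y,0}=\fg_{u,v,0}\in G_{A,B}$.

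Since each of $G_A$, $G_B$, $G_{A,B}$ is a nonempty, finite subset of $G$ closed under the group operation, each is a subgroup. There is no real obstacle here — the only thing to watch is that when the relevant input coordinates are zero, the cross terms $b^{\theta}z$, $c^{\theta}y$, and $c^{\theta}zT(x,y,z)$ in the expression for $u$ (and the term $c^{\theta}T(x,y,z)$ in $v$) all drop out, so the resulting triple lies in the same slice of $\F_q^3$.
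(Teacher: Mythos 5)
Your proof is correct and follows essentially the same route as the paper: apply the product formula of Theorem \ref{Main} with the relevant coordinates set to zero, observe that the cross terms vanish so each set is closed under $\circ$, and conclude by the finite-subset criterion. The extra verification that $\fg_{0,0,0}$ is the identity is harmless but not needed, since nonemptiness of the three sets is immediate from their definitions.
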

\begin{proof}
Since $G$ is a finite group, it suffices to show that each of them is closed under multiplication. We take $G_A$ for example. By Theorem \ref{Main}, we deduce that $\fg_{a,0,0}\circ\fg_{x,0,0}=\fg_{u,0,0}$ with $u=a^{\theta_{x,0,0}}+x$, so $G_A$ is indeed closed under multiplication. The proofs are similar for $G_B$ and $G_{A,B}$.
\end{proof}

The following notation will be used throughout this  paper.
\begin{Notation}\label{notation_sigmaLMS}
Let $G$ be a point regular automorphism group of the derived quadrangle $\cQ^P$ with associated functions $T$ and $\theta$ as in Theorem \ref{Main}, and define
\[
\psi:\,G\rightarrow \Aut(\F_q),\;\fg_{a,b,c}\mapsto\theta_{a,b,c}
\]
Let  $G_A$, $G_B$ and $G_{A,B}$ be as in \eqref{eqn_GAdef}-\eqref{eqn_GABdef}, and let $p^{r_A}$,  $p^{r_B}$, $p^{r_{A,B}}$ be the exponents of their images under $\psi$ respectively. We use the shorthand notation for special cases of $T$ and $\theta$ in Table \ref{tab_notLMS}.
\begin{table}[hptp]
\caption{The shorthand notation for special cases of $T$ and $\theta$}
\label{tab_notLMS}
\begin{center}
\begin{tabular}{c|c}
\hline
$L(x)$     & $T(x,0,0)$       \\ \hline
$M(y)$     & $T(0,y,0)$       \\ \hline
$S(z)$     & $T(0,0,z)$       \\ \hline
$\sigma_c$ & $\theta_{0,0,c}$ \\ \hline
\end{tabular}
\end{center}
\end{table}
Finally, set $p^{r_C}:=\max\{o(\sigma_c):\,c\in\F_q\}$, and $s:=\max\{0,r_C-r_{A,B}\}$.
\end{Notation}

In general, the condition in Theorem \ref{Main} is fairly complicated. We consider some special cases, where the conditions can be simplified.

\begin{corollary}\label{cond1}
Suppose that $G$ is a point regular group of $\cQ^P$. Then for  $a$, $b$, $c$, $x$, $y$, $z$ in $\F_q$, it holds that
\begin{enumerate}
\item[(1)] $\theta_{a,0,0}\theta_{x,0,0}=\theta_{(a^{\theta_{x,0,0}}+x),0,0}$ and $L(a)^{\theta_{x,0,0}}+L(x)=L(a^{\theta_{x,0,0}}+x);$
\item[(2)]  $\theta_{0,b,0}\theta_{0,y,0}=\theta_{0,(b^{\theta_{0,y,0}}+y),0}$ and $M(b)^{\theta_{0,y,0}}+M(y)=L(b^{\theta_{0,y,0}}+y);$
\item[(3)]  $\sigma_c\sigma_z=\theta_{u,v,w}$ and  $S(c)^{\sigma_{z}}+S(z)=T(u,v,w)$, where $u=-\sigma_{z}(c)zS(z)$, $v=\sigma_{z}(c)S(z)$ and $w=\sigma_{z}(c)+z$;
\item[(4)]$\theta_{a,0,0}\theta_{0,b,0}=\theta_{a^{\theta_{0,b,0}},b,0}$ and  $L(a)^{\theta_{0,b,0}}+M(b)=T(a^{\theta_{0,b,0}},b,0);$
\item[(5)] $\theta_{0,b,0}\theta_{a,0,0}=\theta_{a,b^{\theta_{a,0,0}},0}$ and  $L(a)+M(b)^{\theta_{a,0,0}}=T(a,b^{\theta_{a,0,0}},0);$
\item[(6)] $\fg_{a,b,c}=\fg_{a',b',0}\circ \fg_{0,0,c}$,  or equivalently,
     $\theta_{a,b,c}=\theta_{a',b',0}\theta_{0,0,c}$ and $T(a,b,c)=T(a',b',0)^{\sigma_c}+S(c)$, where $a'=\sigma_c^{-1}(a+bc)$,  $b'=\sigma_c^{-1}(b)$;
\item[(7)] $\sigma_c\theta_{a,b,0}=\theta_{u,v,0}\sigma_{w} $ and $S(c)^{\theta_{a,b,0}}+T(a,b,0)=T(u,v,0)^{\sigma_{w}}+S(w)$,
where $w=c^{\theta_{a,b,0}}$, $v=\sigma_{w}^{-1}\left(b+c^{\theta_{a,b,0}}T(a,b,0) \right)$ and $u=\sigma_{w}^{-1}\left(a+2bc^{\theta_{a,b,0}}+c^{2\theta_{a,b,0}}T(a,b,0)\right)$.
 \end{enumerate}
\end{corollary}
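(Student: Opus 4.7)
The plan is to derive each of the seven identities as a direct specialization of the multiplication rule encoded in equations \eqref{eqctheta} and \eqref{eqc1} of Theorem \ref{Main}, together with the shorthand $L(a)=T(a,0,0)$, $M(b)=T(0,b,0)$, $S(c)=T(0,0,c)$, $\sigma_c=\theta_{0,0,c}$ fixed in Notation \ref{notation_sigmaLMS}. Parts (1)--(5) require only that I choose an appropriate pair of triples $((a,b,c),(x,y,z))$ from the three coordinate axes and then read off the resulting triple $(u,v,w)$ from the formulas in Theorem \ref{Main}. Parts (6) and (7) additionally use a canonical normal-form factorization of $\fg_{a,b,c}$ through the $c$-axis.

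Concretely, for part (1) I take both triples on the $a$-axis, so $(b,c)=(y,z)=(0,0)$; this forces $w=v=0$ and $u=a^{\theta_{x,0,0}}+x$, and equations \eqref{eqctheta}--\eqref{eqc1} collapse to the stated identities with $T$ replaced by $L$. Parts (2) and (3) are entirely analogous, taking both triples on the $b$-axis and the $c$-axis respectively; for (3) the only nontrivial point is that the surviving terms $c^{\sigma_z}y$, $c^{\sigma_z}z\,T(x,y,z)$, etc., collapse exactly to $\sigma_z(c)S(z)$ and $-\sigma_z(c)z\,S(z)$. Parts (4) and (5) are the ``mixed'' cases, where I take one triple on the $a$-axis and the other on the $b$-axis (in each of the two orders); since $c=z=0$ still holds, the cross terms in $u,v,w$ simplify cleanly.

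For part (6), the idea is to solve the equation $\fg_{a',b',0}\circ\fg_{0,0,c}=\fg_{a,b,c}$ for $(a',b')$ using the multiplication formula: plugging $(a,b,c)\mapsto(a',b',0)$ and $(x,y,z)\mapsto(0,0,c)$ into the expressions for $(u,v,w)$ in Theorem \ref{Main} gives $w=c$, $v=\sigma_c(b')$, $u=\sigma_c(a')-\sigma_c(b')c$, from which the uniqueness of the representing element in the regular group $G$ forces $b'=\sigma_c^{-1}(b)$ and $a'=\sigma_c^{-1}(a+bc)$; the conditions on $\theta$ and $T$ are then immediate from \eqref{eqctheta}--\eqref{eqc1}. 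Part (7) is a combination of (6) with the specialization $(a,b,c)\mapsto(0,0,c)$, $(x,y,z)\mapsto(a,b,0)$: this specialization produces $\fg_{0,0,c}\circ\fg_{a,b,0}=\fg_{u^*,v^*,w}$ with $w=c^{\theta_{a,b,0}}$, $v^*=b+c^{\theta_{a,b,0}}T(a,b,0)$, $u^*=a+bc^{\theta_{a,b,0}}$, and I then apply (6) to rewrite $\fg_{u^*,v^*,w}=\fg_{u,v,0}\circ\fg_{0,0,w}$. Substituting the formulas from (6) for $u,v$ yields $v=\sigma_w^{-1}(v^*)$ and $u=\sigma_w^{-1}(u^*+v^*w)=\sigma_w^{-1}\!\bigl(a+2bc^{\theta_{a,b,0}}+c^{2\theta_{a,b,0}}T(a,b,0)\bigr)$, and the corresponding $\theta$ and $T$ identities drop out.

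There is no real conceptual obstacle; the work is entirely mechanical substitution into the formulas of Theorem \ref{Main}. The only step requiring care is (7), where the algebraic simplification of $u^*+v^*w$ must be done attentively so that the coefficient of $bc^{\theta_{a,b,0}}$ correctly becomes $2$ after combining the contributions from $\fg_{0,0,c}\circ\fg_{a,b,0}$ and from the factorization in (6). Apart from this bookkeeping, each part reduces to a direct reading of \eqref{eqctheta}--\eqref{eqc1}.
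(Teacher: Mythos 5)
Your proposal is correct and follows essentially the same route as the paper: parts (1)--(5) by specializing the multiplication law of Theorem \ref{Main} to axis triples, part (6) by matching $\fg_{a',b',0}\circ\fg_{0,0,c}$ against $\fg_{a,b,c}$ via the $(u,v,w)$ formulas (equivalently, the image of $\la(0,0,0,1)\ra$, which is what the paper computes), and part (7) by equating the two factorizations $\fg_{0,0,c}\circ\fg_{a,b,0}=\fg_{u^*,v^*,w}=\fg_{u,v,0}\circ\fg_{0,0,w}$. The algebra you outline, including the simplification $u^*+v^*w=a+2bc^{\theta_{a,b,0}}+c^{2\theta_{a,b,0}}T(a,b,0)$, checks out.
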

\begin{proof}
The claims (1)-(5) are  special cases of  Theorem \ref{Main}, where exactly one of $\{a,b,c\}$ and exactly one of $\{x,y,z\}$ are nonzero. The first equation in claim (6) holds because the group $G$ is point regular and $\fg_{a',b',0}\circ \fg_{0,0,c}$ maps $(0,0,0,1)$ to
\[
(a'^{\sigma_c},b'^{\sigma_c},0,1)\,E(0,0,c,S(c))=
(a+bc,b,0,1)\begin{pmatrix}1& 0& 0 & 0\\-c& 1& 0& 0\\-cS(c)&S(c)&1&0\\0& 0&c & 1\end{pmatrix}
=(a,b,c,1).
\]
The second part of (6) then follows by Theorem \ref{Main}.

It remains to prove (7). Fix a triple $(a,\,b,\,c)$. By Theorem \ref{Main}, we have $\fg_{x,y,w}=\fg_{0,0,c}\circ\fg_{a,b,0}$, where  $x=a+bc^{\theta_{a,b,0}}$, $y=b+c^{\theta_{a,b,0}}T(a,b,0)$, $w=c^{\theta_{a,b,0}})$. By (6), we have $\fg_{x,y,w}=\fg_{u,v,0}\circ \fg_{0,0,w}$, where $u,v$ are as in the statement of (7). These two expressions of $\fg_{x,y,w}$ lead to two expressions of $\theta_{x,y,w}$ and $T(x,y,w)$ via \eqref{eqctheta} and \eqref{eqc1} respectively, and the claim in (7) follows by equating the respective expressions.
\end{proof}

\begin{corollary}\label{cor_rABnonlin}
Let $G$ be as in Notation \ref{notation_sigmaLMS}. Then $r_{A,B}=\max\{r_A,r_B\}$, and $G$ is nonlinear if and only if $r_{A,B}>0$ or $r_C>0$.
\end{corollary}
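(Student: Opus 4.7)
The plan is to handle the two assertions in turn.

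For the identity $r_{A,B}=\max\{r_A,r_B\}$, the inequality ``$\ge$'' is immediate from $G_A,G_B\subseteq G_{A,B}$, since this gives $\psi(G_A),\psi(G_B)\subseteq\psi(G_{A,B})$. For ``$\le$'', I would invoke Corollary \ref{cond1}(4), which states $\theta_{a,0,0}\theta_{0,b,0}=\theta_{a^{\theta_{0,b,0}},b,0}$. Given any $\fg_{a',b,0}\in G_{A,B}$, setting $a:=(a')^{\theta_{0,b,0}^{-1}}$ expresses $\theta_{a',b,0}\in\psi(G_{A,B})$ as the product of $\theta_{a,0,0}\in\psi(G_A)$ and $\theta_{0,b,0}\in\psi(G_B)$. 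Now $\psi(G)$ lies in the Sylow $p$-subgroup of $\Aut(\F_q)$, which is cyclic as a subgroup of the cyclic group $\Gal(\F_q/\F_p)$. In a cyclic $p$-group, a product of two elements of orders $p^i$ and $p^j$ has order dividing $p^{\max(i,j)}$, so $\theta_{a',b,0}$ has order dividing $p^{\max\{r_A,r_B\}}$. This yields $r_{A,B}\le\max\{r_A,r_B\}$.

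For the nonlinearity characterization, note that by construction $G$ is nonlinear precisely when some element of $G$ has nontrivial Frobenius part. The ``if'' direction is immediate: if $r_{A,B}>0$ then some $\theta_{a,b,0}\ne 1$, and if $r_C>0$ then some $\sigma_c\ne 1$, either of which furnishes a nonlinear element. For the ``only if'' direction, suppose $\theta_{a,b,c}\ne 1$ for some triple. I would apply Corollary \ref{cond1}(6), which factors $\fg_{a,b,c}=\fg_{a',b',0}\circ\fg_{0,0,c}$ for suitable $a',b'$, yielding $\theta_{a,b,c}=\theta_{a',b',0}\,\sigma_c$. Since the product is nontrivial, at least one of $\theta_{a',b',0}$ and $\sigma_c$ is nontrivial, giving $r_{A,B}>0$ or $r_C>0$ respectively.

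I do not anticipate a genuine obstacle: both parts are short structural arguments that rely on the multiplicative decompositions of arbitrary group elements supplied by Corollary \ref{cond1}, combined with the standing assumption that $G$ sits inside the Sylow $p$-subgroup of $\PGaSp(4,q)_P$. The only subtlety worth flagging is the implicit use of the cyclicity of the Sylow $p$-subgroup of $\Aut(\F_q)$ in the first part, which is what upgrades ``product of $p$-power-order elements'' to ``element of order the maximum of the two''.
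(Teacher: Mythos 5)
Your proof is correct and takes essentially the same route as the paper's: both parts rest on the decompositions supplied by Corollary \ref{cond1}(4) and \ref{cond1}(6). You merely make explicit two details the paper leaves implicit, namely the containment $G_A,G_B\subseteq G_{A,B}$ for the inequality $r_{A,B}\ge\max\{r_A,r_B\}$ and the order computation in the abelian group $\Aut(\F_q)$ for the reverse inequality.
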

\begin{proof}
The claim on $r_{A,B}$ is a consequence of (4) of Corollary \ref{cond1}, since $\theta_{u,v,0}=\theta_{a,0,0}\theta_{0,v,0}$ with $a=\theta_{0,v,0}^{-1}(u)$.
Take $a,b,c\in\F_q$. By (6) of Corollary \ref{cond1}, we have $\theta_{a,b,c}=\theta_{a',b',0}\sigma_c$ for some $a',b'\in\F_q$. Hence $G$ is linear if and only if $G_{A,B}$ is linear and $r_C=0$. The second claim now follows from (1).
\end{proof}

\begin{remark}\label{rem_EMult}
Let $E$ be  as defined in  \eqref{eqn_EMat}. We shall make extensive use of the following calculations throughout the paper:
\begin{align*}
E(a,b,c,t)\cdot E(x,y,z,w)&=E(a+x-bz+cy-czw,b+y+cw,c+z,t+w),\\
E(a,b,c,t)^{-1}&=E(-a,-b+ct,-c, -t).
\end{align*}
In each  equation, the last two coordinates, i.e., the $(4,3)$-rd and the $(3,2)$-nd entries of the matrix, on the right hand side has a very simple form. This is particularly helpful in many circumstances where we are only concerned with these two coordinates.
\end{remark}

\subsection{Summary of the point regular groups of $\cQ^P$,  $q$ odd}
The following is a list of point regular automorphism groups of $\cQ^P$ for odd $q$ that we construct in this paper. The first construction also works for even $q$. The fact that the desired elements in each construction exist will be established in the subsequent sections.
There is another construction, Construction \ref{Const2}, for $q$ even.
\begin{construction}\label{const_S1}
For a prime power $q=p^m$, set $\theta_{a,b,c}\equiv 1$ and $T(a,b,c):=S_1(c)$ for a  reduced linearized polynomial $S_1$ over $\F_q$. The set $G$ as defined  in Theorem \ref{Main} for the prescribed functions $T$ and $\theta$ is  a point regular subgroup of $\cQ^P$.
\end{construction}

\begin{construction}\label{const_S2}
Suppose that $q=p^{pl}$ with $p$ an odd prime and $l$ a positive integer, and let $g\in\Aut(\F_q)$ be such that $g(x)=x^{p^l}$. Let $S_1(X)$ be a reduced linearized polynomial whose coefficients lie in $\F_{p^l}$. Take $\mu_C\in\F_{p^l}^*$. Set $\theta_{a,b,c}:=g^{\tr_{\F_q/\F_p}(\mu_C c)}$, and $T(a,b,c):=S_1(c)$ for $a,\,b,\,c\in\F_q$.  Then the set $G$ as defined in Theorem \ref{Main} with the prescribed functions $ T$ and $\theta$ is a  point regular group of $\cQ^P$.
\end{construction}

\begin{construction}\label{const_S3}
Suppose that $q=p^{pl}$ with $p$ an odd prime and $l$ a positive integer, and let $g\in\Aut(\F_q)$ be such that $g(x)=x^{p^l}$. Take $\mu_B\in\F_{p^l}^*$ and  a tuple $(s_0,\,s_1,\cdots,s_{pl-1})$ with entries in $\F_{p^l}$ such that $\mu_B s_i- s_{pl-i}^{p^i}\mu_B^{p^i}=0$ for $1\leq i\leq pl-1$. Set $S_1(X):=\sum_{i=0}^{pl-1}s_iX^{p^i}$, and $Q(x):=-\tr_{\F_q/\F_p}(\mu_BxS_1(x))$ for $x\in\F_q$. Set \[\theta_{a,b,c}:=g^{\frac{1}{2}Q(c)+\tr_{\F_q/\F_p}(\mu_B b)},\,\quad  T(a,b,c):=S_1(c) \text{ for } a,\,b,\,c\in\F_q.\] Then the set $G$ as defined in Theorem \ref{Main} with the prescribed functions $T$ and $\theta$ is a point regular subgroup  of $\cQ^P$.
\end{construction}

\begin{construction}\label{const_S4}
Suppose that $q=3^{9l}$ with $l$ a positive integer, and take $g\in\Aut(\F_q)$ such that $g(x)=x^{3^l}$. Set $g_1:=g^3$.
\begin{enumerate}
	\item[(i)] Take $u\in \F_{3^{3l}}$ such that $\mu_C:=u-u^g\in\F_{3^l}^*$;
	\item[(ii)] Take   $t_C\in\F_q^*$  such that $\lambda_C:=\tr_{\F_q/\F_3}(\mu_C t_C)\ne 0$;
	\item[(iii)] Take $\mu_B\in\F_{3^l}^*$;
	\item[(iv)] Take a tuple $(s_0,\,s_1,\cdots,s_{9l-1})$ with entries in $\F_{3^l}$ that satisfies
	\[
	-\mu_B s_i+s_{9l-i}^{3^i}\mu_B^{3^i}=\mu_C u^{3^i}-u\mu_C^{3^i},\;1\leq i\leq 9l-1;
	\]
	\item[(v)] Take $\alpha\in\F_{3^{3l}}$, $\lambda\in\F_3$ such that
		$g(\alpha)-\alpha=\lambda_Cu+\lambda\mu_C$.
\end{enumerate}
Set $S_1(X):=\sum_{i=0}^{9l-1}s_iX^{3^i}$, $Q(x):=-\tr_{\F_q/\F_3}(\mu_BxS_1(x))$ for $x\in\F_q$.  Set $K:=\{z\in\F_q :\,\tr_{\F_q/\F_3}(\mu_C z)=0\}$. For $a,\,b\in\F_q$ and $c\in K$, define
		\[
		\cM_{a,b,c}=E(a,b,c,S_1(c)),\quad \theta_{a,b,c}:=g_1^{\frac{1}{2}Q(c)+\tr_{\F_q/\F_3}(\alpha c+\mu_Bb)},
		\]
		and $\fg_{a,b,c}=(\cM_{a,b,c},\,\theta_{a,b,c})$. Then $G_{K}:=\{\fg_{a,b,c}:\,a,\,b\in\F_q,\, c\in K\}$ is a group  of order $q^3/3$. Set $\fg_{0,0,t_C}:=(\cM_{0,0,t_C},\, g)$ with $\cM_{0,0,t_C}:=E(0,0,t_C,S_1(t_C))$. Then $G:=\la G_K,\, \fg_{0,0,t_C}\ra$ is a point regular group of $\cQ^P$.
	\end{construction}

The main body of this paper is devoted to the proof of the following classification theorem in the odd characteristic case.
\begin{thm}\label{thm_conjugate}
Let $G$ be a point regular automorphism group of the derived quadrangle $\cQ^P$ of the symplectic quadrangle $\cQ=W(q)$, $q$ odd and $q\ge 5$. Then $G$ is conjugate to one of the groups in Constructions \ref{const_S1}-\ref{const_S4}.
\end{thm}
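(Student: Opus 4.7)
The plan is to prove Theorem \ref{thm_conjugate} by combining a linear/nonlinear dichotomy with the structural analysis outlined for the subsequent sections of the paper. First, after conjugating $G$ into the Sylow $p$-subgroup of $\PGaSp(4,q)_P$ described in Section \ref{subsec_payneGQ}, every element of $G$ acquires the canonical form $\fg_{a,b,c}=(\cM_{a,b,c},\theta_{a,b,c})$ of Theorem \ref{Main}, so $G$ is fully encoded by a pair of functions $(T,\theta)$ subject to the cocycle-type identities \eqref{eqctheta} and \eqref{eqc1}. By Corollary \ref{cor_rABnonlin}, $G$ is linear exactly when $r_{A,B}=r_C=0$, and otherwise at least one of $r_A,r_B,r_C$ is positive; these two regimes must be treated separately.

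For the linear case all Frobenius parts are trivial, so \eqref{eqc1} degenerates to pure additivity $T(u,v,w)=T(a,b,c)+T(x,y,z)$. Specialising through Corollary \ref{cond1}(1)--(5) shows that $L$, $M$ and $S$ are additive, and the commutation identity (7) forces $L\equiv M\equiv 0$ together with $S=S_1$ a reduced linearized polynomial. This yields Construction \ref{const_S1}, and comprises the content of Section \ref{sec_linear}.

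For the nonlinear case the plan is to first pin down the Frobenius map $\theta$ and then recover $T$. The map $\psi:G\to\Aut(\F_q)$ of Notation \ref{notation_sigmaLMS} is a group homomorphism into a cyclic group, and its restrictions to $G_A$, $G_B$, and the ``column'' $\{\fg_{0,0,c}\}$ have images measured by $r_A,r_B,r_C$. Using Corollary \ref{cond1}(1)--(3) one shows that the composition $\psi\circ\fg_{\cdot,0,0}$ and $\psi\circ\fg_{0,\cdot,0}$ are additive, while from (7) the quantity $\sigma_c$ interacts with $T(a,b,0)$ via a $Q$-like quadratic correction. This will force $\theta_{a,b,c}=g^{\frac{1}{2}Q(c)+\tr_{\F_q/\F_p}(\alpha c+\mu_B b)}$ for a single generator $g\in\Aut(\F_q)$, together with $T(a,b,c)=S_1(c)$ for a constrained linearized polynomial $S_1$, which is the content of the classification statement Theorem \ref{thm_nonlinear} to be proved in Sections \ref{sec_nonlinear_req0} and \ref{sec_nonlinear_req1}. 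These sections split on whether $r_{A,B}\ge r_C$ (giving Construction \ref{const_S2}, where $\theta$ depends only on $c$) or $r_{A,B}<r_C$ (leading after further analysis to Constructions \ref{const_S3} and \ref{const_S4}).

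The main obstacle will be the delicate case $r_C>r_{A,B}$ producing Construction \ref{const_S4}. Here the Frobenius parts of elements with $c\neq 0$ generate a strictly larger cyclic group than $\psi(G_{A,B})$, so one has to track carefully how $\sigma_c$ acts on $G_{A,B}$ by conjugation: the conditions $\mu_C=u-u^g\in\F_{3^l}^*$, the parity constraint $-\mu_B s_i+s_{9l-i}^{3^i}\mu_B^{3^i}=\mu_C u^{3^i}-u\mu_C^{3^i}$, and the equation $g(\alpha)-\alpha=\lambda_C u+\lambda\mu_C$ together encode the obstruction to extending the partial homomorphism on $G_{A,B}$ across the whole group, and the constraint $q=3^{9l}$ emerges precisely from solving this obstruction in odd characteristic. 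Once $T$ and $\theta$ are forced into one of the four explicit shapes, Section \ref{sec_iso} completes the proof by normalising the parameters up to conjugacy in $\PGaSp(4,q)_P$ and identifying the resulting group with one of Constructions \ref{const_S1}--\ref{const_S4}.
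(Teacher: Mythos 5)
Your overall strategy is the same as the paper's: reduce $G$ to the $(T,\theta)$ normal form of Theorem \ref{Main}, split on linearity via Corollary \ref{cor_rABnonlin}, classify each regime through the identities of Corollary \ref{cond1}, and finish by normalising parameters up to conjugacy in $\PGaSp(4,q)_P$. The linear half of your sketch is accurate for odd $q$ (Lemma \ref{LM} does give $L\equiv M\equiv 0$ and $S$ additive, yielding Construction \ref{const_S1}). However, there are two concrete defects in the nonlinear half.

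First, your case correspondence is backwards. Construction \ref{const_S2} arises from $r_{A,B}=0$ and $r_C=1$, i.e.\ $r_{A,B}<r_C$ --- that is precisely why $\theta$ there depends only on $c$ --- while Construction \ref{const_S3} arises from $r_{A,B}=1$ and $r_C\le r_{A,B}$. Your own parenthetical (``where $\theta$ depends only on $c$'') contradicts the label ``$r_{A,B}\ge r_C$'' you attach to it. The paper's actual decomposition is first on $r_{A,B}\in\{0,1\}$ (Sections \ref{sec_nonlinear_req0} and \ref{sec_nonlinear_req1}), and then, within $r_{A,B}=1$, on $r_C\le 1$ versus $r_C=2$; only the last subcase gives Construction \ref{const_S4}. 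Second, and more seriously, your plan nowhere explains why $r_{A,B}$ and $r_C$ are bounded at all: a priori the Frobenius parts could generate a large cyclic subgroup of $\Aut(\F_q)$, and then no statement of the form ``$\theta_{a,b,c}=g^{\frac{1}{2}Q(c)+\tr_{\F_q/\F_p}(\alpha c+\mu_B b)}$ for a single $g$ of order $p$ or $p^2$'' is available. The paper's first and hardest step (Theorems \ref{thm_GA_struc1} and \ref{thm_G_struc}, resting on the coset-partition/uniserial-module criterion of Lemma \ref{Wt1cond}) is to prove $r_{A,B}\le 1$ and $r_C\le r_{A,B}+1$ for odd $q$; the asserted shapes of $\theta$ and $T$, and indeed the whole case analysis, collapse without it. Writing that the identities of Corollary \ref{cond1} ``will force'' the final form skips exactly this obstruction, so as it stands the plan has a genuine gap at its central point.
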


Here we give a brief description of our strategy for the proof of Theorem \ref{thm_conjugate}. The linear case is handled in Section \ref{sec_linear} by exploring the conditions in Corollary \ref{cond1} which take simple forms in this case. The arguments also work for even $q$ and lead to a new construction, Construction \ref{Const2}. The nonlinear case also relies on a detailed analysis of the conditions in Corollary \ref{cond1} but is considerably more involved. The first major step is to bound the order of the Frobenius part of $G$ with the aid of Lemma \ref{Wt1cond}, cf. Theorems \ref{thm_GA_struc1} and \ref{thm_G_struc}. This step is crucial, since it implies that the ``linear" part of the group $G$, i.e., the kernel $H$ of the homomorphism from $G$ to $\Aut(\F_q)$ that maps each element to its Frobenius part, has a small index in $G$. This step also works for even $q$. As in the linear case, we are able to obtain a good description of the group structure of $H$ by examining the conditions in Corollary \ref{cond1}. Take an element $g\in G$ that has a Frobenius part of the largest possible order $b=[G:\,H]$. Then we have $G=\la H,g\ra$, $g^b\in H$ and $H\unlhd G$. We then further derive restrictions on the parameters of $H$ and $g$ by considering the conditions $H^g\le H$, $g^b\in H$ and $G$ is transitive on $\cQ^P$. The actual analysis is more complicated but follows the same spirit. This leads to the classification of the nonlinear case into three families for odd $q$, $q\ge 5$, in Theorem \ref{thm_nonlinear}. In Section \ref{sec_iso}, we further simplify the constructions up to conjugacy in $\PGaSp(4,q)_P$, which concludes the proof of Theorem \ref{thm_conjugate}.

\begin{remark}
Construction \ref{const_S1} is implicitly known  in \cite{Chen2013} and \cite{DeWinter2014Criterion} in the case $q$ is odd,  and it includes the constructions given in \cite{Bamberg2011PRG}. Write $q=p^m$ with $p$ prime. For $\alpha\in\F_q$, define $\theta_{\alpha}:=E(0,0,\alpha,\alpha)$ and $t_{0,0,\alpha}:=E(0,0,\alpha,0)$. Let $\{\alpha_1,\cdots,\alpha_m\}$ be a basis of $\F_q$ over $\F_p$. Take the dual basis $\{\beta_1,\cdots, \beta_m\}$ such that $\tr_{\F_q/\F_p}(\alpha_i \beta_j)=1$ or $0$ according as $i=j$ or not. For $1\le k\le m-1$, set $T_k(x,y,z):=\sum_{i=1}^k\tr_{\F_q/\F_p}(\beta_i z)\alpha_i$,
and let $G_k$ be the group arising from Construction \ref{const_S1} with $T=T_k$. Then in $G_k$ we have $\fg_{0,0,\alpha_i}=\theta_{\alpha_i}$ for $1\le i\le k$ and $\fg_{0,0,\alpha_i}=t_{0,0,\alpha_i}$ for $k+1\le i\le m$. In the notation of \cite{Bamberg2011PRG},   the group $G_k$ is $S_{U,W}$ with $U=\la \alpha_1,\cdots,\alpha_k\ra_{\F_p}$ and $W=\la \alpha_{k+1},\cdots,\alpha_m\ra_{\F_p}$.
\end{remark}

In Section \ref{sec_iso}, we will show that the point regular groups arising from  Constructions \ref{const_S1}-\ref{const_S4} are in general nonisomorphic by calculating their group invariants such as exponents and Thompson subgroups. In the case of Constructions \ref{const_S2}-\ref{const_S4}, we will show in Theorem \ref{thm_G1nil} that the nilpotency class of the  resulting  group lies in the range $[2p^e,3p^e]$ in the case $l>1$, where $p^e=o(g)$. In Table \ref{table_ncODD}, we give explicit values of nilpotency classes for some special cases of Construction \ref{const_S2} with $\mu_C=1$ and Construction \ref{const_S3} with $\alpha=0$, $\mu_B=1$. In both cases, we assume that $l>1$. From the table we see that in general the nilpotency class of $G$ is larger if $\ker(S_1)$ has a smaller size.
\begin{table}[hptp]
	\caption{ The nilpotency class of the point regular group $G$. In Construction \ref{const_S2} (\textit{resp.} \ref{const_S3}), take  $\mu_C=1$ (\textit{resp.} $\alpha=0$, $\mu_B=1$) and assume that $l>1$.}\label{table_ncODD}
	\begin{center}
		\begin{tabular}{|c|c|c|c|}
			\hline
			Construction & $S_1(z)$                     & nilpotency class & Condition  \\ \hline
			\multirow{4}{*}{\begin{tabular}[c]{@{}c@{}} \ref{const_S2} \end{tabular}} & $0$                                    & $2p$    &                                                      \\ \cline{2-4}
			& $z^{p^k}$                        & $3p$    &     $l\nmid k$                                                                              \\ \cline{2-4}
			& $z^{p^k}$                        & $3p-1$  &     $l\mid k$                                                                         \\ \cline{2-4}
			& $(1-g)^k(z)$                     & $3p-k$  &      $1\le k\le p-1$                                                                            \\ \hline
			\multirow{4}{*}{\begin{tabular}[c]{@{}c@{}}\ref{const_S3}\\  \end{tabular}} & $0$                                    & $2p$    &   \\ \cline{2-4}
			& $z$                                               & $3p-1$  &                                                                                  \\ \cline{2-4}
			& $z^{p^k}+z^{p^{pl-k}}$       & $3p$    &    $1\leq k \leq pl-1$                                                                              \\ \cline{2-4}
			& $(1-g)^{2k} (z^{p^{pl-kl}})$ & $3p-2k$ &    $2\leq 2k \leq p-1$                                                                              \\ \hline
		\end{tabular}
	\end{center}
\end{table}

\section{Arithmetic of the field $\F_q$}
	
This section contains facts and technical results that concern the arithmetic properties of the finite field $\F_q$. We refer the reader to the standard textbook \cite{LidlFF} on finite fields. The reader is suggested to skip the proofs in this section and focus on the more important proofs in the later sections for the first reading.  

Let $\F_q$ be the finite field with $q$ elements, where $q=p^m$ with $p$  prime. Let $\Aut(\F_q)$ be the Galois group of the field $\F_q$, consisting of the Frobenius maps $x\mapsto x^{p^i}$, $0\le i\le m-1$. For $g\in\Aut(\F_q)$, we write both $x^g$ and $g(x)$ for its action on $x$. For a divisor $d$ of $m$, the trace function from $\F_q$ to the subfield $\F_{p^d}$ is
\[
\tr_{\F_q/\F_{p^d}}(x):=x+x^{p^d}+\cdots+x^{p^{m-d}},\quad x \in \F_q.
\]
The trace function is surjective, i.e., $\{\tr_{\F_q/\F_{p^d}}(x):\,x\in\F_{q}\}=\F_{p^d}$.

\begin{lemma}\label{lem_trace_int}
Let $d$ be a divisor of a positive integer $m$, and set $q=p^m$ for a prime $p$. Then $\tr_{\F_q/\F_{p^d}}(x^{p^{id}}y)=\tr_{\F_q/\F_{p^d}}(xy^{p^{m-id}})$ for $0\le i\le m/d$ and $x,\,y\in\F_q$.
\end{lemma}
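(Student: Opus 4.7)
The plan is to exploit two well-known properties of the trace: that it is invariant under any Galois automorphism of $\F_q/\F_{p^d}$, and that every element of $\F_q$ satisfies $y^{p^m}=y$.

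First I would set $w:=xy^{p^{m-id}}$ and compute
\[
w^{p^{id}}=x^{p^{id}}\,y^{p^{m-id}\cdot p^{id}}=x^{p^{id}}\,y^{p^m}=x^{p^{id}}y,
\]
using $y^{p^m}=y$. So the identity to prove reduces to showing
\[
\tr_{\F_q/\F_{p^d}}\bigl(w^{p^{id}}\bigr)=\tr_{\F_q/\F_{p^d}}(w).
\]

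Next I would justify this by noting that the map $\phi:z\mapsto z^{p^{id}}$ belongs to $\Gal(\F_q/\F_{p^d})$: since $d\mid id$, any $w\in\F_{p^d}$ satisfies $w^{p^{id}}=w$, so $\phi$ fixes $\F_{p^d}$ pointwise. As the trace $\tr_{\F_q/\F_{p^d}}$ is the sum over $\Gal(\F_q/\F_{p^d})$, it is invariant under precomposition with any element of that Galois group, giving the required equality. Alternatively, one can verify this termwise from the explicit formula $\tr_{\F_q/\F_{p^d}}(z)=\sum_{j=0}^{m/d-1}z^{p^{jd}}$, where raising to the $p^{id}$ power merely permutes the terms cyclically modulo $p^{md}=p^m$ on $\F_q$.

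There is no real obstacle here; the only thing to be careful about is the range $0\le i\le m/d$, which is exactly what is needed to ensure $m-id\ge 0$ so that $y^{p^{m-id}}$ makes sense as an ordinary power (and the Frobenius $\phi$ is well-defined as a single automorphism rather than needing to invoke $y^{p^m}=y$ to reduce the exponent). With this the proof is essentially a two-line Galois invariance argument.
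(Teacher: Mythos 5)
Your proof is correct and is essentially the same as the paper's: both rest on the invariance of $\tr_{\F_q/\F_{p^d}}$ under the Frobenius $z\mapsto z^{p^{id}}$ together with $y^{p^m}=y$ (the paper phrases it by raising the trace value, which lies in $\F_{p^d}$, to the complementary power $p^{(m/d-i)d}$ and commuting that power inside the trace, which is the same Galois-invariance argument read in the opposite direction).
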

\begin{proof}
Write $s_i:=\tr_{\F_q/\F_{p^d}}(x^{p^{id}}y)$ for $0\le i\le m/d$. It is in $\F_{p^d}$, so we have
\[
s_i=s_i^{p^{(m/d-i)d}}=\tr_{\F_q/\F_{p^d}}\left((x^{p^{id}}y)^{p^{(m/d-i)d}}\right)=\tr_{\F_q/\F_{p^d}}(xy^{p^{m-id}}).
\]
This proves the lemma.
\end{proof}

\begin{lemma}\cite[Theorem 2.25]{LidlFF}\label{lem_ff1}
Let $d$ be a divisor of a positive integer $m$. For $\alpha\in \F_{p^m}$,  $\tr_{\F_{p^m}/\F_{p^d}}(\alpha)=0$ if and only if $\alpha=\beta^{p^d}-\beta$ for some $\beta \in \F_{p^m}$.
\end{lemma}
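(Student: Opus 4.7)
The plan is to prove the two directions separately, noting that both the relative trace and the additive map $\Delta \colon \beta \mapsto \beta^{p^d} - \beta$ are $\F_{p^d}$-linear endomorphisms of $\F_{p^m}$ viewed as an $\F_{p^d}$-vector space of dimension $m/d$.

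First I would handle the easy ``if'' direction by direct calculation. If $\alpha = \beta^{p^d} - \beta$, then the relative trace telescopes:
\[
\tr_{\F_{p^m}/\F_{p^d}}(\alpha) = \sum_{i=0}^{m/d-1}\left(\beta^{p^{(i+1)d}} - \beta^{p^{id}}\right) = \beta^{p^m} - \beta = 0,
\]
where the last equality uses $\beta \in \F_{p^m}$.

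For the ``only if'' direction I would use a counting/dimension argument. The kernel of $\Delta$ is exactly the fixed field of the $p^d$-power Frobenius, namely $\F_{p^d}$, so $|\ker \Delta| = p^d$ and hence $|\im \Delta| = p^{m}/p^d = p^{m-d}$. On the other hand, the relative trace $\tr_{\F_{p^m}/\F_{p^d}} \colon \F_{p^m} \to \F_{p^d}$ is $\F_{p^d}$-linear and (by standard facts about finite-field traces) surjective, so its kernel also has cardinality $p^{m-d}$. The first direction shows $\im \Delta \subseteq \ker(\tr_{\F_{p^m}/\F_{p^d}})$, and since both sets have the same cardinality they coincide. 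Thus every $\alpha$ of relative trace $0$ is of the form $\beta^{p^d} - \beta$.

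There is essentially no obstacle here: the only nonroutine ingredient is the surjectivity of the relative trace, but this is immediate from the fact that $\tr_{\F_{p^m}/\F_{p^d}}$ is a nonzero $\F_{p^d}$-linear functional on $\F_{p^m}$ (alternatively, one could appeal to additive Hilbert 90 for the cyclic extension $\F_{p^m}/\F_{p^d}$, which gives the statement in one shot). Since the excerpt cites this as \cite[Theorem 2.25]{LidlFF}, I would not expand the argument further in the paper itself.
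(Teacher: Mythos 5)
Your proof is correct and is essentially the standard argument (telescoping for the ``if'' direction, then comparing $|\im(\beta\mapsto\beta^{p^d}-\beta)|=p^{m-d}$ with $|\ker \tr_{\F_{p^m}/\F_{p^d}}|=p^{m-d}$ via surjectivity of the trace), which is exactly how this is proved in the cited source. The paper itself gives no proof of this lemma — it simply quotes it as Theorem 2.25 of Lidl--Niederreiter — so there is nothing further to compare against.
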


By \cite[Theorem 2.23]{LidlFF}, the trace function $\tr_{\F_q/\F_p}$ is $\F_p$-linear, and $\tr_{\F_q/\F_p}(x^p)=\tr_{\F_q/\F_p}(x)$ for $x\in \F_q$. For each $\beta\in\F_q$, define
\begin{equation*} 
L_\beta:\,\F_q\rightarrow \F_p, \quad x\mapsto \tr_{\F_q/\F_p}(\beta x).
\end{equation*}
We have $L_0\equiv 0$, $\lambda L_\beta=L_{\lambda\beta}$ for $\lambda\in\F_p$.
\begin{lemma}\cite[Theorem 2.24]{LidlFF}\label{lem_ff2}
Each $\F_p$-linear transformation from $\F_q$ to $\F_p$ equals $L_{\beta}$ for some $\beta\in \F_q$, and $L_{\beta}=L_{\gamma}$ if and only if $\beta=\gamma$.
\end{lemma}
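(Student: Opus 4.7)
The plan is to verify the two assertions by first establishing that the assignment $\beta\mapsto L_\beta$ is a well-defined $\F_p$-linear map from $\F_q$ to the dual space $\mathrm{Hom}_{\F_p}(\F_q,\F_p)$, and then showing it is a bijection. Linearity of $L_\beta$ itself is immediate from the $\F_p$-linearity of $\tr_{\F_q/\F_p}$ (which is noted just before the lemma), and the $\F_p$-linearity of $\beta\mapsto L_\beta$ follows from $L_{\beta+\gamma}=L_\beta+L_\gamma$ and $L_{\lambda\beta}=\lambda L_\beta$ for $\lambda\in\F_p$, both of which come straight out of the distributive law and linearity of the trace.

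Next I would prove the uniqueness statement (injectivity of $\beta\mapsto L_\beta$). Suppose $L_\beta=L_\gamma$; then $L_{\beta-\gamma}\equiv 0$, i.e., $\tr_{\F_q/\F_p}((\beta-\gamma)x)=0$ for every $x\in\F_q$. If $\beta-\gamma\neq 0$, multiplication by $\beta-\gamma$ is a bijection of $\F_q$, so this would force the trace to vanish identically on $\F_q$, contradicting the surjectivity of $\tr_{\F_q/\F_p}:\F_q\to\F_p$ that was recorded just before Lemma \ref{lem_ff1}. Hence $\beta=\gamma$.

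Finally, for the existence statement (surjectivity onto $\mathrm{Hom}_{\F_p}(\F_q,\F_p)$), I would use a dimension count. The target $\mathrm{Hom}_{\F_p}(\F_q,\F_p)$ has $\F_p$-dimension $\dim_{\F_p}(\F_q)=m$, hence size $p^m=q$. The source $\F_q$ also has size $q$. Injectivity of an $\F_p$-linear map between two $\F_p$-vector spaces of the same finite dimension forces bijectivity, so every $\F_p$-linear functional $\F_q\to\F_p$ is of the form $L_\beta$ for a unique $\beta\in\F_q$.

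The only subtle point is the appeal to the surjectivity of the trace in the uniqueness argument; everything else is formal. An alternative (which avoids invoking trace surjectivity twice) would be to prove surjectivity first by a direct construction: pick any nonzero functional $\varphi_0$ (for instance $\tr_{\F_q/\F_p}$ itself), note that $\beta\mapsto L_\beta$ sends $1$ to $\varphi_0$, and then argue via the nondegeneracy of the trace pairing $(x,y)\mapsto\tr_{\F_q/\F_p}(xy)$ on $\F_q$. I do not expect a real obstacle here; the lemma is essentially the nondegeneracy of the trace form, packaged as an identification of $\F_q$ with its $\F_p$-linear dual.
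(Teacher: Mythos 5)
Your proof is correct. The paper itself gives no proof of this lemma --- it is quoted directly from \cite[Theorem 2.24]{LidlFF} --- and your argument (injectivity of $\beta\mapsto L_\beta$ from the fact that $\tr_{\F_q/\F_p}$ is not identically zero, combined with the dimension count $\dim_{\F_p}\mathrm{Hom}_{\F_p}(\F_q,\F_p)=m=\dim_{\F_p}\F_q$ to get bijectivity) is essentially the standard proof given in that reference, so there is nothing to object to.
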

As a corollary,  $\tr_{\F_q/\F_p}(\beta x)=0$ for all $x\in\F_q$ if and only if $\beta=0$.

\begin{lemma}\label{lem_tr}
For $\alpha,\beta\in \F_q$ with $\beta\ne 0$, if $\ker(L_\alpha)$ contains the subspace $K=\{x\in \F_q:\,\tr_{\F_q/\F_p}(\beta x)=0\}$, then $\alpha=\lambda\beta$ for some $\lambda\in \F_p$.
\end{lemma}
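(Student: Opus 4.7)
The plan is to exploit the fact that $L_\beta$ is a nonzero $\F_p$-linear form on $\F_q$, so $K=\ker(L_\beta)$ has $\F_p$-codimension one in $\F_q$. First I would pick a vector $x_0\in\F_q$ with $L_\beta(x_0)=1$ (such an $x_0$ exists because $L_\beta\not\equiv 0$ by the corollary to Lemma \ref{lem_ff2}, using $\beta\neq 0$). Then every $x\in\F_q$ decomposes uniquely as $x=cx_0+k$ with $c=L_\beta(x)\in\F_p$ and $k=x-cx_0\in K$.

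Next I would set $\lambda:=L_\alpha(x_0)\in\F_p$ and use the hypothesis $K\subseteq\ker(L_\alpha)$ together with the $\F_p$-linearity of $L_\alpha$ to compute, for $x=cx_0+k$,
\[
L_\alpha(x)=cL_\alpha(x_0)+L_\alpha(k)=c\lambda=\lambda L_\beta(x).
\]
Since $\lambda L_\beta=L_{\lambda\beta}$, this shows that $L_\alpha$ and $L_{\lambda\beta}$ agree on all of $\F_q$. The uniqueness statement in Lemma \ref{lem_ff2} then forces $\alpha=\lambda\beta$, which is exactly the conclusion.

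There is no real obstacle here; the argument is a one-line consequence of Lemma \ref{lem_ff2} once one notes that the hypothesis says $\ker(L_\beta)\subseteq\ker(L_\alpha)$, which for linear functionals on a codimension-one subspace immediately yields proportionality. The only place to be slightly careful is invoking the uniqueness part of Lemma \ref{lem_ff2} at the end, rather than trying to prove $\alpha=\lambda\beta$ by manipulating traces directly.
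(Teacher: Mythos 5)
Your argument is correct and is essentially identical to the paper's own proof: the paper likewise picks $u$ with $\tr_{\F_q/\F_p}(\beta u)=1$, writes $\F_q=K\oplus\F_p\cdot u$, deduces $L_\alpha=L_\alpha(u)L_\beta=L_{L_\alpha(u)\beta}$, and concludes by the uniqueness part of Lemma \ref{lem_ff2}. Your $x_0$ and $\lambda=L_\alpha(x_0)$ play exactly the roles of the paper's $u$ and $L_\alpha(u)$.
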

\begin{proof}
We have $K=\ker(L_\beta)$. Take $u\in\F_q\setminus K$ such that $\tr_{\F_q/\F_p}(\beta u)=1$. Then $\F_q=K\oplus\F_p\cdot u$. Since $K\subseteq \ker(L_\alpha)$, we have $L_\alpha(x+\lambda u)=\lambda L_\alpha(u)$ for $x\in K$ and $\lambda\in\F_p$. On the other hand, $L_\beta(x+\lambda u)=\lambda$, so $L_\alpha(z)=L_\alpha(u)L_\beta(z)$ for all $z\in\F_q$. Since $L_\alpha(u)$ is in $\F_p$ and $L_\alpha(u)L_\beta=L_{L_\alpha(u)\beta}$, the claim now follows from Lemma \ref{lem_ff2}.
\end{proof}

\begin{lemma}\label{lem_mu_inv}
Suppose that $q=p^m$ with $p$ prime, and $g\in\Aut(\F_q)$ has order $p^r$ with $r\ge0$. If $K=\{x\in\F_q:\,\tr_{\F_q/\F_p}(\mu x)=0\}$ is $g$-invariant,  then $g(\mu)=\mu$.
\end{lemma}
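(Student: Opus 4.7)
The plan is to reduce to Lemma \ref{lem_tr} and then exploit the order constraint on $g$. If $\mu=0$ there is nothing to prove, so assume $\mu\neq 0$. The key input is the trace identity
\[
\tr_{\F_q/\F_p}\!\bigl(\mu\cdot g(x)\bigr)=\tr_{\F_q/\F_p}\!\bigl(g^{-1}(\mu)\cdot x\bigr),\qquad x\in\F_q,
\]
which is immediate from Lemma \ref{lem_trace_int} (equivalently, from the fact that the absolute trace is fixed by every power of Frobenius, so applying $g^{-1}$ inside the trace is harmless). The hypothesis that $K$ is $g$-invariant means $g(K)\subseteq K$, i.e.\ $\tr_{\F_q/\F_p}(\mu g(x))=0$ whenever $\tr_{\F_q/\F_p}(\mu x)=0$. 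Combined with the identity above, this gives the inclusion $\ker(L_\mu)\subseteq\ker(L_{g^{-1}(\mu)})$.

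Now I would invoke Lemma \ref{lem_tr} with $\alpha=g^{-1}(\mu)$ and $\beta=\mu$ to conclude that $g^{-1}(\mu)=\lambda\mu$ for some $\lambda\in\F_p$. Since every element of $\Aut(\F_q)$ fixes $\F_p$ pointwise, applying $g^{-1}$ repeatedly yields $g^{-k}(\mu)=\lambda^k\mu$ for all $k\ge 0$. Taking $k=p^r$ and using $g^{p^r}=1$ gives $\mu=\lambda^{p^r}\mu$, hence $\lambda^{p^r}=1$ because $\mu\neq 0$. But $\lambda\in\F_p$ satisfies $\lambda^p=\lambda$ by Fermat's little theorem, so iterating $r$ times yields $\lambda^{p^r}=\lambda$, forcing $\lambda=1$. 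Therefore $g^{-1}(\mu)=\mu$, and equivalently $g(\mu)=\mu$.

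The only step that requires any thought is the verification that the inclusion of kernels holds in the correct direction so that Lemma \ref{lem_tr} applies; everything else is bookkeeping with the identities $\tr_{\F_q/\F_p}\circ g=\tr_{\F_q/\F_p}$ and $\lambda^p=\lambda$ on $\F_p$. There is no real obstacle to the argument.
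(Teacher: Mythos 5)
Your proof is correct and follows essentially the same route as the paper: both reduce to Lemma \ref{lem_tr} to conclude that the image of $\mu$ under $g^{\pm1}$ is an $\F_p$-multiple $\lambda\mu$, and then use the order $p^r$ of $g$ to force $\lambda=1$. The only cosmetic differences are that you work with $g^{-1}(\mu)$ rather than $g(\mu)$ and deduce $\lambda^{p^r}=1$ by iterating the relation instead of taking the relative norm, which changes nothing substantive.
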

\begin{proof}
The cases $r=0$ and $\mu=0$ are both trivial, so we assume that $r\ge 1$ and $\mu\ne 0$. We have $g(K)=\{x\in\F_q:\,\tr_{\F_q/\F_p}(g(\mu) x)=0\}$, i.e., $g(K)=\ker(L_{g(\mu)})$. Since $K=g(K)$, $\ker(L_{g(\mu)})$ equals $K$, and so $g(\mu)=\lambda\mu$ for some $\lambda\in\F_p$ by Lemma \ref{lem_tr}.
Taking the relative norm to the subfield fixed by $g$, we see that $\lambda^{p^r}=1$, i.e., $\lambda=1$. This completes the proof.
\end{proof}

\begin{lemma}\label{Imdim1}
Let $x\mapsto f(x)$ be an $\F_p$-linear transformation of $\F_q$, and set $K=\{x\in\F_q:\,\tr_{\F_q/\F_p}(\eta x)=0\}$ for some $\eta\in\F_q$. If $\im(f|_{K})=\F_p\cdot \omega$, then there exist  $u\in\F_q,\,\mu\in\F_q^*$ such that $f(x)=\omega\tr_{\F_q/\F_p}(\mu x)+u\tr_{\F_q/\F_p}(\eta x)$ for $x\in\F_q$.
\end{lemma}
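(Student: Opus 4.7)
The plan is to identify $f$ as a sum of two pieces: one trace form picking up the values on $K$, and another trace form $L_\eta$ controlling the complementary direction. First I would dispose of the degenerate case $\eta = 0$: then $K = \F_q$, $L_\eta \equiv 0$, and the hypothesis reads $\im(f) \subseteq \F_p \omega$, so $f = \omega L_\mu$ for some $\mu \in \F_q$ by Lemma \ref{lem_ff2}; the coefficient $u$ is arbitrary, and $\mu$ can be taken nonzero (replacing it by $1$ if $f \equiv 0$).

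Assume now $\eta \ne 0$. Fix $v \in \F_q$ with $\tr_{\F_q/\F_p}(\eta v) = 1$, so that $\F_q = K \oplus \F_p v$ as $\F_p$-vector spaces. The hypothesis $\im(f|_K) \subseteq \F_p \omega$ produces an $\F_p$-linear map $\phi \colon K \to \F_p$ with $f(k) = \omega \phi(k)$ for $k \in K$ (take $\phi \equiv 0$ if $\omega = 0$). Extending $\phi$ arbitrarily to an $\F_p$-linear $\tilde\phi \colon \F_q \to \F_p$, Lemma \ref{lem_ff2} furnishes $\mu \in \F_q$ with $L_\mu = \tilde\phi$, and in particular $L_\mu|_K = \phi$. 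Then $h(x) := f(x) - \omega L_\mu(x)$ vanishes on $K$, so setting $u := h(v) \in \F_q$ and writing $x = k + \lambda v$ with $k \in K$, $\lambda \in \F_p$, we get $h(x) = \lambda u = u\, L_\eta(x)$ since $L_\eta(x) = \lambda$. Rearranging yields $f(x) = \omega L_\mu(x) + u L_\eta(x)$ for every $x \in \F_q$.

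The only real obstacle is forcing $\mu \in \F_q^*$. If $\omega \ne 0$, the hypothesis $\im(f|_K) = \F_p \omega$ makes $\phi$ surjective onto $\F_p$, hence nonzero on $K$, so $L_\mu \not\equiv 0$ and therefore $\mu \ne 0$. If $\omega = 0$ the summand $\omega L_\mu$ contributes nothing regardless of $\mu$, so we may simply replace $\mu$ by any nonzero element, say $\mu = 1$. Everything else is routine bookkeeping for the direct sum decomposition $\F_q = K \oplus \F_p v$, coupled with a single invocation of Lemma \ref{lem_ff2}.
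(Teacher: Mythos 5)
Your proof is correct and follows essentially the same route as the paper: both arguments dispose of the case $\eta=0$ first, then use the decomposition $\F_q=K\oplus\F_p v$ with $\tr_{\F_q/\F_p}(\eta v)=1$ together with a single application of Lemma \ref{lem_ff2}. The only cosmetic difference is the order of subtraction — the paper peels off the $u\tr_{\F_q/\F_p}(\eta x)$ term first (via $F(x)=f(x)-f(v)\tr_{\F_q/\F_p}(\eta x)$) and reduces to the $\eta=0$ case, whereas you peel off $\omega L_\mu$ first and identify the remainder on the complement of $K$ — which changes nothing of substance.
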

\begin{proof}
We first consider the case $\eta=0$, i.e., $\im(f)=\F_p\cdot \omega$. If $\omega=0$, then $f$ is constantly zero, and the claim holds with $u=\mu=1$. If $\omega\ne 0$, then $x\mapsto\omega^{-1}f(x)$ is a nonzero $\F_p$-linear map from $\F_q$ to $\F_p$, and so $f(x)=\omega\tr_{\F_q/\F_p}(\mu x)$ for some $\mu\in\F_q^*$ by Lemma \ref{lem_ff2}. The element $u$ is irrelevant here, since $\eta=0$.

We next consider the case  $\eta\ne 0$. Take $\beta\in\F_q$ such that $\tr_{\F_q/\F_p}(\eta  \beta)=1$, and define $F(x):=f(x)-f(\beta)\cdot\tr_{\F_q/\F_p}(\eta x)$. Then $\F_q=K\oplus\F_p\cdot u$, $F$ is $\F_p$-linear, and $F(a+\lambda \beta)=f(a)$ for $a\in K$ and $\lambda\in\F_p$. It follows that $\im(F)=\F_p\cdot \omega$. By the previous case, we have $F(x)=\omega\tr_{\F_q/\F_p}(\mu x)$ for some $\mu\in\F_q^*$, and the claim follows. This completes the proof.
\end{proof}

For two subsets $A,\,B$ of $\F_q$, we define $A\cdot B:=\{xy:\, x \in A, \,y \in B\}$, and write $\langle A \rangle_{\F_p}$ for the $\F_p$-subspace spanned by elements of $A$.

\begin{lemma} \label{lem_AB_prod}
	Suppose that $q=p^m$ with $p$ prime and $m> 2$, and let $A,\,B$ be two $\F_p$-subspaces of $(\F_q,+)$ of codimension $1$. Then  $\la A\cdot A\ra_{\F_p}=\la A\cdot B\ra_{\F_p}=\F_q$.
\end{lemma}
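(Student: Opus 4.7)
The plan is to prove the statement for $\langle A\cdot B\rangle_{\F_p}$, since $\langle A\cdot A\rangle_{\F_p}$ is the special case $B=A$. I would argue by contradiction and reduce the question to a problem about kernels of trace-like functionals, using the lemmas just established.

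First, write $A=\ker(L_\alpha)$ and $B=\ker(L_\beta)$ for some nonzero $\alpha,\,\beta\in\F_q$, which is possible by Lemma \ref{lem_ff2} since both $A,B$ are $\F_p$-hyperplanes. Suppose for contradiction that $\langle A\cdot B\rangle_{\F_p}$ is a proper subspace of $\F_q$. By Lemma \ref{lem_ff2}, there exists $\gamma\in\F_q^*$ such that $L_\gamma$ vanishes on $A\cdot B$, i.e.\
\[
\tr_{\F_q/\F_p}(\gamma ab)=0\quad\text{for all }a\in A,\,b\in B.
\]
Rewriting this as $L_{\gamma a}(b)=0$ for every $b\in B$, we conclude that $B\subseteq\ker(L_{\gamma a})$ for each $a\in A$.

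Next I would apply Lemma \ref{lem_tr}: for each $a\in A$ with $\gamma a\neq 0$, the inclusion $B=\ker(L_\beta)\subseteq\ker(L_{\gamma a})$ forces $\gamma a=\lambda_a\beta$ for some $\lambda_a\in\F_p$. Trivially this also holds when $\gamma a=0$ (take $\lambda_a=0$), so in all cases $\gamma A\subseteq\F_p\cdot\beta$. Since $\gamma\ne 0$, multiplication by $\gamma$ is an $\F_p$-linear bijection of $\F_q$, hence $\dim_{\F_p}(\gamma A)=\dim_{\F_p}(A)=m-1$. But $\F_p\cdot\beta$ has $\F_p$-dimension $1$, so we obtain $m-1\le 1$, contradicting the hypothesis $m>2$. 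This proves $\langle A\cdot B\rangle_{\F_p}=\F_q$, and specializing $B=A$ yields $\langle A\cdot A\rangle_{\F_p}=\F_q$.

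There is really no significant obstacle here; the only mild subtlety is handling both the case $\gamma a\ne 0$ (which invokes Lemma \ref{lem_tr}) and the case $\gamma a=0$ (which is automatic) uniformly in the inclusion $\gamma A\subseteq\F_p\cdot\beta$. The hypothesis $m>2$ is used precisely at the dimension-counting step, since for $m=2$ the subspace $A$ has dimension $1$ and the argument degenerates.
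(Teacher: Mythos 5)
Your proof is correct, but it follows a genuinely different route from the paper's. You work dually: assuming $\la A\cdot B\ra_{\F_p}$ is proper, you pick a nonzero annihilating functional $L_\gamma$ (via Lemma \ref{lem_ff2}), read $\tr_{\F_q/\F_p}(\gamma ab)=0$ as $B\subseteq\ker(L_{\gamma a})$, and invoke Lemma \ref{lem_tr} to force $\gamma A\subseteq\F_p\cdot\beta$, which kills the claim by comparing dimensions $m-1\le 1$. The paper instead normalizes so that $1\in A\cap B$ and argues primally: for $\la A\cdot A\ra_{\F_p}$, if the span were proper it would equal $A$, making $A$ a proper subfield of index $p$ and forcing $q\le p^2$; for $A\ne B$ it then just notes $\la A\cdot B\ra_{\F_p}\supseteq A+B=\F_q$. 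Your argument buys uniformity --- one computation covers $A\cdot A$ and $A\cdot B$ with no case split on whether $A=B$ --- and it leans on the trace-duality lemmas the section has already built, so it fits the surrounding toolkit well. The paper's argument buys a cleaner explanation of where the hypothesis $m>2$ really bites (a multiplicatively closed hyperplane would be a subfield of index $p$, impossible once $m>2$) and avoids needing $A,B$ to be presented as kernels at all. Both are complete; either could be substituted for the other.
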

\begin{proof}
By replacing $A$ with $\{ax:\,a\in A\}$ for some $x\in\F_q^*$ if necessary, we assume that $1\in A$ without loss of generality; similarly, assume that $1\in B$. Write $W=\la A\cdot A\ra_{\F_p}$, and assume that $W\ne\F_q$. Since $A\le W\le\F_q$ and $A$ has codimension $1$, we have $W=A$. It follows that the subspace  $A$  is also closed under multiplication, so $A$ is a proper subfield of $\F_q$. We thus have $q\ge |A|^2$.  On the other hand, $A$ has codimension $1$ by assumption, i.e., $q=p\cdot|A|$.  We deduce that either $(|A|,\,q)=(p,\,p^2)$ or $(|A|,\,q)=(1,p)$, both contradicting the assumption that $m>2$. To sum up, we have shown that $\la A\cdot A\ra_{\F_p}=\F_q$.
	
Now assume that $A\ne B$. Then we have $A+B=\F_q$ by considering dimensions. Write $U=\la A\cdot B\ra_{\F_p}$. It contains both $A$ and $B$ by the assumption $1\in A\cap B$, so it also contains the subspace $A+B=\F_q$. This completes the proof.
\end{proof}

By \cite[Theorem 1.71]{LidlFF}, for each function $f:\,\F_q\rightarrow\F_q$ there is a unique polynomial $F(X)$ of degree at most $q-1$ in $\F_q[X]$ such that $F(a)=f(a)$ for all $a\in\F_q$. We call $F(X)$ the reduced polynomial associated with the function $f$. It is conventional to write $f(X)$ for the associated reduced polynomial of a function $f$. 
A \textit{linearized} polynomial over $\F_q$ is a polynomial  $f$ of the form $f(X)=\sum_{i=0}^{n}a_iX^{p^k}$, $a_i\in\F_q$. It is \textit{reduced} if $n\le m-1$, where $q=p^m$. There is a bijection between $\F_p$-linear transformations of $\F_q$ and the reduced linearized polynomials over $\F_q$. If $f(X) $ is a reduced linearized polynomial over $\F_q$, then its \textit{trace dual} is the (unique) reduced linearized polynomial $\tilde{f}(X)$ such that $\tr_{\F_q/\F_p}(f(x)y)=\tr_{\F_q/\F_p}(\tilde{f}(y)x)$ for $x,\,y\in\F_q$. If $f(X)=\sum_{i=0}^{m-1}s_iX^{p^i}$, then we apply Lemma  \ref{lem_trace_int} to obtain that
$\tilde{f}(X)=\sum\limits_{i=0}^{m-1}s_{m-i}^{p^i}X^{p^i}$.

A map $\cB:\,\F_q\times\F_q\rightarrow\F_p$ is a \textit{bilinear form} if $\cB(x,y)$ is additive in both $x$ and $y$. It is \textit{symmetric} if $\cB(x,y)=\cB(y,x)$ for $x,y\in\F_q$. In the next lemma, we associate bilinear forms over $\F_q$ with linearized polynomials.
\begin{lemma}\label{lem_bform}
Suppose that $\cB:\,\F_q\times\F_q\rightarrow\F_p$ is a bilinear form. Then there is a reduced linearized polynomial $f$ over $\F_q$ such that $\cB(x,y)=\tr_{\F_q/\F_p}(xf(y))$.
\end{lemma}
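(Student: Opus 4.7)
The plan is to construct $f$ pointwise using Lemma \ref{lem_ff2} and then verify it is $\F_p$-linear, so that the standard correspondence between $\F_p$-linear maps and reduced linearized polynomials delivers the claim.

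First I would fix $y \in \F_q$ and consider the map $x \mapsto \cB(x,y)$. By bilinearity this is $\F_p$-linear from $\F_q$ to $\F_p$, so Lemma \ref{lem_ff2} produces a unique element, which I call $f(y) \in \F_q$, such that $\cB(x,y) = \tr_{\F_q/\F_p}(x f(y))$ for every $x \in \F_q$. Doing this for each $y$ defines a function $f: \F_q \to \F_q$.

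Next I would show $f$ is $\F_p$-linear. For $y_1,y_2 \in \F_q$ and any $x \in \F_q$, the bilinearity of $\cB$ and the $\F_p$-linearity of the trace give
\[
\tr_{\F_q/\F_p}\bigl(x f(y_1+y_2)\bigr) = \cB(x,y_1+y_2) = \cB(x,y_1)+\cB(x,y_2) = \tr_{\F_q/\F_p}\bigl(x(f(y_1)+f(y_2))\bigr).
\]
By the uniqueness clause of Lemma \ref{lem_ff2} (applied to the difference of the two representing elements), $f(y_1+y_2) = f(y_1)+f(y_2)$. A parallel argument with $\lambda \in \F_p$ yields $f(\lambda y) = \lambda f(y)$, so $f$ is $\F_p$-linear.

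Finally, invoking the bijection between $\F_p$-linear transformations of $\F_q$ and reduced linearized polynomials over $\F_q$ (recalled in the paragraph just before the lemma), I would conclude that $f$ is represented by a unique reduced linearized polynomial, which is the desired polynomial. There is essentially no obstacle here; the only care required is to keep track of the order of the arguments in $\tr_{\F_q/\F_p}(xf(y))$ versus $\tr_{\F_q/\F_p}(f(y)x)$, which is harmless by commutativity of multiplication in $\F_q$.
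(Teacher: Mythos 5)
Your proposal is correct and follows essentially the same route as the paper's proof: define $f(y)$ pointwise via Lemma \ref{lem_ff2}, deduce additivity from $\tr_{\F_q/\F_p}(x(f(y_1+y_2)-f(y_1)-f(y_2)))=0$ holding for all $x$, and invoke the correspondence with reduced linearized polynomials. (The check of $\F_p$-homogeneity is redundant since it follows from additivity, but it does no harm.)
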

\begin{proof}
By Lemma \ref{lem_ff2}, for each $y\in \F_q$ there exists an element $f(y)\in\F_q$ such that
$\cB(x,y)=\tr_{\F_q/\F_p}(xf(y))$. For $y,z\in\F_q$, we have $\cB(x,y+z)=\cB(x,y)+\cB(x,z)$, i.e., $\tr_{\F_q/\F_p}(x(f(y+z)-f(y)-f(z)))=0$. This holds for all $x\in\F_q$, so $f(y+z)-f(y)-f(z)=0$, i.e., $f$ is additive. This completes the proof.
\end{proof}

\begin{lemma}\label{Lpoly}
Suppose that $q=p^{m}$  with $p$ prime and  $m=p^el$, and take $g\in\Aut(\F_q)$ such that $g(x)= x^{p^l}$. Take $\eta\in\F_{p^l}$, and define $K:=\{x \in\F_q:\,\tr_{\F_q/\F_p}(\eta x)=0\}$. If $f$ is an $\F_p$-linear transformation of $\F_q$ such that $g(f(g^{-1}(x)))=f(x)$  for $x\in K$, then there exists a reduced linearized polynomial  $f_1(X)$ over $\F_q$ with coefficients in $\F_{p^l}$ such that $f(x)=f_1(x)$ for $x\in K$.
\end{lemma}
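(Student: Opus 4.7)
My plan is to exploit the bijection between reduced linearized polynomials over $\F_q$ and $\F_p$-linear endomorphisms of $\F_q$: I will translate the commutation hypothesis into a relation on the coefficients of $f$, show that the ``obstruction to having coefficients in $\F_{p^l}$'' is controlled by a single element $u\in\F_q$, and then remove that obstruction by modifying $f$ by a map that vanishes on $K$. The key enabling facts are Lemma \ref{Imdim1} and Lemma \ref{lem_ff1} (Hilbert 90 for $\F_q/\F_{p^l}$).

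Write $f(X)=\sum_{i=0}^{m-1}a_iX^{p^i}$ with $a_i\in\F_q$. Since $g(x)=x^{p^l}$ and $g^{-1}(x)=x^{p^{m-l}}$, a direct expansion gives $g(f(g^{-1}(x)))=\sum_i a_i^{p^l}x^{p^i}$ for every $x\in\F_q$. Thus the hypothesis reads $h(x)=0$ for $x\in K$, where
\[
h(X):=\sum_{i=0}^{m-1}(a_i^{p^l}-a_i)X^{p^i}.
\]
If $\eta=0$, then $K=\F_q$ and the uniqueness of the reduced linearized polynomial representing the zero map forces $a_i^{p^l}=a_i$ for all $i$, so $f_1:=f$ works. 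Otherwise $K$ is an $\F_p$-hyperplane, $h|_K\equiv 0$, so $\im(h|_K)=\F_p\cdot 0$. Applying Lemma \ref{Imdim1} (with $\omega=0$) produces $u\in\F_q$ such that $h(x)=u\,\tr_{\F_q/\F_p}(\eta x)$ for all $x\in\F_q$. Expanding the trace and comparing the coefficients of the two reduced linearized polynomials yields
\[
a_i^{p^l}-a_i=u\,\eta^{p^i},\qquad 0\le i\le m-1.
\]

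Setting $i=0$ gives $u\eta=a_0^{p^l}-a_0$, an element of zero relative trace to $\F_{p^l}$ by Lemma \ref{lem_ff1}. Since $\eta\in\F_{p^l}^*$ and $\tr_{\F_q/\F_{p^l}}$ is $\F_{p^l}$-linear, this yields $\tr_{\F_q/\F_{p^l}}(u)=0$, so Lemma \ref{lem_ff1} supplies $v\in\F_q$ with $v^{p^l}-v=u$. Define
\[
f_1(X):=f(X)-v\,\tr_{\F_q/\F_p}(\eta X)=\sum_{i=0}^{m-1}(a_i-v\eta^{p^i})X^{p^i}.
\]
Clearly $f_1(x)=f(x)$ for $x\in K$. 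Using $\eta^{p^l}=\eta$, a one-line check gives
\[
(a_i-v\eta^{p^i})^{p^l}-(a_i-v\eta^{p^i})=(a_i^{p^l}-a_i)-(v^{p^l}-v)\eta^{p^i}=u\eta^{p^i}-u\eta^{p^i}=0,
\]
so every coefficient of $f_1$ lies in $\F_{p^l}$, as required.

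The step I expect to be the main obstacle is the verification that $\tr_{\F_q/\F_{p^l}}(u)=0$, because if it failed then $u$ would not lie in the image of $v\mapsto v^{p^l}-v$ and the single ``correction'' $v\,\tr_{\F_q/\F_p}(\eta X)$ would be insufficient to absorb all the differences $a_i^{p^l}-a_i$ simultaneously. The reason this works out is that Lemma \ref{Imdim1} collapses all these differences into the one-parameter family $u\eta^{p^i}$, so a single Hilbert 90 check at $i=0$ handles every index at once.
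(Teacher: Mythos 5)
Your proof is correct and follows essentially the same route as the paper's: express the commutation hypothesis as the vanishing on $K$ of the reduced linearized polynomial $\sum_i(a_i^{p^l}-a_i)X^{p^i}$, use Lemma \ref{Imdim1} to write it as $u\,\tr_{\F_q/\F_p}(\eta x)$, verify $\tr_{\F_q/\F_{p^l}}(u)=0$ via the $i=0$ coefficient and Lemma \ref{lem_ff1}, and absorb the obstruction by subtracting $v\,\tr_{\F_q/\F_p}(\eta X)$. The only cosmetic difference is that you split off the case $\eta=0$ explicitly while the paper splits on whether $\im(D)$ is trivial; the substance is identical.
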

\begin{proof}
Suppose that $f(x)=\sum_{i=0}^{m-1}d_ix^{p^i}$ for $x\in\F_q$. We set
\begin{equation*}
	D(x):=g(f(g^{-1}(x)))-f(x)=\sum_{i=0}^{m-1}\left(g(d_i)-d_i\right)x^{p^i},
\end{equation*}
which is zero for $x\in K$ by assumption, i.e., $K\le\ker(D)$. It follows that $\dim_{\F_p}(\im(D))\le 1$. If $\im(D)=0$, then we have $g(d_i)=d_i$ for $0\le i\le m-1$, i.e., $d_i\in\F_{p^l}$, and the claim follows. Therefore, we assume that   $\im(D)$ has dimension $1$ over $\F_p$. From $K\le\ker(D)$ we deduce that $K\ne\F_q$, i.e., $\eta\ne 0$; also, we have $D(x)=u\tr_{\F_q/\F_p}(\eta x)$ for  some $u\in\F_q$ by Lemma \ref{Imdim1}.	Therefore, we have a polynomial equation
$\sum_{i=0}^{m-1}(g(d_i)-d_i)X^{p^i}=\sum_{i=0}^{m-1}u\eta^{p^i} X^{p^i}$.
By comparing the coefficients of both sides,  we deduce that $g(d_i)-d_i=u \eta^{p^i},\ 0\leq i\leq m-1$. By assumption we have $g(\eta)=\eta$, so $\eta\tr_{\F_q/\F_{p^l}}(u)=\tr_{\F_q/\F_{p^l}}(g(d_0)-d_0)=0.$
By Lemma \ref{lem_ff1}, $u=g(v)-v$ for some $v\in\F_{q}$.
We deduce from $g(d_i)-d_i=u\eta^{p^i}$ that $h_i:=d_i-v\eta^{p^i}$ is fixed by $g$, i.e., lies in $\F_{p^l}$. Now define $f_1(X):=\sum_{i=0}^{m-1}h_iX^{p^i}$. For $x\in K$, we have
\[
	f(x)-f_1(x)=\sum_{i=0}^{m-1}v\eta^{p^i}x^{p^i}=v\tr_{\F_q/\F_p}(\eta x)=0,
\]
so $f_1$ is the desired polynomial. This completes the proof.
\end{proof}

\begin{lemma}\label{lem_Tracelinear}
	Suppose that $q=p^m$ with $p$ prime. Let $f(X)=\sum_{i=0}^{m-1}s_iX^{p^i}$ be a reduced linearized polynomial over $\F_q$ and $\tilde{f}$ be its trace dual. Take $\mu\in\F_q$. Suppose that $\cB(c,z)=\tr_{\F_q/\F_p}(\mu c f(z))$ is a symmetric bilinear form on the $\F_p$-subspace $K=\{x\in\F_q:\,\tr_{\F_q/\F_p}(\eta x)=0\}$ for some $\eta\in\F_q$. Then  there exists  $u\in\F_q$ such that $ \tilde{f}(\mu x)=\mu f(x)-\eta\tr_{\F_q/\F_p}(ux)+u\tr_{\F_q/\F_p}(\eta x)$ for $x\in\F_q$ and
	\begin{equation}\label{eqn_sismmi}
	\mu s_i- s_{m-i}^{p^i}\mu^{p^i}=\eta u^{p^i}-u\eta^{p^i},\quad 0\leq i\leq m-1.
	\end{equation}
	Moreover, if $q$ is even and $K=\F_q$, then $\cB(c,c)=0$ for all $c\in\F_q$ if and only if $\mu s_0=0$.
\end{lemma}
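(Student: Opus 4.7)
The plan is to introduce the auxiliary $\F_p$-linear map $D:\F_q\to\F_q$ defined by $D(x):=\tilde f(\mu x)-\mu f(x)$. Writing $D(x)=\sum_{i=0}^{m-1}d_ix^{p^i}$, the formula $\tilde f(X)=\sum_i s_{m-i}^{p^i}X^{p^i}$ (with indices mod $m$, so $s_m=s_0$) yields $d_i=s_{m-i}^{p^i}\mu^{p^i}-\mu s_i$; in particular $d_0=s_0\mu-\mu s_0=0$ \emph{for free}, which will be the lynchpin of the whole argument. By the defining property of the trace dual, $\cB(c,z)=\tr_{\F_q/\F_p}(z\,\tilde f(\mu c))$, so the hypothesis that $\cB$ is symmetric on $K$ translates into $\tr_{\F_q/\F_p}(zD(c))=0$ for all $c,z\in K$. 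Hence $D(c)\in K^{\perp}=\F_p\cdot\eta$ for every $c\in K$, where $K^{\perp}$ denotes the annihilator of $K$ under $(x,y)\mapsto\tr_{\F_q/\F_p}(xy)$. When $\eta=0$ we have $K=\F_q$ and the same argument forces $D\equiv 0$, so the conclusion holds trivially with $u=0$.

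Assuming $\eta\ne 0$, let $\phi:K\to\F_p$ be the $\F_p$-linear map determined by $D(c)=-\phi(c)\eta$. Extend $\phi$ to an $\F_p$-linear functional on $\F_q$ and invoke Lemma \ref{lem_ff2} to write $\phi(x)=\tr_{\F_q/\F_p}(ux)$ for some $u\in\F_q$. Then $R(x):=D(x)+\eta\,\tr_{\F_q/\F_p}(ux)$ is $\F_p$-linear on $\F_q$ and vanishes on $K=\ker L_\eta$, so it factors through the one-dimensional $\F_p$-quotient $\F_q/K$; consequently $R(x)=u'\,\tr_{\F_q/\F_p}(\eta x)$ for some $u'\in\F_q$. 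This gives
\[
D(x)\;=\;u'\,\tr_{\F_q/\F_p}(\eta x)\;-\;\eta\,\tr_{\F_q/\F_p}(ux),\qquad x\in\F_q.
\]
The main subtlety — and the step I expect to be delicate — is that a priori the two scalars $u$ and $u'$ could differ; note that the freedom in extending $\phi$ shifts both $u$ and $u'$ by the same element of $\F_p\cdot\eta$, so $u'-u$ is intrinsic. Comparing the coefficient of $x=x^{p^0}$ on both sides gives $d_0=u'\eta-\eta u=\eta(u'-u)$; combined with the identity $d_0=0$ noted above and $\eta\ne 0$, this pins down $u'=u$. The equations \eqref{eqn_sismmi} then follow by comparing the coefficients of $x^{p^i}$ for all $i$.

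For the moreover clause, set $\eta=0$, so $K=\F_q$ and \eqref{eqn_sismmi} reads $\mu s_i=\mu^{p^i}s_{m-i}^{p^i}$, or equivalently $\mu s_{m-i}=(\mu s_i)^{p^{m-i}}$. Expanding $\cB(c,c)=\sum_{i=0}^{m-1}\tr_{\F_q/\F_p}(\mu s_i c^{p^i+1})$ and applying Lemma \ref{lem_trace_int} together with this identity gives $\tr_{\F_q/\F_p}(\mu s_i c^{p^i+1})=\tr_{\F_q/\F_p}(\mu s_{m-i}c^{p^{m-i}+1})$. In characteristic $2$, pairing the indices $i$ and $m-i$ for $i\in\{1,\dots,m-1\}\setminus\{m/2\}$ wipes those terms out; the exceptional $i=m/2$ term (present only when $m$ is even) vanishes via an inner-trace computation down to $\F_{p^{m/2}}$ that uses $\mu s_{m/2}\in\F_{p^{m/2}}$. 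Only $\tr_{\F_q/\F_p}(\mu s_0 c^2)$ survives, and since $c\mapsto c^2$ is a bijection on $\F_q$ in characteristic $2$, Lemma \ref{lem_ff2} yields $\cB(c,c)\equiv 0$ if and only if $\mu s_0=0$.
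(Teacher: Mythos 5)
Your proof is correct and follows essentially the same route as the paper's: both arguments reduce symmetry on $K$ to the statement that the defect $\tilde f(\mu x)-\mu f(x)$ lands in $\F_p\cdot\eta$ on $K$, express that defect as $u'\tr_{\F_q/\F_p}(\eta x)-\eta\tr_{\F_q/\F_p}(ux)$, and use the vanishing of the coefficient of $X^{p^0}$ (the paper's "coefficient of $X$ is $0$ on the left hand side") to force $u'=u$; the "moreover" computation pairing $i$ with $m-i$ and handling $i=m/2$ separately is likewise identical. Your handling of the extension of $\phi$ by hand in place of citing Lemma \ref{Imdim1}, and the fact that you need no separate case for $\mu=0$, are only cosmetic differences.
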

\begin{proof}
The proofs for the case $\eta=0$ and the case $\eta\ne 0$ are similar, and we only prove the more complicated case $\eta\ne 0$ here. If $\mu=0$, then we can simply take $u=0$ and the claim is trivial. We assume that $\mu\ne 0$ in the sequel. We have $\cB(c,z)=\tr_{\F_q/\F_p}(\tilde{f}(\mu c)z)$, where $\tilde{f}$ is the trace dual of $f$. Since $\cB$ is symmetric on $K$, we have $\cB(c,z)=\cB(z,c)$, i.e., $\tr_{\F_q/\F_p}((\mu f(c)-\tilde{f}(\mu c))z)=0$ for $c,z\in K$. By Lemma \ref{lem_tr}, we deduce that $\mu f(c)-\tilde{f}(\mu c)\in\F_p\cdot \eta$ for each $c\in K$. By Lemma \ref{Imdim1}, there exist $u,\, v\in\F_q^*$ such that $\mu f(c)-\tilde{f}(\mu c)=\eta\tr_{\F_q/\F_p}(uc)+v\tr_{\F_q/\F_p}(\eta x)$.
	The first part of the claim then follows by comparing the coefficients of the corresponding polynomial identity $\mu f(X)-\tilde{f}(\mu X)=\sum_{i=0}^{m-1}\eta u^{p^i}X^{p^i}+\sum_{i=0}^{m-1}v\eta^{p^i}X^{p^i}$. Here, by comparing the coefficient of $X$, which is $0$ on the left hand side, we deduce that $v=-u$.
	
	Now assume that $q$ is even and $K=\F_q$, i.e., $\eta=0$. We only handle the case $m$ is even, since the case $m$ is odd is similar. By taking $i=m/2$ in  \eqref{eqn_sismmi}, we deduce that $\mu s_{m/2}\in\F_{2^{m/2}}$. We calculate that
	\begin{align*}
	\cB(c,c)&=\tr(\mu s_0c^2)+\tr(\mu s_{m/2}c^{2^{m/2}+1})+\sum_{i=1}^{m/2-1}\tr( \mu s_ic^{2^i+1}+\mu s_{m-i}c^{2^{m-i}+1})\\
	&=\tr(\mu s_0c^2),
	\end{align*}
where $\tr=\tr_{\F_q/\F_2}$. The second trace term is $0$ since $\mu s_{m/2}c^{2^{m/2}+1}\in\F_{2^{m/2}}$, and each summand in the third sum vanishes by  \eqref{eqn_sismmi} and Lemma \ref{lem_trace_int}. The second part of the lemma now follows. This completes the proof.
\end{proof}

Suppose that $q=p^m$ and $m=p^el$ for some positive integers $e,l$.  Take $g\in\Aut(\F_q)$ such that $g(x)=x^{p^l}$ for $x\in\F_q$. Let $\F$ be a subfield of $\F_q$. For $\alpha=\sum_{i=0}^{p^e-1}a_ig^i\in\F[\la g\ra]$ and $x\in\F_q$, we define $\alpha(x):=\sum_{i=0}^{p^e-1}a_ig^i(x)$. In this way, $\F_q$ becomes an $\F[\la g\ra]$-module. Here are some basic facts, cf. \cite{AninGR,Groupring}:
\begin{enumerate}
	\item[(i)] The ring $\F[\la g\ra]$ is a uniserial local ring, and $(1-g)^i\F[\la g\ra]$, $0\leq i\leq p^{e}$, are all its ideals. The dimension of $(1-g)^i\F[\la g\ra]$ over $\F$ is $p^e-i$, where $0\leq i\leq p^{e}$.
	\item[(ii)] We have $(1-g)^{p-1}=1+g+\cdots+g^{p-1}$ by binomial expansion, since
    \[
     \binom{p-1}{i}=\frac{(p-1)\cdots(p-i+1)(p-i)}{1\cdots (i-1)i}\equiv(-1)^i\pmod{p}.
    \]
    It follows that $(1-g)^p=1-g^p$. Inductively, it holds that
	\begin{equation}\label{eqn_1mgpow}
	(1-g)^{p^i-1}=(1-g)^{p-1}(1-g^p)^{p^{i-1}-1}=1+g+\cdots+g^{p^i-1}.
	\end{equation}
\end{enumerate}
As a corollary, we have $\tr_{\F_q/\F_{p^l}}(x)=\sum_{i=0}^{p^e-1}g^i(x)=(1-g)^{p^e-1}(x)$ for $x\in\F_q$.

\begin{lemma}\label{Wt1cond}
	Suppose that $q=p^{p^el}$ with $p$ prime and $e\ge 1$, and take $g\in\Aut(\F_q)$ such that $g(x)=x^{p^l}$ for $x\in\F_q$.  Then there exists a pair $(W,\,t)$  such that
	\begin{itemize}
		\item[(1)] $W$ is a $g$-invariant $\F_p$-subspace of codimension $h$ in $\F_q$ with $0< h\leq e$,
		\item[(2)] $t$ is an element of $\F_q$ such that $W_i:=W+t_i$, $0\leq i\leq p^h-1$, are pairwise disjoint, where $t_0=0$ and $t_i= (1+\cdots+g^{i-1})(t)$,  $1\leq i\leq p^h-1$,
	\end{itemize}
if and only if $h \geq p^{h-1}$, i.e., $h=1$ if $p$ is odd and $h=1$ or $2$ if $p=2$.
\end{lemma}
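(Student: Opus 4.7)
The plan is to reformulate the pairwise disjointness condition on the cosets $W_i$ as a single vanishing condition in the quotient module $V := \F_q/W$, and then read off both implications from the $\F_p[\la g\ra]$-module structure of $V$.

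First I would note that since $W$ is $g$-invariant and $t_i-t_j = g^j(t_{i-j})$ for $i>j$, the translates $W_0,\dots,W_{p^h-1}$ are pairwise disjoint if and only if $t_k=S_k(t)\notin W$ for every $k\in\{1,\dots,p^h-1\}$, where $S_k := 1+g+\cdots+g^{k-1}\in\F_p[\la g\ra]$. Since the $p^h$ translates must then partition $\F_q$, the problem becomes: exhibit a $g$-invariant $\F_p$-subspace $W$ of codimension $h$, along with an element $\bar t\in V=\F_q/W$, such that $S_k(\bar t)\ne 0$ in $V$ for all $k$ in the required range.

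For the necessity direction, I would use that $1-g$ acts nilpotently on the $h$-dimensional $\F_p$-space $V$ with nilpotency index at most $h$, so $(1-g)^h=0$ on $V$. Combined with the identity $S_{p^i}=(1-g)^{p^i-1}$ from \eqref{eqn_1mgpow}, picking the specific value $k=p^{h-1}\in[1,p^h-1]$ gives: whenever $h<p^{h-1}$ (equivalently $p^{h-1}-1\ge h$), the operator $S_{p^{h-1}}$ vanishes identically on $V$, forcing $S_{p^{h-1}}(\bar t)=0$ for every $\bar t$ and precluding a valid pair. The inequality $h\ge p^{h-1}$ then reduces easily to $h=1$ or $(p,h)=(2,2)$.

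For sufficiency, I would realize $V$ as the cyclic $\F_p[\la g\ra]$-module $\F_p[y]/(y^h)$ with $y=1-g$. By the normal basis theorem $\F_q\cong\F_{p^l}[\la g\ra]$ as $\F_{p^l}[\la g\ra]$-modules, so $\F_q/(1-g)^h\F_q\cong\F_{p^l}[y]/(y^h)$; composing this quotient with $\tau\otimes\mathrm{id}$ for any nonzero $\F_p$-linear functional $\tau:\F_{p^l}\to\F_p$ produces a surjective $\F_p[\la g\ra]$-homomorphism $\F_q\twoheadrightarrow\F_p[y]/(y^h)$ whose kernel $W$ is $g$-invariant of codimension $h$. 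Choosing $t$ with $\bar t=1\in V$, a direct computation using $g\cdot 1 = 1-y$ yields
\[
S_k(\bar t)=\sum_{j=0}^{k-1}(-1)^j\binom{k}{j+1}y^j\pmod{y^h},
\]
which is nonzero in $V$ iff some $\binom{k}{m}$ with $1\le m\le h$ is nonzero modulo $p$. By Lucas's theorem the smallest positive $m$ with $\binom{k}{m}\not\equiv 0\pmod p$ is $p^{v_p(k)}$, and the maximum of $v_p(k)$ on $\{1,\dots,p^h-1\}$ equals $h-1$; so the hypothesis $h\ge p^{h-1}$ is exactly what is needed to cover every $k$ in the range.

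The step I expect to be the main obstacle is producing a codimension-$h$ subspace $W$ whose quotient carries the prescribed cyclic module structure. The normal basis theorem, which exhibits $\F_q$ as a free $\F_{p^l}[\la g\ra]$-module, is the clean way to do this, and once $V\cong\F_p[y]/(y^h)$ is in hand the Lucas-theorem count converts the remaining question about the $S_k$ into a transparent statement about $p$-adic valuations of elements of $\{1,\dots,p^h-1\}$.
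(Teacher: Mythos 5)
Your proposal is correct, and it takes a genuinely different route from the paper's proof, even though both reduce disjointness to $t_k=(1+g+\cdots+g^{k-1})(t)\notin W$ for $1\le k\le p^h-1$ and both invoke the normal basis theorem at some point. The paper stays inside $\F_q$: it further reduces to the single condition $t_{p^{h-1}}\notin W$, decomposes $\F_q=\bigoplus_i R\cdot\xi_i\eta$ into uniserial free $\F_p[\la g\ra]$-modules, and settles both directions by comparing $\dim_{\F_p}(W\cap R\xi_i\eta)$ with $\dim_{\F_p}\bigl((1-g)^{p^{h-1}-1}(R\xi_i\eta)\bigr)=p^e-p^{h-1}+1$; necessity is the statement that codimension $h<p^{h-1}$ forces $W\supseteq(1-g)^{p^{h-1}-1}(\F_q)$. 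You instead work in the quotient $V=\F_q/W$ throughout. Your necessity argument --- a nilpotent operator on an $h$-dimensional space has nilpotency index at most $h$, so $(1-g)^{p^{h-1}-1}$ kills $V$ whenever $p^{h-1}-1\ge h$ --- is shorter and more elementary than the paper's, and notably needs neither the normal basis theorem nor uniseriality. Your sufficiency argument realizes $V\cong\F_p[y]/(y^h)$ as the kernel of an explicit module surjection (normal basis plus a linear functional $\tau$), and then checks \emph{all} $k$ at once via $S_k(1)=\sum_j(-1)^j\binom{k}{j+1}y^j$ and Lucas's theorem, rather than first reducing to $k=p^{h-1}$ as the paper does; the price is importing Lucas's theorem, where the paper only needs the identity $(1-g)^{p^i-1}=1+g+\cdots+g^{p^i-1}$ that it has already established. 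Both arguments are complete and yield the same characterization $h\ge p^{h-1}$.
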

\begin{proof}
Write $R=\F_p[\la g\ra]$, and define  $R_i:=(1-g)^iR$  for  $0\le i\le p^e$. The ring $R$ is local with a unique maximal ideal $R_1$. The ideal $R_1$ is nilpotent, and an element $a$ is invertible in $R$ if and only if its image in the quotient ring $R/R_1\cong\F$ is invertible. We have a chain of ideals of $R$:
\begin{equation*}
R=R_0\supseteq R_1\supseteq\cdots \supseteq R_{p^e-1}\supseteq R_{p^e}=0,
\end{equation*}
with $\dim_{\F_p}(R_i/R_{i+1})=1$ for $0\le i\le p^e-1$. In particular, $\dim_{\F_p}(R_i)=p^e-i$.

There exists a (normal) basis of $\F_q$ over $\F_{p^l}$ of the form $\{g^i(\eta):\,0\le i\le p^e-1\}$ with $\eta\in\F_q$ by \cite[Theorem 2.35]{LidlFF}, so $\F_q$ is a free $\F_{p^l}[\la g\ra]$-module with generator $\eta$, i.e., $\F_q=R'\cdot \eta$, where $R'=\F_{p^l}[\la g\ra]$.
Take $\xi_1,\xi_2,\cdots,\xi_{l}$  to be a basis of $\F_{p^l}$ over $\F_p$. Then $R'= \xi_1 R\oplus \xi_2R\oplus\cdots\oplus \xi_lR$ and
\begin{equation*}
\F_q= R\cdot \xi_1\eta\oplus R\cdot \xi_2\eta\oplus \cdots \oplus R\cdot \xi_l\eta.
\end{equation*}
Here, each $R\cdot \xi_i\eta$ is a free $R$-module with generator $\xi_i\eta$.  The submodules of $R\cdot \xi_i\eta$ are $R_k\cdot \xi_i\eta$, $0\le k\le p^e-1$. In particular, $R\cdot \xi_i\eta$ has a unique submodule of each possible dimension, and these submodules form a chain under containment.

We claim that  the condition (2) can be reduced to $t_{p^{h-1}}\not\in W$ assuming that the condition (1) holds. Take two numbers $i,j$ such that $0\le i<j\le p^h-1$, and write $j-i=p^ku$, where $\gcd(p,u)=1$ and $0\le k\le h-1$. It holds that $t_j-t_i=g^i(t_{j-i})$, so we deduce that $W+t_i\cap W+t_j=\emptyset$ if and only if $t_j-t_i\in W$, i.e., $t_{j-i}\not\in W$ by the $g$-invariance of $W$. It holds that $\sum_{i=0}^{p^ku-1} g^i=(\sum_{i=0}^{u-1}g^{ip^k})\cdot (\sum_{j=0}^{p^k-1}g^{j})$, so $t_{j-i}=(1+g_k+\cdots+g_k^{u-1})(t_{p^k})$, where $g_k:=g^{p^k}$. Since the quotient image of $1+g_k+\cdots+g_k^{u-1}$ in $R/R_1$ equals $\bar{u}$ and is invertible in $R/R_1\cong\F_p$, we deduce that it is invertible in $R$. Since $W$ is a $R$-module by the condition (1), it follows that $t_{j-i}\not\in W$ if and only if $t_{p^k}\not\in W$. The condition (2) is now reduced to $t_{p^k}\not\in W$ for $0\le k\le h-1$. By  \eqref{eqn_1mgpow}, $t_{p^k}=(1-g)^{p^k-1}(t)$, which is a generator of the submodule $R_{p^k-1}\cdot t=R\cdot t_{p^k}$. By applying the ideals in the chain $\{R_i\}$ to $t$, we obtain the chain $R\cdot t_{p^{0}}\supseteq\cdots\supseteq R\cdot t_{p^{h-1}}$. Since $W$ is a $R$-module, we see that $t_{p^{h-1}}\not\in W$ implies that $t_{p^k}\not\in W$ for $0\le k\le h-1$. This proves the claim.

We are now in a position to complete the proof. The condition (1) is equivalent to that $W$ is a $R$-submodule of $\F_q$ of codimension $h$. By  \eqref{eqn_1mgpow}, $t_{p^{h-1}}=(1-g)^{p^{h-1}-1}(t)$. The existence of $t$ with $(1-g)^{p^{h-1}-1}(t)\not\in W$ is equivalent to $W^*\not\le W$, where $W^*:=(1-g)^{p^{h-1}-1}(\F_q)$. By  the decomposition $\F_q= \oplus_{i=1}^nR\cdot \xi_i\eta$, we have $W^*= W_1^*\oplus \cdots\oplus W_l^*$, where $W_i^*:=R_{p^{h-1}-1}\cdot \xi_i\eta$.
Each component $W_i^*$ has dimension $p^e-p^{h-1}+1$ over $\F_p$.
	
If $h\ge p^{h-1}$, take $W$ to be the direct sum of $R\cdot \xi_2\eta\oplus \cdots\oplus R\cdot \xi_l\eta$ and a $R$-submodule of $ R\cdot (\xi_1\eta)$ of dimension $p^el-h-p^{e}(l-1)=p^e-h$. Then $W$ is a $R$-submodule of $\F_q$ of codimension $h$. Since $p^e-h< \dim_{\F_p}(W_1^*)$ and $R\cdot\xi_1\eta$ is a uniserial $R$-module, the component $W_1^*=R_{p^{h-1}-1}\cdot \xi_1\eta$ of $W^*$ is not contained in the chosen $W$. Since $R_{p^{h-1}-1}=(1-g)^{p^{h-1}-1}R$, we deduce that $(1-g)^{p^{h-1}-1}(\xi_1\eta)\not\in W$. Therefore, $(W,\,\xi_1\eta)$ satisfies both conditions and is a desired pair.

Conversely, if $h< p^{h-1}$, then $\dim_{\F_p} \left(W \cap (R\cdot\xi_i\eta)\right)$ is at least
\begin{align*}
  \dim_{\F_p} W+\dim_{\F_p}  (R\cdot\xi_i\eta) -\dim_{\F_p} (\F_q)=p^{e}l-h+p^{e}-p^{e}l= p^{e}-h\ge \dim_{\F_p}(W_i^*),
\end{align*}
so $W$ contains $W_i^*$  by the fact that $R\cdot \xi_i\eta$ is uniserial. It follows that $W^*\le W$, and thus there is no pair $(W,\,t)$ with the desired properties. This completes the proof.
\end{proof}

\section{The classification of linear point regular groups of $\cQ^P$}\label{sec_linear}

Let $G$ be a point regular group of the Payne derived quadrangle $\cQ^P$, where $\cQ=W(q)$, $P=\la(1,0,0,0)\ra$. In this section, we consider the case where the group $G$  is linear, i.e.,  $G$ is a subgroup of $\textup{PGL}(4,q)$. In \cite{Bamberg2011PRG}, the authors enumerated all the point regular groups of the quadrangle $\cQ^P$  for $q\le 25$ by Magma \cite{Magma}, so we only consider the case $q\ge 5$ below. The main result of this section is the following theorem, the odd characteristic case of which is also due to \cite{Chen2013} and \cite{DeWinter2014Criterion} independently.
\begin{thm}\label{thm_linearPRG}
Let $G$ be a subgroup of $\textup{PGL}(4,q)$ that acts regularly on the points of the derived quadrangle $\cQ^P$ of $\cQ=W(q)$, $q\geq 5$. Then $G$ is conjugate to one of the groups in Construction \ref{const_S1} or Construction \ref{Const2} below.
\end{thm}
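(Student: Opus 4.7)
The plan is to exploit that every Frobenius part vanishes, recover a clean factorization of $T$, and then let the global additivity identity of Theorem \ref{Main} do the work. Since $G\le\textup{PGL}(4,q)$, Corollary \ref{cor_rABnonlin} forces $r_{A,B}=r_C=0$, so $\theta_{a,b,c}\equiv 1$; parts (1)--(2) of Corollary \ref{cond1} then assert that $L$ and $M$ are $\F_p$-linear maps $\F_q\to\F_q$, and part (6) yields the factorization
\[
T(a,b,c)=L(a+bc)+M(b)+S(c).
\]
I would substitute this into the additivity condition of Theorem \ref{Main}. Expanding $u+vw$ and using additivity of $L$ and $M$ collapses the cross terms and boils the identity down to
\[
S(c)+S(z)-S(c+z)=L(2cy)+L\bigl(c^{2}T(x,y,z)\bigr)+M\bigl(cT(x,y,z)\bigr)
\]
for all $a,b,c,x,y,z\in\F_q$.

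Since the left-hand side depends only on $(c,z)$, specializing $x=y=0$ yields the basic identity $S(c)+S(z)-S(c+z)=L(c^{2}S(z))+M(cS(z))$, and subtracting it leaves the secondary identity
\[
L(2cy)+L(c^{2}\alpha)+M(c\alpha)=0,\qquad \alpha:=L(x+yz)+M(y),
\]
in which $\alpha$ ranges over $\im(L)+\im(M)$. For odd $q$ I would set $y=0$ to obtain $L(c^{2}\beta)+M(c\beta)=0$ for every $\beta\in\im(L)$ and every $c\in\F_q$. If some $\beta_{0}\in\im(L)$ were non-zero, then $c\mapsto M(c)=-L(c^{2}/\beta_{0})$ would have to be $\F_{p}$-additive, which forces the cross term $2L(c_{1}c_{2}/\beta_{0})$ appearing in the expansion of $L((c_{1}+c_{2})^{2}/\beta_{0})$ to vanish for all $c_{1},c_{2}\in\F_q$; since $2$ is invertible and $c_{1}c_{2}$ sweeps $\F_q$, this contradicts $\beta_{0}\ne 0$. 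Hence $L\equiv 0$, and the same surjectivity argument applied to $\im(M)$ then forces $M\equiv 0$. The basic identity degenerates to $S(c+z)=S(c)+S(z)$, so $S$ is a reduced linearized polynomial and $T(a,b,c)=S(c)$, which is precisely Construction \ref{const_S1}.

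For even $q$ the term $L(2cy)$ vanishes identically and the quadratic obstruction used above disappears, so non-trivial $L$ and $M$ are a priori possible. My plan in that regime is to extract all the information carried by the two boxed identities above and then normalize the triple $(L,M,S)$ up to conjugacy in $\PGaSp(4,q)_{P}$: unipotent conjugations by matrices of the shape $E(a_{0},b_{0},c_{0},t_{0})$, whose action on $\cM_{a,b,c}$ is transparent from Remark \ref{rem_EMult}, shift $L$, $M$ and $S$ by controlled additive terms, while conjugation by a power of $\delta$ rescales the variables. The hard part will be this even-characteristic bookkeeping: I expect to split into the cases where $L$ is zero, $M$ is zero, or both are non-zero, and to verify in each case that the resulting triple is conjugate to one arising from Construction \ref{const_S1} or from Construction \ref{Const2}.
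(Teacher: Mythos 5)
Your reduction to $\theta_{a,b,c}\equiv 1$, the factorization $T(a,b,c)=L(a+bc)+M(b)+S(c)$ (the paper's \eqref{eqn_linear_T}), and the two identities you extract from Theorem \ref{Main} are correct: your specializations $y=0$ and $x=z=0$ recover exactly \eqref{eqLM1} and \eqref{eqLM2}. Your odd-characteristic argument is also correct and is in fact more elementary than the paper's: where Lemma \ref{LM} converts $L(c^2L(a))+M(cL(a))=0$ into a reduced-polynomial identity and compares coefficients of $X^{2p^i}$, you note that $M(c)=-L(c^2/\beta_0)$ would force the polarization term $L(2c_1c_2/\beta_0)$ to vanish identically, which kills $L$ because $2$ is a unit and $c_1c_2$ sweeps $\F_q$; the same surjectivity argument then kills $M$, and the basic identity makes $S$ additive, giving Construction \ref{const_S1}.

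The even case, however, is where the substance of the theorem lies, and there your proposal stops at a plan. Two essential steps are missing. First, you must show that a nonzero $L$ forces $\im(L)$ to be a single line $\F_2\cdot\omega$ and that $M$ is then rigidly tied to $L$, namely $L(x)=\omega\tr_{\F_q/\F_2}(\mu^2\omega x)$ and $M(y)=\omega\tr_{\F_q/\F_2}(\mu y)$; the paper obtains this in Lemma \ref{LM} by comparing coefficients of $X^{2^{i+1}}$ in \eqref{eqLM1} to get $u_i=v_{i+1}L(a)^{2^i}$ for \emph{every} $a$ with $L(a)\ne 0$, whence $\im(L)$ has a unique nonzero value. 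Nothing in your two boxed identities yields this for free, and the unipotent conjugations by $E(a_0,b_0,c_0,t_0)$ you invoke do not visibly normalize an arbitrary pair $(L,M)$ to this shape. Second, even after $L$ and $M$ are pinned down, $S$ is \emph{not} additive in general: one must analyze the symmetric form $\cB(c,z)=S(c+z)+S(c)+S(z)=L(c^2S(z))+M(cS(z))$, extract the linearized $F$ with $F(z)^2=\omega S(z)+S(z)^2$, and derive the coefficient constraints \eqref{FcondF}, \eqref{eqn_sifi} and \eqref{eqn_fsum}; this is the content of Lemmas \ref{lem_linearqevenF}--\ref{lem_num} and is precisely where the quadratic term $\omega\sum_{i<j}\mu^{2^i}f_{j-i}^{2^i}c^{2^i+2^j}$ of Construction \ref{Const2} comes from. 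Until these steps are supplied, the even half of the classification remains unproved.
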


The rest of this section is devoted to the proof of Theorem \ref{thm_linearPRG}. By the analysis in Section \ref{subsec_payneGQ}, we can assume that $G$ is as defined in Theorem \ref{Main} for some functions $T$ and $\theta$ up to conjugacy. Take the same notation as in Theorem \ref{Main}, and let $L,\,M,\,S$ be as in Notation \ref{notation_sigmaLMS}. In this case, $\theta_{a,b,c}\equiv 1$. By (1), (2), (4) and (5) of Corollary \ref{cond1}, the maps $L$ and $M$ are both additive and
\begin{equation}\label{eqn_linear_LM}
T(a,b,0)=L(a)+M(b),\quad a,\,b\in \F_q.
\end{equation}

Also, by (7) it holds for all $a,\,b,\,c \in \F_q$ that
\begin{equation*}
L\left(2bc+c^2L(a)+c^2M(b)\right)+M\left(cL(a)+cM(b)\right)=0.
\end{equation*}
By setting $b=0$ and $a=0$ respectively, we get
\begin{equation}\label{eqLM1}
L(c^2L(a))+M(cL(a))=0,  \textup{ for } a,\,c\in \F_q,
\end{equation}
\begin{equation}\label{eqLM2}
L(2bc+c^2M(b))+M(cM(b))=0, \textup{ for } b,\,c\in\F_q.
\end{equation}

\begin{lemma} \label{LM}
Take notation as above. If $q $ is odd, then $L(x)\equiv 0$ and $M(y)\equiv 0$. If $q$ is even, there exist $\omega,\,\mu\in\F_q$ such that
\begin{equation}\label{eqn_LM_exp}
L(x)=\omega \tr_{\F_q/\F_2}(\mu^2 \omega x),\quad M(y)=\omega \tr_{\F_q/\F_2}(\mu y).
\end{equation}
\end{lemma}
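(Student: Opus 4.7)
The plan is to view \eqref{eqLM1} as a functional identity in $c$ for each fixed $u:=L(a)\in\im(L)$, and to split the analysis according to the characteristic of $\F_q$.

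First, for odd $q$, I substitute $c\mapsto c+c'$ into the relation $L(c^2u)+M(cu)=0$. By the additivity of $L$ and $M$ and the expansion $(c+c')^2=c^2+2cc'+c'^2$, subtracting the instances at $c$ and at $c'$ isolates the single cross term, giving $L(2cc'u)=0$ for all $c,c'\in\F_q$. If $L\not\equiv 0$ I pick $a$ with $u=L(a)\neq 0$; then $\{2cc'u:c,c'\in\F_q\}=\F_q$, which forces $L\equiv 0$ and contradicts the choice of $u$. Hence $L\equiv 0$. Plugging this into \eqref{eqLM2} gives $M(cM(b))=0$ for all $b,c\in\F_q$, and the same range-saturation argument applied to any $b$ with $M(b)\neq 0$ forces $M\equiv 0$.

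For even $q$ the cross-term trick dies because $2=0$. Instead \eqref{eqLM1} simplifies to $L(c^2u)=M(cu)$. When $u\neq 0$ lies in $\im(L)$ I substitute $v:=cu$ to obtain $M(v)=L(v^2/u)$ for all $v\in\F_q$. This expression must be independent of the choice of $u\in\im(L)\setminus\{0\}$, so comparing two such choices $u_1,u_2$ yields $L\bigl((u_1^{-1}+u_2^{-1})v^2\bigr)=0$ for every $v$; since squaring is a bijection on $\F_q$, either $L\equiv 0$ or $u_1=u_2$, i.e.\ $|\im(L)|\le 2$. In the subcase $L\equiv 0$, \eqref{eqLM2} again forces $M\equiv 0$, which matches \eqref{eqn_LM_exp} with $\omega=0$. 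Otherwise $\im(L)=\{0,\omega\}$ with $\omega\neq 0$, so $\omega^{-1}L$ is a surjective $\F_2$-linear map $\F_q\to\F_2$, and Lemma \ref{lem_ff2} gives $L(x)=\omega\tr_{\F_q/\F_2}(\eta x)$ for a unique $\eta\in\F_q^*$. Feeding this into $M(v)=L(v^2/\omega)=\omega\tr(\eta v^2/\omega)$ and using the characteristic-$2$ identities $\tr(x^2)=\tr(x)$ and the bijectivity of Frobenius (write $\eta/\omega=\mu^2$), I recast the result as $M(y)=\omega\tr(\mu y)$ and $L(x)=\omega\tr(\mu^2\omega x)$, precisely \eqref{eqn_LM_exp}.

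Finally, a direct substitution verifies that \eqref{eqLM2} is automatic for the pair found in the even case and imposes no additional constraint on $(\omega,\mu)$: the only nontrivial case $M(b)=\omega$ reduces to $L(c^2\omega)+M(c\omega)=0$, and both terms evaluate to $\omega\tr(\mu\omega c)$, whose sum vanishes in characteristic $2$. The main obstacle here is not analytical depth but noticing that the slick odd-characteristic substitution collapses in characteristic $2$; the appropriate replacement, namely dividing out $u$ inside the identity $L(c^2u)=M(cu)$, is what cuts $\im(L)$ down to dimension at most one over $\F_2$, after which Lemma \ref{lem_ff2} and the trace-square identity do the rest.
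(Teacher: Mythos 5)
Your proof is correct. The paper reaches the same milestones by a different mechanism: it writes $L$ and $M$ as reduced linearized polynomials $\sum u_iX^{p^i}$, $\sum v_jX^{p^j}$, turns \eqref{eqLM1} into a polynomial identity in $X$, and compares coefficients --- of $X^{2p^i}$ in odd characteristic (where $2p^i$ is never a power of $p$, so $u_iL(a)^{p^i}=0$ falls out directly) and of $X^{2^{i+1}}$ in characteristic $2$ (where the two families of monomials collide, giving $u_i=v_{i+1}L(a)^{2^i}$ and hence that $\im(L)$ has at most one nonzero value). Your replacements --- polarizing $c\mapsto c+c'$ to isolate the cross term $L(2cc'u)$ in odd characteristic, and dividing $u$ out of $L(c^2u)=M(cu)$ to force $|\im(L)|\le 2$ in characteristic $2$ --- are purely functional and sidestep both the reduced-polynomial formalism and the degree bookkeeping (the bound $2p^{m-1}\le q-1$ and the mod-$m$ subscripts) that the paper has to manage; this is arguably the more elementary route. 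The endgame coincides: once $\im(L)=\{0,\omega\}$, Lemma \ref{lem_ff2} (the paper invokes Lemma \ref{Imdim1}, which reduces to it when $\eta=0$) gives $L(x)=\omega\tr_{\F_q/\F_2}(\eta x)$, and the forced relation $M(v)=L(v^2/\omega)$ together with $\tr_{\F_q/\F_2}(x^2)=\tr_{\F_q/\F_2}(x)$ yields \eqref{eqn_LM_exp}; your parameters match the paper's via $\eta=\mu^2\omega$, i.e. $\mu=\omega^{-q/2}\eta^{q/2}$. Your closing verification that \eqref{eqLM2} imposes no further constraint is not needed for the lemma as stated (it only asserts the form of $L$ and $M$), but it is harmless and in fact anticipates the sufficiency check done later in Construction \ref{Const2}.
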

\begin{proof}
Write $q=p^m$ with $p$ prime. Let $L(X):=\sum_{i=0}^{m-1}u_iX^{p^i}$, $M(X):=\sum_{i=0}^{m-1}v_iX^{p^i}$ be the corresponding reduced polynomials for the additive maps $x\mapsto L(x)$ and $y\mapsto M(y)$ respectively. The subscripts of $u_i$'s and $v_j$'s are taken modulo $m$.

First assume that $q$ is odd.  Since $2p^{m-1}\le q-1$, we deduce from  \eqref{eqLM1} that $L(X^2L(a))+M(XL(a))=0$ in $\F_q[X]$ for $a\in\F_q$. After expansion we obtain
$\sum_{i=0}^{m-1}u_i L(a)^{p^i}  X^{2p^i}+\sum_{j=0}^{m-1} v_j L(a)^{p^j} X^{p^j}=0$.
By comparing the coefficients of $X^{2p^i}$,  we get $u_iL(a)^{p^i}=0$  for $0\le i\le m-1$. If there is $a\in\F_q$ such that $L(a)\ne 0$, then we deduce that $u_i=0$ for each $i$, i.e., $L(X)=0$: a contradiction. Hence we must have $L(X)=0$. Then  \eqref{eqLM2} reduces to $M(XM(b))=0$, and the same argument yields $M(X)=0$.

Next assume that $q$ is even. If $L(X)=0$, then we deduce from  \eqref{eqLM2} that $M(X)=0$ in the same way, and we can take  $\omega=\mu=0$ in \eqref{eqn_LM_exp}. So  assume that $L(X)\ne 0$ in the sequel. Since $L(X)\ne 0$, there is at least one nonzero coefficient $u_i$. Take $a\in\F_q$ such that $L(a)\ne 0$. By converting  \eqref{eqLM1} into the reduced polynomial form and comparing the coefficients of $X^{2^{i+1}}$, we get $u_iL(a)^{2^i}+v_{i+1}L(a)^{2^{i+1}}=0$, i.e., $u_i=v_{i+1}L(a)^{2^i}$. This holds for all $a\in\F_q$ such that $L(a)\ne 0$, so $\im(L)$ has only one nonzero element, say $\omega$. By applying Lemma \ref{Imdim1} with $\eta =0$, we deduce that there exists $\mu_A\in\F_q^*$ such that  $L(x)=\omega\tr_{\F_q/\F_2}(\mu_Ax)$, i.e., $L(X)=\sum_{i=0}^{m-1}\omega\mu_A^{2^{i}}X^{2^i}$. It follows that $u_i=\omega\mu_A^{2^{i}}$, $v_{i+1}=\omega^{1-2^i}\mu_A^{2^i}$, and $M(y)=\omega\tr_{\F_q/\F_2}(\omega^{-q/2}\mu_A^{q/2}y)$. By setting $\mu=\omega^{-q/2}\mu_A^{q/2}$, we get the desired expressions for $L$ and $M$ in \eqref{eqn_LM_exp}.
\end{proof}

\begin{lemma}\label{lem_LM0_linear}
If $L(a)=M(b)=0$ for all $a,\,b\in\F_q$, then $T(a,b,c)=S(c)$,  $S$ is additive, and  $G$ arises from Construction \ref{const_S1}.
\end{lemma}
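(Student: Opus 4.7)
The plan is to simply chain together the special-case identities from Corollary \ref{cond1} and Theorem \ref{Main} with the hypotheses $L\equiv 0$ and $M\equiv 0$. Since we are in the linear case, $\theta_{a,b,c}\equiv 1$ and in particular $\sigma_c=1$ for every $c$, so many of the field automorphisms in those identities disappear, and the conditions collapse to straightforward additive identities.

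First I would show that $T(a,b,c)$ depends only on $c$. Combining $L\equiv 0$, $M\equiv 0$ with \eqref{eqn_linear_LM} gives $T(a,b,0)=0$ for every $a,b\in\F_q$. Now apply part (6) of Corollary \ref{cond1}: since $\sigma_c=1$, the prescribed values reduce to $a'=a+bc$, $b'=b$, and the identity becomes
\[
T(a,b,c)=T(a+bc,b,0)+S(c)=S(c).
\]
Thus $T$ factors through the last coordinate, and we may write $T(a,b,c)=S(c)$.

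Next I would show that $S$ is additive. Since $T$ depends only on the third coordinate, I substitute into \eqref{eqc1} of Theorem \ref{Main}. With every $\theta$ trivial, $w=c+z$, and \eqref{eqc1} becomes
\[
S(c)+S(z)=T(u,v,w)=S(w)=S(c+z),
\]
which is exactly additivity of $S$. Being an additive map $\F_q\to\F_q$, its reduced polynomial $S_1(X)$ is a reduced linearized polynomial over $\F_q$.

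Finally, since the data $(T,\theta)$ attached to $G$ is precisely $\theta\equiv 1$ and $T(a,b,c)=S_1(c)$ for a reduced linearized polynomial $S_1$, the group $G$ coincides with the one produced by Construction \ref{const_S1} applied to $S_1$. There is no real obstacle here; the lemma is essentially a bookkeeping consequence of Corollary \ref{cond1}(6) and \eqref{eqc1}, and the only thing to watch is that under the linear hypothesis every Frobenius twist $(\cdot)^{\sigma_c}$ in those identities is the identity map, which is what forces the collapse to additivity.
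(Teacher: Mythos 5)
Your proof is correct and follows essentially the same route as the paper: the paper likewise gets $T(a,b,c)=S(c)$ from Corollary \ref{cond1}(6) together with $T(a,b,0)=L(a)+M(b)=0$, and then reads off additivity of $S$ from the closure condition (the paper cites the special case (3) of Corollary \ref{cond1}, which is just \eqref{eqc1} with $a=b=x=y=0$, so your direct use of \eqref{eqc1} is the same computation). No issues.
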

\begin{proof}
We have $T(a,b,c)=S(c)$ by (6) of Corollary \ref{cond1}. By (3) of Corollary \ref{cond1}, $S$ is additive. Therefore, $G$ arises from Construction \ref{const_S1} with $S_1=S$.
\end{proof}

\begin{remark}\label{rem_notimp}
Let $G$ be as in Construction \ref{const_S1} with $\theta_{x,y,z}\equiv 1$ and $T(a,b,c)=S_1(c)$ for an additive function $S_1$. The conditions in Theorem \ref{Main} reduce to $S_1(c+z)=S_1(c)+S_1(z)$ which clearly holds, so $G$ is indeed a point regular group of $\cQ^P$.
\end{remark}

By Lemma \ref{LM}, Lemma \ref{lem_LM0_linear} and Remark \ref{rem_notimp}, Theorem \ref{thm_linearPRG} holds for odd $q$. In the sequel, we consider the case $q=2^m$, $m\ge 3$. By Lemma \ref{LM}, there exist $\omega$ and $\mu$ such that \eqref{eqn_LM_exp} holds. In the case $\omega=0$ or $\mu=0$, we have $L\equiv 0$, $M\equiv 0$, and Lemma \ref{lem_LM0_linear} applies. We thus assume that $\omega$ and $\mu$ are nonzero. Set
\begin{equation}\label{eqn_linearBdef}
\cB(c,z):=S(c+z)+S(c)+S(z),
\end{equation}
which is symmetric in $c,\,z$.
\begin{lemma}\label{lem_linearqevenF}
There is a reduced polynomial $F(X)=\sum_{i=0}^{m-1} f_i X^{2^i}$ with $f_0=0$ and
\begin{equation}\label{FcondF}
\mu f_i=(\mu f_{m-i})^{2^i}\, \text{ for } 1\leq i\leq m-1.
\end{equation}
such that $\cB(c,z)=\omega\tr_{\F_q/\F_2}(\mu c F(z))$, where $F(z)^2=\omega S(z)+S(z)^2$.
\end{lemma}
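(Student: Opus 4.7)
The plan is to derive an explicit closed form for $\cB$ and then extract the claimed properties of $F$ from the trace machinery developed in Section 3. First I would use item (6) of Corollary \ref{cond1} together with \eqref{eqn_linear_LM} (both valid because $\theta_{a,b,c}\equiv 1$) to write $T(a,b,c)=L(a+bc)+M(b)+S(c)$. Substituting into the linear-case form of item (3) of Corollary \ref{cond1}, namely $S(c)+S(z)=T(czS(z),cS(z),c+z)$, and simplifying in characteristic $2$ using $czS(z)+cS(z)(c+z)=c^2S(z)$, I obtain
\begin{equation*}
\cB(c,z)=L(c^2 S(z))+M(cS(z)).
\end{equation*}

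Next, I would plug in $L(x)=\omega\tr(\mu^2\omega x)$ and $M(y)=\omega\tr(\mu y)$ from Lemma \ref{LM} and apply the identity $\tr(x^2 y)=\tr(x y^{q/2})$ (Lemma \ref{lem_trace_int} with $p=2$, $d=1$, $i=1$) to rewrite $\tr(\mu^2\omega c^2 S(z))=\tr(\mu c\cdot\omega^{1/2}S(z)^{1/2})$. This yields
\begin{equation*}
\cB(c,z)=\omega\tr\bigl(\mu c\,F(z)\bigr),\qquad F(z):=\omega^{1/2}S(z)^{1/2}+S(z),
\end{equation*}
so the identity $F(z)^2=\omega S(z)+S(z)^2$ is immediate. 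I then need to show $F$ is $\F_2$-linear so that it admits a reduced linearized representation $F(X)=\sum_{i=0}^{m-1}f_iX^{2^i}$. Because $L$ and $M$ take values in $\omega\F_2$, we have $\cB(c,z)=\epsilon\omega$ for some $\epsilon=\epsilon(c,z)\in\F_2$. Using that the square root on $\F_q$ is the Frobenius $x\mapsto x^{q/2}$ (which is $\F_2$-linear, hence $\sqrt{a+b+c}=\sqrt{a}+\sqrt{b}+\sqrt{c}$), I compute
\begin{equation*}
F(c+z)+F(c)+F(z)=\omega^{1/2}\cB(c,z)^{1/2}+\cB(c,z)=\omega^{1/2}\cdot\epsilon\omega^{1/2}+\epsilon\omega=0.
\end{equation*}

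For the coefficient conditions I plan to invoke Lemma \ref{lem_Tracelinear} applied to the normalized form $\widetilde{\cB}(c,z):=\omega^{-1}\cB(c,z)=\tr(\mu c F(z))$, which is $\F_2$-valued, $\F_2$-bilinear (from additivity of $F$ and linearity of the trace), and symmetric (since $\cB$ is symmetric in $c,z$ by the definition \eqref{eqn_linearBdef}), on $K=\F_q$, i.e.\ with $\eta=0$ in the lemma. The relation \eqref{eqn_sismmi} with $\eta=u=0$ yields exactly $\mu f_i=(\mu f_{m-i})^{2^i}$ for $1\le i\le m-1$. To obtain $f_0=0$, observe that $\cB(c,c)=S(2c)+2S(c)=S(0)=0$ in characteristic $2$ (with $S(0)=T(0,0,0)=0$ since $\fg_{0,0,0}$ is the identity), so $\widetilde{\cB}(c,c)=0$ for every $c\in\F_q$; the last assertion of Lemma \ref{lem_Tracelinear} then gives $\mu f_0=0$, whence $f_0=0$ as $\mu\ne 0$. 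The main obstacle I foresee is the characteristic-$2$ bookkeeping needed to establish additivity of $F$ (via the square-root identity) and to verify cleanly that $\widetilde{\cB}$ meets the bilinearity and symmetry hypotheses of Lemma \ref{lem_Tracelinear}.
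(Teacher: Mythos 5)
Your proposal is correct and follows essentially the same route as the paper: compute $\cB(c,z)=L(c^2S(z))+M(cS(z))$ from Corollary \ref{cond1}, rewrite via $\tr(x^2y)=\tr(xy^{q/2})$ to identify $F$ with $F(z)^2=\omega S(z)+S(z)^2$, and then apply Lemma \ref{lem_Tracelinear} with $\eta=0$ to get \eqref{FcondF} and $f_0=0$ from $\cB(c,c)=0$. The only (harmless) deviation is in proving additivity of $F$: the paper deduces it from additivity of $\cB$ in $z$ plus nondegeneracy of the trace pairing, whereas you compute $F(c+z)+F(c)+F(z)=\omega^{1/2}\cB(c,z)^{1/2}+\cB(c,z)=0$ directly using $\cB(c,z)\in\omega\F_2$; both arguments are valid.
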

\begin{proof}
By  (3) and (6)  of Corollary \ref{cond1}, we have
 $S(c)+S(z)=T(czS(z),cS(z),c+z)$ and $T(u,v,w)=T(u+vw,v,0)+S(w)$, so
\begin{align}\label{Striangle}
\cB(c,z)=&S(c+z)+T(czS(z),cS(z),c+z)\notag\\
=&T\left(czS(z)+cS(z)(c+z),cS(z),0\right)\notag\\
=&L(c^2S(z))+M(cS(z))\notag\\
=&\omega\tr_{\F_q/\F_2}\left(\mu^2 c^2(\omega S(z)+S(z)^2)\right).
\end{align}
In the third equality we used \eqref{eqn_linear_LM}, and in the fourth we used \eqref{eqn_LM_exp}. The function $B(c,z)$ is symmetric in $c,z$ by \eqref{eqn_linearBdef} and is additive in $c$ by \eqref{Striangle}, so it is also additive in $z$. Set $F(z):=(\omega S(z)+S(z)^2)^{q/2}$, so that $F(z)^2=\omega S(z)+S(z)^2$ and
\[
\cB(c,z)=\omega\tr_{\F_q/\F_2}\left(\mu^2 c^2F(c)^2\right)=\omega\tr_{\F_q/\F_2}(\mu c F(z))
\]
by the fact $\tr_{\F_q/\F_2}(x^2)=\tr_{\F_q/\F_2}(x)$. From $\cB(c,z_1+z_2)=\cB(c,z_1)+\cB(c,z_2)$ for all $c\in\F_q$ we deduce that $F(z_1+z_2)=F(z_1)+F(z_2)$, i.e., $z\mapsto F(z)$ is additive.  Let $F(X)=\sum_{i=0}^{m-1} f_i X^{2^i}$ be the corresponding reduced polynomial. We have $\cB(c,c)=S(2c)+2S(c)=0$ for $c\in\F_q$.  By Lemma \ref{lem_Tracelinear}, we deduce that $f_0=0$ and the equations in \eqref{FcondF} hold. This completes the proof.
\end{proof}

Take notation as in Lemma \ref{lem_linearqevenF}, and set $H(x):=\sum_{0\le i<j\le m-1}\mu^{2^i}f_{j-i}^{2^i} x^{2^i+2^{j}}$. Its value lies in $F_2$, since
\begin{align*}
		H(x)+H(x)^2&=\sum_{0\le i<j\le m-1} \mu^{2^i}f_{j-i}^{2^i} x^{2^{i}+2^{j}}+ \sum_{1\le i<j\le m} \mu^{2^{i}}f_{j-i}^{2^{i}} x^{2^{i}+2^{j}}\\
		&=  \sum_{1\le j\le m-1} \mu f_{j} x^{1+2^{j}}+ \sum_{1\le i\le m-1} (\mu f_{m-i})^{2^{i}} x^{2^{i}+1}=0,
\end{align*}
where we  used  \eqref{FcondF} to get the last equality.

\begin{lemma}\label{lem_linearqevenS1}
The function $S_1(z):=S(z)+\omega\cdot H(z)$ is linearized. Moreover, if $S_1(X):=\sum_{i=0}^{m-1} s_i X^{2^i}$ is the corresponding reduced polynomial, then
\begin{equation}\label{eqn_sifi}
		s_{i+1}=\sum_{j=1}^i\omega^{-2^j+1}f_{i+1-j}^{2^j}+w^{-2^{i+1}+1}s_0^{2^{i+1}},\quad 0\le i\le m-1,
\end{equation}
where $s_m:=s_0$.
\end{lemma}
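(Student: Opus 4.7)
The plan is to prove the lemma in two steps: first verify that $S_1$ is $\F_2$-linear, then compute its coefficients from a recurrence derived from the quadratic identity of Lemma \ref{lem_linearqevenF}.

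For additivity, I would compute the defect $\Delta(c,z):=S_1(c+z)+S_1(c)+S_1(z)$ (in characteristic $2$ signs are irrelevant), which splits as $\cB(c,z)+\omega[H(c+z)+H(c)+H(z)]$. The expansion
\[
(c+z)^{2^i+2^j}=c^{2^i+2^j}+z^{2^i+2^j}+c^{2^i}z^{2^j}+c^{2^j}z^{2^i}
\]
gives $H(c+z)+H(c)+H(z)=\sum_{0\le i<j\le m-1}\mu^{2^i}f_{j-i}^{2^i}\bigl(c^{2^i}z^{2^j}+c^{2^j}z^{2^i}\bigr)$. On the other hand, expanding the trace in $\cB(c,z)=\omega\,\tr_{\F_q/\F_2}(\mu c\,F(z))$ produces a double sum over $i,j\in\{0,\dots,m-1\}$ of $\omega\mu^{2^i}f_{(j-i)\bmod m}^{2^i}c^{2^i}z^{2^j}$. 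The diagonal $i=j$ vanishes since $f_0=0$, and in the $i>j$ block one invokes the duality \eqref{FcondF} (in the same way it was used to show $H(x)\in\F_2$) to rewrite the coefficient $\mu^{2^i}f_{(j-i)\bmod m}^{2^i}$ as $\mu^{2^j}f_{i-j}^{2^j}$. After swapping indices, $\cB(c,z)$ is seen to equal $\omega[H(c+z)+H(c)+H(z)]$, and $\Delta(c,z)=0$ follows.

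For the coefficient formula, I would substitute $S=S_1+\omega H$ into the identity $S(z)^2+\omega S(z)=F(z)^2$ from Lemma \ref{lem_linearqevenF}. Because $H(z)\in\F_2$, one has $H(z)^2=H(z)$, so the $H$-contributions cancel and
\[
S_1(z)^2+\omega S_1(z)=F(z)^2
\]
holds as a function identity on $\F_q$. Writing $S_1(X)=\sum_i s_iX^{2^i}$, expanding both sides, and using $z^{2^m}=z$ to stay inside reduced linearized polynomials, comparison of the coefficient of $z^{2^i}$ yields the recurrence
\[
\omega s_i=s_{i-1}^2+f_{i-1}^2=(s_{i-1}+f_{i-1})^2,\qquad 0\le i\le m-1,
\]
with indices modulo $m$ (matching the convention $s_m:=s_0$ in the statement). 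A straightforward induction on $i$ now converts this recurrence into \eqref{eqn_sifi}: the inductive step is to square the expression for $s_{i+1}$, multiply by $\omega^{-1}$, add $\omega^{-1}f_{i+1}^2$, and reindex $j\mapsto j+1$ so that the new $f_{i+1}^2$ contribution appears as the $j=1$ summand.

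The main obstacle is the coefficient-matching in the additivity step: correctly applying \eqref{FcondF} to convert the $i>j$ monomials in the trace expansion of $\cB(c,z)$ into the form that matches $H(c+z)+H(c)+H(z)$. Once this symmetrisation is in place, the quadratic identity for $S_1$ and the induction producing \eqref{eqn_sifi} are routine polynomial bookkeeping.
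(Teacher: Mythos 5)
Your proposal is correct and follows essentially the same route as the paper: additivity of $S_1$ is reduced to the identity $\omega\bigl(H(c+z)+H(c)+H(z)\bigr)=\cB(c,z)$ via the expansion of $(c+z)^{2^i+2^j}$, the duality relation \eqref{FcondF} and an index swap, and the coefficient formula comes from substituting $S=S_1+\omega H$ into $F(z)^2=\omega S(z)+S(z)^2$, using $H+H^2=0$ to get the recurrence $\omega s_{i+1}+s_i^2=f_i^2$ and then inducting. No gaps.
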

\begin{proof}
To show that $S_1$ is additive, i.e., $S_1(c+z)+S_1(c)+S_1(z)=0$, is equivalent to showing that $H(c+z)+H(c)+H(z)=\omega^{-1} \cB(c,z)$, cf. \eqref{eqn_linearBdef}. We compute that  $H(c+z)+H(c)+H(z)$ equals
	\begin{align}\label{eqn_BczEq}
		& \sum_{i<j}\mu^{2^i}f_{j-i}^{2^i}\left( (c+z)^{2^i+2^{j}}+c^{2^i+2^{j}}+z^{2^i+2^{j}}\right)\notag\\
		=&\sum_{i<j}(\mu f_{j-i})^{2^i}c^{2^i}z^{2^j}
		+\omega\cdot\sum_{i<j}(\mu f_{j-i})^{2^i}z^{2^i}c^{2^j}\notag\\
		=&\sum_{i<j}(\mu f_{j-i})^{2^i}c^{2^i}z^{2^j}
		+\omega\cdot\sum_{i<j}(\mu f_{i-j})^{2^j}c^{2^j}z^{2^i}\notag\\
		=&\sum_{i,j}(\mu f_{j-i})^{2^i}c^{2^i}z^{2^j}=\tr_{\F_q/\F_2}(\mu cF(z)).
	\end{align}
Here, the subscripts are taken modulo $m$. In the second equality we used the fact $\mu f_{j-i}=(\mu f_{i-j})^{2^{j-i}}$ in  \eqref{FcondF}, and in the third equality we interchanged the label of $i,\,j$ in the last summation and used the fact $f_0=0$. The claim then follows from Lemma \ref{lem_linearqevenF}.

Let $S_1(X):=\sum_{i=0}^{m-1} s_i X^{2^i}$ be the corresponding reduced polynomial. We have $S(x)=S_1(x)+\omega\cdot H(x)$, i.e.,
\begin{equation}\label{eqSZ}
  S(x)=\sum_{i=0}^{m-1}s_i x^{2^i}+ \omega \cdot \sum_{0\le i<j\le m-1}\mu^{2^i}f_{j-i}^{2^i} x^{2^i+2^{j}}.
\end{equation}
We now consider the relation $F(x)^2=\omega S(x)+S(x)^2$. The left hand side is $\sum_{i=0}^{m-1}f_{i-1}^{2}x^{2^i}$.  The right hand side equals $S_1(x)^2+wS_1(x)=\sum_{i=0}^{m-1}(\omega s_i+s_{i-1}^2) x^{2^i}$, since $H(x)+H(x)^2 =0$. Both expressions have degree not exceeding $q-1$, so they are equal as polynomials. By comparing coefficients, we get $\omega s_{i+1}+s_{i}^2=f_{i}^2$ for $0\le i\le m-1$. Here, the subscripts are taken modulo $m$. The equations in \eqref{eqn_sifi} follow by induction on $i$.
\end{proof}

In Lemma \ref{lem_linearqevenS1}, the case $i=m-1$ in \eqref{eqn_sifi} takes the form
\begin{equation}\label{eqn_fsum}
		\sum_{j=1}^{m-1}\omega^{-2^j+1}f_{m-j}^{2^j}=0.
\end{equation}

\begin{lemma}\label{lem_num}
For fixed nonzero elements $\omega,\,\mu$, the number of $(m+1)$-tuples $(f_0,\cdots,f_{m-1},s_0)$ such that $f_0=0$ and all the conditions in  \eqref{FcondF} and \eqref{eqn_fsum} hold  is $2q^{(m-1)/2}$.
\end{lemma}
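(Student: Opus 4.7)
Plan of proof. The key idea is to recast the count in terms of alternating $\F_2$-bilinear forms on $\F_q$. Since $s_0$ appears in neither \eqref{FcondF} nor \eqref{eqn_fsum}, it contributes a free factor of $q$; hence it suffices to count tuples $(f_0,\ldots,f_{m-1})$ with $f_0=0$ satisfying both conditions and to verify that this number is $2^{(m-1)(m-2)/2}=2q^{(m-3)/2}$.

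To a reduced linearized polynomial $f(X)=\sum_{i=0}^{m-1}f_iX^{2^i}$ I would associate the $\F_2$-bilinear form $\cB_f(c,z):=\tr_{\F_q/\F_2}(\mu c f(z))$. By Lemma \ref{lem_ff2} applied in $z$, every $\F_2$-bilinear form on $\F_q$ arises this way from a unique reduced linearized $f$. Lemma \ref{lem_Tracelinear} applied with $\eta=0$ tells us that $\cB_f$ is symmetric iff \eqref{FcondF} holds, while its final assertion (with $K=\F_q$) says $\cB_f(c,c)=0$ for all $c$ iff $\mu f_0=0$, i.e., $f_0=0$. Thus $f\mapsto\cB_f$ restricts to a bijection between tuples $(f_0=0,f_1,\ldots,f_{m-1})$ satisfying \eqref{FcondF} and alternating $\F_2$-bilinear forms on $\F_q$ (viewed as $\F_2^m$), of which there are $2^{\binom{m}{2}}=q^{(m-1)/2}$.

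Next I would translate \eqref{eqn_fsum} into a condition on $\cB_f$. A direct re-indexing using $f_0=0$ rewrites the left-hand side of \eqref{eqn_fsum} as $\omega\,\tilde f(\omega^{-1})$, where $\tilde f(X)=\sum_{i=0}^{m-1}f_{m-i}^{2^i}X^{2^i}$ is the trace dual of $f$. By the defining property of the trace dual this vanishes if and only if $\tr_{\F_q/\F_2}(\omega^{-1}f(z))=0$ for every $z\in\F_q$, that is, $\cB_f((\omega\mu)^{-1},z)=0$ for every $z$. Setting $v:=(\omega\mu)^{-1}\neq 0$, condition \eqref{eqn_fsum} is thus equivalent to $v\in\rad(\cB_f)$.

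The count is now a standard linear-algebra exercise. The $\F_2$-linear map $\cB\mapsto\cB(v,\cdot)$ from the space of alternating forms (of dimension $\binom{m}{2}$) into the $\F_2$-dual of $\F_q$ lands in the hyperplane $\{\phi:\phi(v)=0\}$ because $\cB(v,v)=0$, and extending $\{v\}$ to an $\F_2$-basis of $\F_q$ and defining an alternating form freely on the complementary basis vectors shows the map is surjective onto this hyperplane. Hence its kernel has dimension $\binom{m}{2}-(m-1)=\binom{m-1}{2}$, giving $2^{\binom{m-1}{2}}$ valid tuples; multiplying by $q$ from the free variable $s_0$ yields $2^{m+\binom{m-1}{2}}=2\cdot 2^{\binom{m}{2}}=2q^{(m-1)/2}$, as required. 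The main obstacle is the translation in the third paragraph, which requires some careful re-indexing of $\omega$'s and $2^j$'s and the correct use of the trace-pairing identity; the bijection via Lemma \ref{lem_Tracelinear} and the codimension count for alternating forms with a prescribed vector in the radical are both routine.
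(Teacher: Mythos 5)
Your proof is correct, and it takes a genuinely different route from the paper's. The paper argues in coordinates: for $m$ odd it uses \eqref{FcondF} to eliminate $f_{(m+1)/2},\dots,f_{m-1}$, substitutes into \eqref{eqn_fsum}, and recognizes the result as an Artin--Schreier equation $\beta+\beta^2=c$ with $\tr_{\F_q/\F_2}(c)=0$, which has exactly two roots in $\beta$ by Lemma \ref{lem_ff1}; the remaining free coefficients $f_2,\dots,f_{(m-1)/2}$ together with $s_0$ supply the factor $q^{(m-1)/2}$, and the case $m$ even needs a separate (if similar) treatment using $\mu f_{m/2}\in\F_{2^{m/2}}$. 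You instead identify the admissible tuples $(0,f_1,\dots,f_{m-1})$ with alternating $\F_2$-bilinear forms $\cB_f(c,z)=\tr_{\F_q/\F_2}(\mu c f(z))$; the ``iff'' you extract from Lemma \ref{lem_Tracelinear} with $\eta=0$ is valid even though that lemma is stated as a one-way implication, since the equations with $\eta=0$ say precisely $\tilde f(\mu X)=\mu f(X)$, which gives symmetry back. Your re-indexing of \eqref{eqn_fsum} as $\omega\tilde f(\omega^{-1})=0$, hence as $(\omega\mu)^{-1}\in\rad(\cB_f)$, is also correct (the $i=0$ term vanishes because $f_0=0$), and the rank--nullity count of alternating forms with a prescribed nonzero vector in the radical gives $2^{\binom{m-1}{2}}$, with $2^{m+\binom{m-1}{2}}=2q^{(m-1)/2}$ as required. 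Your approach is uniform in the parity of $m$, explains the factor $2$ structurally (the evaluation map $\cB\mapsto\cB(v,\cdot)$ lands in a hyperplane of the dual rather than the whole dual), and fits naturally with the bilinear form $\cB$ that the paper itself introduces in Lemma \ref{lem_linearqevenF}; the paper's computation is more elementary but requires the parity case split and an ad hoc trace verification.
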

\begin{proof}
First, assume that $m$ is odd. By  \eqref{FcondF}, we can  express $f_{(m+1)/2},\cdots,f_{m-1}$ in terms of $f_1,\cdots,f_{(m-1)/2}$ as follows:  $f_{m-i}=\mu^{2^{m-i}-1} f_i^{2^{m-i}}$ for $(m+1)/2\le  i\leq m-1$. Plugging them into  \eqref{eqn_fsum} and dividing both sizes by $\omega^{2}\mu$, we get
\begin{equation*}
\beta+\beta^2=\sum_{i=2}^{(m-1)/2}(\mu^{-2^i}\omega^{-1-2^{i}}f_i+\mu^{-1}\omega^{-1-2^{m-i}}f_{i}^{2^{m-i}})
\end{equation*}
with $\beta=\mu^{-1}\omega^{-1-2^{m-1}}f_{1}^{2^{m-1}}$.  The right hand side has absolute trace $0$, since $\mu^{-2^i}\omega^{-1-2^{i}}f_i=(\mu^{-1}\omega^{-1-2^{m-i}}f_{i}^{2^{m-i}})^{2^i}$ for each $i$. Therefore, by Lemma \ref{lem_ff1}, there exist two solutions in $\beta$ for any chosen tuple $(f_2,\cdots,f_{(m-1)/2})$. The claim now follows in this case.

Next, consider the case $m$ is even. The argument is basically the same, and the only distinction is that in showing that the right hand side has absolute trace $0$ we need the observation that $f_{m/2}\mu$ is in the subfield $\F_{2^{m/2}}$, cf. \eqref{FcondF} with $i=m/2$. This completes the proof.
\end{proof}

By (6) of Corollary \ref{cond1} and \eqref{eqn_linear_LM}, we have
\begin{equation}\label{eqn_linear_T}
T(a,b,c)=T(a+bc,b,0)+S(c)=L(a+bc)+M(b)+S(c).
\end{equation}
It turns out that the conditions that we have derived so far are also sufficient, and we have the following construction.
\begin{construction}\label{Const2}
Suppose that $q=2^m$ with $m>1$, and let $\omega,\,\mu$ be two nonzero elements of $\F_q$. Take any tuple $(f_0,\cdots,f_{m-1},s_0)$ satisfying the conditions in Lemma \ref{lem_num}, and define $s_1,\cdots, s_{m-1}$ by  \eqref{eqn_sifi}.  Set $\theta_{a,b,c}\equiv 1$, and
\[
T(a,b,c)=\omega\tr_{\F_q/\F_2}\left(\mu^2\omega (a+bc)+\mu b\right)+\sum_{i=0}^{m-1}s_i c^{2^i}+\omega \sum_{0\leq i<j\leq m-1} \mu^{2^i}f_{j-i}^{2^i} c^{2^{i}+2^{j}}.
\]
Then the  set $G$ as defined in Theorem \ref{Main} with the prescribed functions $T$ and $\theta$ is a point regular group of $\cQ^P$.
\end{construction}
\begin{proof}
Let $L$, $M$ and $S$ be the corresponding functions as defined in  \eqref{eqn_LM_exp} and \eqref{eqSZ}. Then we can verify that  \eqref{eqn_linear_T} holds. Set $F(x):=\sum_{i=0}^{m-1}f_ix^{2^i}$, $\cB(c,z):=S(c+z)+S(c)+S(z)$. We verify that $\omega s_{i+1}+s_{i}^2=f_{i}^2$ holds for $0\le i\le m-1$ by \eqref{eqn_sifi}, so $F(x)^2=\omega S(x)+S(x)^2$. By the calculations in \eqref{eqn_BczEq}, we obtain $\cB(c,z)=\omega\tr_{\F_q/\F_2}(\mu c F(z))$.

It remains to show that $G$ is a point regular group of $\cQ^P$. By Theorem \ref{Main}, it suffices to verify that $T(a,b,c)+T(x,y,z)=T(u,v,w)$, where $w=c+z$, $v=b+y+cT(x,y,z)$ and $u=a+x+bz+cy+czT(x,y,z)$. We deduce that $(u+vw)+(a+bc)+(x+yz)=c^2T(x,y,z)$. Observe that both $L$ and $M$ are additive. Therefore,
\begin{align*}	
&T(a,b,c)+T(x,y,z)+T(u,v,w)\\
=&L(a+bc+x+yz+u+vw)+M(b+y+v)+S(c)+S(z)+S(c+z)\\
=&L(c^2T(x,y,z))+M(cT(x,y,z))+\cB(c,z)\\
=& \omega \tr\left(  \mu^2  c^2\omega T(x,y,z)+ \mu cT(x,y,z) \right)+\cB(c,z) \\
=& \omega \tr( \mu^2 c^2\omega^2 \beta+ \mu c \omega\beta)+\omega\tr(\mu^2c^2\omega S(z)+\mu c S(z))+\cB(c,z)\\
=&\omega\tr(\mu^2c^2\omega S(z)+\mu^2 c^2 S(z)^2)+\cB(c,z).
\end{align*}
Here, $\tr=\tr_{\F_q/\F_2}$, $\beta=\omega^{-1}(L(x+yz)+M(y))$. The last equality holds because  $\beta\in \F_2$ and $\tr(h^2)=\tr(h)$ for $h\in\F_q$. We conclude that this sum is $0$ by the facts that $F(z)^2=\omega S(z)+S(z)^2$, $\cB(c,z)=\omega\tr(\mu c F(z))$. This completes the proof.
\end{proof}

To summarize, we have now completed the classification for $q$ even in the linear case: in the case $\omega=0$ or $\mu=0$ in \eqref{LM}, the group arises from Construction \ref{const_S1} by Lemma \ref{lem_LM0_linear}; in the case $\omega$ and $\mu$ are nonzero, the group arises from Construction \ref{Const2}. This completes the proof of Theorem \ref{thm_linearPRG}.

\begin{remark}\label{rem_even_linear}
Suppose that $G$ is the point regular group of $\cQ^P$ obtained from either of Construction \ref{const_S1} or Construction \ref{Const2}, and assume that $q$ is even. If $T(a,b,c)\equiv 0$, then $G$ is elementary abelian, so we assume that $T(a,b,c)\not\equiv 0$. It is routine to deduce that $G$ has exponent $4$ and nilpotency class $2$ and its center is $Z(G)=\{\fg_{a,b,0}:\,a,\,b\in\F_q,\ T(a,b,0)=0\}$ in both cases. In Construction \ref{const_S1}, $T(a,b,0)\equiv 0$ and $Z(G)$ has size $q^2$; in Construction \ref{Const2}, $T(a,b,0)=\omega\tr_{\F_q/\F_2}(\mu^2\omega a+\mu b)$ and $Z(G)$ has size $q^2/2$. Therefore, the two constructions yield non-isomorphic groups in the even characteristic case.
\end{remark}

\section{The structure of a nonlinear point regular group of $\cQ^P$}\label{sec_structure}

Suppose that $G$ is a nonlinear point regular  group  of the quadrangle $\cQ^P$ with associated functions $T$ and $\theta$ as described in Theorem \ref{Main}. By Corollary \ref{cor_rABnonlin}, we have either $r_{A,B}>0$ or $r_C>0$. We introduce the following notation.
\begin{Notation}\label{notation_4.1}
Take the same notation as in Notation \ref{notation_sigmaLMS}, and assume that $r_{A,B}>0$ or $r_C>0$. Take elements $g_A$, $g_B$, $g_C$ of $\Aut(\F_q)$ of order $p^{r_A}$, $p^{r_B}$, $p^{r_C}$ respectively, and let $t_A$, $t_B$, $t_C$  be the corresponding elements of $\F_q$ such that
\[
\theta_{t_A,0,0}=g_A,\;\theta_{0,t_B,0}=g_B,\;\sigma_{t_C}=g_C.
\]
Also, define $\cK_0^*:=\{z\in \F_q:\, \sigma_z^{p^{r_{A,B}}}=1\}$.
\end{Notation}

\subsection{The Frobenius part of the group $G$}\label{sec_Frob}

\begin{lemma}\label{lem_KAKB}
Take notation as in Notation \ref{notation_4.1}, and set $K_A:=\{a\in\F_q:\,\theta_{a,0,0}=1\}$, $K_B:=\{b\in\F_q:\,\theta_{0,b,0}=1\}$. Then
\begin{enumerate}
\item[(1)] $G_{A,K}:=\{\theta_{a,0,0}:\,a\in K_{A}\}$ is a normal subgroup of $G_A$ of index $p^{r_A}$, and
    \begin{equation}\label{eqn_GAGAK}
      G_A=\la G_{A,K},\,\fg_{t_A,0,0}\ra=\cup_{i=0}^{p^{r_A}-1}G_{A,K}\circ \fg_{t_A,0,0}^i.
    \end{equation}
\item[(2)] $K_A$ is a $g_A$-invariant $\F_p$-subspace of codimension $r_A$ in $\F_q$, $L(a)^{g_A}=L(a^{g_A})$ for $a\in K_A$, and $L$ is additive on $K_A$.
\end{enumerate}
The same conclusions also hold for $G_B$ after we replace $L$ by $M$, $\fg_{t_A,0,0}$ by $\fg_{0,t_B,0}$, and the $A$'s  in the subscripts by $B$'s.
\end{lemma}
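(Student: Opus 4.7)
The plan is to realize $G_A$ as a group via Corollary \ref{subgp} and then package the content of Corollary \ref{cond1}(1) as a single Frobenius homomorphism $\psi_A:G_A\to\Aut(\F_q)$, $\fg_{a,0,0}\mapsto\theta_{a,0,0}$. Once this homomorphism is in hand, both (1) and (2) will follow by extracting its kernel, its image, and the two additive identities encoded by Corollary \ref{cond1}(1).

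\textbf{Step 1 (the homomorphism and claim (1)).} The multiplication rule from Theorem \ref{Main} gives $\fg_{a,0,0}\circ\fg_{x,0,0}=\fg_{a^{\theta_{x,0,0}}+x,0,0}$, and the first identity of Corollary \ref{cond1}(1) says $\theta_{a,0,0}\theta_{x,0,0}=\theta_{a^{\theta_{x,0,0}}+x,0,0}$, so $\psi_A$ is a group homomorphism. By definition its kernel is $G_{A,K}$ (which establishes normality), and its image is a subgroup of the cyclic group $\Aut(\F_q)$ of exponent $p^{r_A}$, hence is the unique cyclic subgroup $\langle g_A\rangle$ of that order. Thus $[G_A:G_{A,K}]=p^{r_A}$. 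Since $\fg_{t_A,0,0}$ maps to the generator $g_A$, it supplies a coset representative, and the coset decomposition in \eqref{eqn_GAGAK} follows.

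\textbf{Step 2 (additive/Galois structure of $K_A$).} That $K_A$ is an additive subgroup of $\F_q$ is immediate from $\ker(\psi_A)=G_{A,K}$ together with the Frobenius identity of Corollary \ref{cond1}(1): if $\theta_{a,0,0}=\theta_{x,0,0}=1$, then $\theta_{a+x,0,0}=1$. Since $|G_A|=q$ and $[G_A:G_{A,K}]=p^{r_A}$, we read off $|K_A|=q/p^{r_A}$, giving codimension $r_A$. For $g_A$-invariance, the most transparent route is to compute the conjugate $\fg_{t_A,0,0}^{-1}\circ\fg_{a,0,0}\circ\fg_{t_A,0,0}$ directly from Theorem \ref{Main} (for $a\in K_A$ all the Frobenius twists collapse to $g_A$ or $1$); a short computation produces $\fg_{g_A(a),0,0}$, and normality of $G_{A,K}$ forces $g_A(a)\in K_A$.

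\textbf{Step 3 (identities for $L$).} Additivity of $L$ on $K_A$ is the second identity in Corollary \ref{cond1}(1) specialised to $\theta_{a,0,0}=\theta_{x,0,0}=1$. For the semilinearity relation $L(a)^{g_A}=L(a^{g_A})$ with $a\in K_A$, I would apply Corollary \ref{cond1}(1) twice. First, with the element $a$ and $x=t_A$, it yields
\[
L(a)^{g_A}+L(t_A)=L(a^{g_A}+t_A).
\]
Second, using $a^{g_A}\in K_A$ (from Step 2) together with Corollary \ref{cond1}(1) applied to the pair $(t_A,a^{g_A})$, I get $L(a^{g_A}+t_A)=L(t_A)+L(a^{g_A})$. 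Cancelling $L(t_A)$ gives the claim. The statements for $G_B$ are obtained by the identical argument using the second identity in Corollary \ref{cond1}(2) with $M$ in place of $L$.

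\textbf{Expected obstacle.} There is no single deep step; the only subtlety is pinning down the image of $\psi_A$ as \emph{exactly} $\langle g_A\rangle$ rather than a proper subgroup containing it, which requires invoking both the cyclicity of $\Aut(\F_q)$ and the defining property that $p^{r_A}$ is the exponent of $\psi_A(G_A)$. Everything else is a bookkeeping exercise in applying Corollary \ref{cond1}(1) and Theorem \ref{Main}.
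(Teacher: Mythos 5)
Your proposal is correct and follows essentially the same route as the paper: the Frobenius homomorphism $\psi_A:\fg_{a,0,0}\mapsto\theta_{a,0,0}$ gives normality of $G_{A,K}$ and the index $p^{r_A}$, the conjugation $\fg_{t_A,0,0}^{-1}\circ\fg_{a,0,0}\circ\fg_{t_A,0,0}=\fg_{g_A(a),0,0}$ gives the $g_A$-invariance of $K_A$, and Corollary \ref{cond1}(1) gives additivity of $L$ on $K_A$. The only cosmetic difference is that the paper reads the relation $L(a)^{g_A}=L(g_A(a))$ directly off the $(3,2)$-entry of the matrix part of that same conjugate, whereas you derive it by applying Corollary \ref{cond1}(1) twice (to the pairs $(a,t_A)$ and $(t_A,a^{g_A})$, the latter using $a^{g_A}\in K_A$) and cancelling $L(t_A)$ --- both derivations are valid.
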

\begin{proof}
Since the arguments for $G_A$ and $G_B$ are the same, we only give the proof for  $G_A$ below. In the case $r_A=0$, we have $K_A=\F_q$ and the map $L$ is additive  by (1) of Corollary \ref{cond1}. The claims are trivial in this case, so  we assume that $r_A\ge 1$ in the sequel. \\

\noindent(1). Recall that the group homomorphism $\psi:\,G\rightarrow \Aut(\F_q)$ maps $\fg_{a,b,c}$ to its Frobenius part $\theta_{a,b,c}$. The set $G_{A,K}$ is the kernel of $\psi_A:=\psi|_{G_A}$, the restriction of $\psi$ to the subgroup $G_A$, so it is normal in $G_A$. By the choice of $t_A$ in Notation \ref{notation_4.1}, we have $\im(\psi_A)=\la\psi(\fg_{t_A,0,0})\ra$. The leads to the desired coset decomposition in \eqref{eqn_GAGAK}. \\

\noindent(2). By (1) of Corollary \ref{cond1}, we have $L(a)^{\theta_{x,0,0}}+L(x)=L(a^{\theta_{x,0,0}}+x)$, and
$\theta_{a,0,0}\theta_{x,0,0}=\theta_{u,0,0}$ with $u=a^{\theta_{x,0,0}}+x$  for $a,\,x \in\F_q$.
If $a,x\in K_A$, then $\theta_{a,0,0}=\theta_{x,0,0}=1$, and so $L(a)+L(x)=L(a+x)$,  $\theta_{a+x,0,0}=1$. It follows that $a+x\in K_A$. We conclude that $K_A$ is closed under addition, i.e., it is an $\F_p$-subspace of $\F_q$, and $L$ is additive on $K_A$.

We next explore the fact that $G_{A,K}$ is a normal subgroup of $G_A$. For $a\in K_A$, we consider the element $\fg_{t_A,0,0}^{-1} \circ \fg_{a,0,0} \circ \fg_{t_A,0,0}=(\cM_{-t_A,0,0}\cdot \cM_{a,0,0}^{g_A}\cdot\cM_{t_A,0,0},\, 1)$. The  last row of its matrix part  is $(g_A(a),0,0, 1)$, so it equals $\fg_{g_A(a),0,0}$.  Comparing the $(3,2)$-nd entries of their matrix parts, we get $L(a)^{g_A}=L(g_A(a))$. It has a trivial Frobenius part, so $g_A(a)\in K_A$.  It follows that $K_A$ is $g_A$-invariant.

Finally, we have $|K_A|=|G_{A,K}|$ and $[G_A:\,G_{A,K}]=|\im(\psi_A)|=p^{r_A}$.  This yields the claim on the size of $K_A$ and completes the proof of (2).
\end{proof}

\begin{thm}\label{thm_GA_struc1}
Take notation as in Notation \ref{notation_4.1}. If $q$ is odd, then $r_{A,B}\le 1$; if $q$ is even, then $r_{A,B}\le 2$.
\end{thm}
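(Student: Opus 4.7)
The strategy is to reduce the claim to a direct application of Lemma \ref{Wt1cond}. By Corollary \ref{cor_rABnonlin}, $r_{A,B}=\max\{r_A,r_B\}$, and the situation for $G_A$ and $G_B$ is symmetric (swap $L\leftrightarrow M$, $t_A\leftrightarrow t_B$, etc.), so I would prove $r_A\le 1$ for odd $p$ and $r_A\le 2$ for $p=2$, then conclude the same for $r_B$ by the parallel argument. The case $r_A=0$ is vacuous, so assume $r_A\ge 1$. Since $g_A\in\Aut(\F_q)$ has order $p^{r_A}$ inside the cyclic group $\Gal(\F_q/\F_p)$ of order $m$ (with $q=p^m$), we have $p^{r_A}\mid m$; writing $m=p^{r_A}l$ and replacing $g_A$ by an appropriate power if necessary, we may assume $g_A(x)=x^{p^l}$. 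This puts us in the exact setup of Lemma \ref{Wt1cond} with $e=h=r_A$.

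The key computation is to identify the cosets of $G_{A,K}$ in $G_A$ with translates of $K_A$ in $\F_q$. Iterating Theorem \ref{Main} with $b=c=y=z=0$ shows that $\fg_{t_A,0,0}^{i}=\fg_{s_i,0,0}$, where $s_{i+1}=g_A(s_i)+t_A$ and hence $s_i=(1+g_A+\cdots+g_A^{i-1})(t_A)$. In particular $\theta_{s_i,0,0}=g_A^i$. Using Theorem \ref{Main} once more,
\[
G_{A,K}\circ\fg_{t_A,0,0}^{i}=\{\fg_{a,0,0}\circ\fg_{s_i,0,0}:\,a\in K_A\}=\{\fg_{g_A^i(a)+s_i,\,0,\,0}:\,a\in K_A\}.
\]
Since $K_A$ is $g_A$-invariant by Lemma \ref{lem_KAKB}, $g_A^i(K_A)=K_A$, so the first-coordinate set of this coset is precisely $K_A+s_i$. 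The $p^{r_A}$ cosets partition $G_A$, which is parametrized by $a\in\F_q$, so the translates $K_A+s_i$ for $0\le i\le p^{r_A}-1$ partition $\F_q$; in particular they are pairwise disjoint.

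Now apply Lemma \ref{Wt1cond} with $W=K_A$ (a $g_A$-invariant $\F_p$-subspace of codimension $h=r_A$ by Lemma \ref{lem_KAKB}) and $t=t_A$ (so that $t_i=s_i$). The lemma's conclusion forces $r_A\ge p^{r_A-1}$. For odd $p$ this bound already fails at $r_A=2$ (since $p\ge 3>2$), yielding $r_A\le 1$; for $p=2$ it fails at $r_A=3$ (since $4>3$), yielding $r_A\le 2$. The same reasoning applied verbatim to $G_B$, using the analogous statements in Lemma \ref{lem_KAKB}, gives the corresponding bound on $r_B$, and the theorem follows. The only real content of the proof is the bookkeeping showing that $K_A+s_i$ are exactly the images of the cosets of $G_{A,K}$; once this is in place, Lemma \ref{Wt1cond} does all the remaining work, and no additional obstacle is anticipated.
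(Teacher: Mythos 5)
Your proof is correct and follows essentially the same route as the paper: reduce to $G_A$ via $r_{A,B}=\max\{r_A,r_B\}$, show that the cosets of $G_{A,K}$ translate into the sets $K_A+t_i$ partitioning $\F_q$, and invoke Lemma \ref{Wt1cond} with $e=h=r_A$ to force $r_A\ge p^{r_A-1}$. Your derivation of the partition directly from the coset decomposition of $G_A$ is a slight streamlining of the paper's argument (which first proves $K_A+t_i\subseteq K_i$ and then equalizes sizes), but the key objects and the key lemma are identical.
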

\begin{proof}
We have $r_{A,B}=\max\{r_A,r_B\}$ by Lemma \ref{cor_rABnonlin}, so the claim is equivalent to that both $r_A$ and $r_B$ are upper bounded by $1$ or $2$ according as $q$ is odd or even.
Since the arguments for $G_A$ and $G_B$ are the same, we only give the proof for  $G_A$ here. We assume that $r_A\ge 1$ in the sequel. For the ease of notation, we write $r=r_A$ in this proof. For each  $i\ge 0$, define
\begin{equation}\label{eqn_AtiKi}
t_{i}=t_A+g_A(t_A)+\cdots+g_A^{i-1}(t_A),\quad K_{i}:=\{a \in \F_q:\, \theta_{a,0,0}= g_A^i\}.
\end{equation}
Here, we have $t_0=0$, $t_A\in K_{1}$ and $K_{i}=K_{i+p^r}$. Let $K_A$ be as in Lemma \ref{lem_KAKB}; we have $K_A=K_0$. It is a $g_A$-invariant $\F_p$-subspace of $\F_q$ of codimension $r$ by (2) of Lemma \ref{lem_KAKB}.

We examine the coset decomposition in  \eqref{eqn_GAGAK}. For $a\in K_A$ and $i\ge 1$, set $\fg_i:=\fg_{a,0,0}\circ\fg_{t_A,0,0}^i$. We compute that the last row of the matrix part of $\fg_i$ is $(g_A^i(a)+t_i,0,0,1)$ by induction, so it equals $\fg_{g_A^i(a)+t_i,0,0}$.
\begin{enumerate}
\item[(a)] The Frobenius part of $\fg_i$ is $g_A^i$, so $K_A+t_i\subseteq K_i$ for $i\ge 1$; in particular, with $i=p^r$ we have $K_A+t_{p^r}=K_A$ by the fact $g_A^{p^r}=1$. It follows that $t_{p^r}\in K_A$.
\item[(b)] By (1) of Corollary \ref{cond1} with $x=t_A$ and $a\in K_i$, we have $\theta_{g_A(a)+t_A,0,0}=\theta_{a,0,0}\theta_{t_A,0,0}=g_A^{i+1}$. Therefore, $a\in K_i$ implies that $t_A+g_A(a)\in K_{i+1}$, and so $t_A+g_A(K_i)\subseteq K_{i+1}$. It follows that $|K_i|=|t_A+g_A(K_i)|\leq |K_{i+1}|$ for each $i$. Since $K_0=K_{p^r}$, we conclude that all the $K_i$'s have the same sizes.
\end{enumerate}
As a corollary of (a) and (b), we have $K_A+t_i=K_i$. It follows by definition that $K_0,\cdots,K_{p^r-1}$ form a partition of $\F_q$. All the conditions in Lemma \ref{Wt1cond} are now satisfied for the pair $(K_A,\,t_A)$ with $e=h=r$, so we have $r\geq p^{r-1}$. We deduce that $r\le 1$ if $p$ is odd and $r\le 2$ if $p=2$ as desired.
\end{proof}

We shall need the following technical lemma in the next section.
\begin{lemma}\label{lem_LsumLofx}
Take the same notation as in Notation \ref{notation_4.1}. For $i\ge 1$, set $t_{A,i}:=(1+g_A+\cdots+g_A^{i-1})(t_A)$. Then  we have $t_{A,p^{r_A}}\in K_{A}$. Moreover, for $x\in K_A$ it holds that
\begin{equation}\label{eqn_Lofx}
    L(g_A^i(x)+t_{A,i})=g_A^i(L(x))+\sum_{j=0}^{i-1}g_A^j(L(t_A)),\quad i\ge 1.
\end{equation}
If we replace the $A$'s in the subscripts by $B$ and $L$ by $M$, then the claims also hold.
\end{lemma}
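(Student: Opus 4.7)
The plan is to derive both assertions from a single induction on $i$, using direct matrix computation via Remark \ref{rem_EMult}. The claim $t_{A,p^{r_A}} \in K_A$ is essentially already observed in step (a) of the proof of Theorem \ref{thm_GA_struc1}: tracking the last row of $\fg_{t_A,0,0}^i$ by induction shows that this element has last row $(t_{A,i},0,0,1)$ and Frobenius part $g_A^i$, so it must equal $\fg_{t_{A,i},0,0}$ (by the regular action). Specialising to $i=p^{r_A}$, the Frobenius part is trivial, hence $t_{A,p^{r_A}} \in K_A$.

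For \eqref{eqn_Lofx}, I would prove by induction on $i \ge 1$ the stronger assertion that for each $x\in K_A$,
\[
\fg_{x,0,0}\circ\fg_{t_A,0,0}^i = \fg_{g_A^i(x)+t_{A,i},\,0,\,0},
\]
and then read off \eqref{eqn_Lofx} by comparing $(3,2)$-entries, since both sides have $(3,2)$-entry equal to $L$ evaluated at the last-row first coordinate. For the base case $i=1$, since $x\in K_A$ has trivial Frobenius part, \eqref{groupmulti} gives $\fg_{x,0,0}\circ\fg_{t_A,0,0}=(\cM_{x,0,0}^{g_A}\cdot\cM_{t_A,0,0},\,g_A)$. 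Now $\cM_{x,0,0}^{g_A}=E(g_A(x),0,0,g_A(L(x)))$, and by Remark \ref{rem_EMult} the matrix product equals $E(g_A(x)+t_A,\,0,\,0,\,g_A(L(x))+L(t_A))$. Matching last rows identifies the element as $\fg_{g_A(x)+t_A,0,0}$, and comparing $(3,2)$-entries yields $L(g_A(x)+t_A)=g_A(L(x))+L(t_A)$.

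For the inductive step I would set $u_i:=g_A^i(x)+t_{A,i}$, write $\fg_{x,0,0}\circ\fg_{t_A,0,0}^{i+1}=\fg_{u_i,0,0}\circ\fg_{t_A,0,0}$, and apply the same matrix computation; the Frobenius part of $\fg_{u_i,0,0}$ is now $g_A^i$, but the formula for the product still reads $E(g_A(u_i)+t_A,\,0,\,0,\,g_A(L(u_i))+L(t_A))$, and the telescoping identity $g_A(t_{A,i})+t_A=t_{A,i+1}$ simplifies $g_A(u_i)+t_A=g_A^{i+1}(x)+t_{A,i+1}=u_{i+1}$. Comparing $(3,2)$-entries gives $L(u_{i+1})=g_A(L(u_i))+L(t_A)$, and substituting the inductive hypothesis produces the claimed formula. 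The $G_B$ statement follows by the identical argument, with the roles of $\fg_{t_A,0,0}$, $L$ replaced by $\fg_{0,t_B,0}$, $M$; the only mild subtlety, which is not an obstacle, is that $u_i\in K_i$ rather than $K_A$ for $i\ge 1$, so the $K_A$-additivity of $L$ cannot be invoked directly — but the formula is extracted from the matrix identity in $\PGaSp(4,q)$, which is insensitive to this distinction.
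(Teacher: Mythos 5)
Your proposal is correct and follows essentially the same route as the paper: both establish $\fg_{x,0,0}\circ\fg_{t_A,0,0}^i=\fg_{g_A^i(x)+t_{A,i},0,0}$ (the paper imports this from the proof of Theorem \ref{thm_GA_struc1}, you re-derive it inductively via Remark \ref{rem_EMult}) and then read off \eqref{eqn_Lofx} by comparing $(3,2)$-entries, with $t_{A,p^{r_A}}\in K_A$ following from the trivial Frobenius part at $i=p^{r_A}$. Your closing observation that the computation uses only the matrix identity and never the $K_A$-additivity of $L$ is accurate and consistent with the paper's argument.
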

\begin{proof}
By (2) of Lemma \ref{lem_KAKB}, we have $L(g_A(x))=g_A(L(x))$ for $x\in K_A$.
We showed in the proof of Theorem \ref{thm_GA_struc1} that $t_{A,p^{r_A}}\in K_{A}$, and $\fg_{x,0,0}\circ\fg_{t_A,0,0}^i=\fg_{g_A^i(x)+t_{A,i},0,0}$ for $i\ge 1$. Inductively, we compute that the $(3,2)$-nd entry of $\fg_{x,0,0}\circ\fg_{t_A,0,0}^i$ equals $g_A^i(L(x))+\sum_{j=0}^{i-1}g_A(L(t_A))$ for  $i\ge 1$. The $(3,2)$-nd entry of $\fg_{g_A^i(x)+t_{A,i},0,0}$ equals $T(g_A^i(x)+t_{A,i},0,0)=L(g_A^i(x)+t_{A,i})$ by the definition of $L$ in Notation \ref{notation_sigmaLMS}. Therefore, \eqref{eqn_Lofx} holds. The $(B,M)$ version is proved in the same way.
\end{proof}

\begin{thm}\label{thm_GA_struc2}
Take notation as in Notation \ref{notation_4.1}, and take $g_2\in\Aut(\F_ q)$ of order $p$. If $r_A\le 1$, then there exists   $\mu_A\in \F_q$  such that $g_2(\mu_A)=\mu_A$ and $\theta_{x,0,0}=g_2^{\tr_{\F_q/\F_p}(\mu_A x)}$. The same also holds for $\theta_{x,0,0}$ after we replace the $A$'s  in the subscripts by $B$'s.
\end{thm}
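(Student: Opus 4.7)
The plan is to dispose of $r_A=0$ trivially (then $\theta_{x,0,0}\equiv1$ and $\mu_A=0$ works), so the core content is the case $r_A=1$. Assume this, write $g_A(x)=x^{p^l}$, and note that $\F_{p^l}$ is the fixed field of $\la g_A\ra=\la g_2\ra$ (the unique order-$p$ subgroup of the cyclic group $\Aut(\F_q)$). By Lemma \ref{lem_KAKB}(2), $K_A$ is a $g_A$-invariant $\F_p$-hyperplane, so Lemma \ref{lem_ff2} gives $K_A=\ker L_\mu$ for a unique $\mu\in\F_q^*$, and Lemma \ref{lem_mu_inv} forces $\mu\in\F_{p^l}$.

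I would then define $f:\F_q\to\F_p$ by $\theta_{x,0,0}=g_A^{f(x)}$ and aim to prove $f$ is $\F_p$-linear. Once this is done, Lemma \ref{lem_ff2} produces $\nu\in\F_q$ with $f(x)=\tr_{\F_q/\F_p}(\nu x)$; the equality $\ker L_\nu=K_A=\ker L_\mu$ combined with Lemma \ref{lem_tr} forces $\nu=c\mu$ for some $c\in\F_p^*$, so $\nu\in\F_{p^l}$. Writing $g_A=g_2^s$ with $1\le s\le p-1$ and setting $\mu_A:=s\nu$, the $\F_p$-linearity of $\tr_{\F_q/\F_p}$ gives
\[
\theta_{x,0,0}=g_2^{s\tr_{\F_q/\F_p}(\nu x)}=g_2^{\tr_{\F_q/\F_p}(\mu_A x)},
\]
and $\mu_A\in\F_{p^l}$ is fixed by $g_2$ since $\F_{p^l}$ is the fixed field of $\la g_2\ra$.

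The substantive work is the additivity of $f$. From the structure established in the proof of Theorem \ref{thm_GA_struc1}, the fibers $K_i=\{x:\theta_{x,0,0}=g_A^i\}$ are exactly the cosets $K_A+t_i$ of $K_A$, so $f$ is constant on cosets of $K_A$. Corollary \ref{cond1}(1) rewrites as $f(a^{\theta_{x,0,0}}+x)=f(a)+f(x)$, so additivity of $f$ reduces to showing $a-a^{\theta_{x,0,0}}=(1-g_A^{f(x)})(a)\in K_A$ for all $a,x\in\F_q$. Since $(1-g_A^j)(\F_q)\subseteq(1-g_A)(\F_q)$ for every $j$, it suffices to verify $(1-g_A)(\F_q)\subseteq K_A$. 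By Lemma \ref{lem_ff1}, $(1-g_A)(\F_q)=\ker\tr_{\F_q/\F_{p^l}}$; and since $\mu\in\F_{p^l}$,
\[
L_\mu(x)=\tr_{\F_{p^l}/\F_p}\bigl(\mu\tr_{\F_q/\F_{p^l}}(x)\bigr),
\]
so $\ker\tr_{\F_q/\F_{p^l}}\subseteq\ker L_\mu=K_A$. This is the main (and only nontrivial) step: it is where the hypothesis $r_A\le 1$ is genuinely used, via the invariance $g_A(\mu)=\mu$ coming from Lemma \ref{lem_mu_inv}. The statement for $\theta_{0,b,0}$ follows by the symmetric argument with $B$, $M$, $K_B$ in place of $A$, $L$, $K_A$.
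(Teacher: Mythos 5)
Your proof is correct, and the endpoints match the paper's: reduce to $r_A=1$, use that $K_A$ is a $g_A$-invariant hyperplane equal to $\ker L_\mu$, and finish with Lemmas \ref{lem_ff2} and \ref{lem_tr}. The middle step, however, takes a genuinely different route. The paper works in the quotient $\F_q/K_A$: it writes $K_A+g_A(t_A)=K_A+\lambda t_A$, invokes the group-theoretic fact $(1+g_A+\cdots+g_A^{p-1})(t_A)\in K_A$ from Lemma \ref{lem_LsumLofx} to get $\sum_{i=0}^{p-1}\lambda^i=0$ and hence $\lambda=1$, and so obtains $K_i=K_A+it_A=\{x:\tr_{\F_q/\F_p}(\mu x)=i\}$ directly, only calling on Lemma \ref{lem_mu_inv} at the very end. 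You instead put $g_A(\mu)=\mu$ at the center: transitivity of the trace gives $\ker\tr_{\F_q/\F_{p^{m/p}}}\subseteq\ker L_\mu=K_A$, hence $(1-g_A^j)(\F_q)\subseteq(1-g_A)(\F_q)\subseteq K_A$, which makes the Galois twist in Corollary \ref{cond1}(1) invisible modulo $K_A$ and yields additivity of the exponent map $f$ in one stroke. This is arguably cleaner — it replaces the $\lambda$-computation by a containment of kernels, and it subsumes the paper's $\lambda=1$ step, since $(1-g_A)(\F_q)\subseteq K_A$ gives $\overline{g_A(t_A)}=\overline{t_A}$ in $\F_q/K_A$ immediately. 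Two cosmetic points, neither of which affects the argument: the $\mu$ with $K_A=\ker L_\mu$ is unique only up to an $\F_p^*$-scalar; and writing the fixed field of $g_A$ as $\F_{p^l}$ tacitly normalizes $l=m/p$ (a generator of the unique order-$p$ subgroup is $x\mapsto x^{p^{jm/p}}$ with $\gcd(j,p)=1$, whose fixed field is $\F_{p^{m/p}}$ — which is all you actually use).
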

\begin{proof}
The arguments for $r_A$ and $r_B$ are the same, and we only give the proof for  $r_A$ here. In the case $r_A=0$, we can simply take $\mu_A=0$. Therefore, we assume that $r_A=1$ in the sequel. Since $g_A$ has order $p$, we have $g_A=g_2^d$ for some integer $d$ with $1\le d\le p-1$. Let $K_i$ and $t_i$ be as in \eqref{eqn_AtiKi}, and let $K_A$ be as in Lemma \ref{lem_KAKB}.

We claim that  $K_i=K_A+it_A$ for $0\le i\le p-1$. By Lemma \ref{lem_KAKB}, the $\F_p$-subspace $K_A$ has codimension $r=1$ in $\F_q$. Since $t_A\not\in K_A$, we have a partition $\F_q=\cup_{j\in \F_p} (K_A+j t_A)$. Since  $K_A+g_A(t_A)$ is a coset of $K_A$, there is  $\lambda\in\F_p^*$ such that $K_A+g_A(t_A)=K_A+\lambda t_A$. Since $K_A$ is $g_A$-invariant, we deduce that $K_A+g_A^i(t_A)=K_A+\lambda^i t_A$ by induction. In the quotient space $\F_q/K_A$, we thus have  $\overline{g_A^i(t_A)}=\lambda^i \cdot\overline{t_A}$ for $i\ge 0$. From the fact $t_p=\sum_{i=0}^{p-1}g_A^i(t_A)\in K_A$ in Lemma \ref{lem_LsumLofx}, we have $\sum_{i=0}^{p-1}\lambda^i \cdot \overline{t_A}=0$, i.e., $\sum_{i=0}^{p-1}\lambda^i=0$. We deduce that $\lambda=1$, since otherwise the sum equals $\frac{\lambda^p-1}{\lambda-1}=1$. We thus have established that $\overline{g_A^i(t_A)}=\overline{t_A}$ in $\F_q/K_A$ for $i\ge 0$. It follows that $\overline{t_i}=i\cdot \overline{t_A}$, and the claim follows.

Since $K_A$ has codimension $1$ and $t_A\not\in\F_q$, there exists $\mu\in\F_q^*$ such that $\tr_{\F_q/\F_p}(\mu x)=0$ for $x\in K_A$ and $\tr_{\F_q/\F_p}(\mu t_A)=1$. Then $K_i=K_A+it_A=\{x\in\F_q:\,\tr_{\F_q/\F_p}(\mu x)=i\}$ for $i\ge 0$. By the definitions of the $K_i$'s in \eqref{eqn_AtiKi}, we have $\theta_{x,0,0}=g_A^i=g_A^{\tr_{\F_q/\F_p}(\mu x)}=g_2^{\tr_{\F_q/\F_p}(d\mu x)}$ for each $x\in K_i$ and each $i\ge 0$. Since $K_A$ is $g_A$-invariant, we have $g_A(\mu)=\mu$ by Lemma \ref{lem_mu_inv}. To conclude, the theorem holds with $\mu_A=d\mu$.
\end{proof}

We can now give a good description of the Frobenius part of $G_{A,B}$.
\begin{corollary}\label{cor_theta_Ord}
Take  notation as in Notation \ref{notation_4.1}, and assume that $q$ is odd. Let $g_2$ be an element of $\Aut(\F_q)$ of order $p$. Then there exist $\mu_A$ and $\mu_B$ in $\F_q$  that are $g_2$-invariant  such that $\theta_{a,b,0}=g_2^{\tr_{\F_q/\F_p}(\mu_A a+\mu_B b)}$.
\end{corollary}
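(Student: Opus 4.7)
The plan is to combine the two preceding theorems with the multiplicative identity in part (4) of Corollary \ref{cond1}. Since $q$ is odd, Theorem \ref{thm_GA_struc1} forces $r_{A,B}\le 1$, and hence both $r_A\le 1$ and $r_B\le 1$ (using $r_{A,B}=\max\{r_A,r_B\}$ from Corollary \ref{cor_rABnonlin}). Applying Theorem \ref{thm_GA_struc2} twice, once to $G_A$ and once to $G_B$, produces $g_2$-invariant elements $\mu_A,\mu_B\in\F_q$ such that
\[
\theta_{a,0,0}=g_2^{\tr_{\F_q/\F_p}(\mu_A a)},\qquad \theta_{0,b,0}=g_2^{\tr_{\F_q/\F_p}(\mu_B b)}
\]
for all $a,b\in\F_q$.

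Next I would invoke part (4) of Corollary \ref{cond1}, which says $\theta_{a^{\theta_{0,b,0}},\,b,\,0}=\theta_{a,0,0}\,\theta_{0,b,0}$. Setting $u:=a^{\theta_{0,b,0}}$, the map $a\mapsto u$ is a bijection of $\F_q$ (since $\theta_{0,b,0}\in\Aut(\F_q)$), so fixing $b$ it suffices to show $\theta_{u,b,0}=g_2^{\tr(\mu_A u+\mu_B b)}$ for all $u$. The key observation is that $\theta_{0,b,0}$ is a power of $g_2$, and $g_2(\mu_A)=\mu_A$, so $\theta_{0,b,0}$ fixes $\mu_A$; combining this with the Galois-invariance of the trace gives
\[
\tr_{\F_q/\F_p}(\mu_A a)=\tr_{\F_q/\F_p}\bigl(\theta_{0,b,0}(\mu_A a)\bigr)=\tr_{\F_q/\F_p}(\mu_A u).
\]
Therefore
\[
\theta_{u,b,0}=g_2^{\tr(\mu_A a)}\,g_2^{\tr(\mu_B b)}=g_2^{\tr(\mu_A u+\mu_B b)},
\]
using that $g_2$ has order $p$ and the trace values lie in $\F_p$ so the exponent only matters modulo $p$. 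This is exactly the desired formula.

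There is essentially no main obstacle here: the statement is a direct consolidation of the single-variable descriptions obtained in Theorem \ref{thm_GA_struc2}, glued together via the group law (4) of Corollary \ref{cond1}. The only point requiring a little care is verifying that the Galois substitution $a\mapsto a^{\theta_{0,b,0}}$ does not disturb the trace term $\tr(\mu_A a)$, and this is immediate from the $g_2$-invariance of $\mu_A$ together with the fact that $\theta_{0,b,0}\in\langle g_2\rangle$.
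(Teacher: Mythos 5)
Your proposal is correct and follows essentially the same route as the paper: both deduce $r_A,r_B\le 1$ from Theorem \ref{thm_GA_struc1}, apply Theorem \ref{thm_GA_struc2} to each of $G_A$ and $G_B$, and then glue the two formulas via part (4) of Corollary \ref{cond1}, using that $\theta_{0,b,0}\in\langle g_2\rangle$ fixes $\mu_A$ and that the trace is Galois-invariant. The only cosmetic difference is that the paper substitutes $a\mapsto\theta_{0,b,0}^{-1}(a)$ while you use the forward bijection $a\mapsto a^{\theta_{0,b,0}}$; these are the same manipulation.
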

\begin{proof}
By Theorem \ref{thm_GA_struc1} and Theorem \ref{thm_GA_struc2}, we have $r_A\le 1$, $r_B\le 1$, and there exist $\mu_A$ and $\mu_B$ that are fixed by $g_2$ such that $\theta_{a,0,0}=g_2^{\tr_{\F_q/\F_p}(\mu_A a)}$, $\theta_{0,b,0}=g_2^{\tr_{\F_q/\F_p}(\mu_B b)}$. By (4) of Corollary \ref{cond1}, we have
\[
\theta_{a,b,0}=\theta_{\theta_{0,b,0}^{-1}(a),0,0}\cdot\theta_{0,b,0}=g_2^{\tr_{\F_q/\F_p}(\mu_A a+\mu_Bb)}.
\]
In the second equality, we used the fact that $\tr_{\F_q/\F_p}(\mu_A \theta_{0,b,0}^{-1}(a))=\tr_{\F_q/\F_p}(\theta_{0,b,0} (\mu_A)a)$ and the fact that $\theta_{0,b,0}$ is in $\la g_2\ra$ and thus fixes $\mu_A$. This completes the proof.
\end{proof}

In Theorem \ref{thm_GA_struc1}, we have obtained an upper bound on $r_{A,B}$. Our next objective is to bound $r_C$. To be specific, we will establish the following result.
\begin{thm}\label{thm_G_struc}
Take notation as in Notation \ref{notation_4.1}, and set
\[
\cK_i^*:=\{z\in \F_q:\,\sigma_z^{p^{r_{A,B}}}=g_C^{ip^{r_{A,B}}}\},\quad i\ge 0.
\]
If $q$ is even, then $r_C\le r_{A,B}+2$; if $q$ is odd, then $r_C\le r_{A,B}+1$, $\cK_0^*$ is a  $g_C$-invariant $\F_p$-subspace  of codimension $s=\max\,\{0,r_C-r_{A,B}\}$ and
\[
\cK_i^*=(1+g_C+\cdots+g_C^{i-1})(t_{C})+\cK_0^*,\quad i\ge 1.
\]
\end{thm}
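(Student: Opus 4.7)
The plan is to mimic the strategy used in the proof of Theorem \ref{thm_GA_struc1}, with the sets $\cK_i^*$ playing the role of the $K_i$ there, and to conclude by applying Lemma \ref{Wt1cond} to the pair $(\cK_0^*,t_C)$ with $e=r_C$ and $h=s$. The main obstacle is that $c$-components do not combine additively under the group multiplication: by (3) of Corollary \ref{cond1}, $\fg_{0,0,c}\circ\fg_{0,0,z}$ has $c$-component $\sigma_z(c)+z$ rather than $c+z$, so showing that $\cK_0^*$ is an $\F_p$-subspace requires more than the direct computation that worked in the $G_A$ case.

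First I would pass to the subgroup $H:=\psi^{-1}(\psi(G_{A,B}))\le G$. Combining (6) of Corollary \ref{cond1} with the abelianness of $\Aut(\F_q)$, one obtains $\theta_{a,b,c}^{p^{r_{A,B}}}=\sigma_c^{p^{r_{A,B}}}$, so $\fg_{a,b,c}\in H$ if and only if $c\in\cK_0^*$. Hence $H=\{\fg_{a,b,c}:a,b\in\F_q,\,c\in\cK_0^*\}$, and since $G/H\cong\psi(G)/\psi(G_{A,B})$ is cyclic of order $p^s$, counting gives $|\cK_0^*|=q/p^s$. The case $s=0$ is immediate ($\cK_0^*=\F_q$ and all claims are trivial), so I assume $s>0$. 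Computing $\fg_{0,0,c}\circ\fg_{0,0,t_C}$, whose Frobenius part is $\sigma_c g_C$ and whose $c$-component is $g_C(c)+t_C$, shows that the affine map $z\mapsto g_C(z)+t_C$ sends $\cK_i^*$ injectively into $\cK_{i+1}^*$; iterating around the cycle $\cK_{p^s}^*=\cK_0^*$ forces these to be bijections, so the $\cK_i^*$ partition $\F_q$ into $p^s$ equal pieces and $\cK_{i+1}^*=g_C(\cK_i^*)+t_C$.

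The crucial step is to upgrade $\cK_0^*$ from a set to an $\F_p$-subspace. For this I exploit the freedom to choose $(a,b)$ inside $H$: given $z\in\cK_0^*$, since $\sigma_z\in\psi(G_{A,B})$ one can choose $(a,b)$ with $\theta_{a,b,z}=\theta_{a',b',0}\sigma_z=1$. Then for any $c\in\cK_0^*$ the product $\fg_{0,0,c}\circ\fg_{a,b,z}$ has Frobenius part $\sigma_c\in\psi(G_{A,B})$, so it lies in $H$, and its $c$-component is $\theta_{a,b,z}(c)+z=c+z$; hence $c+z\in\cK_0^*$, which shows $\cK_0^*$ is closed under addition and (as it contains $0$ and the characteristic is $p$) is therefore an $\F_p$-subspace. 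The same choice of $(a,b)$, applied with $c\in\cK_i^*$, yields $\cK_i^*+\cK_0^*\subseteq\cK_i^*$, so each $\cK_i^*$ is a single $\cK_0^*$-coset by size comparison. In particular $\cK_1^*=t_C+\cK_0^*$ (since $t_C\in\cK_1^*$), and comparing with $\cK_1^*=g_C(\cK_0^*)+t_C$ gives $g_C(\cK_0^*)=\cK_0^*$; an easy induction then produces $\cK_i^*=\cK_0^*+t_{C,i}$ with $t_{C,i}=(1+g_C+\cdots+g_C^{i-1})(t_C)$.

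With all hypotheses of Lemma \ref{Wt1cond} now verified for the pair $(\cK_0^*,t_C)$ at codimension $h=s$, the lemma forces $s\ge p^{s-1}$. This gives $s\le 1$ when $p$ is odd, hence $r_C\le r_{A,B}+1$, and $s\le 2$ when $p=2$, hence $r_C\le r_{A,B}+2$. The structural conclusions about $\cK_0^*$ and the cosets $\cK_i^*$ claimed in the odd-characteristic case were established along the way, completing the proof.
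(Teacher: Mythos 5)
Your proof is correct, and it reaches the conclusion by a genuinely different route from the paper's. The shared skeleton is the partition $\F_q=\cup_{i=0}^{p^s-1}\cK_i^*$ with $\cK_{i+1}^*=g_C(\cK_i^*)+t_C$, followed by an application of Lemma \ref{Wt1cond}; the difference is how the linear structure of $\cK_0^*$ is extracted. The paper works only with the elements $\fg_{0,0,c}$, so it only obtains Galois-twisted sumset relations $\cK_{i+p^sk}+g_C^{i+p^sk}(\cK_j^*)=\cK_{i+j}^*$; it then introduces the stabilizer $H_0^*=\{x:\,x+\cK_0^*\subseteq\cK_0^*\}$ and its $g_C$-invariant core $W=\cap_i g_C^i(H_0^*)$, shows $\cK_0^*$ is a disjoint union of $p^{d_0}$ cosets of $W$, and applies Lemma \ref{Wt1cond} to the pair $(W,t_C)$ with the larger codimension $h=d_0+s$; only in odd characteristic does the inequality $p^{d_0+s-1}\le d_0+s$ force $d_0=0$, so that $\cK_0^*=W$ is a subspace a posteriori. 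You instead exploit the freedom in the $(a,b)$-coordinates: since $\sigma_z$ and $\psi(G_{A,B})$ both live in the unique subgroup of order $p^{r_{A,B}}$ of the cyclic group $\Aut(\F_q)$, every $z\in\cK_0^*$ occurs as the $c$-coordinate of an element of $G$ with trivial Frobenius part, and multiplying $\fg_{0,0,c}$ by such an element adds $z$ to the $c$-coordinate with no Galois twist. This yields the subspace and single-coset structure a priori, in every characteristic, and lets you invoke Lemma \ref{Wt1cond} directly with $h=s$. (You also identify $H=G_{\cK_0^*}$ as $\psi^{-1}(\psi(G_{A,B}))$ up front, which the paper only records afterwards in Corollary \ref{cor_subGcK} as a consequence of the theorem.) Your route is shorter, dispenses with the $H_0^*$ and $W$ machinery, and as a bonus establishes the $g_C$-invariant subspace structure of $\cK_0^*$ for even $q$ as well, which the paper's argument leaves open since there $d_0$ could a priori equal $1$; the paper's version buys only the economy of never leaving the one-parameter family $\{\fg_{0,0,c}\}$.
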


We shall prove Theorem \ref{thm_G_struc} after a series of lemmas. In the case $r_C\le r_{A,B}$, we have $\cK_i^*=\F_q$ for each $i\ge 0$, and the claims in Theorem \ref{thm_G_struc} are trivial. Therefore, we assume that $r_C\ge r_{A,B}+1$, i.e., $s=\max\,\{0,\,r_C-r_{A,B}\}\ge 1$. Set
\[
H_{0}^*:=\{x \in \F_q:\, x+\cK_0^*\subseteq \cK_0^*\},
\]
which is an $\F_p$-subspace. Since $0\in \cK_0^*$, we have $H_0^* \subseteq \cK_0^*$.
For $i\ge 0$, define
\[
\cK_i:=\{z\in \F_q:\, \sigma_z=g_C^i\},\quad t_i:=(1+g_C+\cdots+g_C^{i-1})(t_C).
\]
Here, $t_0=0$, and  it holds that $t_{i+j}=t_{i}+g_C^{i}(t_{j})$ for $i,j\ge 0$. We have $t_1=t_C\in \cK_1$ by the choice of $t_C$.  For ease, we write $r:=r_{A,B}$.

\begin{lemma}\label{lem_FrobcKis1}
We have $\cK_i^*=t_i+g_C^i(\cK_0^*)$, $t_{p^si}\in \cK_0^*$ and $\cK_{i+p^sk}+g_C^{i+p^sk}(\cK_j^*)= \cK_{i+j}^*$ for any nonnegative integers $i,j,k$.
\end{lemma}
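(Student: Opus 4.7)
The plan is to analyze products of the form $\fg_{0,0,c_1}\circ\fg_{0,0,c_2}$ (and a single inverse) by repeatedly invoking Theorem \ref{Main}, Corollary \ref{cond1}(3), Corollary \ref{cond1}(6), and Remark \ref{rem_EMult}. The central observation is the following identity, which drops out of Corollary \ref{cond1}(6) after raising both sides to the $p^r$-th power, where $r:=r_{A,B}$: for every $a,b,c\in\F_q$,
\[
\theta_{a,b,c}^{p^r} \;=\; \theta_{a',b',0}^{p^r}\,\sigma_c^{p^r}\;=\;\sigma_c^{p^r},
\]
since $\theta_{a',b',0}$ lies in the Frobenius image of $G_{A,B}$ and therefore has order dividing $p^r$. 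Membership of the last coordinate of any product of the $\fg_{0,0,c}$'s in the fiber $\cK_l^*$ can thus be read off from the $p^r$-th power of the Frobenius part of the product. As a warm-up I would first prove by induction on $i$ that $\fg_{0,0,t_C}^i=\fg_{u_i,v_i,t_i}$ with Frobenius part exactly $g_C^i$; the inductive step follows from Theorem \ref{Main} applied to $(u_i,v_i,t_i)$ and $(0,0,t_C)$, yielding new last coordinate $g_C(t_i)+t_C=t_{i+1}$ and new Frobenius part $g_C^{i+1}$. Applying the key identity, $t_i\in\cK_i^*$. Claim (b) is then immediate: specialise to $ip^s$ and use $g_C^{ip^s\cdot p^r}=g_C^{ip^{r_C}}=1$, so $t_{ip^s}\in\cK_{ip^s}^*=\cK_0^*$.

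For claim (a) I would prove both inclusions using the same Frobenius-power trick. Given $y\in\cK_0^*$, Theorem \ref{Main} applied to $\fg_{0,0,y}\circ\fg_{u_i,v_i,t_i}$ produces last coordinate $g_C^i(y)+t_i$ with Frobenius part $\sigma_y g_C^i$, whose $p^r$-th power is $g_C^{ip^r}$; hence $g_C^i(y)+t_i\in\cK_i^*$, giving $t_i+g_C^i(\cK_0^*)\subseteq\cK_i^*$. For the reverse, take $z\in\cK_i^*$ and compute $\fg_{0,0,z}\circ\fg_{0,0,t_C}^{-i}$, using the inverse formula from Remark \ref{rem_EMult} to find that $\fg_{0,0,t_C}^{-i}$ has last coordinate $-g_C^{-i}(t_i)$ and Frobenius part $g_C^{-i}$; the product then has last coordinate $g_C^{-i}(z-t_i)$ and Frobenius part $\sigma_z g_C^{-i}$ whose $p^r$-th power is $1$, so $g_C^{-i}(z-t_i)\in\cK_0^*$.

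For claim (c) the forward inclusion is a direct analogue: for $z\in\cK_{i+p^sk}$ and $y\in\cK_j^*$, Theorem \ref{Main} applied to $\fg_{0,0,y}\circ\fg_{0,0,z}$ gives last coordinate $g_C^{i+p^sk}(y)+z$ and Frobenius part $\sigma_y\sigma_z$, whose $p^r$-th power is $g_C^{(i+j+p^sk)p^r}=g_C^{(i+j)p^r}$ (because $g_C^{p^sk\cdot p^r}=g_C^{kp^{r_C}}=1$). For the reverse, fix any $z\in\cK_{i+p^sk}$ and, for each $w\in\cK_{i+j}^*$, compute $\fg_{0,0,w}\circ\fg_{0,0,z}^{-1}$ as in Step 3 to get the candidate $y:=g_C^{-(i+p^sk)}(w-z)$, whose Frobenius part has $p^r$-th power $g_C^{jp^r}$, certifying $y\in\cK_j^*$ and $w=z+g_C^{i+p^sk}(y)$. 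The main obstacle I anticipate is precisely the existence step needed for this reverse inclusion: one must ensure $\cK_{i+p^sk}\neq\emptyset$. I would handle this by observing that the Frobenius image $\Theta=\psi(G)$ is a cyclic $p$-group in which $g_A,g_B$ automatically lie in the unique subgroup of their own order, so $\Theta=\langle g_C\rangle$ is cyclic of order $p^{r_C}$; the coset decomposition $\Theta=\bigsqcup_k \Theta_{AB}\,g_C^{i+p^sk}$ obtained from Corollary \ref{cond1}(6) then forces, for each nonempty $\cK_i^*$, at least one $k$ with $\cK_{i+p^sk}\neq\emptyset$, which suffices to complete the argument.
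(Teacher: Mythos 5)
Your proposal is correct and runs on the same engine as the paper's proof: raise the Frobenius part of a product of elements $\fg_{0,0,c}$ to the $p^{r}$-th power (with $r=r_{A,B}$) so that the contribution coming from $G_{A,B}$ dies, and then read off from Corollary \ref{cond1}(6) which fiber $\cK_l^*$ the last coordinate of the product lands in. Your forward inclusions in (a) and (c), and claim (b), are exactly the paper's: the paper packages the forward direction of (c) as the single containment $\cK_{i+p^sk}+g_C^{i+p^sk}(\cK_j^*)\subseteq\cK_{i+j}^*$ and obtains (a) and (b) from its special case $i=1$, $k=0$, $z=t_C$. Where you genuinely diverge is in the reverse inclusions. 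The paper gets them by counting: the chain $|\cK_j^*|\le|\cK_{j+1}^*|$ together with the periodicity $\cK_j^*=\cK_{j+p^s}^*$ forces all the $\cK_i^*$ to have the common size $q/p^s$, and the containments are then upgraded to equalities by comparing cardinalities. You instead compute with the explicit inverses $\fg_{0,0,t_C}^{-i}$ and $\fg_{0,0,z}^{-1}$ via Remark \ref{rem_EMult}, which gives each reverse inclusion directly and yields the equal-size statement as a corollary of (a) rather than as an input. Both routes are legitimate; yours is slightly more hands-on but dispenses with the counting step.

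Two remarks. First, the coset decomposition you write, $\Theta=\bigsqcup_k\Theta_{AB}\,g_C^{i+p^sk}$, is garbled: since $\Theta_{AB}=\la g_C^{p^s}\ra$ the cosets $\Theta_{AB}\,g_C^{i+p^sk}$ coincide for all $k$; what your argument actually uses is $\Theta=\bigsqcup_{i=0}^{p^s-1}\Theta_{AB}\,g_C^{i}$ together with $\psi(G)\subseteq\Theta_{AB}\cdot\{\sigma_c:c\in\F_q\}$ from Corollary \ref{cond1}(6). Second, the nonemptiness issue you flag for the reverse inclusion in (c) is real, but your resolution only produces, for each residue $i$, \emph{some} $k$ with $\cK_{i+p^sk}\ne\emptyset$; that does not literally prove the asserted equality "for any $k$", since for an empty $\cK_{i+p^sk}$ the left-hand side is empty while $\cK_{i+j}^*$ is not. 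This is not a defect of your proof relative to the paper's: the paper's "by comparing sizes" step silently requires the same nonemptiness and does not address it, and the "some $k$" version is all that is invoked in Lemma \ref{lem_FrobcKis2} and in the proof of Theorem \ref{thm_G_struc}. It would be worth stating that caveat explicitly rather than claiming it "suffices" for the statement as written.
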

\begin{proof}
Combining (3) and (6) of Corollary \ref{cond1}, we have
\[
\sigma_c\sigma_z=\theta_{a',b',c'}=\theta_{a,b,0}\sigma_{c'},
\]
where $c'=c^{\sigma_z}+z$ and the expressions of $a,b,a',b'$ are irrelevant. Raising both sides   to the $p^r$-th power, we deduce that
\begin{equation}\label{eqn_KiKjs}
g^{i+p^sk}(c)+z \in \cK_{i+j}^*,\, \textup{ for } c\in \cK_j^*, \,z\in \cK_{i+p^sk}.
\end{equation}
We deduce from   \eqref{eqn_KiKjs} that  $\cK_{i+p^sk}+g^{i+p^sk}(\cK_j^*)\subseteq \cK_{i+j}^*$; in particular, $|\cK_j^*| \le |\cK_{i+j}^*|$. This holds for all nonnegative integer $i,\,j,\,k$'s. Since $\cK_i^*=\cK_{i+p^s}^*$ and the $\cK_i^*$'s form a partition of $\F_q$, we deduce that all the $\cK_i^*$'s have the same size $q/p^s$ as $\cK_0^*$. It follows that the equality holds in $\cK_{i+p^sk}+g^{i+p^sk}(\cK_j^*)\subseteq \cK_{i+j}^*$ by comparing sizes.

By taking $i=1$, $k=0$, $z=t_C$ in  \eqref{eqn_KiKjs}, we obtain $g_C(c)+t_C\in \cK_{j+1}^*$ for $c\in\cK_j^*$. It follows that $g_C(\cK_j^*)+t_C=\cK_{j+1}^*$ by comparing sizes. Inductively, we obtain $\cK_i^*=t_i+g_C^i(\cK_0^*)$ for  $i\ge 0$. In the case $i=p^sk$, we deduce from $0\in \cK_0^*$  and $\cK_{p^sk}^*=\cK_0^*$ that $t_{p^sk}\in \cK_0^*$. This completes the proof.
\end{proof}

\begin{lemma}\label{lem_FrobcKis2}
We have $\cK_{p^sk}-t_{p^sk}\subseteq g^j(H_0^*)$ for $j,k\ge 0$.
\end{lemma}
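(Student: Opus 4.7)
The plan is to derive the inclusion directly from the two structural identities
\[
\cK_{i+p^sk}+g_C^{i+p^sk}(\cK_j^*)=\cK_{i+j}^*,\qquad \cK_j^*=t_j+g_C^j(\cK_0^*)
\]
established in Lemma \ref{lem_FrobcKis1}. Specializing the first to $i=0$ gives $\cK_{p^sk}+g_C^{p^sk}(\cK_j^*)=\cK_j^*$, and the crux of the argument is to rewrite the twisted term $g_C^{p^sk}(\cK_j^*)$ as a clean translate of $\cK_j^*$ itself.

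To do this, I first apply $g_C^{p^sk}$ to $\cK_j^*=t_j+g_C^j(\cK_0^*)$, obtaining $g_C^{p^sk}(\cK_j^*)=g_C^{p^sk}(t_j)+g_C^{p^sk+j}(\cK_0^*)$. Since indices of $\cK_\ell^*$ are $p^s$-periodic in $\ell$, applying the formula $\cK_\ell^*=t_\ell+g_C^\ell(\cK_0^*)$ at $\ell=p^sk+j$ gives $g_C^{p^sk+j}(\cK_0^*)=\cK_j^*-t_{p^sk+j}$. The cocycle relation $t_{a+b}=t_a+g_C^a(t_b)$ at $(a,b)=(p^sk,j)$ then collapses the $g_C^{p^sk}(t_j)$ contributions, yielding $g_C^{p^sk}(\cK_j^*)=\cK_j^*-t_{p^sk}$. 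Substituting back into the structural identity produces
\[
(\cK_{p^sk}-t_{p^sk})+\cK_j^*=\cK_j^*,
\]
so every element of $\cK_{p^sk}-t_{p^sk}$ belongs to the translation stabilizer of $\cK_j^*$ inside $(\F_q,+)$.

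To finish, I would identify this stabilizer with $g_C^j(H_0^*)$. By definition of $H_0^*$, an element $y$ lies in $g_C^j(H_0^*)$ iff $g_C^{-j}(y)+\cK_0^*\subseteq\cK_0^*$; applying $g_C^j$ and using $g_C^j(\cK_0^*)=\cK_j^*-t_j$ converts this precisely to $y+\cK_j^*\subseteq\cK_j^*$. Combined with the previous paragraph, this yields $\cK_{p^sk}-t_{p^sk}\subseteq g_C^j(H_0^*)$, as required.

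The only nontrivial piece of bookkeeping is the cocycle cancellation $t_{p^sk+j}-g_C^{p^sk}(t_j)=t_{p^sk}$ together with the $p^s$-periodicity of the $\cK_\ell^*$'s, which together convert $g_C^{p^sk}(\cK_j^*)$ into $\cK_j^*-t_{p^sk}$. Once this is in place, the identification of $g_C^j(H_0^*)$ with the stabilizer of $\cK_j^*$ is purely formal, and the lemma follows immediately.
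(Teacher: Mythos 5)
Your proof is correct and follows essentially the same route as the paper's: both start from the $i=0$ case of $\cK_{p^sk}+g_C^{p^sk}(\cK_j^*)=\cK_j^*$ from Lemma \ref{lem_FrobcKis1}, use the $p^s$-periodicity of the $\cK_\ell^*$'s together with the cocycle identity $t_{a+b}=t_a+g_C^a(t_b)$ to convert the twisted term into the translate $\cK_j^*-t_{p^sk}$, and then read off membership in $g_C^j(H_0^*)$ from the definition of $H_0^*$. The only cosmetic difference is that you phrase the conclusion via the stabilizer of $\cK_j^*$ while the paper phrases it via $g_C^j(\cK_0^*)$, which are the same thing since the two sets differ by a translation.
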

\begin{proof}
By Lemma \ref{lem_FrobcKis1}, we have $\cK_{p^sk}+g^{p^sk}(\cK_j^*)= \cK_{j}^*$ and  $\cK_j^*=g^j(\cK_0^*)+t_j$. After canceling $\cK_j^*$ we obtain
\begin{equation}\label{eqn_KKK}
\cK_{p^sk}+g^{j}(g^{p^sk}(\cK_0^*))+g^{p^sk}(t_j)= g^j(\cK_0^*)+t_j.
\end{equation}
It holds that  $-g^j(t_{p^sk})+g^{p^sk}(t_j)-t_j=-t_{p^sk}$, i.e.,
\[
-\sum_{l=j}^{p^sk+j-1}g^l(t_1)+\sum_{l=p^sk}^{ p^sk+j-1}g^{l}(t_1)-\sum_{l=0}^{j-1}g^l(t_1)=-\sum_{l=0}^ {p^sk-1}g^l(t_1),
\]
which is clear by comparing indices. It holds that $\cK_{p^sk}^*=\cK_0^*$ by the definition of $\cK_i^*$, so we have $g^{p^sk}(\cK_0^*)=\cK_0^*-t_{p^sk}$ by Lemma \ref{lem_FrobcKis1}. We thus deduce from \eqref{eqn_KKK} that $\cK_{p^sk}+g^j(\cK_0^*)-g^j(t_{p^sk})+g^{p^sk}(t_j)-t_j=g^j(\cK_0^*)$, i.e.,  $\cK_{p^sk}-t_{p^sk}+g^{j}(\cK_0^*)=g^j(\cK_0^*)$. The claim now follows from the definition of $H_0^*$.
\end{proof}

\noindent\textit{Proof of Theorem \ref{thm_G_struc}}. We continue with the arguments so far.
Set $W:=\cap_{i=0}^{p^{r+s}-1}g_C^i(H_0^*).$ Since $H_0^*$ is a subspace and $g_C$ has order $p^{r+s}$, $W$ is a $g_C$-invariant subspace of $\F_q$. By Lemma \ref{lem_FrobcKis2}, we have $\cK_{p^sk}\subseteq W+t_{p^sk}$ for each $k\ge 0$.  From the fact $0\in\cK_0=\cK_{p^{r+s}}\subseteq W+t_{p^{r+s}}$, we deduce that $t_{p^{r+s}}\in W$. Since each $t_{p^si}$ is in $\cK_0^*$ by Lemma \ref{lem_FrobcKis1} and $W\subseteq H_0^*\subseteq \cK_0^*$, we have
\begin{equation}\label{eqn_cK0_dec}
\cK_0^*=\cup_{i=0}^{p^r-1}\cK_{p^si}\subseteq \cup_{i=0}^{p^r-1}(W+t_{p^si})\subseteq \cK_0^*.
\end{equation}
Therefore, each containment becomes equality in the above equation.

Let $d$ be the smallest positive integer such that $t_{p^sd}\in W$. We just showed that $t_{p^{r+s}}\in W$, so $d\le p^r$. By the $g_C$-invariance of $W$ and the fact  $t_{p^sd(i+1)}=t_{p^sdi}+g^{p^sdi}(t_{p^sd})$, we deduce that $t_{p^{s}di}\in W$ for $i\ge 1$ by induction. Since $t_{p^s(j+di)}=t_{p^sj}+g_C^{p^sj}(t_{p^sdi})$, we deduce that $W+t_{p^s(j+di)}=W+t_{p^sj}$ for $i,j\ge 0$. Hence \eqref{eqn_cK0_dec} yields \begin{equation}\label{ckWrelat}
\cK_0^*=\cup_{i=0}^{d-1}(W+t_{p^si}).
\end{equation}

We claim that \eqref{ckWrelat} is a partition of $\cK_0^*$. If $W+t_{p^si}=W+t_{p^sj}$ with $0\le i<j\le d-1$, then $t_{p^sj}-t_{p^si}=g_C^{p^si}(t_{p^s(j-i)})\in W$, and so $t_{p^s(j-i)}\in W$ by the $g_C$-invariance of $W$, which contradicts the minimality of $d$. This proves the claim.

We plug \eqref{ckWrelat} into $\cK_i^*=g^i(\cK_0^*)+t_i$ and use the fact $t_{i+p^sj}=t_i+g^i(t_{p^sj})$ to obtain
$\cK_i^*=\cup_{j=0}^{d-1}(W+t_{i+p^sj})$, $0\le i\le p^s-1$.
Therefore, from the partition $\F_q=\cup_{i=0}^{p^s-1}\cK_i^*$ we get a refined partition
$\F_q =\cup_{i=0}^{p^s-1}\cup_{j=0}^{d-1}(W+t_{i+p^sj})$ of $\F_q$ into $p^{s}d$ distinct cosets of the subspace $W$. It follows that $d=p^{d_0}$ for some nonnegative integer $d_0$ by considering the divisibility. We have $0\le d_0\le r$ by the fact $d\le p^r$.  The pair $(W,\,t_C)$ satisfies all the conditions in Lemma \ref{Wt1cond} with $h=d_0+s$ and $e=r_C=r+s$, so  $p^{d_0+s-1}\leq d_0+s$. It follows that $d_0+s\le 1$ if $q$ is odd, $d_0+s\le 2$ if $q$ is even.

In the case $q$ is odd, we must have $d_0=0$, $s=1$ by the assumption $s\ge 1$ and the fact $d_0+s\le 1$. It follows that $\cK_0^*=W$ by  \eqref{ckWrelat}, and so $\cK_0^*$ is $g_C$-invariant. By Lemma \ref{lem_FrobcKis1}, we have $\cK_i^*=t_i+\cK_0^*$ for $i\ge 0$. This completes the proof of Theorem \ref{thm_G_struc}. \qed

\begin{corollary}\label{cor_subGcK}
Take notation as above, and assume that $q$ is odd. The set
\begin{equation}\label{eqn_GcK}
 G_{\cK_0^*}:=\{\fg_{a,b,c}:\, a,\,b\in\F_q,\,c\in\cK_0^*\}
\end{equation}
is a normal subgroup of $G$ of index $p^s$ and $G=\cup_{i=0}^{p^{s}-1}G_{\cK_0^*}\circ \fg_{0,0,t_C}^i$. Here, $\cK_0^*:=\{z\in \F_q:\, \sigma_z^{p^{r_{A,B}}}=1\}$, and $s=\max\{0,r_C-r_{A,B}\}$.
\end{corollary}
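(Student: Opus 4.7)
The plan is to identify $G_{\cK_0^*}$ as the preimage of a canonical subgroup under the Frobenius homomorphism $\psi : G \to \Aut(\F_q)$ of Notation~\ref{notation_sigmaLMS}, and then exploit the cyclic structure of $\psi(G)$. Since $\psi(G)$ is a $p$-subgroup of $\Aut(\F_q)$ and the Sylow $p$-subgroups of $\Aut(\F_q)$ are cyclic, $\psi(G)$ is a cyclic $p$-group. It contains $\sigma_{t_C}=g_C$ of order $p^{r_C}$, and by (6) of Corollary~\ref{cond1} every $\theta_{a,b,c}$ factors as $\theta_{a',b',0}\sigma_c$, with $o(\theta_{a',b',0})\mid p^{r_{A,B}}$ and $o(\sigma_c)\mid p^{r_C}$. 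Since $\psi(G)$ is abelian, this factorization forces $|\psi(G)|=p^{\max(r_C,r_{A,B})}$.

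The key step is the identity $\theta_{a,b,c}^{p^{r_{A,B}}}=\sigma_c^{p^{r_{A,B}}}$, which follows from the commutativity of $\psi(G)$ combined with the factorization above and the fact that $\theta_{a',b',0}^{p^{r_{A,B}}}=1$ by the definition of $r_{A,B}$. Set $H:=\{h\in\psi(G):h^{p^{r_{A,B}}}=1\}$. Then $\fg_{a,b,c}\in\psi^{-1}(H)$ if and only if $\sigma_c^{p^{r_{A,B}}}=1$, i.e., $c\in\cK_0^*$, so $G_{\cK_0^*}=\psi^{-1}(H)$. In the cyclic group $\psi(G)$ the subgroup $H$ is characteristic (it is the unique subgroup of order $p^{r_{A,B}}$, or all of $\psi(G)$ when $r_C\le r_{A,B}$), so $G_{\cK_0^*}$ is normal in $G$, with index
\[
[G:G_{\cK_0^*}]=[\psi(G):H]=p^{\max(r_C,r_{A,B})-r_{A,B}}=p^s.
\]

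For the coset decomposition, when $s=0$ the identity $G=G_{\cK_0^*}$ is immediate. When $s\ge 1$ we have $r_C>r_{A,B}$, so $g_C=\sigma_{t_C}$ has order $p^{r_C}=|\psi(G)|$ and generates $\psi(G)$. Hence the image of $g_C$ in the cyclic quotient $\psi(G)/H$ of order $p^s$ is a generator, and the $p^s$ cosets $Hg_C^i$ ($0\le i\le p^s-1$) are distinct and exhaust $\psi(G)$. Since $\psi(\fg_{0,0,t_C})=g_C$ and $\psi$ induces an isomorphism $G/G_{\cK_0^*}\cong \psi(G)/H$, pulling back shows that $G_{\cK_0^*}\circ\fg_{0,0,t_C}^i$ ($0\le i\le p^s-1$) are distinct cosets covering $G$.

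There is no genuine obstacle here: once the cyclicity of $\psi(G)$ and the factorization $\theta_{a,b,c}=\theta_{a',b',0}\sigma_c$ are in hand, the proof is essentially abstract nonsense via the standard subgroup lattice of a cyclic $p$-group. The one mild subtlety is bookkeeping the two cases $r_C\ge r_{A,B}$ and $r_C<r_{A,B}$, both of which collapse to the uniform statement through the formula $p^s=p^{\max(r_C,r_{A,B})-r_{A,B}}$. Note that Theorem~\ref{thm_G_struc} is used implicitly only to ensure that $\cK_0^*$ is nontrivially structured (and in particular $g_C$-invariant), which guarantees consistency with the algebraic interpretation of $G_{\cK_0^*}$ as a preimage, but is not directly needed for the subgroup and index arguments.
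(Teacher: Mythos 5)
Your proof is correct, and it follows the same core strategy as the paper: both arguments identify $G_{\cK_0^*}$ with the kernel of the map $\fg_{a,b,c}\mapsto\theta_{a,b,c}^{p^{r_{A,B}}}$ (your $\psi^{-1}(H)$ is exactly this kernel). The difference lies in how the identification and the index are closed. The paper only proves the inclusion $G_{\cK_0^*}\subseteq\ker(\psi_r)$ from the factorization $\theta_{a,b,c}=\theta_{a',b',0}\sigma_c$, and then forces equality by counting: it invokes Theorem \ref{thm_G_struc} to get $|\cK_0^*|=q/p^s$, hence $|G_{\cK_0^*}|=q^3/p^s$, and combines this with $|\im(\psi_r)|\ge p^s$ and $|G|=|\ker(\psi_r)|\cdot|\im(\psi_r)|$. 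You instead prove the identity $\theta_{a,b,c}^{p^{r_{A,B}}}=\sigma_c^{p^{r_{A,B}}}$ outright (using commutativity of $\psi(G)$ and $\theta_{a',b',0}^{p^{r_{A,B}}}=1$), which yields both inclusions at once, and you read off the index $p^s=p^{\max(r_C,r_{A,B})-r_{A,B}}$ from the subgroup lattice of the cyclic group $\psi(G)$ of order $p^{\max(r_C,r_{A,B})}$. What this buys is independence from Theorem \ref{thm_G_struc}: your argument needs neither the codimension of $\cK_0^*$ nor, in fact, the hypothesis that $q$ is odd, whereas the paper's counting step does. One small wording point: when you assert $|\psi(G)|=p^{\max(r_C,r_{A,B})}$, the property doing the work is cyclicity rather than mere commutativity (in a noncyclic abelian $p$-group the subgroup generated by elements of bounded orders can be strictly larger than the maximal order); since you establish cyclicity in the preceding sentence, nothing breaks, but the justification should cite that.
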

\begin{proof}
If $s=0$, then $\cK_0^*=\F_q$ and the claims are trivial. We assume that $s>0$ for the rest of this proof. In particular, $r_C=s+r_{A,B}$. Write $g=g_C$, $r=r_{A,B}$ for short. Define the group homomorphism $\psi_r:\,G\rightarrow \Aut(\F_q)$, $\fg_{a,b,c}\mapsto\theta_{a,b,c}^{p^r}$.

We claim that  $G_{\cK_0^*}$ lies in $\ker(\psi_r)$. By the definition of $r=r_{A,B}$ in Notation \ref{notation_sigmaLMS}, $\la \theta_{a,b,0}:\,a,\,b\in\F_q\ra$ has order $p^{r}$. By (6) of  Corollary \ref{cond1}, $\theta_{a,b,c}=\theta_{a',b',0}\sigma_{c}$ for some elements $a'$ and $b'$. Its $p^r$-th power is $1$ if $c\in\cK_0^*$, and the claim follows.

By Theorem \ref{thm_G_struc}, we deduce that the subgroup $G_{\cK_0^*}$ has size $q^2\cdot|\cK_0^*|=q^3/p^s$. Hence, $|\ker(\psi_r)|\ge q^3/p^s$. On the other hand, $\theta_{0,0,t_C}^{p^r}=g^{p^r}$ has order $p^s$, so $|\im(\psi_r)|\ge p^s$. Since $|G|=|\ker(\psi_r)|\cdot|\im(\psi_r)|$, we conclude that $G_{\cK_0^*}=\ker(\psi_r)$.
Since $\psi_r(\fg_{0,0,t_C})$ generates $\im(\psi_r)$, all the claims now follow.
\end{proof}

\subsection{The matrix part of $G$ in the odd characteristic case}\label{sec_mat}

In this subsection, we take the notation as introduced in Notation \ref{notation_4.1} and assume that $q$ is odd. By Theorem \ref{thm_GA_struc1} and Theorem \ref{thm_G_struc}, we have $r_{A,B}=\max\{r_A,\,r_B\}\le 1$ and $s=\max\,\{0,r_C-r_{A,B}\}\le 1$.
Let $g_2$ be an element of $\Aut(\F_q)$ of order $p$, and define
\begin{equation*} 
K_A:=\{a\in\F_q:\,\theta_{a,0,0}=1\},\quad K_B:=\{b\in\F_q:\,\theta_{0,b,0}=1\}.
\end{equation*}
We now collect some known facts.
\begin{enumerate}
\item[(F1)]By Corollary \ref{cor_theta_Ord}, $\theta_{a,b,0}=g_2^{\tr_{\F_q/\F_p}(\mu_A a+\mu_B b)}$ for $a,\,b\in\F_q$, where both $\mu_A$ and $\mu_B$ are $g_2$-invariant. In particular, we have $K_A=\{x\in\F_q:\,\tr_{\F_q/\F_p}(\mu_Ax)=0\}$. We have $r_A=0$ if $\mu_A=0$ and $r_A=1$ otherwise, cf. Notation \ref{notation_sigmaLMS}. The same is true if the $A$'s in the subscripts are replaced by $B$'s.
\item[(F2)]By (4), (5) of Corollary \ref{cond1}, we have $T(a,b,0)=L(a)+M(b)$ if either $a\in K_A$ or $b\in K_B$. By (1), (2) of Corollary \ref{cond1}, $L$   is additive on the subspace $K_A$  and $M$   is additive on $K_B$.
\item[(F3)] By (7) of Corollary \ref{cond1}, we have
	\begin{eqnarray}
		\label{eqn_thetaabc}
		&\sigma_{c}\ \theta_{a,b,0}=\theta_{a',b',0}\ \sigma_{c'};\\
		\label{eqn_ScTab}
		&S(c)^{\theta_{a,b,0}}+T(a,b,0)=T(a',b',0)^{\sigma_{c'}}+S(c');
	\end{eqnarray}
	where $c'=\theta_{a,b,0}(c)$, $b'=\sigma_{c'}^{-1}\left(b+c^{\theta_{a,b,0}}T(a,b,0)\right)$, and
	\begin{align*}		 a'&=\sigma_{c'}^{-1}\left(a+2bc^{\theta_{a,b,0}}+c^{2\theta_{a,b,0}}T(a,b,0)\right).
	\end{align*}
\end{enumerate}

\begin{lemma}\label{lem_sigmaCfixmuAmuB}
If $q$ is odd, then $\sigma_c$ leaves $\mu_A$ and $\mu_B$ invariant for $c\in\cK_0^*$.
\end{lemma}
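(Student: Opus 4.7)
The plan is short: the claim reduces immediately to a statement about the Sylow $p$-subgroup of $\Aut(\F_q)$, once the already-proved bound $r_{A,B}\le 1$ is combined with the explicit form of $\theta_{a,b,0}$ from Corollary \ref{cor_theta_Ord}.

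First I would unpack the definition of $\cK_0^*$. By the statement of Theorem \ref{thm_G_struc}, $c\in\cK_0^*$ means precisely that $\sigma_c^{p^{r_{A,B}}}=1$. Since $q$ is odd, Theorem \ref{thm_GA_struc1} gives $r_{A,B}\le 1$, so $\sigma_c$ has order dividing $p$. The case $\sigma_c=1$ is trivial, so I may assume $\sigma_c$ has order exactly $p$.

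The key observation is then purely group-theoretic: $\Aut(\F_q)=\Gal(\F_q/\F_p)$ is cyclic, and a cyclic group has at most one subgroup of each given order. Since $g_2\in\Aut(\F_q)$ has order exactly $p$ by choice, the subgroup $\la g_2\ra$ is the unique subgroup of order $p$ in $\Aut(\F_q)$, and therefore every element of order dividing $p$ lies in $\la g_2\ra$. In particular $\sigma_c\in\la g_2\ra$.

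Finally, Corollary \ref{cor_theta_Ord} asserts that both $\mu_A$ and $\mu_B$ are fixed by $g_2$, and hence by every power of $g_2$; so $\sigma_c(\mu_A)=\mu_A$ and $\sigma_c(\mu_B)=\mu_B$. There is no substantive obstacle here: all the real work was carried out in the earlier bounds on $r_{A,B}$ and in the derivation of the explicit formula for $\theta_{a,b,0}$, and the present lemma is essentially a corollary packaged for later use.
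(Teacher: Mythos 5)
Your proof is correct and follows essentially the same route as the paper: unpack the definition of $\cK_0^*$, use $r_{A,B}\le 1$ from Theorem \ref{thm_GA_struc1} to conclude $\sigma_c$ has order dividing $p$, place $\sigma_c$ in $\la g_2\ra$ via cyclicity of $\Aut(\F_q)$, and invoke the $g_2$-invariance of $\mu_A,\mu_B$ from Corollary \ref{cor_theta_Ord}. The paper's own two-line proof is just a compressed version of exactly this argument.
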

\begin{proof}
Recall that $\cK_0^*=\{z\in \F_q:\, \sigma_z^{p^{r_{A,B}}}=1\}$. Since $r_{A,B}\le 1$ by Theorem \ref{thm_GA_struc1}, we have $\sigma_c\in\la g_2\ra$ for $c\in\cK_0^*$. The claim is now a consequence of (F1).
\end{proof}

\begin{lemma}\label{lem_qoddMatmuA0}
If $q$ is odd, then $\mu_A=0$, $r_A=0$, $K_A=\F_q$.
\end{lemma}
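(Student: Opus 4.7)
The plan is to argue by contradiction: assume $\mu_A\neq 0$, so by (F1) we have $r_A=1$ and $K_A=\{x\in\F_q:\tr_{\F_q/\F_p}(\mu_A x)=0\}$ is an $\F_p$-hyperplane. The goal is to derive trace identities constraining the additive map $L$ that cannot be simultaneously satisfied.

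The main technical tool is the commutation identity in (F3), namely Corollary~\ref{cond1}(7), specialised at $b=0$ and $a\in K_A$ (so that $\theta_{a,0,0}=1$, whence $w=c$, $v=\sigma_c^{-1}(cL(a))$ and $u=\sigma_c^{-1}(a+c^{2}L(a))$). With $c\in\cK_0^{*}$, the Frobenius part of \eqref{eqn_thetaabc} collapses to $\theta_{u,v,0}=1$. By (F1) this is equivalent to $\tr_{\F_q/\F_p}(\mu_A u+\mu_B v)\equiv 0\pmod p$. Using Lemma~\ref{lem_sigmaCfixmuAmuB} (so $\sigma_c$ fixes both $\mu_A,\mu_B$), the $\Aut(\F_q)$-invariance of the trace, and the fact that $\tr_{\F_q/\F_p}(\mu_A a)=0$ for $a\in K_A$, this reduces to the clean identity
\begin{equation}\label{eqn_plan_key}
\tr_{\F_q/\F_p}\!\bigl((\mu_A c^{2}+\mu_B c)\,L(a)\bigr)=0\quad\text{for all } a\in K_A,\ c\in\cK_0^{*}.
\end{equation}

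The next step is to extract structural information from \eqref{eqn_plan_key}. For fixed $a\in K_A$, as $c$ ranges over $\cK_0^{*}$ the element $\mu_A c^{2}+\mu_B c$ traces out a large $\F_p$-subset of $\F_q$; its $\F_p$-span $V$ can be shown to be all of $\F_q$ (or at worst of codimension $1$) by the kind of bilinear spanning argument behind Lemma~\ref{lem_AB_prod}, using that $\cK_0^{*}$ has codimension $s\le 1$ (Theorem~\ref{thm_G_struc}) and that $\mu_A\ne 0$. The nondegeneracy of the trace form then forces $L(a)$ into a tiny $\F_p$-subspace, and I then plan to combine this with the matrix-part identity \eqref{eqn_ScTab}, which in the same regime reads $L(a)=T(u,v,0)^{\sigma_c}$, together with the additivity of $L$ on $K_A$ from (F2), to conclude that $L$ vanishes identically on $K_A$.

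Once $L|_{K_A}\equiv 0$, I will run Corollary~\ref{cond1}(7) a second time with a representative $a\in\F_q\setminus K_A$ (so $\theta_{a,0,0}=g_{2}^{k}$ with $k\ne 0$) and with $c\in\cK_0^{*}$; the Frobenius part still yields a trace condition, while the matrix part now forces an incompatible relation between $L(a)$, $S(c)$ and values of $T$ outside $K_A$. The contradiction will come from confronting the resulting formula for $L$ on all of $\F_q$ with the constraint that $G_A$ be a genuine subgroup of order $q$ with nontrivial Frobenius image. I expect the main obstacle to be Step 2, controlling the $\F_p$-span of $\{\mu_A c^{2}+\mu_B c:c\in\cK_0^{*}\}$; the two subcases $s=0$ and $s=1$ will likely need separate spanning arguments, with the $s=1$ case being the more delicate one since $\cK_0^{*}$ is only a hyperplane there.
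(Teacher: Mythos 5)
Your derivation of the identity $\tr_{\F_q/\F_p}\bigl((\mu_A c^{2}+\mu_B c)L(a)\bigr)=0$ for $a\in K_A$, $c\in\cK_0^{*}$ is correct, and polarizing it in $c$ (differencing twice, exactly as the paper does with its own identity) together with Lemma \ref{lem_AB_prod} does force $\mu_A L(a)=0$, hence $L|_{K_A}\equiv 0$ whenever $\mu_A\ne 0$. But this is where the plan breaks down: $L|_{K_A}\equiv 0$ is not in tension with $\mu_A\ne 0$ — the paper later proves $L\equiv 0$ unconditionally (Lemma \ref{lem_qoddMatLeq03}) — and once $L(a)=0$ your identity is satisfied identically, leaving no residual constraint on $\mu_A$. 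The second run with $a\notin K_A$, $b=0$ fares no better: writing $\theta_1=\theta_{a,0,0}$ and $\sigma_c=g_2^{e(c)}$, the Frobenius part of \eqref{eqn_thetaabc} yields $\tr_{\F_q/\F_p}\bigl((\mu_Ac^{2\theta_1}+\mu_Bc^{\theta_1})L(a)\bigr)=e(c)-e(c^{\theta_1})$ and the matrix part \eqref{eqn_ScTab} reduces (once $L\equiv 0$) to $S(c)^{\theta_1}=S(c^{\theta_1})$; both conditions are actually satisfied by the genuine nonlinear examples with $\mu_B\ne 0$, so no contradiction can be extracted. Likewise, $G_A$ with nontrivial Frobenius image and $L\equiv0$ is a perfectly good group of order $q$ (the cocycle condition in Corollary \ref{cond1}(1) holds because $\theta_{x,0,0}$ fixes $\mu_A$), so the "genuine subgroup" constraint you invoke at the end gives nothing.

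The ingredient your specialization discards is the cross term $2bc^{\theta_{a,b,0}}$ in the formula for $u$ in Corollary \ref{cond1}(7), which vanishes when $b=0$; every surviving term in your identities carries a factor of $L(a)$, so the whole computation collapses as soon as $L$ does. The paper instead takes $a=0$, $b\in K_B$, $c\in\cK_0^{*}$, obtaining $\tr_{\F_q/\F_p}\bigl(\mu_A(2bc+c^{2}M(b))+\mu_B cM(b)\bigr)=0$. Double polarization in $c$ plus Lemma \ref{lem_AB_prod} first gives $\mu_A M(b)=0$; then, assuming $\mu_A\ne 0$, one gets $M|_{K_B}\equiv 0$ and the \emph{surviving} cross term yields $\tr_{\F_q/\F_p}(2\mu_A bv)=0$ for all $b\in K_B$ and $v\in\cK_0^{*}$, which traps the subspace $\la 2\mu_A b:\,b\in K_B\ra_{\F_p}$ of size at least $q/p$ inside a subspace of dimension $s\le 1$ — a contradiction since $q\ge p^{p}$. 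It is this bilinear pairing between the $b$- and $c$-coordinates, invisible in any $b=0$ specialization, that kills $\mu_A$; to repair your argument you would have to rerun Corollary \ref{cond1}(7) with $b\in K_B$ nonzero rather than with $a\in K_A$.
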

\begin{proof}
Take $a=0$, $b\in K_B$ and $c\in\cK_0^*$. We have $\theta_{0,b,0}=1$ and $c'=\theta_{0,b,0}(c)=c$, since $b$ is in $K_B$. By canceling out $\sigma_c=\sigma_{c'}$ and comparing exponents in \eqref{eqn_thetaabc}, we obtain
\begin{equation}\label{eqn_tr_KAKB}
\tr_{\F_q/\F_p}\left(\mu_A(2bc+c^2M(b))+\mu_B cM(b))\right)=0.
\end{equation}
Here, we used Lemma \ref{lem_sigmaCfixmuAmuB} and Lemma \ref{lem_trace_int}. Taking the difference of  \eqref{eqn_tr_KAKB} for $c=c_1,\,c_2\in\cK_0^*$, we obtain
\begin{equation}\label{eqn_uv_tr}
\tr_{\F_q/\F_p}\left((2\mu_A b+\mu_BM(b))\cdot v\right)=-2\tr_{\F_q/\F_p}\left(\mu_A M(b)uv\right).
\end{equation}
where $u=\frac{1}{2}(c_1+c_2)$, $v=c_1-c_2$.  We observe that $(u,v)$ ranges over $\cK_0^*\times\cK_0^*$ as $c_1,\,c_2$ vary in $\cK_0^*$. Take the difference of both sides of \eqref{eqn_uv_tr} for $u=u_1,\,u_2\in\cK_0^*$, and we deduce that $\tr_{\F_q/\F_p}\left(\mu_A M(b)vv'\right)=0$ for $v$ and $v'=u_1-u_2\in \cK_0^*$. Since  $\{vv':\,v,v'\in \cK_0^*\}$ spans $\F_q$ over $\F_p$ by Lemma \ref{lem_AB_prod}, we deduce that $\mu_A M(b)=0$ for $b\in K_B$.

Suppose that $\mu_A\ne 0$. Then we deduce from $\mu_A M(b)=0$ that $M(b)=0$ for $b\in K_B$. Hence \eqref{eqn_uv_tr} reduces to $\tr_{\F_q/\F_p}(2\mu_A bv)=0$  for $b\in K_B$, $v\in \cK_0^*$. It follows that the subspace $\{x\in\F_q:\,\tr_{\F_q/\F_p}(xv)=0\textup{ for all }v\in \cK_0^*\}$ contains $\la 2\mu_Ab:\,b\in K_B\ra_{\F_p}$.
Since $\cK_0^*$ has codimension $s$ in $\F_q$, the former subspace has dimension $s$. The latter subspace has size $|K_B|=q/p$, so we have $q/p\le p^s$. This is impossible, since $q\ge p^p$ and $s\le 1$.  To conclude, we have $\mu_A=0$. It follows that $K_A=\F_q$ and $r_A=0$, cf. (F1).
\end{proof}

\begin{lemma}\label{lem_qoddMatM2}
If $q$ is odd and $\mu_B\ne 0$, then  $g_C(\mu_B)=\mu_B$ and $M(b)=0$ for $b\in K_B$.
\end{lemma}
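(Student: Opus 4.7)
My plan is to derive a single key identity from \eqref{eqn_thetaabc} specialized to $a = 0$ and $b \in K_B$, and then use the structural results of the previous subsections to bound the image of $M$ on $K_B$ and the $g_C$-orbit of $\mu_B$.

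First I would apply \eqref{eqn_thetaabc} with $a = 0$, $b \in K_B$, and $c \in \F_q$ arbitrary. Since $\theta_{0,b,0} = 1$, the auxiliary quantity $c' = \theta_{0,b,0}(c)$ equals $c$, and the equation collapses to $\theta_{a',b',0} = 1$. By Lemma \ref{lem_qoddMatmuA0} we have $\mu_A = 0$, so this reads $\tr_{\F_q/\F_p}(\mu_B b') = 0$. The explicit formula $b' = \sigma_c^{-1}(b + cM(b))$ from (F3), combined with the standard identity $\tr(\mu_B\sigma_c^{-1}(x)) = \tr(\sigma_c(\mu_B) x)$, yields
\begin{equation*}
  \tr_{\F_q/\F_p}\bigl(\sigma_c(\mu_B)(b + cM(b))\bigr) = 0, \qquad b \in K_B,\; c \in \F_q. \tag{$\star$}
\end{equation*}
Specializing $(\star)$ to $c \in \cK_0^*$ --- where $\sigma_c \in \langle g_2\rangle$ fixes $\mu_B$ by Lemma \ref{lem_sigmaCfixmuAmuB} --- and using $\tr(\mu_B b) = 0$ for $b \in K_B$, I obtain $\tr(\mu_B cM(b)) = 0$, so $\mu_B M(b)$ lies in the trace-orthogonal complement of $\cK_0^*$.

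If $s = 0$, then $\cK_0^* = \F_q$ and $\mu_B M(b) = 0$, so $M(b) = 0$ for $b \in K_B$; $(\star)$ then reduces to $\tr(\sigma_c(\mu_B) b) = 0$ for all $c,b$, whence the $g_C$-orbit of $\mu_B$ lies in $K_B^\perp = \F_p\mu_B$ and $g_C(\mu_B) = \lambda\mu_B$ for some $\lambda \in \F_p^\times$. The identity $g_C^{p^{r_C}} = 1$ yields $\lambda^{p^{r_C}} = \lambda = 1$ in the cyclic group $\F_p^\times$ of order $p-1$.

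If $s = 1$, then by Theorem \ref{thm_G_struc} $\cK_0^*$ is $g_C$-invariant of codimension $1$, so $\cK_0^* = \ker L_{\mu_C}$ for some $\mu_C \in \F_q^*$, with $g_C(\mu_C) = \mu_C$ by Lemma \ref{lem_mu_inv}. The containment $\mu_B M(b) \in \F_p \mu_C$ gives $M(b) = \lambda_b \nu$ with $\nu := \mu_C \mu_B^{-1}$ and $\lambda_b \in \F_p$; additivity of $M$ on $K_B$ makes $\lambda\colon K_B \to \F_p$ an $\F_p$-linear functional. If $\lambda \equiv 0$, the argument of the $s=0$ case applies verbatim to give both conclusions. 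Otherwise $K_B' := \ker \lambda$ has codimension $2$ in $\F_q$ and $M$ vanishes on $K_B'$; applying $(\star)$ on $K_B'$ shows that the $g_C$-invariant $\F_p$-span $U := \langle g_C^i(\mu_B) : i \ge 0\rangle_{\F_p}$ lies in $(K_B')^\perp$, forcing $\dim_{\F_p} U \le 2$. Since $g_C$ acts on $U$ with $p$-power order, the Sylow $p$-subgroup of $\GL_2(\F_p)$ has order $p$, and $g_C^p \in \langle g_2\rangle$ fixes $\mu_B$, a case analysis forces $U = \F_p\mu_B$ and hence $g_C(\mu_B) = \mu_B$ even in this subcase. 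Finally, substituting $c = t_C$ into $(\star)$ and using $g_C(\mu_B) = \mu_B$ gives $\lambda_b\tr(\mu_C t_C) = 0$; as $t_C \notin \cK_0^*$ we have $\tr(\mu_C t_C) \ne 0$, whence $\lambda \equiv 0$ and $M|_{K_B} \equiv 0$.

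The main obstacle is the dimension/$\GL_2(\F_p)$ argument in the $s=1$ subcase where $\lambda$ is nontrivial: one must exclude that the $g_C$-orbit of $\mu_B$ genuinely spans a $2$-dimensional $\F_p$-subspace of $\F_q$. A cleaner route would be to pair $(\star)$ with the companion identity \eqref{eqn_ScTab} (specialized to $a = 0$, $b \in K_B$, $c = t_C$) to produce a second independent linear constraint on $M|_{K_B}$ that forces $\lambda \equiv 0$ directly, bypassing the explicit $p$-group-action analysis.
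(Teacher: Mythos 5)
Your derivation of the identity $(\star)$ from \eqref{eqn_thetaabc} and your treatment of the case $\cK_0^*=\F_q$ coincide with the paper's argument and are correct. The genuine gap is in the subcase $s=1$ with $\lambda\not\equiv 0$: you assert that ``a case analysis forces $U=\F_p\mu_B$,'' but the facts you have assembled do not exclude $\dim_{\F_p}U=2$. In that situation $g_C|_U$ would be a nontrivial unipotent element of $\textup{GL}(2,p)$ of order $p$ --- perfectly consistent with $g_C^p$ fixing $\mu_B$, since $g_C^p$ then fixes all of $U$ --- whose fixed space is the line spanned by $(g_C-1)(\mu_B)\ne 0$; in particular $\mu_B$ itself would \emph{not} be $g_C$-fixed, which is precisely the conclusion you need. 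Nothing in the containment $U\le (K_B')^\perp$ rules this out, so the step fails as written; you correctly flag it as the main obstacle, but you do not close it, and your concluding substitution $c=t_C$ depends on $g_C(\mu_B)=\mu_B$ having already been established.

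The paper closes exactly this hole by extracting more from the same identity rather than by invoking \eqref{eqn_ScTab}: it lets $c$ range over each coset $\cK_i^*$ (where $\sigma_c(\mu_B)=g_C^i(\mu_B)$, because $\sigma_c\in g_C^i\la g_C^p\ra$ and $g_C^p$ fixes $\mu_B$) and takes differences over $c_1,c_2\in\cK_i^*$ to obtain $\tr_{\F_q/\F_p}\bigl(g_C^i(\mu_B)M(b)u\bigr)=0$ for all $u\in\cK_0^*$, hence $g_C^i(\mu_B)M(b)\in\F_p\cdot\mu_C$ for \emph{every} $i$, not only $i=0$. If some $M(b_0)\ne 0$, then $g_C(\mu_B)M(b_0)$ and $\mu_BM(b_0)$ are two nonzero elements of the line $\F_p\mu_C$, so $g_C(\mu_B)=\lambda\mu_B$ with $\lambda\in\F_p^*$; the relative-norm argument gives $\lambda=1$, and then $(\star)$ collapses to $\tr_{\F_q/\F_p}(\mu_BM(b)c)=0$ for all $c$, forcing $M(b)=0$ on $K_B$ and a contradiction. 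This is the ``second independent linear constraint'' you were hoping to manufacture, and it is already available from $(\star)$ once you use the cosets $\cK_i^*$ with $i\ge 1$. With that modification the rest of your outline (including the final norm argument for $g_C(\mu_B)=\mu_B$ once $M|_{K_B}\equiv 0$) goes through.
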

\begin{proof}
Assume that $\mu_B\ne 0$. Then $r_B=1$ by (F1). We have $r_{A,B}=\max\{r_A,r_B\}=1$ by Corollary \ref{cor_rABnonlin}. Take $a=0$, $b\in K_B$ and $c\in\F_q$; we have $\theta_{0,b,0}=1$. We cancel out $\sigma_c=\sigma_{c'}$ in \eqref{eqn_thetaabc} and compare exponents to obtain
\begin{equation}\label{eqn_muBne0}
\tr_{\F_q/\F_p}\left(\mu_B^{\sigma_c}(b+cM(b))\right)=0,\quad b\in K_B,\,c\in\F_q.
\end{equation}
Here, we again used Lemma \ref{lem_trace_int}.

First consider the case $\cK_0^*=\F_q$. In this case, we have $\mu_B^{\sigma_{c}}=\mu_B$ for all $c\in\F_q$ by Lemma \ref{lem_sigmaCfixmuAmuB}. In particular, $g_C=\sigma_{t_C}$ fixes $\mu_B$. The equation \eqref{eqn_muBne0} reduces to $\tr_{\F_q/\F_p}(\mu_B cM(b))=0$ for $b\in K_B$, $c\in\F_q$.  We thus deduce that $M(b)=0$ for $b\in K_B$ as desired. This settles the case $\cK_0^*=\F_q$.

We assume that $\cK_0^*\ne\F_q$ in the sequel. In this case, $s=\max\{0,r_C-r_{A,B}\}=1$ and so $r_C=2$. We have $o(g_C)=p^2$ and $\la g_2\ra=\la g_C^{p}\ra$. By Theorem \ref{thm_G_struc}, $\cK_0^*$ is a $g_C$-invariant $\F_p$-subspace of codimension $s=1$ in $\F_q$, and $\cK_i^*=t_{C,i}+\cK_0^*$ for $i\ge 1$, where $\cK_i^*:=\{z\in \F_q:\,\sigma_z^{p}=g_C^{ip}\}$. For $c\in\cK_i^*$, we have $\sigma_c=g_C^{i+p^sj}$ for some $j\ge 0$ and so $\sigma_c(\mu_B)=g_C^i(\mu_B)$ by the fact that $g_2(\mu_B)=\mu_B$, cf. (F1).

By Lemma \ref{lem_mu_inv}, there exists a $g_C$-invariant element $\mu_C$ such that $\cK_0^*=\{x\in\F_q:\,\tr_{\F_q/\F_p}(\mu_C x)=0\}$.  By taking the difference of \eqref{eqn_muBne0} over $c=c_1,\,c_2\in\cK_i^*$ and letting $c_1$ and $c_2$ vary, we deduce that $\tr_{\F_q/\F_p}(g_C^i(\mu_B)M(b)u)=0$  for all $u\in\cK_0^*$ and $b\in K_B$. By Lemma \ref{lem_tr}, we deduce that $g_C^i(\mu_B)M(b)\in\F_p\cdot \mu_C$ for $b\in K_B$ and $i\ge 0$.

We now show that $M(b)=0$ for $b\in K_B$. Suppose to the contrary that there is $b_0\in K_B$ such that $M(b_0)\ne 0$. Then we deduce from $\F_p\cdot g_C(\mu_B)M(b_0)=\F_p\cdot \mu_B M(b_0)=\F_p\cdot \mu_C$ that $g_C(\mu_B)=\lambda\mu_B$ for some $\lambda\in\F_p^*$. Taking the relative norm to the fixed subfield $\F'$ of $g_C$, we deduce that $\lambda^{[\F_q:\,\F']}=1$. Since $\F_q/\F'$ is a Galois extension, $[\F_q:\,\F']=o(g_C)=p^{r_C}$. It follows that $\lambda=1$, i.e., $g_C(\mu_B)=\mu_B$.  \eqref{eqn_muBne0} then simplifies to $\tr_{\F_q/\F_p}(\mu_BM(b)c)=0$ for $c\in\F_q$, from which we deduce that $M(b)=0$ for $b\in K_B$: a contradiction.

We next show that $g_C(\mu_B)=\mu_B$.  \eqref{eqn_muBne0} now reduces to $\tr_{\F_q/\F_p}\left(\mu_B^{\sigma_c}b\right)=0$ for $b\in K_B$. It follows that $\sigma_c(\mu_B)\in\F_p\cdot\mu_B$ for $c\in\F_q$, so $g_C(\mu_B)\in\F_p\cdot\mu_B$. We deduce that $g_C(\mu_B)=\mu_B$ again by taking the relative norm. This completes the proof.
\end{proof}

\begin{lemma}\label{lem_qoddMatLeq03}
If $q$ is odd, then $L(a)=0$  for  $a\in\F_q$.
\end{lemma}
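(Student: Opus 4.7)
The plan builds on Lemma \ref{lem_qoddMatmuA0}: $\mu_A=0$, so $L$ is $\F_p$-linear on $K_A=\F_q$ and, by (F1), $\theta_{a,b,0}=g_2^{\tr_{\F_q/\F_p}(\mu_B b)}$ depends only on $b$. The main tool will be (F3) specialized to $b=0$: then $\theta_{a,0,0}=1$, $c'=c$, $a'=\sigma_c^{-1}(a+c^2L(a))$, $b'=\sigma_c^{-1}(cL(a))$, and \eqref{eqn_thetaabc} collapses to $\theta_{a',b',0}=1$. Expanding via (F1) and sliding $\sigma_c^{-1}$ through the trace via Lemma \ref{lem_trace_int} yields the trace identity
\[
\tr_{\F_q/\F_p}\!\bigl(\sigma_c(\mu_B)\cdot c\,L(a)\bigr)=0\quad\text{for all }a,c\in\F_q.
\]

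I would split on whether $\mu_B$ vanishes. When $\mu_B\ne 0$, I plan to first show that $\sigma_c(\mu_B)=\mu_B$ for every $c$: by the reduction in Section \ref{subsec_payneGQ} the Frobenius image $\psi(G)$ sits inside the Sylow $p$-subgroup of the cyclic group $\Aut(\F_q)$, which is itself cyclic; each $\sigma_c$ has $p$-power order bounded by $p^{r_C}$, so $\sigma_c\in\la g_C\ra$, and since $g_C(\mu_B)=\mu_B$ by Lemma \ref{lem_qoddMatM2}, so does every $\sigma_c$. The trace identity then reduces to $\tr_{\F_q/\F_p}(\mu_B c L(a))=0$ for all $c\in\F_q$, and Lemma \ref{lem_ff2} immediately gives $\mu_B L(a)=0$, hence $L(a)=0$.

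The interesting case is $\mu_B=0$, where the trace identity above is vacuous. Here $\theta_{a,b,0}\equiv 1$ and $M$ is $\F_p$-linear on $K_B=\F_q$; nonlinearity of $G$ forces $r_C\ge 1$, and Theorem \ref{thm_G_struc} then yields $r_C=1$ with $\cK_0^*=\{z\in\F_q:\sigma_z=1\}$ an $\F_p$-subspace of codimension $1$. Applying \eqref{eqn_ScTab} with $b=0$ and $c\in\cK_0^*$ (so $\sigma_c=1$, $a'=a+c^2L(a)$, $b'=cL(a)$) and cancelling $S(c)$, the identity collapses by additivity to
\[
L(c^2 L(a))+M(c L(a))=0\quad\text{for all }a\in\F_q,\ c\in\cK_0^*.
\]
Since $\cK_0^*$ is $\F_p$-stable, substituting $c\mapsto\lambda c$ for $\lambda\in\F_p^*$ and using the $\F_p$-linearity of $L,M$ gives $\lambda^2L(c^2 L(a))+\lambda M(cL(a))=0$; subtracting $\lambda$ times the original leaves $\lambda(\lambda-1)L(c^2L(a))=0$, and choosing $\lambda\in\F_p^*\setminus\{1\}$ (available since $p\ge 3$) forces $L(c^2L(a))=0$. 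Finally, if $L\not\equiv 0$, I pick $\alpha\in\im(L)\setminus\{0\}$; by the polarization identity $2c_1c_2=(c_1+c_2)^2-c_1^2-c_2^2$ the $\F_p$-span of $\{c^2:c\in\cK_0^*\}$ contains $\cK_0^*\cdot\cK_0^*$, which spans $\F_q$ by Lemma \ref{lem_AB_prod} (applicable since $r_C\ge 1$ forces $p\mid m$, so $m\ge p\ge 3$). Thus $\{c^2\alpha:c\in\cK_0^*\}$ $\F_p$-spans $\F_q$, so additivity of $L$ propagates $L(c^2\alpha)=0$ to $L\equiv 0$ on $\F_q$, contradicting the choice of $\alpha$.

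The main obstacle will be the subcase $\mu_B=0$: the direct trace argument degenerates, forcing one to go through \eqref{eqn_ScTab} instead. The key new ingredient is the rescaling $c\mapsto\lambda c$ inside the $\F_p$-subspace $\cK_0^*$, which decouples $L(c^2L(a))$ from $M(cL(a))$ so that a polarization-plus-span argument via Lemma \ref{lem_AB_prod} can finish the job.
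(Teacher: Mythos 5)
Your proof is correct and follows essentially the same route as the paper's: the same case split on $\mu_B$, the same trace identity extracted from \eqref{eqn_thetaabc} with $b=0$ when $\mu_B\ne 0$ (including the observation that every $\sigma_c$ lies in $\la g_C\ra$ and hence fixes $\mu_B$), and the same reduction to $L(c^2L(a))+M(cL(a))=0$ for $c\in\cK_0^*$ via \eqref{eqn_ScTab} when $\mu_B=0$. The only cosmetic difference is the final step of the second case: the paper differences over $c=c_1,c_2\in\cK_0^*$ to obtain $L(uvL(a_0))=-M(vL(a_0))$ and then notes the right side is independent of $u$, whereas you first rescale $c\mapsto\lambda c$ to decouple the quadratic term from $M$ and then polarize the squares — both variants terminate with the same appeal to Lemma \ref{lem_AB_prod}.
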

\begin{proof}
First, consider the case $\mu_B\ne 0$. We have  $\mu_A=0$ and $K_A=\F_q$ by Lemma \ref{lem_qoddMatmuA0}, $g_C(\mu_B)=\mu_B$ and $M(b)=0$ for $b\in K_B$ by Lemma \ref{lem_qoddMatM2}. In particular, $\sigma_c(\mu_B)=\mu_B$ for $c\in\F_q$ by the definition of $g_C$, cf. Notation \ref{notation_4.1}. Taking $a\in \F_q$, $b=0$ and $c\in\F_q$ in \eqref{eqn_thetaabc} and comparing exponents, we obtain $\tr_{\F_q/\F_p}(\mu_BcL(a))=0$. This holds for all $c\in\F_q$, so $L(a)=0$ as desired. This settles the case $\mu_B\ne 0$.

Second, consider the case $\mu_B=0$. In this case, $\theta_{a,b,0}\equiv 1$ and $r_{A,B}=0$. Since $G$ is nonlinear, we have $r_C>0$ by Corollary \ref{cor_rABnonlin}. By Theorem \ref{thm_G_struc}, we have $s=\max\{r_C,0\}\le1$, so $s=r_C=1$. By the same theorem, $\cK_0^*=\{z\in \F_q:\, \sigma_z=1\}$ is a subspace of codimension $1$ of $\F_q$. We have $K_A=K_B=\F_q$ by (F1),  $T(a,b,0)=L(a)+M(b)$ for $a,b\in\F_q$ and both $L$ and $M$ are additive over $\F_q$ by (F2). Suppose to the contrary that $L(a_0)\ne 0$ for some $a_0\in\F_q$. Take $a=a_0$, $b=0$, $c\in\cK_0^*$ in  \eqref{eqn_ScTab}, and we obtain $L(c^2L(a_0))=-M(cL(a_0))$. Taking the difference over $c=c_1,\,c_2\in \cK_0^*$ and letting $c_1,c_2$ vary,  we deduce that $L(uvL(a_0))=-M(vL(a_0))$ for all $u,v\in \cK_0^*$. The right hand side is independent of $u$, so both sides equals $L(0\cdot vL(a))=0$. Therefore, $L(uvL(a_0))=0$ for all $u,v\in\cK_0^*$. By Lemma \ref{lem_AB_prod}, $\la uv:\,u,v\in\cK_0^*\ra_{\F_p}=\F_q$, so $L$ is constantly zero on $\F_q$: a contradiction. This completes the proof.
\end{proof}

\begin{lemma}\label{lem_qoddMatM4}
If $q$ is odd and $\mu_B=0$, we have $M(b)=0$ for $b\in\F_q$.
\end{lemma}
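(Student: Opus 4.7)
The plan is to combine the hypotheses with the previously established Lemmas \ref{lem_qoddMatmuA0} and \ref{lem_qoddMatLeq03} to get a clean functional equation for $M$, then rule out a nonzero $M$ by a dimension argument followed by a norm/trace contradiction.

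First I would collect the structural consequences. With $\mu_B=0$, property (F1) gives $\theta_{0,b,0}\equiv 1$ and $K_B=\F_q$; Lemma \ref{lem_qoddMatmuA0} gives the same for the $A$-part, so $\theta_{a,b,0}\equiv 1$ and $r_{A,B}=0$, while Lemma \ref{lem_qoddMatLeq03} yields $L\equiv 0$, so $T(a,b,0)=M(b)$ and $M$ is additive on all of $\F_q$ by (F2). Since $G$ is nonlinear, Corollary \ref{cor_rABnonlin} forces $r_C\geq 1$, and Theorem \ref{thm_G_struc} then gives $r_C=1$ and $\cK_0^*$ a $g_C$-invariant $\F_p$-subspace of codimension $1$; by Lemma \ref{lem_mu_inv} one can write $\cK_0^*=\{x\in\F_q:\tr_{\F_q/\F_p}(\mu_C x)=0\}$ for a $g_C$-invariant $\mu_C\ne 0$, and of course $\tr_{\F_q/\F_p}(\mu_C t_C)\ne 0$ because $t_C\notin\cK_0^*$. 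Specializing \eqref{eqn_ScTab} to $a=0$ (using $\theta_{0,b,0}=1$, $c'=c$ and $L\equiv 0$) produces the key identity
$$\sigma_c^{-1}(M(b))=M\bigl(\sigma_c^{-1}(b+cM(b))\bigr)\quad\text{for all }b,c\in\F_q.$$

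Second, I would restrict this identity to $c\in\cK_0^*$, where $\sigma_c=1$. The identity simplifies via additivity of $M$ to $M(cM(b))=0$, so for every $b$ the subspace $\cK_0^*\cdot M(b)$ lies in $\ker(M)$. Suppose for contradiction that $M\not\equiv 0$ and pick $b_0$ with $\omega:=M(b_0)\ne 0$. Then $\omega\cK_0^*\subseteq\ker(M)$ is an $\F_p$-subspace of codimension $1$, forcing $\ker(M)=\omega\cK_0^*$ (the alternative $\ker(M)=\F_q$ contradicts $\omega\ne 0$). Consequently $\dim_{\F_p}\im(M)=1$, and Lemma \ref{lem_ff2} gives $M(b)=\omega\tr_{\F_q/\F_p}(\mu_0 b)$ for some $\mu_0\in\F_q^*$; comparing kernels via Lemma \ref{lem_tr} yields $\mu_0\omega=\lambda\mu_C$ for some $\lambda\in\F_p^*$.

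Third, I would plug this explicit form of $M$ back into the key identity. Both sides are $\F_p$-linear functions of $b$: the left-hand side is $b\mapsto \sigma_c^{-1}(\omega)\tr_{\F_q/\F_p}(\mu_0 b)$, while expanding the right-hand side gives $b\mapsto\omega\tr_{\F_q/\F_p}(\mu_0\sigma_c^{-1}(b))+\omega\tr_{\F_q/\F_p}(\mu_0\sigma_c^{-1}(c\omega))\tr_{\F_q/\F_p}(\mu_0 b)$. Equating these for all $b$ and evaluating on $\ker(\tr_{\F_q/\F_p}(\mu_0\cdot))$ forces $\sigma_c^{-1}$ to preserve this kernel; since every $\sigma_c$ lies in $\la g_C\ra$, Lemma \ref{lem_mu_inv} yields $g_C(\mu_0)=\mu_0$, hence $\sigma_c(\mu_0)=\mu_0$ for all $c$. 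Applying Lemma \ref{lem_trace_int} then collapses $\tr_{\F_q/\F_p}(\mu_0\sigma_c^{-1}(b))$ to $\tr_{\F_q/\F_p}(\mu_0 b)$, and the identity reduces to
$$\sigma_c^{-1}(\omega)=\omega\bigl(1+\tr_{\F_q/\F_p}(\mu_0 c\omega)\bigr),\quad c\in\F_q.$$
Taking $c=t_C$ gives $g_C^{-1}(\omega)=(1+\beta)\omega$ with $\beta:=\tr_{\F_q/\F_p}(\mu_0\omega t_C)\in\F_p$; applying the norm from $\F_q$ to the fixed field of $g_C$ yields $(1+\beta)^{p}=1$, so $\beta=0$. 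But $\mu_0\omega=\lambda\mu_C$ gives $\beta=\lambda\tr_{\F_q/\F_p}(\mu_C t_C)\ne 0$, a contradiction. Hence $M\equiv 0$.

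The main obstacle is the $\F_p$-linearity comparison in the third paragraph: one must carefully separate the two distinct $\F_p$-linear functionals $b\mapsto\tr_{\F_q/\F_p}(\mu_0\sigma_c^{-1}(b))$ and $b\mapsto\tr_{\F_q/\F_p}(\mu_0 b)$ and invoke Lemma \ref{lem_mu_inv} to get $g_C$-invariance of $\mu_0$; after that, the norm trick and the defining property $\tr_{\F_q/\F_p}(\mu_C t_C)\ne 0$ deliver the contradiction almost for free.
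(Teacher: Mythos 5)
Your proposal is correct and follows essentially the same route as the paper: both extract the same functional equation from \eqref{eqn_ScTab}, deduce that $\im(M)$ is one-dimensional so that $M(b)=\omega\tr_{\F_q/\F_p}(\mu_0 b)$, substitute back, and reach a contradiction via a relative-norm argument. The only differences are cosmetic — you obtain $\cK_0^*\cdot M(b)\subseteq\ker(M)$ directly from $c\in\cK_0^*$ rather than by differencing over each $\cK_i^*$, and your endgame evaluates the resulting identity at $c=t_C$ using $\mu_0\omega=\lambda\mu_C$ instead of the paper's conclusion that $\tr_{\F_q/\F_p}(\eta c\omega)=0$ for all $c$; both are sound.
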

\begin{proof}
Assume that $\mu_B=0$ and $M$ is not constantly zero. In this case, $r_{A,B}=0$. As in the second case of the proof of Lemma \ref{lem_qoddMatLeq03}, we must have $s=r_C=1$ by the nonlinearity of $G$. By Lemma \ref{lem_qoddMatLeq03}, $L(a)=0$ for all $a\in\F_q$. Also, $K_A=K_B=\F_q$ by (F1),  $T(a,b,0)=M(b)$ for $a,b\in\F_q$ and $M$ is additive over $\F_q$ by (F2). Take $a=0$, $b\in\F_q$ and $c\in\F_q$ in  \eqref{eqn_ScTab}, and we obtain
\begin{equation}\label{eqn_Mb}
M(\sigma_c^{-1}(b))^{\sigma_c}-M(b)+M(\sigma_c^{-1}(cM(b)))^{\sigma_c}=0.
\end{equation}
By taking difference over $c=c_1,\,c_2\in\cK_i^*=\{z\in\F_q:\,\sigma_z=g_C^i\}$ and letting $c_1,c_2$ vary, we deduce that $M(g_C^{-i}(v M(b)))=0$ for $v\in \cK_0^*$ and $i\ge 0$.
Therefore, $\cK_0^*\cdot g_C^i(M(b))\subseteq \ker(M)$ for $i\ge 0$. Since $\cK_0^*$ has size $q/p$ by Theorem \ref{thm_G_struc} and $\ker(M)\ne\F_q$, we see that $\ker(M)=\cK_0^*\cdot g_C^i(M(b))$ so long as $M(b)\ne 0$ by comparing sizes. It follows that $\im(M)$ has dimension $1$, say, $\im(M)=\F_p\cdot\omega$ with $\omega\ne 0$. Then $\ker(M)=\cK_0^*\cdot g_C^i(\omega)$ for $i\ge 0$. Since $\cK_0^*$ is $g_C$-invariant, we deduce that $\ker(M)=\cK_0^*\cdot\omega$ is $g_C$-invariant.

Since $\cK_0^*$ is $g_C$-invariant and has codimension $1$, there exists a $g_C$-invariant element $\mu_C$ such that $\cK_0^*=\{x\in\F_q:\,\tr_{\F_q/\F_p}(\mu_C x)=0\}$ by Lemma \ref{lem_mu_inv}. Hence $\ker(M)=\{x\in\F_q:\,\tr_{\F_q/\F_p}(\mu_C \omega^{-1}x)=0\}$. By Lemma \ref{lem_mu_inv}, we deduce from the $g_C$-invariance of $\ker(M)$ that $g_C(\mu_C \omega^{-1})=\mu_C \omega^{-1}$. It follows that $g_C(\omega)=\omega$.

To sum up, we have shown that $M$ is $\F_p$-linear over $\F_q$, $\ker(M)=\cK_0^*\cdot \omega$ and $\im(M)=\F_p\cdot \omega$. By applying Lemma \ref{Imdim1} with $K=\F_q$, we see that
there exists  $\eta\in\F_q^*$ such that $M(x)=\omega\tr_{\F_q/\F_p}(\eta x)$. Plugging it into \eqref{eqn_Mb}, we get $\tr_{\F_q/\F_p}(\Delta b)=0$ with $\Delta=\sigma_c(\eta)-\eta+\tr_{\F_q/\F_p}(\sigma_c(\eta)c\omega)\eta$. Since $b$ is arbitrary, we have $\Delta=0$ for $c\in\F_q$. It follows that $\sigma_c(\eta)\eta^{-1}=1-\tr_{\F_q/\F_p}(\sigma_c(\eta)c\omega)$ lies in $\F_p$. We deduce that $\sigma_c(\eta)\eta^{-1}=1$ by taking the relative norm to the fixed subfield of $\sigma_c$. Now $\Delta=0$ reduces to $\tr_{\F_q/\F_p}(\eta c\omega)=0$ for $c\in\F_q$, and so $\eta\omega=0$: a contradiction to the fact $\eta\omega\ne 0$. This completes the proof.
\end{proof}

\begin{lemma}\label{lem_qoddMatMinfo5}
If $q$ is odd and $r_B=1$, then $(1+g_B+\cdots+g_B^{p-1})(M(t_B))=0$, and
$M(y)=(1+g_B+\cdots+g_B^{i-1})(M(t_B))$ if $\theta_{0,y,0}=g_B^i$, $1\le i\le p-1$.
\end{lemma}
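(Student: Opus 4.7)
My plan is to derive both assertions as short consequences of the $B$-version of Lemma \ref{lem_LsumLofx} combined with the vanishing of $M$ on $K_B$ that has already been established.

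First, since $r_B=1$, fact (F1) forces $\mu_B \ne 0$, so Lemma \ref{lem_qoddMatM2} applies and yields $M(b)=0$ for every $b \in K_B$. In particular $0 \in K_B$ and $M(0)=0$. Lemma \ref{lem_LsumLofx}, in its $B$-analogue, then simplifies: for every $x\in K_B$ and every $i\ge 1$,
\[
M\bigl(g_B^i(x)+t_{B,i}\bigr) \;=\; g_B^i\bigl(M(x)\bigr)+\sum_{j=0}^{i-1}g_B^{j}\bigl(M(t_B)\bigr) \;=\; (1+g_B+\cdots+g_B^{i-1})\bigl(M(t_B)\bigr).
\]

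Second, the argument in the proof of Theorem \ref{thm_GA_struc1}, transposed to the $B$-side, establishes the partition $\F_q=\bigsqcup_{i=0}^{p-1}(K_B+t_{B,i})$, with $\{y\in\F_q:\theta_{0,y,0}=g_B^i\}=K_B+t_{B,i}$ for $0\le i\le p-1$. Hence if $\theta_{0,y,0}=g_B^i$ with $1\le i\le p-1$, there exists $x'\in K_B$ with $y=x'+t_{B,i}$. Using the $g_B$-invariance of $K_B$ (from Lemma \ref{lem_KAKB}(2)), we may rewrite $x'=g_B^i(x)$ for some $x\in K_B$, and the displayed identity immediately gives the desired formula
\[
M(y)=(1+g_B+\cdots+g_B^{i-1})(M(t_B)).
\]

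Third, for the identity $(1+g_B+\cdots+g_B^{p-1})(M(t_B))=0$, Lemma \ref{lem_LsumLofx} already records $t_{B,p}\in K_B$ (this is the case $i=p^{r_B}=p$). Applying the displayed identity with $i=p$ and $x=0\in K_B$ gives
\[
(1+g_B+\cdots+g_B^{p-1})(M(t_B)) \;=\; M(t_{B,p}) \;=\; 0,
\]
the last equality because $t_{B,p}\in K_B$ and $M$ vanishes on $K_B$. There is essentially no obstacle here: the only subtlety is being scrupulous about invoking the $B$-analogues of statements phrased in the excerpt for $A$, but in each case (Lemma \ref{lem_KAKB}, Lemma \ref{lem_LsumLofx}, the coset description inside the proof of Theorem \ref{thm_GA_struc1}) the lemma is stated to hold after swapping $A\leftrightarrow B$ and $L\leftrightarrow M$.
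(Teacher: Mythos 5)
Your proof is correct and follows essentially the same route as the paper's: both arguments combine the vanishing of $M$ on $K_B$ (Lemma \ref{lem_qoddMatM2}), the $(B,M)$-version of \eqref{eqn_Lofx} from Lemma \ref{lem_LsumLofx}, and the coset description $K_i=K_B+t_{B,i}$ from the proof of Theorem \ref{thm_GA_struc1}, together with the $g_B$-invariance of $K_B$. No gaps.
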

\begin{proof}
Suppose that $r_B=1$. In this case, $\mu_B\ne 0$ and $K_B=\{x\in\F_q:\,\tr_{\F_q/\F_p}(\mu_Bx)=0\}$ is $g_B$-invariant by (F1). By Lemma \ref{lem_LsumLofx}, we have $(1+g_B+\cdots+g_B^{p-1})(t_B)\in K_B$. Since $M$ vanishes on $K_B$ by Lemma \ref{lem_qoddMatM2}, we deduce that $(1+g_B+\cdots+g_B^{p-1})(M(t_B))=0$ by the $(B,M)$-version of  \eqref{eqn_Lofx} with $x=0$ and $i=p$.

Take $y\in\F_q$ and assume that $\theta_{0,y,0}=g_B^i$. Then $y\in K_i$, where $K_{i}=\{b\in\F_q:\,\theta_{0,b,0}=g_B^i\}$. In the proof of Theorem \ref{thm_GA_struc1}, we showed that $K_{i}=K_B+(1+g_B+\cdots+g_B^{i-1})(t_B)$. Since $K_B$ is $g_B$-invariant, there exists $x\in K_B$ such that $y=g_B^i(x)+ (1+g_B+\cdots+g_B^{i-1})(t_B)$. By the $(B,M)$-version of  \eqref{eqn_Lofx} in Lemma \ref{lem_LsumLofx}, we have $M(y)=g_B^i(M(x))+\sum_{j=0}^{i-1}g_B^j(M(t_B))$. By Lemma \ref{lem_qoddMatM2}, we have $M(x)=0$. The desired expression of $M(y)$ then follows.
\end{proof}

Combining Lemmas \ref{lem_qoddMatmuA0}-\ref{lem_qoddMatMinfo5} and the facts (F1), (F2), we have the following theorem on the matrix part of the nonlinear point regular group $G$.
\begin{thm}\label{thm_matrix_G}
Take notation as in Notation \ref{notation_4.1}. Suppose that $q$ is odd, and let $g_2$ be an element of order $p$. Then there exists a $g_C$-invariant element $\mu_B$ such that   $\theta_{a,b,0}=g_2^{\tr_{\F_q/\F_p}(\mu_B b)}$. Also, $T(a,b,0)=M(b)$ for $a,b\in\F_q$, and $M$ vanishes on $K_B=\{b:\,\tr_{\F_q/\F_p}(\mu_B b)=0\}$. If we assume further that $\mu_B\ne 0$, then with $\nu_B:=M(t_B)$ we have $(1+g_B+\cdots+g_B^{p-1})(\nu_B)=0$, and
$M(b)=N_B(\theta_{0,b,0})$, where
    \begin{equation}\label{eqn_Nc}
       N_B(g_B^i):=\begin{cases}0,&\textup{ if }i=0,\\(1+g_B+\cdots+g_B^{i-1})(\nu_B),&\textup{ if }1\le i\le p-1.\end{cases}
    \end{equation}
\end{thm}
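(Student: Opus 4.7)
The plan is to assemble the theorem as a direct synthesis of the structural lemmas already established in this subsection, rather than to introduce any new computation. The main observation is that every ingredient in the conclusion has already been obtained individually; the task is to put them together and verify that the two cases $\mu_B=0$ and $\mu_B\ne 0$ are handled uniformly.

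First, I would use fact (F1), i.e., Corollary \ref{cor_theta_Ord}, to write $\theta_{a,b,0}=g_2^{\tr_{\F_q/\F_p}(\mu_A a+\mu_B b)}$ with $g_2$-invariant $\mu_A,\mu_B$. Lemma \ref{lem_qoddMatmuA0} eliminates the $\mu_A a$ term since it shows $\mu_A=0$, $r_A=0$ and $K_A=\F_q$, so $\theta_{a,b,0}=g_2^{\tr_{\F_q/\F_p}(\mu_B b)}$. The $g_C$-invariance of $\mu_B$ is trivial if $\mu_B=0$ and is given by Lemma \ref{lem_qoddMatM2} if $\mu_B\ne 0$. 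Next, since $K_A=\F_q$, fact (F2) yields $T(a,b,0)=L(a)+M(b)$ on all of $\F_q\times\F_q$, and Lemma \ref{lem_qoddMatLeq03} shows $L\equiv 0$, giving $T(a,b,0)=M(b)$.

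For the vanishing of $M$ on $K_B$, I would split into two cases. If $\mu_B\ne 0$, then $K_B=\{b:\tr_{\F_q/\F_p}(\mu_B b)=0\}$ and Lemma \ref{lem_qoddMatM2} states precisely that $M$ vanishes on $K_B$. If $\mu_B=0$, then $K_B=\F_q$ and Lemma \ref{lem_qoddMatM4} gives $M\equiv 0$, so again $M$ vanishes on $K_B$.

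Finally, suppose $\mu_B\ne 0$. Then $r_B=1$ by (F1), so Lemma \ref{lem_qoddMatMinfo5} applies directly and yields both $(1+g_B+\cdots+g_B^{p-1})(\nu_B)=0$ with $\nu_B=M(t_B)$, and the formula $M(b)=(1+g_B+\cdots+g_B^{i-1})(\nu_B)$ whenever $\theta_{0,b,0}=g_B^i$, $1\le i\le p-1$. Together with the already established $M(b)=0$ on $K_B=\{b:\theta_{0,b,0}=1\}$, this matches the definition of $N_B$ in \eqref{eqn_Nc} and so gives $M(b)=N_B(\theta_{0,b,0})$. There is essentially no obstacle here: the theorem is a consolidation statement, and the only mildly subtle point is ensuring the two scenarios $\mu_B=0$ and $\mu_B\ne 0$ are merged cleanly when asserting $g_C$-invariance of $\mu_B$ and the vanishing of $M$ on $K_B$, which is handled by the case split above.
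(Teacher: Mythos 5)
Your proposal is correct and is essentially identical to the paper's own treatment: the paper explicitly introduces Theorem \ref{thm_matrix_G} with the sentence ``Combining Lemmas \ref{lem_qoddMatmuA0}--\ref{lem_qoddMatMinfo5} and the facts (F1), (F2), we have the following theorem,'' which is exactly the synthesis you describe. Your case split on $\mu_B=0$ versus $\mu_B\ne 0$ for the $g_C$-invariance of $\mu_B$ and the vanishing of $M$ on $K_B$ is the right way to make that consolidation precise.
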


For the function $N_B$ introduced in Theorem \ref{thm_matrix_G}, it holds that
\begin{equation}\label{eqn_Nc_eq}
      g_B^j(N_B(g_B^i))+N_B(g_B^j)=N_B(g_B^{i+j}),\,\textup{ for }\, 0\le i,\,j\le p-1.
\end{equation}
This is clear by writing out the expressions on both sides.

\subsection{Summary of the structural results for $q$ odd}\label{sec_sum_struc_qodd}

Let $G$ be a point regular group of $\cQ^P$ with associated functions $T$ and $\theta$ as in Theorem \ref{Main}, and take notation as in Notation \ref{notation_4.1}. In this subsection, we summarize the structural results on the group $G$ in the case $q$ is odd.
\begin{lemma}\label{lem_thetasigmac}
For $a,b,c\in\F_q$, we have $\theta_{a,\sigma_c^{-1}(b),0}=\theta_{0,b,0}$, $M(\sigma_c^{-1}(b))=M(b)$.
\end{lemma}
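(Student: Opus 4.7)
The plan is to derive this lemma directly from the explicit description of $\theta_{a,b,0}$ and $M$ given in Theorem~\ref{thm_matrix_G}, so the content boils down to proving that $\sigma_c$ fixes $\mu_B$ for every $c \in \F_q$, and then transporting this through the trace pairing.

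First, I would observe that since $G$ is a $p$-group, every Frobenius part $\sigma_c$ has $p$-power order and hence lies in the (unique) Sylow $p$-subgroup of the cyclic group $\Aut(\F_q)$. By the definition of $g_C$ in Notation~\ref{notation_4.1} as an element of maximal order $p^{r_C}$ among the $\sigma_c$'s, we have $\la \sigma_c : c \in \F_q\ra \le \la g_C\ra$. Since Theorem~\ref{thm_matrix_G} gives an element $\mu_B$ satisfying $g_C(\mu_B) = \mu_B$, it follows that $\sigma_c(\mu_B) = \mu_B$ for every $c \in \F_q$.

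Next, invoking the formula $\theta_{a,b,0} = g_2^{\tr_{\F_q/\F_p}(\mu_B b)}$ from Theorem~\ref{thm_matrix_G}, the first identity reduces to showing
\[
\tr_{\F_q/\F_p}(\mu_B \sigma_c^{-1}(b)) = \tr_{\F_q/\F_p}(\mu_B b).
\]
Writing $\sigma_c^{-1}$ as a power of the Frobenius $x \mapsto x^p$ and applying Lemma~\ref{lem_trace_int}, the left side equals $\tr_{\F_q/\F_p}(\sigma_c(\mu_B) b)$, which is $\tr_{\F_q/\F_p}(\mu_B b)$ by the fixed-point fact above. This gives $\theta_{a,\sigma_c^{-1}(b),0} = \theta_{0,b,0}$ (note the formula is independent of the first coordinate, so the presence of $a$ is harmless).

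For the second identity, I would split on whether $\mu_B = 0$: if so, then $M \equiv 0$ by the vanishing part of Theorem~\ref{thm_matrix_G} (since $K_B = \F_q$ in that case), and the identity is trivial. If $\mu_B \neq 0$, Theorem~\ref{thm_matrix_G} gives $M(b) = N_B(\theta_{0,b,0})$; applying the first part of the lemma with $a = 0$ yields $\theta_{0,\sigma_c^{-1}(b),0} = \theta_{0,b,0}$, so $M(\sigma_c^{-1}(b)) = N_B(\theta_{0,\sigma_c^{-1}(b),0}) = N_B(\theta_{0,b,0}) = M(b)$. There is no real obstacle here; the only subtle point is justifying $\sigma_c \in \la g_C\ra$, which is essentially the cyclicity of $\Aut(\F_q)$ combined with the maximality clause in Notation~\ref{notation_4.1}.
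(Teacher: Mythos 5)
Your proof is correct and follows essentially the same route as the paper: both reduce the first identity to the fact that every $\sigma_c$ lies in $\la g_C\ra$ (hence fixes $\mu_B$) and then move $\sigma_c^{-1}$ across the trace pairing, and both deduce the claim on $M$ from $M(x)=N_B(\theta_{0,x,0})$. Your explicit case split on $\mu_B=0$ is a small extra precaution the paper leaves implicit, but nothing substantive differs.
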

\begin{proof}
By Theorem \ref{thm_matrix_G}, $\theta_{a,b,0}=g_2^{\tr_{\F_q/\F_p}(\mu_B b)}$ with $g_C(\mu_B)=\mu_B$. In particular, we have $\sigma_c(\mu_B)=\mu_B$ for all $c\in\F_q$ by the definition of $g_C$, cf. Notation \ref{notation_4.1}. The first claim then follows from the fact that $\tr_{\F_q/\F_p}(\mu_B \sigma_c^{-1}(b))=\tr_{\F_q/\F_p}( \sigma_c(\mu_B)b)$. The claim on $M$ then follows from the first claim and the fact $M(x)=N_B(\theta_{0,x,0})$, cf. Theorem \ref{thm_matrix_G}.
\end{proof}

\begin{lemma}\label{lem_FiveEqn}
For $a$, $b$, $c$, $x$, $y$, $z$ in $\F_q$,  it holds that
\begin{align}
    \theta_{a,b,c}&=\theta_{0,b,0}\sigma_{c}, \label{eqn_theta6}\\
     T(a,b,c)&=N_B(\theta_{0,b,0})^{\sigma_c}+S(c),\label{eqn_T6}\\
         \sigma_c\sigma_z&=\theta_{0,v,0}\sigma_{w},\label{eqn_sigc3}\\ S(c)^{\sigma_{z}}+S(z)&=N_B(\theta_{0,v,0})^{\sigma_w}+S(w),\label{eqn_Sc3}
\end{align}
where $v=\sigma_{z}(c)S(z)$, $w=\sigma_{z}(c)+z$, and $N_B$ is as in \eqref{eqn_Nc} with $\nu_B=M(t_B)$.
\end{lemma}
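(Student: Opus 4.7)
The plan is to derive \eqref{eqn_theta6}--\eqref{eqn_Sc3} as a straightforward combination of parts (3) and (6) of Corollary~\ref{cond1} with the structural description of $\theta_{a,b,0}$ and $T(a,b,0)$ furnished by Theorem~\ref{thm_matrix_G} and Lemma~\ref{lem_thetasigmac}.

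First I would establish \eqref{eqn_theta6} and \eqref{eqn_T6}. By part (6) of Corollary~\ref{cond1}, for any $a,b,c$ we have the decomposition $\fg_{a,b,c}=\fg_{a',b',0}\circ\fg_{0,0,c}$ with $a'=\sigma_c^{-1}(a+bc)$ and $b'=\sigma_c^{-1}(b)$, which yields
\[
\theta_{a,b,c}=\theta_{a',b',0}\,\sigma_c \quad\text{and}\quad T(a,b,c)=T(a',b',0)^{\sigma_c}+S(c).
\]
Theorem~\ref{thm_matrix_G} tells us that $\theta_{a',b',0}=\theta_{0,b',0}$ and $T(a',b',0)=M(b')=N_B(\theta_{0,b',0})$, since the given formulas depend only on the second coordinate. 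Applying Lemma~\ref{lem_thetasigmac} with $b' = \sigma_c^{-1}(b)$ gives $\theta_{0,b',0}=\theta_{0,b,0}$, so $\theta_{a,b,c}=\theta_{0,b,0}\sigma_c$ and $T(a,b,c)=N_B(\theta_{0,b,0})^{\sigma_c}+S(c)$, as required.

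Next I would establish \eqref{eqn_sigc3} and \eqref{eqn_Sc3}. Part (3) of Corollary~\ref{cond1} gives $\sigma_c\sigma_z=\theta_{u,v,w}$ and $S(c)^{\sigma_z}+S(z)=T(u,v,w)$, where $u=-\sigma_z(c)zS(z)$, $v=\sigma_z(c)S(z)$ and $w=\sigma_z(c)+z$; in particular, the $v$ and $w$ appearing here coincide with those in the statement of the lemma. Now apply the already-proved \eqref{eqn_theta6} and \eqref{eqn_T6} to the triple $(u,v,w)$: this converts $\theta_{u,v,w}$ into $\theta_{0,v,0}\sigma_w$ and $T(u,v,w)$ into $N_B(\theta_{0,v,0})^{\sigma_w}+S(w)$, yielding \eqref{eqn_sigc3} and \eqref{eqn_Sc3}.

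Since every step is a direct invocation of an already-stated result, there is no genuine obstacle; the only point requiring mild care is the substitution in \eqref{eqn_T6}, where one must recognize that the structural identity $M=N_B\circ\psi|_{G_B}$ from Theorem~\ref{thm_matrix_G} applies to $b'$ just as it does to $b$, with Lemma~\ref{lem_thetasigmac} guaranteeing that $\theta_{0,b',0}=\theta_{0,b,0}$ so that the final expression depends only on $b$ and not on the auxiliary $b'$.
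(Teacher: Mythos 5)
Your proposal is correct and follows exactly the paper's own route: the first two identities are obtained by combining part (6) of Corollary \ref{cond1} with Theorem \ref{thm_matrix_G} and Lemma \ref{lem_thetasigmac}, and the last two by feeding part (3) of Corollary \ref{cond1} into the identities just proved. The only difference is that you spell out the substitution $b'=\sigma_c^{-1}(b)$ and the use of $M(b')=M(b)$ explicitly, which the paper leaves implicit.
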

\begin{proof}
The first two equations are reformulations of (6) of Corollary \ref{cond1}  by using the results in  Theorem \ref{thm_matrix_G} and Lemma \ref{lem_thetasigmac}. The last two equations then follow from (3) of Corollary \ref{cond1} and the first two equations.
\end{proof}

\begin{lemma}\label{lem_F6}
For $b$, $c$ in $\F_q$ and $c'=\theta_{0,b,0}(c)$, we have
    \begin{align}
       \sigma_{c}&=\sigma_{c'}\theta_{0,c'M(b),0}, \label{eqn_sigc_7}\\
        \left(S(c)-N_B(\sigma_c)\right)^{\theta_{0,b,0}}&=S(c')-N_B(\sigma_{c'}).\label{eqn_ScTab2}
    \end{align}
\end{lemma}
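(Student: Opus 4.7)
The approach is to apply part (7) of Corollary~\ref{cond1} specialized to $a=0$ and then simplify using the structural information from Theorem~\ref{thm_matrix_G}. With that specialization we have $c'=\theta_{0,b,0}(c)$, $v=\sigma_{c'}^{-1}(b+c'M(b))$, and the two assertions of (7) become
\begin{equation*}
\sigma_c\,\theta_{0,b,0}=\theta_{u,v,0}\,\sigma_{c'},\qquad S(c)^{\theta_{0,b,0}}+M(b)=M(v)^{\sigma_{c'}}+S(c').
\end{equation*}
Because $\mu_A=0$ by Theorem~\ref{thm_matrix_G}, we have $\theta_{u,v,0}=\theta_{0,v,0}$ and $T(u,v,0)=M(v)$; moreover, all elements of $\Aut(\F_q)$ commute since $\Aut(\F_q)$ is cyclic. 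These observations will be used throughout.

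For \eqref{eqn_sigc_7}, I would rewrite the first displayed identity as $\sigma_c=\sigma_{c'}\cdot\theta_{0,v,0}\theta_{0,b,0}^{-1}$ and then identify the second factor with $\theta_{0,c'M(b),0}$. Using $\theta_{0,x,0}=g_2^{\tr_{\F_q/\F_p}(\mu_B x)}$ from Theorem~\ref{thm_matrix_G}, this reduces to verifying $\tr_{\F_q/\F_p}(\mu_B(v-b))=\tr_{\F_q/\F_p}(\mu_B c'M(b))$. Since $\mu_B$ is $g_C$-invariant and $\sigma_{c'}\in\la g_C\ra$, Lemma~\ref{lem_trace_int} gives $\tr_{\F_q/\F_p}(\mu_B\sigma_{c'}^{-1}(x))=\tr_{\F_q/\F_p}(\mu_B x)$; applied to $x=b+c'M(b)$, this immediately yields the required trace identity.

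For \eqref{eqn_ScTab2}, substituting $M(x)=N_B(\theta_{0,x,0})$ into the second equation from (7) reduces the goal to
\begin{equation*}
\theta_{0,b,0}\bigl(N_B(\sigma_c)\bigr)-N_B(\sigma_{c'})=\sigma_{c'}\bigl(N_B(\theta_{0,v,0})\bigr)-N_B(\theta_{0,b,0}).
\end{equation*}
From \eqref{eqn_sigc_7} we have $\sigma_c=\sigma_{c'}\theta_{0,c'M(b),0}$, and the parallel computation that identified the factor in Paragraph~2 also yields $\theta_{0,v,0}=\theta_{0,c'M(b),0}\theta_{0,b,0}$. Writing $\rho=\sigma_{c'}$, $\tau=\theta_{0,b,0}$, $\pi=\theta_{0,c'M(b),0}$ and expanding both $N_B(\rho\pi)$ and $N_B(\pi\tau)$ via the cocycle relation \eqref{eqn_Nc_eq}, the terms involving $N_B(\pi)$ regroup into an expression proportional to $(\tau-1)N_B(\pi)-(\pi-1)N_B(\tau)$, while the remaining terms collapse to $(\tau-1)N_B(\rho)-(\rho-1)N_B(\tau)$. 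Both expressions vanish by the symmetry $(\alpha-1)N_B(\beta)=(\beta-1)N_B(\alpha)$ for commuting $\alpha,\beta$, itself an immediate consequence of applying \eqref{eqn_Nc_eq} to the two orderings $\alpha\beta=\beta\alpha$ in the cyclic group $\Aut(\F_q)$.

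The main obstacle is the cocycle bookkeeping for \eqref{eqn_ScTab2}: the first identity is essentially free once the $g_C$-invariance of $\mu_B$ is observed, but the second requires juggling three commuting automorphisms and invoking the $N_B$-symmetry identity twice. Throughout the calculation one must repeatedly use that $\Aut(\F_q)$ is abelian, which is what licenses each rearrangement of the $\sigma$'s and $\theta$'s.
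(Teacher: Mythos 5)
Your proposal is correct and follows essentially the same route as the paper: specialize (7) of Corollary \ref{cond1} to $a=0$, simplify the Frobenius parts and $T(u,v,0)$ via Theorem \ref{thm_matrix_G}, and then eliminate the $N_B$-terms using the cocycle relation \eqref{eqn_Nc_eq}. The only difference is cosmetic: the paper finishes \eqref{eqn_ScTab2} in one step by applying \eqref{eqn_Nc_eq} to both sides of $\sigma_c\theta_{0,b,0}=\theta_{0,b+c'M(b),0}\sigma_{c'}$ to get $N_B(\theta_{0,b,0})+N_B(\sigma_c)^{\theta_{0,b,0}}=N_B(\theta_{0,b+c'M(b),0})^{\sigma_{c'}}+N_B(\sigma_{c'})$ directly, whereas you split into three commuting automorphisms and invoke the symmetry identity twice — a valid but slightly longer bookkeeping.
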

\begin{proof}
By the results in  Theorem \ref{thm_matrix_G} and Lemma \ref{lem_thetasigmac}, we can reformulate the two equations in (7) of Corollary \ref{cond1} as
$\theta_{0,b,0}\sigma_c=\theta_{0,b+c'M(b),0}\sigma_c'$ and $S(c)^{\theta_{0,b,0}}+N_B(\theta_{0,b,0})=N_B(\theta_{0,b+c'M(b),0})^{\sigma_{c'}}+S(c')$. By the expression of $\theta_{0,x,0}$ in Theorem \ref{thm_matrix_G}, the first equation further reduces to \eqref{eqn_sigc_7}. Also, by \eqref{eqn_Nc_eq} we have
\[
N_B(\theta_{0,b,0})+N_B(\sigma_c)^{\theta_{0,b,0}}=N_B(\theta_{0,b+c'M(b),0})^{\sigma_c'}+N_B(\sigma_c').
\]
Then \eqref{eqn_ScTab2} follows by combining the two equations that involve $N_B$ and $S$.
\end{proof}

In the next two sections, we use these results and Theorem \ref{thm_matrix_G} to establish the following classification theorem for nonlinear regular groups of $\cQ^P$ for $q$ odd.
\begin{thm}\label{thm_nonlinear}
Let $G$ be a group that acts regularly on the points of the derived quadrangle $\cQ^P$ of $\cQ=W(q)$, $q$ odd and $q\ge 5$. If $G$ is not contained in $\textup{PGL}(4,q)$, then $G$ is conjugate to one of the groups in Constructions \ref{Conk0=k0}, \ref{Constquadodd} and \ref{Const_rC2} below.
\end{thm}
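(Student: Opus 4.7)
The plan is to use the structural results of Section \ref{sec_structure} as the starting point and then perform a case analysis on the invariants $(r_{A,B}, r_C)$, recovering in each case an explicit normal form for $(S, \theta)$ that matches one of the three constructions \ref{Conk0=k0}, \ref{Constquadodd}, \ref{Const_rC2}. Concretely, Theorem \ref{thm_matrix_G} already pins down the ``matrix part'' of $G$ on the $G_{A,B}$-part: we have $L\equiv 0$, $T(a,b,0)=M(b)$, $\theta_{a,b,0}=g_2^{\tr_{\F_q/\F_p}(\mu_B b)}$ with $g_C(\mu_B)=\mu_B$, and $M$ is completely determined by $\nu_B:=M(t_B)$ via \eqref{eqn_Nc}. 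By Theorems \ref{thm_GA_struc1} and \ref{thm_G_struc} we also have $r_{A,B}\le 1$ and $r_C-r_{A,B}\le 1$. Since $G$ is nonlinear, Corollary \ref{cor_rABnonlin} rules out the case $r_{A,B}=r_C=0$, so there are exactly three cases to treat: (I) $r_{A,B}=0$, $r_C=1$; (II) $r_{A,B}=r_C=1$; (III) $r_{A,B}=1$, $r_C=2$.

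In every case the remaining unknowns are the map $c\mapsto\sigma_c$ and the function $S$. I would determine $\sigma_c$ first. By Theorem \ref{thm_G_struc} the level sets $\cK_i^*=\{z:\sigma_z^{p^{r_{A,B}}}=g_C^{ip^{r_{A,B}}}\}$ are cosets of a $g_C$-invariant subspace of codimension $s=\max\{0,r_C-r_{A,B}\}$, so Lemma \ref{lem_mu_inv} produces a $g_C$-fixed element $\mu_C$ such that $\cK_0^*=\ker(\tr_{\F_q/\F_p}(\mu_C\,\cdot\,))$. Combining \eqref{eqn_sigc3} with \eqref{eqn_sigc_7} then converts the multiplicative relation for $\sigma_c\sigma_z$ into an additive congruence mod $p^{r_{A,B}}$ for the exponent of $\sigma_c$. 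In Cases (I) and (II) this forces $\sigma_c = g_C^{\frac12 Q(c)+\tr_{\F_q/\F_p}(\alpha c)}$ for a quadratic correction $Q$ built from $\mu_B$ and $\nu_B$ and a linear term coming from the interaction of $\sigma$ with $M$; in Case (III) a genuine second-order term appears because $g_C$ has order $p^2$, and the module-theoretic analysis behind Lemma \ref{Wt1cond} (applied inside $\F_p[\langle g_C\rangle]$) constrains the admissible $p$ and $q$.

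With $\sigma_c$ in hand, the function $S$ is governed by the cocycle identity \eqref{eqn_Sc3} and the invariance relation \eqref{eqn_ScTab2}. I would apply Lemma \ref{lem_bform} to the symmetric bilinear form $\cB(c,z):=S(c+z)-S(c)-S(z)$ restricted to $\cK_0^*$ (using \eqref{eqn_Sc3} to evaluate $\cB$ explicitly in terms of $\nu_B$, $\mu_B$ and the exponent of $\sigma_c$), and then invoke Lemma \ref{lem_Tracelinear} to write $S|_{\cK_0^*}$ as $S_1(z)$ for a reduced linearized polynomial $S_1(X)=\sum s_i X^{p^i}$ whose coefficients satisfy the symmetry conditions \eqref{eqn_sismmi}. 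The $g_C$-invariance \eqref{eqn_ScTab2} of $S(c)-N_B(\sigma_c)$ then lets me apply Lemma \ref{Lpoly} to push the coefficients of $S_1$ into the subfield $\F_{p^l}$ fixed by the appropriate power of $g_C$. In Case (III) there is also the additional datum of $S(t_C)$ encoded in the element $\alpha$ of Construction \ref{const_S4}, and the equation obtained from expanding $S(t_C+t_C)$ via \eqref{eqn_Sc3} gives exactly the constraint $g(\alpha)-\alpha=\lambda_C u+\lambda\mu_C$ in step (v) of that construction.

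I expect the main obstacle to be Case (III). Here $g_C$ has order $p^2$, so the $\F_p[\langle g_C\rangle]$-module structure of $\F_q$ becomes delicate: Lemma \ref{Wt1cond} with $h=2$ forces $h\ge p^{h-1}$, i.e.\ $p\le 2$ in general, so for odd $q$ additional room must be created by the interplay between the codimension-$1$ subspace $\cK_0^*$ and the higher powers of $(1-g_C)$ acting on $t_C$. Tracking this interplay together with the compatibility of $\sigma$, $S$ and the prescribed $M$ will force $p=3$, $m=9l$, and will produce precisely the tuple $(u,t_C,\mu_B,\mu_C,\alpha,\lambda)$ parametrizing Construction \ref{Const_rC2}. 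Once the data is in the correct parametric form in each case, a final conjugation in $\textup{P}\Gamma\textup{Sp}(4,q)_P$ (scaling by $\delta$ to normalize the scalars $\mu_B,\mu_C$ and translating by elements of $G_{A,B}$ to absorb free additive parameters) reduces $G$ to the representative given in the corresponding construction, completing the classification.
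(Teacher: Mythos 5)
Your strategy mirrors the paper's proof: the same three-way case split on $(r_{A,B},r_C)$ after Theorems \ref{thm_GA_struc1}, \ref{thm_G_struc} and \ref{thm_matrix_G}, the same use of Lemmas \ref{lem_bform}, \ref{lem_Tracelinear} and \ref{Lpoly} to put $S|_{\cK_0^*}$ into linearized-polynomial form with coefficients in the fixed subfield, and the same exploitation of the normality of $G_{\cK_0^*}$ to pin down the remaining parameters. The one point where your sketch misattributes the mechanism is the forcing of $p=3$ in Case (III): this does not come from Lemma \ref{Wt1cond} or the module structure (which only yield $r_C\le r_{A,B}+1$), but from the index condition $\fg_{0,0,t_C}^p\in G_{\cK_0^*}$, whose Frobenius exponent evaluates to $D=\tfrac{\lambda_C}{2}\cdot\tfrac{p(p-1)(p+1)}{3}\,\tr_{\F_q/\F_p}(vt_C)$; this must equal $1$, yet the coefficient vanishes modulo $p$ for every odd prime $p>3$.
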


By the analysis in Section 2.1, we assume without loss of generality that $G$ is a nonlinear point regular  group  of the quadrangle $\cQ^P$ as defined in Theorem \ref{Main} with associated functions $T$ and $\theta$. By Theorem \ref{thm_GA_struc1}, we have $r_{A,B}\le 1$. We will handle the case $r_{A,B}=0$ in Section \ref{sec_nonlinear_req0}, and handle the case $r_{A,B}=1$ in Section \ref{sec_nonlinear_req1}.

\section{Proof of Theorem \ref{thm_nonlinear} for the case $r_{A,B}=0$}\label{sec_nonlinear_req0}

Let $G$ be a nonlinear point regular  group  of the quadrangle $\cQ^P$ with associated functions $T$ and $\theta$ as in Theorem \ref{Main}, and suppose that $q$ is odd. Take notation as in Notation \ref{notation_4.1}, and assume that $r_{A,B}=0$. Since $G$ is nonlinear, we must have $r_C=1$ by Corollary \ref{cor_rABnonlin} and Theorem \ref{thm_G_struc}. In this case, $s=\max\{0,r_C-r_{A,B}\}=1$. Since $\F_q/\F_p$ is a Galois extension, the order of  $g_C$ divides $[\F_q:\,\F_p]$, and so $q=p^{pl}$ for a positive integer $l$.

Take $g\in\Aut(\F_q)$ such that $g(x)=x^{p^l}$ for $x\in\F_q$, and we specify $g_C=g$ in Notation \ref{notation_4.1}. By Theorem \ref{thm_G_struc} and Lemma \ref{lem_mu_inv}, there is $\mu_C\in\F_q$ such that $g(\mu_C)=\mu_C$, $\cK_0^*=\{x\in\F_q:\,\tr_{\F_q/\F_p}(\mu_Cx)=0\}$. Here, $\cK_0^*=\{c\in\F_q:\,\sigma_c=1\}$, cf. Notation \ref{notation_4.1}.
The element $\mu_C$ lies in $\F_{p^l}$, the fixed subfield of $g_C$.  We deduce that $T(a,b,0)=M(b)\equiv 0$ by Theorem \ref{thm_matrix_G}, $S$ is additive on $\cK_0^*$ by  \eqref{eqn_Sc3}, and $T(a,b,c)=S(c)$ by  \eqref{eqn_T6}. Since $S(z)=T(0,0,z)$, cf. Notation \ref{notation_sigmaLMS}, we have
\[
\fg_{0,0,t_C}=(\cM_{0,0,t_C},\,g),\quad\cM_{0,0,t_C}=E(0,0,t_C,S(t_C)).
\]

\begin{lemma}\label{lem_K0s}
We have $\F_{p^l}\subseteq \cK_0^*$.
\end{lemma}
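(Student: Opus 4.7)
The plan is to exploit the fact that $\mu_C$ lies in the fixed subfield $\F_{p^l}$ of $g_C$ together with the transitivity of the trace for the tower $\F_p \subseteq \F_{p^l} \subseteq \F_q$. Recall that $q=p^{pl}$, so $[\F_q:\F_{p^l}]=p$.

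First I would take an arbitrary $x\in\F_{p^l}$. Since $\mu_C\in\F_{p^l}$ as already noted, the product $\mu_C x$ lies in $\F_{p^l}$ as well. Then I would compute the relative trace
\[
\tr_{\F_q/\F_{p^l}}(\mu_C x)=\sum_{i=0}^{p-1}(\mu_C x)^{p^{il}}=p\cdot\mu_C x=0,
\]
where the second equality uses that $\mu_C x$ is fixed by $g_C$, and the last equality is just the fact that we are in characteristic $p$.

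Next I would invoke transitivity of the trace:
\[
\tr_{\F_q/\F_p}(\mu_C x)=\tr_{\F_{p^l}/\F_p}\bigl(\tr_{\F_q/\F_{p^l}}(\mu_C x)\bigr)=\tr_{\F_{p^l}/\F_p}(0)=0.
\]
By the description of $\cK_0^*$ as $\{x\in\F_q:\tr_{\F_q/\F_p}(\mu_C x)=0\}$ recorded just before the lemma, this shows $x\in\cK_0^*$, completing the inclusion $\F_{p^l}\subseteq\cK_0^*$.

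There is no real obstacle here; the lemma is essentially a bookkeeping consequence of $\mu_C\in\F_{p^l}$ and $[\F_q:\F_{p^l}]=p$. The only point worth pausing on is justifying that $\mu_C$ genuinely lies in $\F_{p^l}$, but this has already been recorded in the paragraph preceding the lemma via Lemma~\ref{lem_mu_inv}, so the argument is immediate.
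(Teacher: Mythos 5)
Your proof is correct and follows essentially the same route as the paper's: both arguments reduce the claim to the observation that $\tr_{\F_q/\F_{p^l}}$ annihilates $\F_{p^l}$ because $[\F_q:\F_{p^l}]=p$, then apply transitivity of the trace together with the fact $\mu_C\in\F_{p^l}$ and the description $\cK_0^*=\{x\in\F_q:\tr_{\F_q/\F_p}(\mu_C x)=0\}$.
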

\begin{proof}
Since $q=p^{pl}$, we have $\tr_{\F_{q}/\F_p}(z)=\tr_{\F_{p^l}/\F_p}(\tr_{\F_q/\F_{p^l}}(z))=0$ for $z\in\F_{p^l}$. The claim now follows from  $\cK_0^*=\{x\in\F_q:\,\tr_{\F_q/\F_p}(\mu_Cx)=0\}$  and the fact $\mu_C\in\F_{p^l}$.
\end{proof}

By Corollary \ref{cor_subGcK}, $G_{\cK_0^*}$ is a normal subgroup of index $p$ in $G$, where $G_{\cK_0^*}:=\{\fg_{a,b,c}:\, a,\,b\in\F_q,\,c\in\cK_0^*\}$, cf. \eqref{eqn_GcK}.  We now examine its implications.
\begin{lemma}\label{lem_rAB0rC1norm}
There is a reduced linearized polynomial $S_1(X)\in\F_{p^l}[X]$ such that $S(c)=S_1(c)$ for $c\in\cK_0^*$.
\end{lemma}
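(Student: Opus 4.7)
The plan is to apply Lemma \ref{Lpoly} with the subspace $K=\cK_0^*$ and the element $\eta=\mu_C\in\F_{p^l}$, so what I need is an $\F_p$-linear extension $\tilde S$ of $S|_{\cK_0^*}$ to all of $\F_q$ that satisfies the Frobenius compatibility $g(\tilde S(g^{-1}(x)))=\tilde S(x)$ for $x\in\cK_0^*$. There are two things to verify: additivity of $S$ on $\cK_0^*$, and the relation $S(g(c))=g(S(c))$ for $c\in\cK_0^*$. The extension step is then routine since $\cK_0^*$ has codimension one.

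For additivity, I would specialize \eqref{eqn_Sc3} to $c,z\in\cK_0^*$. Since $\sigma_c=\sigma_z=1$ and $\cK_0^*$ is an $\F_p$-subspace that is $g$-invariant, we get $w=c+z\in\cK_0^*$, $\sigma_w=1$, and $v=\sigma_z(c)S(z)=0$ because $M\equiv 0$ forces $S(z)$ to play no role in the $N_B$ term; more directly, \eqref{eqn_Sc3} reduces to $S(c)+S(z)=S(c+z)$. Pick $u\in\F_q$ with $\tr_{\F_q/\F_p}(\mu_Cu)=1$ and set $\tilde S(x+\lambda u):=S(x)$ for $x\in\cK_0^*$, $\lambda\in\F_p$, giving the desired $\F_p$-linear extension.

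The main step — and the only one with any content — is the invariance $S(g(c))=g(S(c))$ on $\cK_0^*$, which should come from the normality of $G_{\cK_0^*}$ in $G$ established in Corollary \ref{cor_subGcK}. For $c\in\cK_0^*$ the element $\fg_{0,0,c}=(E(0,0,c,S(c)),1)$ lies in $G_{\cK_0^*}$, so its conjugate $\fg_{0,0,t_C}^{-1}\circ\fg_{0,0,c}\circ\fg_{0,0,t_C}$ is again in $G_{\cK_0^*}$. A direct matrix computation via Remark \ref{rem_EMult} (using $g$-invariance of $t_C,\mu_C$ only through the fact that $c^g\in\cK_0^*$) shows that this conjugate equals an element of the form $\fg_{u',v',c^g}$ whose $(3,2)$-nd matrix entry is $g(S(c))$. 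Reading off the $(3,2)$-nd entry the other way, via \eqref{eqn_T6} applied with $\theta_{0,v',0}=1$ (because $r_{A,B}=0$ forces $\mu_B=0$, hence $M\equiv 0$ and $N_B\equiv 0$) and $\sigma_{c^g}=1$ (because $\cK_0^*$ is $g$-invariant), gives $T(u',v',c^g)=S(c^g)$. Comparing the two expressions yields $S(g(c))=g(S(c))$, as required.

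With these pieces in hand, $\tilde S$ satisfies $g(\tilde S(g^{-1}(x)))=g(S(g^{-1}(x)))=S(x)=\tilde S(x)$ for $x\in\cK_0^*$ (using that $g^{-1}(\cK_0^*)=\cK_0^*$), so Lemma \ref{Lpoly} produces a reduced linearized polynomial $S_1(X)\in\F_{p^l}[X]$ with $\tilde S(x)=S_1(x)$ on $\cK_0^*$, and hence $S(x)=S_1(x)$ there. The principal obstacle is purely computational: carrying out the matrix multiplication for the conjugate $\fg_{0,0,t_C}^{-1}\circ\fg_{0,0,c}\circ\fg_{0,0,t_C}$ cleanly enough to isolate the $(3,2)$-nd entry without getting lost in the $u'$ and $v'$ coordinates — a task made manageable by Remark \ref{rem_EMult}, where the last two coordinates multiply additively.
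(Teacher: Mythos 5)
Your proposal is correct and follows essentially the same route as the paper: the key step in both is to use the normality of $G_{\cK_0^*}$ (Corollary \ref{cor_subGcK}) to conjugate an element with third coordinate $c\in\cK_0^*$ by $\fg_{0,0,t_C}$, read off the $(4,3)$-rd and $(3,2)$-nd entries via Remark \ref{rem_EMult} to obtain $S(c^g)=S(c)^g$, and then invoke Lemma \ref{Lpoly}. The only difference is that you make explicit the additivity of $S$ on $\cK_0^*$ and the $\F_p$-linear extension to all of $\F_q$ needed to apply Lemma \ref{Lpoly}, steps the paper handles in the section's preamble and leaves implicit, respectively.
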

\begin{proof}
By the normality of the subgroup $G_{\cK_0^*}$, we have $\fg_{0,0,t_C}^{-1}\circ \fg_{a,b,c}\circ \fg_{0,0,t_C}\in G_{\cK_0^*}$ for $a,\,b\in\F_q$, $c\in\cK_0^*$. By the multiplication rule \eqref{groupmulti} of $G$, it equals $(\cM',\,1)$ with $\cM'=\cM_{0,0,t_C}^{-1}\cdot \cM_{a,b,c}^g\cdot \cM_{0,0,t_C}$. We compute that the $(4,3)$-rd and the $(3,2)$-nd entries of $\cM'$ equal $c^g$ and  $S(c)^g$ respectively by using the calculations in Remark \ref{rem_EMult}. Therefore, $\fg_{0,0,t_C}^{-1}\circ \fg_{a,b,c}\circ \fg_{0,0,t_C}=\fg_{a',b',c^g}$ for some elements $a',b'$, and $T(a',b',c^g)=S(c)^g$.
It follows from the fact $T(x,y,z)=S(z)$ that $S(c^g)=S(c)^g$ for $c\in\cK_0^*$. The claim now follows from Lemma \ref{Lpoly}.
\end{proof}

\begin{lemma}\label{lem_rAB0rC1ind}
We have $\tr_{\F_q/\F_{p^l}}(\nu_C-S_1(t_C))=0$, where $S_1$ is as in Lemma \ref{lem_rAB0rC1norm} and $\nu_C:=S(t_C)$.
\end{lemma}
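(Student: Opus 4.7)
The plan is to exploit the fact that $\fg_{0,0,t_C}^p$ lies in the normal subgroup $G_{\cK_0^*}$, since its Frobenius part is $g^p = 1$. Computing this power two different ways---directly from the multiplication rule, and indirectly via $T(a,b,c) = S(c)$---produces an identity that will yield the claim after one observation about $S_1$.

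First I would compute $\fg_{0,0,t_C}^i$ by induction. Writing $\fg_{0,0,t_C}^i = (\cM^{(i)}, g^i)$ and using \eqref{groupmulti} together with the product formula in Remark \ref{rem_EMult}, the recursion
\[
\cM^{(i+1)} = \bigl(\cM^{(i)}\bigr)^{g} \cdot E(0,0,t_C,\nu_C)
\]
shows that if the $(4,3)$ and $(3,2)$ entries of $\cM^{(i)}$ are $c_i$ and $\tau_i$, then $c_{i+1} = g(c_i) + t_C$ and $\tau_{i+1} = g(\tau_i) + \nu_C$. Starting from $c_0 = \tau_0 = 0$, this gives $c_i = (1 + g + \cdots + g^{i-1})(t_C)$ and $\tau_i = (1 + g + \cdots + g^{i-1})(\nu_C)$.

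Next I would specialize to $i = p$. Since $g$ has order $p$, we have $c_p = \tr_{\F_q/\F_{p^l}}(t_C)$ and $\tau_p = \tr_{\F_q/\F_{p^l}}(\nu_C)$, both lying in $\F_{p^l}$. By Lemma \ref{lem_K0s}, $\F_{p^l} \subseteq \cK_0^*$, so $c_p \in \cK_0^*$ and thus $\fg_{0,0,t_C}^p = \fg_{a_p,b_p,c_p}$ for some $a_p, b_p \in \F_q$. Since $r_{A,B} = 0$ forces $T(a,b,c) = S(c)$ (via \eqref{eqn_T6} with $N_B \equiv 0$) and $S$ agrees with $S_1$ on $\cK_0^*$ by Lemma \ref{lem_rAB0rC1norm}, comparing the $(3,2)$ entries gives
\[
\tr_{\F_q/\F_{p^l}}(\nu_C) = \tau_p = T(a_p,b_p,c_p) = S(c_p) = S_1(c_p) = S_1\bigl(\tr_{\F_q/\F_{p^l}}(t_C)\bigr).
\]

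Finally, because $S_1 \in \F_{p^l}[X]$ is linearized, it is $\F_p$-linear and its coefficients are fixed by $g$; therefore $g \circ S_1 = S_1 \circ g$, so $S_1$ commutes with $\tr_{\F_q/\F_{p^l}} = \sum_{j=0}^{p-1} g^j$. Consequently $S_1\bigl(\tr_{\F_q/\F_{p^l}}(t_C)\bigr) = \tr_{\F_q/\F_{p^l}}(S_1(t_C))$, and combining with the displayed equation yields $\tr_{\F_q/\F_{p^l}}(\nu_C - S_1(t_C)) = 0$. There is no real obstacle here: the only subtlety is bookkeeping in the inductive computation of $\cM^{(i)}$, where one must apply $g$ entrywise at each step; everything else is a direct consequence of results already established.
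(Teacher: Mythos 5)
Your proposal is correct and follows essentially the same route as the paper: compute $\fg_{0,0,t_C}^p$ via the recursion from Remark \ref{rem_EMult}, identify its $(4,3)$ and $(3,2)$ entries as $\tr_{\F_q/\F_{p^l}}(t_C)$ and $\tr_{\F_q/\F_{p^l}}(\nu_C)$, invoke Lemmas \ref{lem_K0s} and \ref{lem_rAB0rC1norm} to replace $S$ by $S_1$, and use that $S_1$ commutes with the relative trace because its coefficients lie in $\F_{p^l}$.
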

\begin{proof}
By Corollary \ref{cor_subGcK}, $G_{\cK_0^*}$ has index $p^{r_C}=p$ in $G$, so $\fg_{0,0,t_C}^{p}\in G_{\cK_0^*}$. Similarly, we compute that the $(3,2)$-nd and the $(4,3)$-rd entries of its matrix part equal $\tr_{\F_q/\F_{p^l}}(\nu_C)$ and $\tr_{\F_q/\F_{p^l}}(t_C)$ respectively. As in the proof of Lemma \ref{lem_rAB0rC1norm}, we deduce that $S(\tr_{\F_q/\F_{p^l}}(t_C))=\tr_{\F_q/\F_{p^l}}(\nu_C)$. We have $\tr_{\F_q/\F_{p^l}}(t_C)\in\cK_0^*$ by Lemma \ref{lem_K0s}, so $S(\tr_{\F_q/\F_{p^l}}(t_C))=S_1(\tr_{\F_q/\F_{p^l}}(t_C))$ by Lemma \ref{lem_rAB0rC1norm}. Since $S_1$ is additive and has coefficients in $\F_{p^l}$, it also equals
\[
  \sum_{i=0}^{p-1}S_1(t_C^{p^{il}})=\sum_{i=0}^{p-1}S_1(t_C)^{p^{il}}=\tr_{\F_q/\F_{p^l}}(S_1(t_C)).
\]
Therefore, $S(\tr_{\F_q/\F_{p^l}}(t_C))$ equals both $\tr_{\F_q/\F_{p^l}}(\nu_C)$ and $\tr_{\F_q/\F_{p^l}}(S_1(t_C))$, and the claim follows.
\end{proof}

It turns out that the conditions that we have derived so far are also sufficient. This leads to the following construction.
\begin{construction} \label{Conk0=k0}
Suppose that $q=p^{pl}$ with $p$ an odd prime and $l$ a positive integer, and let $g\in\Aut(\F_q)$ be such that $g(x)=x^{p^l}$. Take $\mu_C\in\F_{p^l}^*$, and define $K:=\{x\in\F_q:\,\tr_{\F_q/\F_p}(\mu_Cx)=0\}$. Take an element $t_C\in\F_q\setminus K$ and a linearized polynomial $S_1(X)\in\F_{p^l}[X]$. Let $\nu_C$ be an element of $\F_q$ such that $\tr_{\F_q/\F_{p^l}}(\nu_C-S_1(t_C))=0$.

Set $\cM_{a,b,c}=E(a,b,c,S_1(c))$ for $a,\,b\in\F_q$, $c\in K$,
and set $\cM_{0,0,t_C}:= E(0,0,t_C,\nu_C)$, where $E$ is as defined in  \eqref{eqn_EMat}.
Then $G_K:=\{\fg_{a,b,c}:\, a,\,b\in\F_q,\,c\in K\}$ is a group of order $q^3/p$, where $\fg_{a,b,c}=(\cM_{a,b,c},1)$. Let $G$ be the group generated by $G_K$ and $\fg_{0,0,t_C}:=(\cM_{0,0,t_C},\,g)$. Then $G$  is a  point regular group of $\cQ^P$.
\end{construction}
\begin{proof}
We give a sketch of the proof. We can use the calculations in Remark \ref{rem_EMult} to verify that $G_{K}$ is closed under multiplication, so that it forms a group of order $q^3/p$. By reversing the arguments in Lemmas \ref{lem_rAB0rC1norm} and \ref{lem_rAB0rC1ind}, we show that $\fg_{0,0,t_C}^p\in G_K$ and
$\fg_{0,0,t_C}^{-1}\circ \fg_{a,b,c}\circ \fg_{0,0,t_C}\in G_K$ for $a,\,b\in\F_q$, $c\in K$, so $G$ is a group of order $q^3$.

The orbit of $\la (0,0,0,1)\ra$ under the action of $G_K$ is $X_1=\{\la (a,b,c,1)\ra:\,a,b\in\F_q,c\in K\ra\}$ of size $q^3/p$, and $\la (0,0,0,1)\ra^{\fg_{0,0,t_C}}=\la (0,0,t_C,1)\ra$ is not in $X_1$. Hence the orbit of $\la (0,0,0,1)\ra$ under $G$ has size larger than $q^3/p$. Since its length also divides $|G|$, we conclude that it has size $q^3$, i.e., $G$ is point regular.
\end{proof}

To summarize, we have established the case $r_{A,B}=0$ of Theorem \ref{thm_nonlinear}.

\section{Proof of Theorem \ref{thm_nonlinear} for the case $r_{A,B}=1$}\label{sec_nonlinear_req1}

Let $G$ be a nonlinear point regular  group  of the quadrangle $\cQ^P$ with associated functions $T$ and $\theta$ as in Theorem \ref{Main}, and suppose that $q$ is odd. Take notation as in Notation \ref{notation_4.1}, and assume that $r_{A,B}=1$. By Theorem \ref{thm_G_struc}, we have $s=\max\{0,r_C-r_{A,B}\}\le 1$, and so $r_C\le 2$. Since $\F_q/\F_p$ is a Galois extension, the orders of $g_B$ and $g_C$ both divide $[\F_q:\,F_p]$. It follows that $q=p^{p^{s+1}l}$ for some integer $l$. Take $g\in\Aut(\F_q)$ such that $g(x)=x^{p^l}$ for $x\in\F_q$. Set $g_1:=g^{p^s}$, which has order $p$. In Notation \ref{notation_4.1} we specify $g_B=g_1$, and in Theorem \ref{thm_matrix_G} we specify $g_2=g_1$, so that $\theta_{a,b,0}=g_1^{\tr_{\F_q/\F_p}(\mu_B b)}$ and $\theta_{0,t_B,0}=g_1$.

By Theorem \ref{thm_G_struc} and Lemma \ref{lem_mu_inv}, there is $\mu_C\in\F_{q}$ such that $g_C(\mu_C)=\mu_C$ and
\begin{equation}\label{eqn_cK0smuC}
\cK_0^*=\{x\in\F_q:\,\tr_{\F_q/\F_p}(\mu_Cx)=0\},
\end{equation}
where $\cK_0^*=\{c\in\F_q:\,\sigma_c^p=1\}$. By Corollary \ref{cor_subGcK}, $G_{\cK_0^*}$ is a normal subgroup of index $p^s$ in $G$, where $G_{\cK_0^*}=\{\fg_{a,b,c}:\, a,\,b\in\F_q,\,c\in\cK_0^*\}$.

\subsection{The structure of the subgroup $G_{\cK_0^*}$}\label{subsec_nonlinearr11}

In this subsection, we derive some general results on $\sigma_c$ and $S(c)$ by exploring the group structure of the subgroup $G_{\cK_0^*}$, and as a byproduct we complete the proof of Theorem \ref{thm_nonlinear} for the case $r_{A,B}=1$, $r_C\le 1$.

\begin{lemma}\label{lem_cK0g1}
The subspace $\cK_0^*$ is $g_1$-invariant.
\end{lemma}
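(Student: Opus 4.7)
The plan is to derive the invariance directly from \eqref{eqn_sigc_7} by specializing $b$ to $t_B$ and exploiting the fact that the automorphism group $\Aut(\F_q)$ is cyclic, so any two Frobenius automorphisms commute.

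First, I would dispose of a trivial case: if $r_C \le r_{A,B} = 1$, then every $\sigma_c$ has order dividing $p$, so $\cK_0^* = \F_q$ and the invariance is automatic. Hence I only need to treat $r_C = 2$, where $s = 1$ and $\cK_0^*$ is the proper subspace described by \eqref{eqn_cK0smuC}.

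Next, the main step: in \eqref{eqn_sigc_7} take $b := t_B$. Since $\theta_{0,t_B,0} = g_B = g_1$ and $M(t_B) = \nu_B$ (cf.\ Theorem \ref{thm_matrix_G}), we get $c' = g_1(c)$ and
\[
\sigma_c \;=\; \sigma_{g_1(c)}\,\theta_{0,\,g_1(c)\nu_B,\,0}\;=\;\sigma_{g_1(c)}\,g_1^{\,e(c)},
\]
where $e(c) := \tr_{\F_q/\F_p}(\mu_B g_1(c) \nu_B) \in \F_p$. Because $\Aut(\F_q)$ is cyclic the two factors on the right commute, so raising to the $p$-th power and using $g_1^p = 1$ yields $\sigma_c^{p} = \sigma_{g_1(c)}^{p}$. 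Consequently $\sigma_c^p = 1$ if and only if $\sigma_{g_1(c)}^p = 1$, which is exactly the statement $c \in \cK_0^* \iff g_1(c) \in \cK_0^*$, and the invariance follows.

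The only subtlety I anticipate is verifying that the commutation used to split the $p$-th power is justified in context; but since all $\sigma_c$ and $\theta_{0,b,0}$ live inside the abelian group $\Aut(\F_q)$, no extra argument is needed. No further appeal to the structural lemmas of Section \ref{sec_mat} is required.
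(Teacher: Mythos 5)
Your proof is correct, but it is not the route the paper takes. The paper disposes of the lemma in two lines: in the nontrivial case $r_C=2$ one has $g_1=g^{p}=g_C^{p}\in\la g_C\ra$, and Theorem \ref{thm_G_struc} has already established that $\cK_0^*$ is $g_C$-invariant, so $g_1$-invariance is immediate. You instead re-derive the invariance from the commutation relation \eqref{eqn_sigc_7}: setting $b=t_B$ gives $\sigma_c=\sigma_{g_1(c)}\,g_1^{e(c)}$ with $e(c)\in\F_p$, and since $\Aut(\F_q)$ is abelian and $g_1^p=1$, raising to the $p$-th power yields $\sigma_c^{p}=\sigma_{g_1(c)}^{p}$, hence $c\in\cK_0^*\iff g_1(c)\in\cK_0^*$. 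This is a genuinely different and in some ways cleaner argument: it needs no case split on $r_C$ at all (your preliminary reduction to $r_C=2$ is harmless but unnecessary), it does not invoke the $g_C$-invariance from Theorem \ref{thm_G_struc}, and it actually proves the slightly stronger fact that $\cK_0^*$ is invariant under every $\theta_{0,b,0}$, since all one needs is that $\theta_{0,c'M(b),0}$ has order dividing $p^{r_{A,B}}$. The one inaccuracy is your closing remark that no appeal to the structural results of Section \ref{sec_mat} is needed: \eqref{eqn_sigc_7} is itself a reformulation built on Theorem \ref{thm_matrix_G}, and you use that theorem both for $\theta_{0,t_B,0}=g_1$ (which requires $r_B=r_{A,B}=1$, i.e.\ $\mu_B\ne 0$, a consequence of Lemma \ref{lem_qoddMatmuA0}) and for the identification $\theta_{0,g_1(c)\nu_B,0}=g_1^{e(c)}$; so the dependence on Section \ref{sec_mat} is real, just routed through Lemma \ref{lem_F6}.
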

\begin{proof}
By Theorem \ref{thm_G_struc}, we have $s=\max\{0,r_C-r_{A,B}\}\le 1$. If $r_C\le r_{A,B}$, then $\cK_0^*=\F_q$, and the claim is trivial. If $r_C>r_{A,B}$, then we have $r_C=2$, $s=1$ by the assumption $r_{A,B}=1$. The element $g_1$ has order $p$, and $g_C=g$ has order $p^2$. The claim now follows from the fact that $\cK_0^*$ is $g_C$-invariant, cf. Theorem \ref{thm_G_struc}.
\end{proof}

\begin{lemma}\label{lem_cBsym}
For $c\in\F_q$ and $z\in\cK_0^*$, we have $\sigma_{c+z}(\sigma_c\sigma_{z})^{-1}=g_1^{\cB(c,z)}$, where
\begin{equation}\label{eqn_BF}
\cB(c,z):=\tr_{\F_q/\F_p}\left(\mu_Bc(N_B(\sigma_z)-S(z))\right).
\end{equation}
\end{lemma}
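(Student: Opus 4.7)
The plan is to express $\sigma_{c+z}$ via the commutation relation \eqref{eqn_sigc3} with a clever choice of argument, and then bridge back to $\sigma_c$ using \eqref{eqn_sigc_7}. Applying \eqref{eqn_sigc3} with the variable $c$ replaced by $\sigma_z^{-1}(c)$, the $w$-parameter becomes $\sigma_z(\sigma_z^{-1}(c))+z=c+z$ and the $v$-parameter becomes $cS(z)$, which yields
$$\sigma_{\sigma_z^{-1}(c)}\sigma_z=\theta_{0,cS(z),0}\sigma_{c+z}.$$
Since $\Aut(\F_q)$ is abelian, this rearranges to
$$\sigma_{c+z}(\sigma_c\sigma_z)^{-1}=\theta_{0,cS(z),0}^{-1}\sigma_{\sigma_z^{-1}(c)}\sigma_c^{-1},$$
so the task reduces to evaluating the factor $\sigma_{\sigma_z^{-1}(c)}\sigma_c^{-1}$.

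For this, observe that $z\in\cK_0^*$ together with $r_{A,B}=1$ gives $\sigma_z^p=1$, so $\sigma_z\in\la g_1\ra$; moreover $r_{A,B}=1$ forces $r_B=1$ (Theorem \ref{thm_matrix_G} yields $\theta_{a,0,0}=1$, hence $r_A=0$), so $\mu_B\neq 0$ and $b\mapsto\theta_{0,b,0}$ surjects $\F_q$ onto $\la g_1\ra$. I would therefore pick $b\in\F_q$ with $\theta_{0,b,0}=\sigma_z^{-1}$; Theorem \ref{thm_matrix_G} then forces $M(b)=N_B(\sigma_z^{-1})$. Substituting this $b$ into \eqref{eqn_sigc_7} (whose $c'$ is then $\sigma_z^{-1}(c)$) produces
$$\sigma_c=\sigma_{\sigma_z^{-1}(c)}\theta_{0,\sigma_z^{-1}(c)N_B(\sigma_z^{-1}),0},$$
so $\sigma_{\sigma_z^{-1}(c)}\sigma_c^{-1}=\theta_{0,-\sigma_z^{-1}(c)N_B(\sigma_z^{-1}),0}$, which is manifestly a power of $g_1$.

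The remainder is bookkeeping. I would invoke \eqref{eqn_Nc_eq} to deduce $N_B(\sigma_z^{-1})=-\sigma_z^{-1}(N_B(\sigma_z))$, so that $-\sigma_z^{-1}(c)N_B(\sigma_z^{-1})=\sigma_z^{-1}(cN_B(\sigma_z))$. Combining the two factors, substituting $\theta_{0,d,0}=g_1^{\tr_{\F_q/\F_p}(\mu_B d)}$, and applying Lemma \ref{lem_trace_int} together with $\sigma_z(\mu_B)=\mu_B$ (valid since $g_C(\mu_B)=\mu_B$ by Theorem \ref{thm_matrix_G} and $\sigma_z\in\la g_C\ra$), both trace terms collapse into the single exponent
$$-\tr_{\F_q/\F_p}(\mu_BcS(z))+\tr_{\F_q/\F_p}(\mu_BcN_B(\sigma_z))=\cB(c,z),$$
which gives the claim. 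The main obstacle is recognizing that \eqref{eqn_sigc3} should be applied with $c$ replaced by $\sigma_z^{-1}(c)$ rather than $c$ itself; once that substitution is made, the remaining manipulations are routine applications of identities already established for $N_B$ and $\theta_{0,\ast,0}$.
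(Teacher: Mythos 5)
Your proof is correct and follows essentially the same route as the paper: both hinge on substituting $c\mapsto\sigma_z^{-1}(c)$ into \eqref{eqn_sigc3} and then relating $\sigma_{\sigma_z^{-1}(c)}$ to $\sigma_c$ via \eqref{eqn_sigc_7} with a suitable $b$. The only (harmless) deviation is that you choose $\theta_{0,b,0}=\sigma_z^{-1}$ rather than $\sigma_z$, which costs you one extra application of \eqref{eqn_Nc_eq} to convert $N_B(\sigma_z^{-1})$ into $-\sigma_z^{-1}(N_B(\sigma_z))$, whereas the paper avoids this by instead substituting $c\mapsto\theta_{0,b,0}^{-1}(c)$ into \eqref{eqn_sigc_7} with $\theta_{0,b,0}=\sigma_z$.
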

\begin{proof}
Fix an element $z\in\cK_0^*$. Then $\sigma_z^p=1$, and so there is $b\in\F_q$ such that $\theta_{0,b,0}=\sigma_z$.
By replacing $c$ with $\sigma_{z}^{-1}(c)$ in  \eqref{eqn_sigc3}, we get
\begin{equation}\label{eqn_sssss}
\sigma_{\sigma_{z}^{-1}(c)}=\sigma_{c+z}\sigma_{z}^{-1}g_1^{\tr_{\F_q/\F_p}(\mu_BcS(z))}.
\end{equation}
By replacing $c$ with $\theta_{0,b,0}^{-1}(c)$ in  \eqref{eqn_sigc_7}, we have
\begin{equation}\label{thetaabcrelat}
\sigma_{\theta_{0,b,0}^{-1}(c)}=\sigma_cg_1^{\tr_{\F_q/\F_p}(\mu_B cM(b))},\quad b,c\in\F_q.
\end{equation}
Since $\theta_{0,b,0}=\sigma_z$, the right hand sides of \eqref{thetaabcrelat} and \eqref{eqn_sssss} are equal. The desired equation $\sigma_{c+z}(\sigma_c\sigma_{z})^{-1}=g_1^{B(c,z)}$ then follows, since $M(b)=N_B(\theta_{0,b,0})=N_B(\sigma_z)$ by Theorem \ref{thm_matrix_G}.
\end{proof}

\begin{lemma}\label{lem_Bdef}
Let $\cB:\,\cK_0^*\times\cK_0^*\rightarrow\F_p$ be as defined in \eqref{eqn_BF}
and  set $Q(x):=\cB(x,x)$. Then $\cB$ is a symmetric bilinear form on $\cK_0^*$, and $Q(g_1(c))=Q(c)$ for $c\in\cK_0^*$. Moreover, there is $\alpha\in\F_q$ such that  $\alpha-g_1(\alpha)+\mu_BM(t_B)\in\F_p\cdot \mu_C$ and
\begin{equation}\label{eqn_sigz_exp}
\sigma_{z}=g_1^{\frac{1}{2}Q(z)+\tr_{\F_q/\F_p}(\alpha z)}\; \textup{ for }\, z\in\cK_0^*.
\end{equation}
\end{lemma}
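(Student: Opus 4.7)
My plan is to exploit the cocycle relation from Lemma \ref{lem_cBsym}. Since $\sigma_z^p=1$ for $z\in\cK_0^*$, the map $\sigma_z$ lies in $\la g_1\ra$, so there is a well-defined $f:\cK_0^*\to\F_p$ with $\sigma_z = g_1^{f(z)}$. Restricting Lemma \ref{lem_cBsym} to $c,z\in\cK_0^*$ translates the multiplicative identity into an additive cocycle
\[
f(c+z)-f(c)-f(z) \;=\; \cB(c,z)\pmod p,\qquad c,z\in\cK_0^*.
\]
Swapping $c\leftrightarrow z$ gives the symmetry of $\cB$, and combined with the manifest $\F_p$-linearity of $\cB$ in its first argument (read off the definition), this upgrades to bilinearity on $\cK_0^*\times\cK_0^*$. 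In odd characteristic, setting $h(z):=f(z)-\tfrac12 Q(z)$ and using the cocycle, the $\cB(c,z)$ terms cancel when expanding $h(c+z)$, so $h$ is $\F_p$-additive on $\cK_0^*$; extending $h$ to an $\F_p$-linear map on $\F_q$ and invoking Lemma \ref{lem_ff2}, we produce $\alpha\in\F_q$ with $h(z) = \tr_{\F_q/\F_p}(\alpha z)$ on $\cK_0^*$, giving \eqref{eqn_sigz_exp}.

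The main substantive step is to establish $Q(g_1(c))=Q(c)$. For this I apply \eqref{eqn_sigc_7} with $b = t_B$, using $\theta_{0,t_B,0}=g_1$ and $M(t_B)=\nu_B$, to obtain
\[
\sigma_c \;=\; \sigma_{g_1(c)}\cdot g_1^{\tr_{\F_q/\F_p}(\mu_B g_1(c)\nu_B)}\qquad\text{for }c\in\cK_0^*,
\]
where $g_1(c)\in\cK_0^*$ by Lemma \ref{lem_cK0g1}. Translating via $f$ and shifting $g_1$ across the trace (using $g_1(\mu_B)=\mu_B$), this becomes $f(c)-f(g_1(c)) = \tr_{\F_q/\F_p}(\gamma c)$ with $\gamma := g_1^{-1}(\mu_B\nu_B)$. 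Substituting $f = \tfrac12 Q + h$ shows that $c\mapsto \tfrac12(Q(c)-Q(g_1(c)))$ is $\F_p$-linear on $\cK_0^*$. This difference is precisely the diagonal of the symmetric bilinear form $\cB'(x,y):=\cB(x,y)-\cB(g_1(x),g_1(y))$; in characteristic $\ne 2$, a quadratic form whose diagonal is linear must vanish on the diagonal (the homogeneity $Q'(2c)=4Q'(c)$ versus linearity $Q'(2c)=2Q'(c)$ forces $Q'(c)=0$), which yields $Q(g_1(c))=Q(c)$.

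With $Q$ now known to be $g_1$-invariant, the identity above simplifies to $\tr_{\F_q/\F_p}\bigl((\alpha-g_1^{-1}(\alpha)-\gamma)c\bigr)=0$ for all $c\in\cK_0^*$. By Lemma \ref{lem_tr} applied with \eqref{eqn_cK0smuC}, this gives $\alpha-g_1^{-1}(\alpha)-\gamma\in\F_p\cdot\mu_C$; applying $g_1$ (which fixes $\mu_C$ since $\mu_C$ is $g_C$-invariant and $g_1\in\la g_C\ra$) and unwinding $\gamma = g_1^{-1}(\mu_B\nu_B)$ delivers the required $\alpha-g_1(\alpha)+\mu_B M(t_B)\in\F_p\cdot\mu_C$. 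The only minor subtlety is that the extension $h\mapsto\alpha$ is not unique when $\cK_0^*\subsetneq\F_q$, but the final condition on $\alpha$ is invariant under $\alpha\mapsto\alpha+\lambda\mu_C$, so any valid choice works.
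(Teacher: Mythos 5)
Your proposal is correct and follows essentially the same route as the paper: both derive symmetry and bilinearity of $\cB$ from the cocycle identity of Lemma \ref{lem_cBsym}, obtain $\alpha$ by observing that $z\mapsto\sigma_z g_1^{-\frac12 Q(z)}$ is a homomorphism and invoking Lemma \ref{lem_ff2}, and then feed the resulting expression into \eqref{eqn_sigc_7}/\eqref{thetaabcrelat} with $b=t_B$, separating the quadratic and linear parts (your ``linear diagonal of a quadratic form vanishes'' step is the paper's substitution $c\mapsto\lambda c$ and comparison of the $\lambda$ and $\lambda^2$ coefficients) before finishing with Lemma \ref{lem_tr}. The only cosmetic difference is that the paper keeps $b$ general and deduces $Q(c^{\theta_{0,b,0}^{-1}})=Q(c)$ for all $b$, whereas you specialize to $b=t_B$ immediately; this changes nothing of substance.
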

\begin{proof}
Take $c,z$ in $\cK_0^*$. We have $\sigma_{c+z}(\sigma_c\sigma_{z})^{-1}=g_1^{B(c,z)}$ by Lemma \ref{lem_cBsym}. Its left hand side is symmetric in $c,z$, and the function $\cB(c,z)$ is $\F_p$-linear in $c$, so $\cB$ is a symmetric bilinear form on $\cK_0^*$. It follows that $\cB(c,z)=\frac{1}{2}(Q(c+z)-Q(c)-Q(z))$ for $c,z\in\cK_0^*$, where $Q(x)=\cB(x,x)$. We thus can rewrite $\sigma_{c+z}(\sigma_c\sigma_{z})^{-1}=g_1^{B(c,z)}$  as follows:
\[
\sigma_{c+z}g_1^{-\frac{1}{2}Q(c+z)}=\left(\sigma_c g_1^{-\frac{1}{2}Q(c)}\right)\cdot\left(\sigma_{z} g_1^{-\frac{1}{2}Q(z)}\right),\quad c,\,z\in\cK_0^*.
\]
That is, $z\mapsto \sigma_{z} g_1^{-\frac{1}{2}Q(z)}$ is a group homomorphism from $\cK_0^*$ to $\langle g_1\rangle$. Therefore, there exists an element $\alpha\in\F_q$ such that  $\sigma_{z}=g_1^{\frac{1}{2}Q(z)+\tr_{\F_q/\F_p}(\alpha z)}$ for $z\in\cK_0^*$ by Lemma \ref{lem_ff2}.

For $c\in\cK_0^*$ and $b\in\F_q$, we have $\theta_{0,b,0}^{-1}(c)\in\cK_0^*$ by Lemma \ref{lem_cK0g1}.  We plug the expressions of $\sigma_c$ and $\sigma_{\theta_{0,b,0}^{-1}(c)}$  into  \eqref{thetaabcrelat} and compare the exponents  to get
\[
\frac{1}{2} Q(c^{\theta_{0,b,0}^{-1}})+\tr_{\F_q/\F_p}(\alpha^{\theta_{0,b,0}}c)=\frac{1}{2}Q(c)+\tr_{\F_q/\F_p}(\alpha c)+\tr_{\F_q/\F_p}(\mu_BcM(b)).
\]
Since $\cK_0^*$ is a subspace, we have $\lambda c\in\cK_0^*$ for all $\lambda\in\F_p$. By replacing $c$ with $\lambda c$ ($\lambda \in\F_p$) and comparing the coefficients of $\lambda$ and $\lambda^2$, we deduce that $Q(c^{\theta_{0,b,0}^{-1}})=Q(c)$ and $\tr_{\F_q/\F_p}\left((\alpha^{\theta_{0,b,0}}-\alpha-\mu_BM(b))\cdot c\right)=0$. Take $b=t_B$, and we get the remaining claim in the lemma by Lemma \ref{lem_tr}.
\end{proof}

\begin{lemma}\label{lem_exp_S_sum}
There is a reduced linearized polynomial $S_1(X)$ over $\F_q$ and a map $H:\,\cK_0^*\rightarrow\F_p$ such that $S(z)=S_1(z)+N_B(\sigma_z)+\mu_B^{-1}\mu_CH(z)$ for $z\in\cK_0^*$.
\end{lemma}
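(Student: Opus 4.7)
The plan is to study the deviation from additivity of the function
$f(z) := N_B(\sigma_z) - S(z)$ on $\cK_0^*$, which is what essentially governs the symmetric bilinear form of Lemma \ref{lem_Bdef}. Indeed, by \eqref{eqn_BF} we have $\cB(c,z) = \tr_{\F_q/\F_p}\!\bigl(\mu_B c\, f(z)\bigr)$ for all $c,z \in \cK_0^*$. First I would exploit $\F_p$-bilinearity of $\cB$ in its second argument: for any $z_1,z_2 \in \cK_0^*$ it yields $\tr_{\F_q/\F_p}(\mu_B c\, \delta) = 0$ for every $c \in \cK_0^*$, where $\delta := f(z_1+z_2)-f(z_1)-f(z_2)$. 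Since $\cK_0^* = \ker L_{\mu_C}$ by \eqref{eqn_cK0smuC}, Lemma \ref{lem_tr} then forces $\mu_B \delta \in \F_p\cdot \mu_C$, so $\delta \in \F_p\cdot \mu_B^{-1}\mu_C$ (with the understanding that this is the trivial subspace when $s=0$, i.e.\ $\mu_C=0$; recall that $\mu_B \neq 0$ because $r_B=1$ in the present case, by Lemma \ref{lem_qoddMatmuA0} and $r_{A,B}=1$).

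Next, the estimate on $\delta$ says precisely that the composition
$\bar f\colon \cK_0^* \xrightarrow{-f} \F_q \twoheadrightarrow \F_q/\F_p\mu_B^{-1}\mu_C$
is $\F_p$-linear. I would then perform a standard two-step lifting: choose an $\F_p$-basis of $\cK_0^*$, pick for each basis vector $v$ an arbitrary representative of the coset $\bar f(v)$ in $\F_q$, and extend $\F_p$-linearly to obtain a lift $\cK_0^* \to \F_q$ of $\bar f$; then extend further to an $\F_p$-linear map $S_1\colon \F_q \to \F_q$ by prescribing it (arbitrarily, say as zero) on a complementary subspace of $\cK_0^*$ in $\F_q$. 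The bijection between $\F_p$-linear maps of $\F_q$ and reduced linearized polynomials recalled in Section~3 immediately furnishes the desired reduced linearized polynomial representative $S_1(X) \in \F_q[X]$.

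Finally, by construction $-f(z) - S_1(z)$ lies in $\F_p\cdot\mu_B^{-1}\mu_C$ for every $z \in \cK_0^*$, so I would define $H\colon \cK_0^* \to \F_p$ by declaring $H(z)$ to be the unique element of $\F_p$ with $\mu_B^{-1}\mu_C\, H(z) = -f(z)-S_1(z)$ (in the degenerate case $\mu_C=0$, the calculation in the previous paragraph already forces $f$ to be additive on $\cK_0^* = \F_q$, so $-f = S_1$ and $H$ can be taken to be identically $0$). Rearranging gives $S(z) = S_1(z) + N_B(\sigma_z) + \mu_B^{-1}\mu_C\, H(z)$ on $\cK_0^*$, as required. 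I do not expect a serious obstacle here; the whole argument is a routine translation of the bilinearity of $\cB$ into an obstruction computation, plus a standard lifting. The only mild subtlety is keeping the cases $s=0$ and $s=1$ uniformly packaged via the convention $\mu_C=0$ when $s=0$.
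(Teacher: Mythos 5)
Your proposal is correct. It rests on the same two ingredients as the paper's proof --- the bilinearity of $\cB$ from Lemma \ref{lem_Bdef} together with Lemma \ref{lem_tr} applied to $\cK_0^*=\ker(L_{\mu_C})$ --- but assembles them in the opposite order. The paper first extends $\cB$ to a bilinear form $\tilde{\cB}$ on all of $\F_q\times\F_q$ (setting it to vanish in the direction of a complement $e$ of $\cK_0^*$), invokes Lemma \ref{lem_bform} to produce the reduced linearized polynomial $S_1$ outright as $-\mu_B^{-1}$ times the representing polynomial of $\tilde{\cB}$, and only then applies Lemma \ref{lem_tr} once to conclude $N_B(\sigma_y)-S(y)+S_1(y)\in\F_p\cdot\mu_B^{-1}\mu_C$. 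You instead apply Lemma \ref{lem_tr} to the additivity defect $\delta$ of $z\mapsto N_B(\sigma_z)-S(z)$, deduce that the induced map $\cK_0^*\to\F_q/\F_p\cdot\mu_B^{-1}\mu_C$ is $\F_p$-linear, and recover $S_1$ by a basis-wise lift followed by an arbitrary linear extension to $\F_q$; this is exactly the content that Lemma \ref{lem_bform} packages for the paper, so the two routes are equivalent in substance, with the paper's being marginally shorter. Your handling of the degenerate case $\mu_C=0$ (where $\cK_0^*=\F_q$, the defect vanishes, and $H\equiv 0$) and your justification that $\mu_B\ne 0$ in the setting of Section \ref{sec_nonlinear_req1} are both correct; note also that the arbitrariness in your choice of $S_1$ is harmless, since the subsequent arguments only use $S_2=S-N_B(\sigma)$ and the congruence modulo $\F_p\cdot\mu_B^{-1}\mu_C$.
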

\begin{proof}
If $\mu_C\ne 0$,  take  $e\in\F_q\setminus\cK_0^*$, and define $\tilde{\cB}(x+\lambda e,y+\lambda'e):=\cB(x,y)$ for $x,\,y\in\cK_0^*$ and $\lambda,\,\lambda'\in\F_p$. Then $\tilde{\cB}$ is a bilinear form over $\F_q$. Therefore, whether $\mu_C=0$ or not, there is a reduced linearized polynomial $S_1(X)$ over $\F_q$ such that $\cB(x,y)=-\tr_{\F_q/\F_p}(\mu_BxS_1(y))$ for $x,\,y\in\cK_0^*$ by Lemma \ref{lem_bform}. Together with  \eqref{eqn_BF}, we deduce that $\tr_{\F_q/\F_p}(\mu_Bx(N_B(\sigma_y)-S(y)+S_1(y)))=0$  for  $x,\,y\in\cK_0^*$.
It follows that $N_B(\sigma_y)-S(y)+S_1(y)\in\F_p\cdot \mu_B^{-1}\mu_C$ for $y\in\cK_0^*$ by Lemma \ref{lem_tr}. This completes the proof.
\end{proof}

For $c\in\cK_0^*$, we define
\begin{equation}\label{eqn_S2def}
S_2(c):=S(c)-N_B(\sigma_c).
\end{equation}
\eqref{eqn_ScTab2} in Lemma \ref{lem_F6} now takes the form $S_2(c^{\theta_{0,b,0}})=S_2(c)^{\theta_{0,b,0}}$, so
\begin{equation}\label{eqn_S2g1}
S_2(c^{g_1^i})=S_2(c)^{g_1^i},\quad\textup{ for } c\in\cK_0^*,\,i\ge 0.
\end{equation}
The expression of $\cB$ in \eqref{eqn_BF} now takes the form
$\cB(c,z)=-\tr_{\F_q/\F_p}(\mu_BcS_2(z))$ for  $c,\,z\in\cK_0^*$, and so $Q(c)=\cB(c,c)$ takes the form
\begin{equation}\label{eqn_Q_exp_cK0}
Q(c)=-\tr_{\F_q/\F_p}(\mu_BcS_2(c)),\quad c\in\cK_0^*.
\end{equation}

\begin{lemma}\label{lem_non_expthetaT}
For $a,\,b\in\F_q$ and $c\in\cK_0^*$, we have
\begin{equation*}
\begin{split}
\theta_{a,b,c}&=g_1^{\frac{1}{2}Q(c)+\tr_{\F_q/\F_p}(\alpha c+\mu_Bb)}, \\
T(a,b,c)&=S_2(c)+N_B(\theta_{a,b,c}),
\end{split}
\end{equation*}
\end{lemma}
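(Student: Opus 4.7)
The plan is to read off both identities from structural results already in hand, with the first being a direct substitution and the second hinging on the cocycle relation \eqref{eqn_Nc_eq} satisfied by $N_B$.

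For the formula expressing $\theta_{a,b,c}$, I would start from \eqref{eqn_theta6}, which already decomposes $\theta_{a,b,c} = \theta_{0,b,0}\sigma_c$. Under the specialization $g_2=g_1$ adopted at the beginning of this section, Theorem \ref{thm_matrix_G} gives $\theta_{0,b,0}=g_1^{\tr_{\F_q/\F_p}(\mu_B b)}$, and for $c\in\cK_0^*$ Lemma \ref{lem_Bdef} supplies $\sigma_c = g_1^{\frac{1}{2}Q(c)+\tr_{\F_q/\F_p}(\alpha c)}$. Since both factors lie in the abelian group $\langle g_1\rangle$, multiplying them and adding exponents immediately yields the claimed expression.

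For the identity $T(a,b,c)=S_2(c)+N_B(\theta_{a,b,c})$, I would start from \eqref{eqn_T6}, which reads $T(a,b,c)=N_B(\theta_{0,b,0})^{\sigma_c}+S(c)$. Using the definition $S(c)=S_2(c)+N_B(\sigma_c)$ from \eqref{eqn_S2def}, this becomes
\[
T(a,b,c) = S_2(c) + N_B(\sigma_c) + N_B(\theta_{0,b,0})^{\sigma_c}.
\]
The crucial observation is that $\sigma_c\in\langle g_1\rangle=\langle g_B\rangle$: indeed, $c\in\cK_0^*$ means $\sigma_c^{p^{r_{A,B}}}=\sigma_c^p=1$, so $\sigma_c$ lies in the unique subgroup of order $p$ of the cyclic group $\langle g_C\rangle$, which is $\langle g_1\rangle$. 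Hence both $\sigma_c$ and $\theta_{0,b,0}$ are powers of $g_B=g_1$, and applying the cocycle identity \eqref{eqn_Nc_eq} with $g_B^i=\theta_{0,b,0}$ and $g_B^j=\sigma_c$ gives
\[
N_B(\sigma_c) + N_B(\theta_{0,b,0})^{\sigma_c} = N_B(\sigma_c\,\theta_{0,b,0}) = N_B(\theta_{a,b,c}),
\]
where the last equality uses \eqref{eqn_theta6} together with the commutativity of $\langle g_1\rangle$. Combining these two displays yields the desired formula.

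There is no genuine obstacle in this lemma; it is a bookkeeping step that assembles Theorem \ref{thm_matrix_G}, Lemma \ref{lem_Bdef}, the decompositions \eqref{eqn_theta6}--\eqref{eqn_T6}, and the cocycle relation \eqref{eqn_Nc_eq}. The only point requiring a short verification is the containment $\sigma_c\in\langle g_1\rangle$ for $c\in\cK_0^*$, which is immediate from the definitions of $\cK_0^*$ and $g_1$.
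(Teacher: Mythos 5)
Your proof is correct and follows essentially the same route as the paper: both parts combine \eqref{eqn_theta6}--\eqref{eqn_T6} with Theorem \ref{thm_matrix_G}, the expression \eqref{eqn_sigz_exp} from Lemma \ref{lem_Bdef}, the definition \eqref{eqn_S2def}, and the cocycle relation \eqref{eqn_Nc_eq}. Your explicit check that $\sigma_c\in\langle g_1\rangle$ for $c\in\cK_0^*$ is a detail the paper leaves implicit, and it is the right justification for invoking \eqref{eqn_Nc_eq}.
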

\begin{proof}
Take $a,\,b\in\F_q$ and $c\in\cK_0^*$. We have $\theta_{0,b,0}=g_1^{\tr_{\F_q/\F_p}(\mu_Bb)}$ and $M(b)=N_B(\theta_{0,b,0})$ by Theorem \ref{thm_matrix_G}, where we specified $g_2=g_1$ in the theorem, cf. the beginning of this section. We have $\theta_{a,b,c}=\theta_{0,b,0}\sigma_c$ by \eqref{eqn_theta6}. We plug  the  expression of $\theta_{0,b,0}$ and that of $\sigma_c$ in    \eqref{eqn_sigz_exp} into  it and obtain the expression for $\theta$.

By \eqref{eqn_T6}, we have $T(a,b,c)=N_B(\theta_{0,b,0})^{\sigma_c}+S(c)$. By \eqref{eqn_S2def}, we have $S(c)=S_2(c)+N_B(\sigma_c)$. We compute that
\begin{align*}
T(a,b,c)&=N_B(\theta_{0,b,0})^{\sigma_c}+N_B(\sigma_c)+S_2(c)\\
&=N_B(\theta_{0,b,0}\sigma_c)+S_2(c)=S_2(c)+N_B(\theta_{a,b,c}).
\end{align*}
For the second equality, we used \eqref{eqn_Nc_eq}. This completes the proof.
\end{proof}

\begin{lemma}\label{lem_S2_eqn_new}
The map $x\mapsto S_2(x)$ as defined in  \eqref{eqn_S2def} is additive on $\cK_0^*$.
\end{lemma}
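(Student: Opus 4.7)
The plan is to exploit equations \eqref{eqn_sigc3} and \eqref{eqn_Sc3} together with the cocycle identity \eqref{eqn_Nc_eq} to derive a twisted version of additivity for $S_2$ on $\cK_0^*$, and then strip off the twist using the intertwining relation \eqref{eqn_S2g1}.

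First, I would fix $c,z\in\cK_0^*$. Since $\cK_0^*=\{y:\sigma_y^p=1\}$ and $\Aut(\F_q)$ is cyclic with the unique subgroup of order $p$ equal to $\la g_1\ra$, every Frobenius $\sigma_y$ with $y\in\cK_0^*$ lies in $\la g_1\ra$; together with the $g_1$-invariance of $\cK_0^*$ from Lemma \ref{lem_cK0g1}, this ensures that $w:=\sigma_z(c)+z$ lies in $\cK_0^*$, so that $\theta_{0,v,0}=\sigma_c\sigma_z\sigma_w^{-1}\in\la g_1\ra$ and $N_B$ applies to all arguments below. Applying \eqref{eqn_Nc_eq} twice gives
\[
N_B(\theta_{0,v,0})^{\sigma_w}=N_B(\sigma_c\sigma_z)-N_B(\sigma_w)=N_B(\sigma_c)^{\sigma_z}+N_B(\sigma_z)-N_B(\sigma_w).
\]
Substituting this into \eqref{eqn_Sc3} and regrouping via the definition $S_2(x)=S(x)-N_B(\sigma_x)$ collapses the relation to the clean twisted identity
\[
S_2\bigl(\sigma_z(c)+z\bigr)=\sigma_z\bigl(S_2(c)\bigr)+S_2(z),\qquad c,z\in\cK_0^*. \qquad(\dagger)
\]

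Next I would remove the twist. Given any $c,z\in\cK_0^*$, the element $c':=\sigma_z^{-1}(c-z)$ again lies in $\cK_0^*$, since $\cK_0^*$ is a $\sigma_z$-invariant $\F_p$-subspace, and $\sigma_z(c')+z=c$. Applying $(\dagger)$ to the pair $(c',z)$ and using \eqref{eqn_S2g1} to rewrite $\sigma_z(S_2(c'))=S_2(\sigma_z(c'))=S_2(c-z)$, I obtain
\[
S_2(c)=S_2(c-z)+S_2(z),\qquad c,z\in\cK_0^*.
\]
Setting $c=0$ gives $S_2(0)=0$ and $S_2(-z)=-S_2(z)$, and then replacing $z$ with $-z$ in the identity yields $S_2(c+z)=S_2(c)+S_2(z)$, which is the asserted additivity on $\cK_0^*$.

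The derivation is essentially mechanical; the only mild obstacle is the bookkeeping needed to verify that all the Frobenius-type elements $\sigma_c,\sigma_z,\sigma_w,\theta_{0,v,0}$ genuinely sit in $\la g_1\ra$ (so that \eqref{eqn_Nc_eq} and \eqref{eqn_S2g1} apply) and that the $N_B$-terms cancel exactly to produce $(\dagger)$.
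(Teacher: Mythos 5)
Your proposal is correct and follows essentially the same route as the paper: use \eqref{eqn_sigc3} to write $\sigma_c\sigma_z=\theta_{0,v,0}\sigma_w$, cancel the $N_B$-terms in \eqref{eqn_Sc3} via \eqref{eqn_Nc_eq} to obtain the twisted identity $S_2(c^{\sigma_z}+z)=S_2(c)^{\sigma_z}+S_2(z)$, and then untwist with \eqref{eqn_S2g1} and the $g_1$-invariance of $\cK_0^*$. Your untwisting step via $c'=\sigma_z^{-1}(c-z)$ is slightly more roundabout than necessary (one can just substitute $c''=\sigma_z(c)$, which ranges over all of $\cK_0^*$), and $S_2(0)=0$ follows from setting $c=z$ rather than $c=0$, but these are cosmetic.
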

\begin{proof}
Take $c,\,z\in\cK_0^*$. Since $\cK_0^*=\{x\in\F_q:\,\sigma_x^{p}=1\}$, cf. Notation \ref{notation_4.1}, we have $\sigma_z\in\la g_1\ra$. By \eqref{eqn_sigc3}, we have $\sigma_c\sigma_z=\theta_{0,v,0}\sigma_{w}$ for some $v,w$. It follows from  \eqref{eqn_Nc_eq} that
\begin{equation}\label{eqn_notimp}
 N_B(\sigma_c)^{\sigma_z}+N_B(\sigma_z)=N_B(\theta_{0,v,0})^{\sigma_w}+N_B(\sigma_w).
\end{equation}

By \eqref{eqn_S2def}, we have $S(c)=S_2(c)+N_B(\sigma_c)$, $S(z)=S_2(z)+N_B(\sigma_z)$. Plug then into \eqref{eqn_Sc3}, i.e., $S(c)^{\sigma_{z}}+S(z)=N_B(\theta_{0,v,0})^{\sigma_w}+S(w)$, and we obtain $S_2(c)^{\sigma_z}+S_2(z)=S_2(c^{\sigma_z}+z)$ after canceling out the terms involving $N_B$ by using \eqref{eqn_notimp}. The lemma now follows from  \eqref{eqn_S2g1} and the fact that $\cK_0^*$ is $g_1$-invariant, cf. Lemma \ref{lem_cK0g1}.
\end{proof}

We are now ready to complete the classification in the case $r_C\le r_{A,B}=1$. In this case, we have $s=0$, $g=g_1$, $\cK_0^*=\F_q$, $\mu_C=0$. In particular, $\sigma_z=1$ for each $z\in\F_q$. The function $S=S_2$ is additive by \eqref{eqn_S2def} and Lemma \ref{lem_exp_S_sum}. Write $S_2(X)=\sum_{i=0}^{pl-1}s_iX^{p^i}$. Then from  $S_2(c^{g_1})=S_2(c)^{g_1}$ for $c\in\F_q$, cf. \eqref{eqn_S2g1}, we deduce that $g_1(s_i)=s_i$, i.e., $s_i\in\F_{p^{l}}$, for each $i$. The function $\cB(c,z)=-\tr_{\F_q/\F_p}(\mu_BcS_2(z))$ is a symmetric bilinear form over $\F_q$ if and only if $\mu_B s_i- s_{pl-i}^{p^i}\mu_B^{p^i}=0$ for $1\leq i\leq pl-1$ by Lemma \ref{lem_Tracelinear} with $\mu=\mu_B$ and $\eta=0$ there. The expressions of $T,\,\theta$ are as in  Lemma \ref{lem_non_expthetaT}, which involve the parameters $\mu_B$, $\alpha$, $S_2$ and $\nu_B:=M(t_B)$. By Lemma \ref{lem_Bdef}, $\alpha-g_1(\alpha)+\mu_B\nu_B=0$. It turns out that the restrictions on the parameters that we have derived so far are also sufficient, and we have the following construction.
\begin{construction} \label{Constquadodd}
Suppose that $q=p^{pl}$ with $p$ an odd prime and $l$ a positive integer, and let $g_1\in\Aut(\F_q)$ be such that $g_1(x)=x^{p^l}$.
\begin{enumerate}
\item[(i)] Take $\mu_B\in\F_{p^l}^*$.
\item[(ii)] Take a tuple $(s_0,\,s_1,\cdots,s_{pl-1})$ with entries in $\F_{p^l}$ such that $\mu_B s_i- s_{pl-i}^{p^i}\mu_B^{p^i}=0$ for $1\leq i\leq pl-1$.
\item[(iii)] Take  $\alpha$ in $\F_q$ and set $\nu_B:=\mu_B^{-1}(g_1(\alpha)-\alpha)$.
\end{enumerate}
Set $S_2(x):=\sum_{i=0}^{pl-1}s_ix^{p^i}$, and $Q(x):=-\tr_{\F_q/\F_p}(\mu_BxS_2(x))$ for $x\in\F_q$. Define
\begin{equation*}
       N_B(g_1^i):=\begin{cases}0,&\textup{ if }i=0,\\(1+g_1+\cdots+g_1^{i-1})(\nu_B),&\textup{ if }1\le i\le p-1.\end{cases}
\end{equation*}
Let $\theta$ and $T$ be as defined in Lemma \ref{lem_non_expthetaT}. Then the set $G$ with associated functions $T$ and $\theta$ as in Theorem \ref{Main}  is a point regular group  of $\cQ^P$.
\end{construction}
\begin{proof}
The number of tuples satisfying the conditions in (ii) equals $p^{(pl+1)l/2}$ by a similar argument to that in the proof of Lemma \ref{lem_num}. Since $\mu_B$ and the $s_i$'s lie in $\F_{p^l}$ and thus are fixed by $g_1$, we can verify that $S_2(g_1(x))=S_2(x)^{g_1}$, $Q(x^{g_1})=Q(x)$ holds for $x\in\F_q$. We also observe that the values of $\theta_{a,b,c}$ and $T(a,b,c)$ are independent of $a$.

Take $a$, $b$, $c$, $x$, $y$, $z$ in $\F_q$, and define $u$, $v$, $w$ as in Theorem \ref{Main}. To be specific, we have  $w=c^{\theta_2}+z$ and $v=b^{\theta_2}+y+c^{\theta_2}T(x,y,z)$, where $\theta_2=\theta_{x,y,z}$. We sketch how to verify the two conditions $\theta_{a,b,c}\theta_{x,y,z}=\theta_{u,v,w}$, $T(a,b,c)^{\theta_2}+T(x,y,z)=T(u,v,w)$ in Theorem \ref{Main} so that $G$ is a point regular group of $\cQ^P$.

We use the expression of $\theta$ to deduce that $\theta_{a,b,c}\theta_{x,y,z}=\theta_{u,v,w}$ is equivalent to $\tr_{\F_q/\F_p}((\alpha-\alpha^{\theta_2}+\mu_BN_B(\theta_2))c^{\theta_2})=0$ upon expansion and simplification. It needs to hold for all $c\in\F_q$, so we need to show that $g_1^i(\alpha)-\alpha=\mu_BN_B(g_1^i)$ for $1\le i\le p-1$. They follow from $g_1(\alpha)-\alpha=\mu_B\nu_B$ by induction. This establishes the first condition.

Similarly, the condition $T(a,b,c)^{\theta_2}+T(x,y,z)=T(u,v,w)$ takes the form
\[
S_2(c)^{\theta_2}+N_B(\theta_{a,b,c})^{\theta_2}+S_2(x)+N_B(\theta_{2})
=S_2(c^{\theta_2}+z)+N_B(\theta_{u,v,w}).
\]
The terms involving $S_2$ cancel out by the facts that $S_2(g_1(x))=S_2(x)^{g_1}$ and $S_2$ is $\F_p$-linear. The remaining terms involving $N_B$ also cancel out by \eqref{eqn_Nc_eq}. This establishes the second condition and completes the proof.
\end{proof}

To summarize, we have proved the case $r_C\le r_{A,B}=1$ of Theorem \ref{thm_nonlinear}.

\subsection{The normality of the subgroup $G_{\cK_0^*}$}\label{subsec_nonlinearr12}

In this subsection, assume that $r_{A,B}=1$, $r_C=2$, so that $s=\max\{0,r_C-r_{A,B}\}=1$.
By the argument in the beginning of this section, we have $q=p^{p^2l}$ for an integer $l$. We take $g\in\Aut(\F_q)$ such that $g(x)=x^{p^l}$ for $x\in\F_q$. Then  $g$ has order $p^2$, and $g_1:=g^p$ has order $p$. In Theorem \ref{thm_matrix_G}, we take $g_2=g_1$, so that $\theta_{a,b,0}=g_2^{\tr_{\F_q/\F_p}(\mu_B b)}$, where $g_C(\mu_B)=\mu_B$. In Notation \ref{notation_4.1}, we specify $g_C=g$, so that $\sigma_{t_C}=g$; we choose $t_B$ such that $\tr_{\F_q/\F_p}(\mu_B t_B)=1$ , so that $g_B=g_1$. Let $N_B$ be as in \eqref{eqn_Nc} with $\nu_B:=M(t_B)$, so that $M(b)=N_B(\theta_{0,b,0})$ by Theorem \ref{thm_matrix_G}.

We have $\cK_0^*=\{x\in\F_q:\,\tr_{\F_q/\F_p}(\mu_Cx)=0\}$ for some $\mu_C\in\F_{p^l}$ by \eqref{eqn_cK0smuC}, where $\cK_0^*=\{c\in\F_q:\,\sigma_c^p=1\}$.
For $c,z\in\cK_0^*$,  set $S_2(c)=S(c)-N_B(\sigma_c)$, $\cB(c,z)=-\tr_{\F_q/\F_p}(\mu_BcS_2(z))$  as in \eqref{eqn_S2def} and \eqref{eqn_BF}, and set $Q(c):=\cB(c,c)$. By Notation \ref{notation_sigmaLMS}, we have
\[
\fg_{0,0,t_C}=(\cM_{0,0,t_C},g),\, \textup{ with } \cM_{0,0,t_C}=E(0,0,t_C,S(t_C))
\]
with $E$ as in  \eqref{eqn_EMat}. Since $t_C\not\in\cK_0^*$, we have $\mu_C\ne 0$, $\tr_{\F_q/\F_p}(\mu_C t_C)\ne 0$.

In the sequel, we explore the condition that $G_{\cK_0^*}=\{\fg_{a,b,c}:\,a,b\in\F_q,\,c\in\cK_0^*\}$ is a normal subgroup of index $p$ in $G$, cf. Corollary \ref{cor_subGcK}. This splits into two steps: (1) we examine the normality of $G_{\cK_0^*}$, i.e., $\fg_{0,0,t_C}^{-1}\circ\fg_{a,b,c}\circ \fg_{0,0,t_C}\in G_{\cK_0^*}$ for $\fg_{a,b,c}\in G_{\cK_0^*}$; (2) we check the condition $[G:\,G_{\cK_0^*}]=p$, i.e., $\fg_{0,0,t_C}^p\in G_{\cK_0^*}$.\\

We now take the first step. Take $c\in\cK_0^*$. Choose an element $b$ such that
\begin{equation}\label{eqn_trb}
-\tr_{\F_q/\F_p}(\mu_Bb)=\frac{1}{2}Q(c)+\tr_{\F_q/\F_p}(\alpha c).
\end{equation}
Such $b$'s exist by the fact $\mu_B\ne 0$. We have $\theta_{a,b,c}=1$ for any $a\in\F_q$ by Lemma \ref{lem_non_expthetaT}. By the calculations in Remark \ref{rem_EMult}, we compute that
\begin{equation}\label{eqn_normalodd}
\fg_{0,0,t_C}^{-1}\circ\fg_{a,b,c}\circ \fg_{0,0,t_C}=\fg_{a',b',c'}.
\end{equation}
where $a'$ is irrelevant, $b'=b^{g}-t_C S_2(c)^g +c^gS(t_C)$  and $c'=c^g$. Here, $S_2(c)=S(c)-N_B(\sigma_c)$, cf.  \eqref{eqn_S2def}.
We have $c'\in \cK_0^*$, since $\cK_0^*$ is $g$-invariant.  By comparing the Frobenius part and the $(3, 2)$-nd entry of the matrix part of both sides of  \eqref{eqn_normalodd}, we get
\[
\theta_{a',b',c'}=1,\quad T(a,b,c)^g=T(a',b',c').
\]
\begin{lemma}\label{lem_S2Q_ginv}
We have $S_2(c)^g=S_2(c^g)$, $Q(c^g)=Q(c)$ for $c\in\cK_0^*$.
\end{lemma}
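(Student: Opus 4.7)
The plan is to read both claims directly off the conjugation identity~\eqref{eqn_normalodd}, together with the formula $T(a,b,c)=S_2(c)+N_B(\theta_{a,b,c})$ from Lemma~\ref{lem_non_expthetaT}. The element $b$ was chosen in~\eqref{eqn_trb} precisely so that $\theta_{a,b,c}=1$, and by the definition of $N_B$ in~\eqref{eqn_Nc} we have $N_B(1)=0$; hence $T(a,b,c)=S_2(c)$. The text just before the lemma already records, by comparing the Frobenius parts and the $(3,2)$-entries of the two sides of~\eqref{eqn_normalodd}, that $\theta_{a',b',c'}=1$ and $T(a,b,c)^g=T(a',b',c')$. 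Applying Lemma~\ref{lem_non_expthetaT} to $T(a',b',c')$ with $c'=c^g$ and $\theta_{a',b',c'}=1$ gives $T(a',b',c')=S_2(c^g)$. Combining these identities yields $S_2(c)^g=S_2(c^g)$, which is the first claim.

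For the second claim I would invoke the explicit formula $Q(c)=-\tr_{\F_q/\F_p}(\mu_B c\, S_2(c))$ from~\eqref{eqn_Q_exp_cK0}. By Theorem~\ref{thm_matrix_G} the element $\mu_B$ is fixed by $g_C=g$, so using the first claim $S_2(c^g)=S_2(c)^g$ one has $\mu_B c^g S_2(c^g)=(\mu_B c\, S_2(c))^g$. Since the absolute trace $\tr_{\F_q/\F_p}$ is invariant under every element of $\Aut(\F_q)$, this gives
\[
Q(c^g)=-\tr_{\F_q/\F_p}\!\left(\mu_B c^g S_2(c^g)\right)=-\tr_{\F_q/\F_p}\!\left((\mu_B c\, S_2(c))^g\right)=-\tr_{\F_q/\F_p}(\mu_B c\, S_2(c))=Q(c).
\]

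No real obstacle is anticipated here: the only non-trivial computation required, namely the matrix-entry calculation underlying the identity $T(a,b,c)^g=T(a',b',c')$, has already been carried out in the paragraphs preceding the lemma statement, so the proof reduces to substituting into the established formulas of Lemma~\ref{lem_non_expthetaT} and~\eqref{eqn_Q_exp_cK0} and applying Galois-invariance of the trace.
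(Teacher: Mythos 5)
Your proof is correct and follows essentially the same route as the paper: both derive $S_2(c)^g=S_2(c^g)$ from the identity $T(a,b,c)^g=T(a',b',c')$ together with the expression of $T$ in Lemma \ref{lem_non_expthetaT} (using $\theta_{a,b,c}=\theta_{a',b',c'}=1$ so the $N_B$ terms vanish), and both obtain $Q(c^g)=Q(c)$ from \eqref{eqn_Q_exp_cK0}, the $g$-invariance of $\mu_B$, and the Galois-invariance of the absolute trace. No gaps.
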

\begin{proof}
Take notation as above. By the expression of $T$ in Lemma \ref{lem_non_expthetaT}, we have $T(a,b,c)=S_2(c)$, $T(a',b',c')=S_2(c')$. It follows from $T(a,b,c)^g=T(a',b',c')$ that  $S_2(c)^g=S_2(c^g)$. This proves the first claim.

We have $Q(x)=-\tr_{\F_q/\F_p}(\mu_BxS_2(x))$ for $x\in\cK_0^*$, cf. \eqref{eqn_Q_exp_cK0}, so
\[
Q(c^g)=-\tr_{\F_q/\F_p}(\mu_Bc^gS_2(c)^g)=-\tr_{\F_q/\F_p}(g^{-1}(\mu_B)cS_2(c))=Q(c).
\]
Here, we used the fact that $g(\mu_B)=\mu_B$, cf. Theorem  \ref{thm_matrix_G}. This proves the second claim.
\end{proof}

We use the expression of $\theta$ in Lemma \ref{lem_non_expthetaT} to see that $\theta_{a',b',c'}=1$ is equivalent to
\begin{equation*}
-\tr_{\F_q/\F_p}(\mu_Bb)=\frac{1}{2}Q(c^g)+\tr_{\F_q/\F_p}(\alpha c^g-\mu_Bt_C S_2(c)^g +\mu_BS(t_C) c^g)).
\end{equation*}
Therefore, its right hand side equals that of \eqref{eqn_trb}. This yields
\begin{equation}\label{eqn_within1}
\tr_{\F_q/\F_p}\left(\mu_Bt_CS_2(c^g) \right)=\tr_{\F_q/\F_p}\left((-\alpha^g+\alpha+\mu_BS(t_C))c^g\right)\;\textup{ for }\, c\in\cK_0^*.
\end{equation}

\begin{lemma}\label{lemma_norsufficond1p1}
There is a polynomial $\sum_{i=0}^{p^2l-1}s_iX^{p^i}\in\F_{p^l}[X]$ whose restriction to $\cK_0^*$ is $S_2$. Write this polynomial as $S_2(X)$ by abuse of notation.
\begin{enumerate}
\item[(i)] There is $u\in\F_q$ such that $u-g(u)\in \F_p\cdot\mu_C$,  $(1-g)^2(u)=0$  and
    \begin{equation}\label{eqn_cond_si}
    -\mu_Bs_i+s_{m-i}^{p^i}\mu_B^{p^i}=\mu_C u^{p^i}-u\mu_C^{p^i},\quad    0\leq i\leq p^2l-1.
    \end{equation}
\item[(ii)] For $x,\,y\in\F_q$ and $\tr=\tr_{\F_q/\F_p}$, it holds that
   \begin{equation}\label{eqn_BFSymm}
    \begin{split}
    \tr \left(\mu_B xS_2(y)\right)= \tr \left(\mu_ByS_2(x)\right)
    &+\tr (u x)\cdot\tr (\mu_C y)-\tr (\mu_C x)\cdot\tr(u y).
    \end{split}
  \end{equation}
\end{enumerate}
\end{lemma}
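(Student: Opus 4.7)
The strategy is to convert the a priori functional data into polynomial data using the field-arithmetic lemmas from Section~3, then read off the claimed identities by comparison of coefficients. Since $r_C = 2$, we have $q = p^{p^2 l}$ with $g = g_C$ of order $p^2$ and fixed subfield $\F_{p^l}$. First, extend $S_2$ arbitrarily from $\cK_0^*$ to an $\F_p$-linear map on $\F_q$ (for instance, by choosing an $\F_p$-complement of $\cK_0^*$ and setting $S_2 = 0$ there); this corresponds to a reduced linearized polynomial over $\F_q$. By Lemma~\ref{lem_S2_eqn_new} the map $S_2$ is additive on $\cK_0^*$, and by Lemma~\ref{lem_S2Q_ginv} it satisfies $g(S_2(g^{-1}(x))) = S_2(x)$ for $x \in \cK_0^*$. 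Applying Lemma~\ref{Lpoly} (with $e = 2$ and $\eta = \mu_C$) then produces a reduced linearized polynomial $\sum_{i} s_i X^{p^i}$ with all $s_i \in \F_{p^l}$ that agrees with $S_2$ on $\cK_0^*$, and we rename this polynomial $S_2$ by abuse of notation.

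Next, the symmetry of $\cB(c,z) = -\tr_{\F_q/\F_p}(\mu_B c S_2(z))$ on $\cK_0^*$ established in Lemma~\ref{lem_Bdef} allows an application of Lemma~\ref{lem_Tracelinear} with $\mu = -\mu_B$, $f = S_2$, and $\eta = \mu_C$; since $p$ is odd, $(-\mu_B)^{p^i} = -\mu_B^{p^i}$, and the identity \eqref{eqn_sismmi} from that lemma specializes directly to \eqref{eqn_cond_si}. The same lemma also produces the explicit expression
\[
\tilde{S_2}(\mu_B x) = \mu_B S_2(x) + \mu_C \tr_{\F_q/\F_p}(ux) - u\tr_{\F_q/\F_p}(\mu_C x).
\]
Substituting this into the defining identity $\tr_{\F_q/\F_p}(\mu_B x S_2(y)) = \tr_{\F_q/\F_p}(\tilde{S_2}(\mu_B x)\,y)$ of the trace dual and pulling the scalar $\F_p$-valued trace factors out of the outer trace yields exactly \eqref{eqn_BFSymm}, establishing part~(ii).

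It remains to derive the two constraints on $u$ in part~(i). Apply $g$ to both sides of \eqref{eqn_cond_si}; since $\mu_B, \mu_C, s_i, s_{m-i}$ all lie in $\F_{p^l}$ and are therefore fixed by $g$, the left-hand side is invariant, so setting $w := u - g(u)$ and exploiting characteristic $p$ reduces the resulting identity to $\mu_C w^{p^i} = w \mu_C^{p^i}$ for $0 \le i \le p^2 l - 1$. The case $i = 1$, combined with the dichotomy $w = 0$ or $(w/\mu_C)^{p-1} = 1$, forces $w \in \F_p \cdot \mu_C$; since $g$ fixes both $\mu_C$ and $\F_p$, this immediately gives $(1-g)^2(u) = (1-g)w = 0$. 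The main subtlety to anticipate is this last step: Lemma~\ref{lem_Tracelinear} on its own does not pin $u$ down beyond the polynomial identity \eqref{eqn_cond_si}, and the extra $g$-equivariance of $u$ is supplied ``for free'' by the $\F_{p^l}$-rationality of the structural constants via the above Frobenius-chasing argument. The analogous rigidity for the data $\alpha$ and $S(t_C)$ that appears in \eqref{eqn_within1} is not invoked here and will presumably be exploited in the next lemma.
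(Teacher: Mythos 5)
Your proof matches the paper's almost step for step: Lemma \ref{Lpoly} (fed by Lemmas \ref{lem_S2Q_ginv} and \ref{lem_S2_eqn_new}) for the $\F_{p^l}$-rational polynomial, Lemma \ref{lem_Tracelinear} with $\mu=-\mu_B$, $\eta=\mu_C$ for \eqref{eqn_cond_si} and the trace-dual formula, $g$-invariance of the $\F_{p^l}$-valued left side of \eqref{eqn_cond_si} to force $u-g(u)\in\F_p\cdot\mu_C$ and hence $(1-g)^2(u)=0$, and pairing the trace-dual identity against $y$ to get \eqref{eqn_BFSymm}. The only (welcome) addition is your explicit $\F_p$-linear extension of $S_2$ from $\cK_0^*$ to all of $\F_q$ before invoking Lemma \ref{Lpoly}, a step the paper leaves implicit.
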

\begin{proof}
By Lemma \ref{lem_S2Q_ginv},  we have $S_2(c)^{g}=S_2(c^{g})$ for $c\in\cK_0^*$. By Lemma \ref{lem_S2_eqn_new}, the map $x\mapsto S _2(x)$ is additive on $\cK_0^*$.  The existence of the desired reduced linearized polynomial $S_2(X)$ follows from  Lemma \ref{Lpoly}. \\

\noindent (i). The equation  \eqref{eqn_cond_si} follows by applying Lemma \ref{lem_Tracelinear} in the case $\mu=-\mu_B$, $\eta=\mu_C$ and $L=S_2$.  Since $\mu_B$ and each $s_i$ are in $\F_{p^l}$, the left hand side of  \eqref{eqn_cond_si} lies in $\F_{p^l}$. It follows that the right hand side is $g$-invariant. Since $\mu_C$ is in $\F_{p^l}$, we deduce that $\mu_C u^{p^i}-u\mu_C^{p^i}=\mu_C g(u)^{p^i}-g(u)\mu_C^{p^i}$, i.e., $ ((u-g(u))\mu_C^{-1})^{p^i}=(u-g(u))\mu_C^{-1}$. In the case $i=1$, it follows that $u-g(u)=h\mu_C$ for some $h\in\F_p$. Since $g(\mu_C)=\mu_C$, we deduce that $(1-g)^2(u)=h\mu_C-hg(\mu_C)=0$ as desired. \\

\noindent (ii). Let $\widetilde{S_2}$ be the trace dual of $S_2(X)$, so that $\tr_{\F_q/\F_p}(S_2(x)y)=\tr_{\F_q/\F_p}(x\widetilde{S_2}(y))$ for $x,y\in\F_q$. Also by Lemma \ref{lem_Tracelinear} with  $\mu=-\mu_B$, $\eta=\mu_C$ and $L=S_2$,   we have
\begin{equation*}
\widetilde{S_2}(\mu_Bx)= \mu_BS_2(x)+\mu_C\tr_{\F_q/\F_p}(u x)-u\tr_{\F_q/\F_p}(\mu_C x).
\end{equation*}
Multiply both sides  by $y$ and take the absolute traces, and we obtain  \eqref{eqn_BFSymm}.
\end{proof}

\begin{lemma}\label{lemma_norsufficond1p2}
Let $S_2(X)$ and $u$ be as in Lemma \ref{lemma_norsufficond1p1}, and write $\lambda_C=\tr_{\F_q/\F_p}(\mu_C t_C)$. There exists $\lambda'\in\F_p$ such that
\begin{equation}\label{eqn_alphagmal}
\alpha^g-\alpha=\mu_BS(t_C)-\mu_BS_2(t_C)+\lambda_Cu+\lambda' \mu_C.
\end{equation}
Moreover, we have $\tr_{\F_q/\F_{p^l}}(S(t_C)-S_2(t_C))=0$ and
\begin{equation}\label{eqn_sumalphagi}
\sum_{i=0}^{p-1}\alpha^{g^i}:=-\sum_{i=1}^{p-1}i \left( \mu_BS(t_C)^{g^{i-1}}-\mu_BS_2(t_C)^{g^{i-1}}+\lambda_C u^{g^{i-1}}\right).
\end{equation}
\end{lemma}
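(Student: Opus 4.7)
The plan is to combine the already-established equation \eqref{eqn_within1} with the symmetry identity \eqref{eqn_BFSymm} from Lemma \ref{lemma_norsufficond1p1} to produce \eqref{eqn_alphagmal}, and then to derive the two subsequent identities by iterating \eqref{eqn_alphagmal} along the $g$-orbit. First I would apply \eqref{eqn_BFSymm} with $x=t_C$ and $y=c^g$. Since $c^g\in \cK_0^*$ means $\tr_{\F_q/\F_p}(\mu_C c^g)=0$, the right-hand side of \eqref{eqn_BFSymm} collapses to $\tr_{\F_q/\F_p}(\mu_B c^g S_2(t_C))-\lambda_C\tr_{\F_q/\F_p}(u c^g)$. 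Substituting this into \eqref{eqn_within1} and rearranging yields
\[
\tr_{\F_q/\F_p}\!\left(\bigl(\mu_B S_2(t_C)-\lambda_C u+\alpha^g-\alpha-\mu_B S(t_C)\bigr)\cdot c^g\right)=0
\]
for every $c\in\cK_0^*$. Because $\cK_0^*$ is $g$-invariant (Lemma \ref{lem_cK0g1}), the element $c^g$ ranges over all of $\cK_0^*$, so Lemma \ref{lem_tr} forces the coefficient in parentheses to lie in $\F_p\cdot\mu_C$, which gives \eqref{eqn_alphagmal} after renaming the scalar.

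For \eqref{eqn_sumalphagi}, I would apply $g^{j}$ to \eqref{eqn_alphagmal}, using that $g$ fixes $\mu_B$, $\mu_C$, and $\lambda_C\in\F_p$, to obtain
\[
\alpha^{g^{j+1}}-\alpha^{g^j}=\mu_B S(t_C)^{g^j}-\mu_B S_2(t_C)^{g^j}+\lambda_C u^{g^j}+\lambda'\mu_C.
\]
Summing from $j=0$ to $i-1$ gives an expression for $\alpha^{g^i}-\alpha$, and summing this over $i=0,\dots,p-1$ yields $p\alpha=0$ on the left and a double sum on the right in which the term indexed by $j$ is weighted by $p-1-j\equiv-(j+1)\pmod{p}$. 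Re-indexing $k=j+1$ produces the claimed formula. The total contribution of $\lambda'\mu_C$ is $-\lambda'\mu_C\sum_{k=1}^{p-1}k=-\lambda'\mu_C\cdot p(p-1)/2$, which vanishes in characteristic $p$ because $p$ is odd.

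For $\tr_{\F_q/\F_{p^l}}(S(t_C)-S_2(t_C))=0$, I would instead telescope \eqref{eqn_alphagmal} over a full $g$-orbit of length $p^2$. The left-hand side telescopes to $\alpha^{g^{p^2}}-\alpha=0$, while the right-hand side equals $\mu_B\tr_{\F_q/\F_{p^l}}(S(t_C)-S_2(t_C))+\lambda_C\tr_{\F_q/\F_{p^l}}(u)+p^2\lambda'\mu_C$. Since $p^2\lambda'\mu_C=0$ in characteristic $p$, the task reduces to showing $\tr_{\F_q/\F_{p^l}}(u)=0$; this follows because $\tr_{\F_q/\F_{p^l}}=(1-g)^{p^2-1}$ by \eqref{eqn_1mgpow}, and $(1-g)^2(u)=0$ by Lemma \ref{lemma_norsufficond1p1}(i) implies $(1-g)^{p^2-1}(u)=0$. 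Dividing by $\mu_B\neq 0$ finishes the proof. No single step is a genuine obstacle; the main care will be in tracking the characteristic-$p$ simplifications (notably $p(p-1)/2\equiv 0$ and $p^2\equiv 0$) alongside the $g$-equivariance of the various parameters.
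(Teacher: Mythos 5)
Your proposal is correct and follows essentially the same route as the paper: the derivation of \eqref{eqn_alphagmal} from \eqref{eqn_within1} and \eqref{eqn_BFSymm} via Lemma \ref{lem_tr} is identical, the trace identity is the paper's "apply $\tr_{\F_q/\F_{p^l}}$ to both sides" in the guise of summing over the $g$-orbit, and your weighted telescoping sum for \eqref{eqn_sumalphagi} is just an unpacked version of the paper's application of $-(1-g)^{p-2}=-\sum_{i=1}^{p-1}ig^{i-1}$ to \eqref{eqn_alphagmal}. All characteristic-$p$ cancellations you flag ($p(p-1)/2\equiv 0$, $\tr_{\F_q/\F_{p^l}}(u)=0$ from $(1-g)^2(u)=0$) match the paper's.
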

\begin{proof}
We observe that $\tr_{\F_q/\F_{p^l}}(u)=(1-g)^{p^2-1}(u)=0$ by  \eqref{eqn_1mgpow} and the fact $(1-g)^2(u)=0$ in (i) of Lemma \ref{lemma_norsufficond1p1}. Take $c\in\cK_0^*$, so that $\tr_{\F_q/\F_p}(\mu_Cc)=0$. Applying  \eqref{eqn_BFSymm} to the pair $(x,\,y)=(t_C,\,c^g)$, we obtain $\tr_{\F_q/\F_p}(\mu_Bt_CS_2(c^g) )=\tr_{\F_q/\F_p}(\mu_B c^gS_2(t_C)-\lambda_C uc^g )$. This is the left hand side of  \eqref{eqn_within1}. By collecting terms, \eqref{eqn_within1} reduces to
\[
\tr_{\F_q/\F_p}\left( (\alpha^g-\alpha-\mu_BS(t_C)+\mu_B S_2(t_C)-\lambda_C u)c^g \right)=0.
\]
This holds for all $c\in\cK_0^*$, so there exists $\lambda'\in\F_p$ such that  \eqref{eqn_alphagmal} holds by Lemma \ref{lem_tr}.
Taking the relative trace to $\F_{p^l}$ on both sides of  \eqref{eqn_alphagmal}, we deduce that $\tr_{\F_q/\F_{p^l}}(S(t_C)-S_2(t_C))=0$. For $0\le i\le p-2$ we have
\[
\binom{p-2}{i}=\frac{(p-2)\cdots(p-i)(p-i-1)}{2\cdots (i-1)i}\equiv(-1)^i(i+1) \pmod{p},
\]
and for $1\le i\le p-1$ we have $\binom{p-1}{i}\equiv(-1)^i\pmod{p}$. Therefore, by binomial expansion we have $(1-g)^{p-2}=\sum_{i=1}^{p-1}ig^{i-1}$, $(1-g)^{p-1}=1+g+\cdots+g^{p-1}$ in $\F_p[\la g\ra]$.  The equation \eqref{eqn_sumalphagi} follows by  applying $-(1-g)^{p-2}$ to both sides of  \eqref{eqn_alphagmal}.
\end{proof}

Finally, we explore the condition $\fg_{0,0,t_C}^p\in G_{\cK_0^*}$. For $i\ge 0$, set
\[
\fg_{a_i,b_i,c_i}:=\fg_{0,0,t_C}^i= (\cM_{0,0,t_C}^{g^{i-1}}\cdot\cdots\cdot\cM_{0,0,t_C},\ g^i).
\]
The value of $a_i$ is irrelevant, and we have $b_{i+1}=b_i^g+c_i^gS(t_C)$, $c_{i+1}=c_i^g+t_C$ by expansion. From $b_1=0$, $c_1=t_C$, we deduce that $b_{i+1}=\sum_{j=1}^{i} \sum_{k=1}^{j}g^{i-j}(t_C^{g^k}S(t_C))$, $c_{i+1}=\sum_{j=0}^{i} t_C^{g^{j}}$. In particular, we have
$\fg_{0,0,t_C}^p= \fg_{a'',b'',c''}$,  where $a''$ is irrelevant and
\begin{equation}\label{tCPin}
b''=b_p=\sum_{i=1}^{p-1}\sum_{k=1}^{i}g^{p-1-i}(t_C^{g^k}S(t_C)) ,\quad c''=c_p=\sum_{i=0}^{p-1}t_C^{g^{i}}.
\end{equation}
Since $g(\mu_C)=\mu_C$, we deduce that  $c''\in\cK_0^*$:
\[
\tr_{\F_q/\F_p}(\mu_Cc'')=\sum_{i=0}^{p-1}\tr_{\F_q/\F_p}(\mu_C^{g^{-i}}t_C)=p\tr_{\F_q/\F_p}(\mu_Ct_C)=0.
\]

\begin{lemma}\label{lem_rabrc2_p=3}
Take notation as in Lemma \ref{lemma_norsufficond1p2}, and write $\nu_B=M(t_B)$, $\nu_C=S(t_C)$, $\lambda_C=\tr_{\F_q/\F_p}(\mu_C t_C)$. Then  $p=3$, $\mu_C=u-u^g$, and $\nu_B=\sum_{i=0}^{p-1}\left(\nu_C-S_2(t_C)\right)^{g^i}$.
\end{lemma}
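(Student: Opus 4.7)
The plan is to apply Corollary \ref{cor_subGcK}, which gives $\fg_{0,0,t_C}^{p}\in G_{\cK_0^*}$; by point regularity this element coincides with $\fg_{a'',b'',c''}$, the triple $(a'',b'',c'')$ being computed in \eqref{tCPin}. Matching the two sides produces two identities: equality of the $(3,2)$-entries of the matrix parts, and equality of the Frobenius parts. The first gives the formula for $\nu_B$; the second, worked out modulo $p$, simultaneously forces $p=3$ and $\mu_C=u-u^g$.

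\emph{Matrix-entry condition.} Telescoping $\cM_{0,0,t_C}^{g^{p-1}}\cdots\cM_{0,0,t_C}$ by Remark \ref{rem_EMult}, the $(3,2)$-entry of the matrix part of $\fg_{0,0,t_C}^{p}$ is $\sum_{i=0}^{p-1}g^i(\nu_C)$. On the other side Lemma \ref{lem_non_expthetaT} gives $T(a'',b'',c'')=S_2(c'')+N_B(g_1)=S_2(c'')+\nu_B$, and since $S_2(X)\in\F_{p^l}[X]$ by Lemma \ref{lemma_norsufficond1p1} the polynomial $S_2$ commutes with $g$, so $S_2(c'')=\sum_{i=0}^{p-1}g^i(S_2(t_C))$. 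Equating yields $\nu_B=\sum_{i=0}^{p-1}(\nu_C-S_2(t_C))^{g^i}$.

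\emph{Frobenius condition.} Lemma \ref{lem_non_expthetaT} forces $e:=\tfrac{1}{2}Q(c'')+\tr_{\F_q/\F_p}(\alpha c''+\mu_Bb'')\equiv 1\pmod p$. I would evaluate each term. Expanding \eqref{tCPin} with $g(\mu_B)=\mu_B$ and trace-invariance under $g$, one gets $\tr(\mu_Bb'')\equiv -\sum_{k}k\tr(\mu_Bg^k(t_C)\nu_C)\pmod p$. Rewriting $\tr(\alpha c'')=\tr(g^{p-1}(t_C)\,(1+g+\cdots+g^{p-1})(\alpha))$ and substituting \eqref{eqn_sumalphagi} makes the $\nu_C$-piece cancel $\tr(\mu_Bb'')$ exactly. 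For $Q(c'')=-\tr(\mu_Bc''S_2(c''))$, expanding the double sum and reindexing by $k=j-i$ gives $Q(c'')\equiv\sum_{k}k(I_k+I_{-k})\pmod p$ with $I_k=\tr(\mu_Bt_Cg^k(S_2(t_C)))$; applying \eqref{eqn_BFSymm} to convert $I_k$ into $I_{-k}$ plus correction terms in $u$ and $\mu_C$ makes the $S_2(t_C)$-piece of $\tfrac12 Q(c'')$ cancel the corresponding piece of $\tr(\alpha c'')$. Using $g^k(u)=u-kh\mu_C$, where $h\in\F_p$ is defined by $u-u^g=h\mu_C$, together with $g(\mu_C)=\mu_C$, one evaluates $\tr(ug^k(t_C))=\tr(ut_C)+kh\lambda_C$ and $\tr(\mu_Cg^k(t_C))=\lambda_C$, and what remains is
\[
e\;\equiv\;\lambda_C\tr(ut_C)\sum_{k=1}^{p-1}k\;+\;\tfrac{1}{2}h\lambda_C^{2}\sum_{k=1}^{p-1}k^{2}\pmod p.
\]

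Since $p$ is odd, $\sum_{k=1}^{p-1}k=\tfrac{p(p-1)}{2}\equiv 0\pmod p$; and $\sum_{k=1}^{p-1}k^{2}=\tfrac{(p-1)p(2p-1)}{6}\equiv 0\pmod p$ for every $p\ge 5$, while for $p=3$ it equals $5\equiv 2\pmod 3$. As the right hand side $1$ is nonzero modulo $p$, we must have $p=3$; then $e\equiv h\lambda_C^{2}\pmod 3$, and since $\lambda_C\neq 0$ (because $t_C\notin\cK_0^*$) and every nonzero element of $\F_3$ squares to $1$, we conclude $h\equiv 1\pmod 3$, i.e., $\mu_C=u-u^g$. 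The main obstacle is the meticulous bookkeeping of the nested sums over $0\le i,j,k\le p-1$ applied to products of $t_C$, $\nu_C$, $S_2(t_C)$, $\alpha$, $u$ twisted by powers of $g$: the clean cancellation of the $\nu_C$- and $S_2(t_C)$-pieces hinges on careful use of $g(\mu_B)=\mu_B$, $g(\mu_C)=\mu_C$, Lemma \ref{lem_trace_int}, $(1-g)^2(u)=0$, and above all the twisted symmetry \eqref{eqn_BFSymm}, so that only the coefficients $\sum k$ and $\sum k^2$ survive modulo $p$.
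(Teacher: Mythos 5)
Your proposal is correct and follows essentially the same route as the paper: compare the matrix and Frobenius parts of $\fg_{0,0,t_C}^{p}=\fg_{a'',b'',c''}$, use the $(3,2)$-entry together with $S_2(X)\in\F_{p^l}[X]$ to get the $\nu_B$ formula, and evaluate the exponent $D$ by cancelling the $\nu_C$- and $S_2(t_C)$-pieces via \eqref{eqn_BFSymm} and \eqref{eqn_sumalphagi}, leaving only power sums of $k$ modulo $p$. The paper packages the surviving term as $\tfrac{p(p-1)(p+1)}{3}\tr(vt_C)$ rather than splitting into $\sum k$ and $\sum k^2$, but this is the same computation (and your stray factor of $2$ on the $\tr(ut_C)\sum k$ term is immaterial since that sum vanishes mod $p$).
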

\begin{proof}
Since $\sigma_{t_C}^p=g^p=g_1$, the Frobenius part of $\fg_{0,0,t_C}^p$ is $g_1$. The Frobenius part of $\fg_{a'',b'',c''}$ is $g_1^D$ with  $D=\frac{1}{2}Q(c'')+\tr_{\F_q/\F_p}(\alpha c''+\mu_B b'')$ by the expression of $\theta$ in Lemma \ref{lem_non_expthetaT}. Since $\fg_{0,0,t_C}^p= \fg_{a'',b'',c''}$, we deduce that $D=1$.

By the expression of $T$ in Lemma \ref{lem_non_expthetaT}, the $(3,2)$-nd entry of the matrix part of $\fg_{a'',b'',c''}$ equals $T(a'',b'',c'')=\nu_B+S_2(c'')$. Here, recall that we chose $t_B$ such that $g_B=g_1$ in the beginning of this subsection. The $(3,2)$-nd entry of the matrix part of $\fg_{0,0,t_C}^p$ equals $\sum_{i=0}^{p-1}g^i(\nu_C)$ upon expansion. We deduce that $\nu_B+S_2(c'')=\sum_{i=0}^{p-1}g^i(\nu_C)$.

For the ease of notation, we write $\tr=\tr_{\F_q/\F_p}$ in this proof. We now compute $D$ explicitly. By Lemma \ref{lemma_norsufficond1p1}, $S_2$ has coefficients in $\F_{p^l}$, so $S_2(x^{g^i})=S_2(x)^{g^i}$ for $i\ge 0$ and $x\in\F_q$. Since $c''=\sum_{i=0}^{p-1}t_C^{g^{i}}$ is in $\cK_0^*$, we have $Q(c'')= -\tr(\mu_Bc''S_2(c''))$ by \eqref{eqn_Q_exp_cK0}. We compute that
\[\begin{array}{rcl}
 Q(c'')&=&-\sum_{i,j=0}^{p-1}\tr\left(\mu_B t_C^{g^i}S_2(t_C)^{g^j}\right) \\
 &=&-\sum_{i<j }  \tr\left(\mu_B t_C^{g^i}S_2(t_C)^{g^j}\right)-\sum_{ j<i}\tr\left(\mu_B t_C^{g^i}S_2(t_C)^{g^j}\right)\\
 &=&-\sum_{k=1}^{p-1}(p-k)\left(\tr (\mu_B t_CS_2(t_C^{g^k}) )+\tr (\mu_B t_C^{g^k}S_2(t_C) )\right)\\
 &=&\sum_{k=1}^{p-1}k\left(2\tr (\mu_B t_C^{g^k}S_2(t_C) )+\lambda_C\tr( u t_C)-\lambda_C\tr(u t_C^{g^k}) \right)\\
 &=& \sum_{k=1}^{p-1}k\cdot\tr(2\mu_B t_C^{g^k}S_2(t_C)-\lambda_C u t_C^{g^k} ).
\end{array}
\]
Here, in the second equality the $p$ terms with $i=j$ are equal and sum to $0$, in the fourth equality we used  \eqref{eqn_BFSymm} with $(x,\,y)=(t_C,\,t_C^{g^k})$, and in the fifth we used the fact $\sum_{i=1}^{p-1}i=\frac{p-1}{2}p\equiv0\pmod{p}$. Similarly, we have
\[
\begin{array}{rcl} \tr(\alpha c'')&=&\tr(t_C^{g^{p-1}}(\alpha+\cdots+\alpha^{g^{p-1}}))\\
&=&-\sum_{i=1}^{p-1}\tr\left(it_C^{g^{p-1}}  ( \mu_B\nu_C^{g^{i-1}}-\mu_BS_2(t_C)^{g^{i-1}}+\lambda_C u^{g^{i-1}} )\right) \\
&=& \sum_{i=1}^{p-1} i\cdot\tr\left( -t_C^{g^{p-i}} \mu_B\nu_C+ \mu_Bt_C^{g^{p-i}}S_2(t_C)- \lambda_C t_C^{g^{p-i}}  u \right) \\
&=& \sum_{k=1}^{p-1} k\cdot\tr(  \mu_B\nu_Ct_C^{g^k}-\mu_Bt_C^{g^{k}}S_2(t_C)+ \lambda_C ut_C^{g^{k}}  ),
\end{array}
\]
where in the second equality we used  \eqref{eqn_sumalphagi}, and in the last one we did a change of variable $p-i\mapsto k$. Finally, we have  $\tr(\mu_B b'')=-\sum_{i=1}^{p-1}i\cdot\tr( \mu_Bt_C^{g^{i}}\nu_C)$ by the fact $g(\mu_B)=\mu_B$ and Lemma \ref{lem_trace_int}. Putting those pieces together, we get
$D=\frac{\lambda_C}{2}\cdot\sum_{i=1}^{p-1}i\cdot\tr(ut_C^{g^i})$.

Set  $v:=u-u^g$, which is in $\F_p\cdot\mu_C$ by  Lemma \ref{lemma_norsufficond1p1}. In particular, $v$ is in $\F_{p^l}$ since $\mu_C$ is.   From $u^g=u-v$, we have $u^{g^i}=u-iv$ for $i\ge 0$ by induction. We have $\lambda_C=\tr(\mu_Ct_C)\ne 0$ by the fact $t_C\not\in\cK_0^*$. We  compute that
\[\begin{array}{rcl}
2\lambda_C^{-1}D &=& \sum_{i=1}^{p-1} i \cdot\tr( u^{g^{p-i-1}} t_C^{g^{p-1}})= \sum_{i=1}^{p-1} i\cdot \tr\left( (u-(p-i-1)v) t_C^{g^{p-1}}\right)\\
&=&\sum_{i=1}^{p-1} (i^2+i)\cdot \tr( v t_C^{g^{p-1}})=\frac{p(p-1)(p+1)}{3} \cdot \tr( v t_C).
\end{array}\]
This value is $0$ if $p>3$, so we must have $p=3$, in which case, $\frac{p(p-1)(p+1)}{3}\equiv -1 \pmod 3$. Since $D=1$, it follows from the above equation that $\lambda_C=\tr(v t_C)$.

Combining the facts $\lambda_C=\tr(\mu_C t_C)$, $\lambda_C=\tr( v t_C)$, and $v\in\F_p\cdot \mu_C$, we deduce that $\mu_C=v$, i.e., $\mu_C=u-u^g$.  Since  $S_2(x^g)=S_2(x)^g$ for $x\in\F_q$, the desired expression of $\nu_B$ follows from  $\nu_B=\sum_{i=0}^{p-1}g^i(\nu_C)-S_2(c'')$. This completes the proof.
\end{proof}

Recall that the functions $T$ and $\theta$ are as in Lemma \ref{lem_non_expthetaT} when restricted to $\F_q\times\F_q\times\cK_0^*$. It turns out that the conditions that we have derived on the parameters in Lemmas \ref{lemma_norsufficond1p2} and \ref{lem_rabrc2_p=3} are also sufficient. This leads to the following construction.
\begin{construction}\label{Const_rC2}
Suppose that $q=3^{9l}$ with $l$ a positive integer, and take $g\in\Aut(\F_q)$ such that $g(x)=x^{p^l}$. Set $g_1:=g^3$.
\begin{enumerate}
\item[(i)] Take $u\in \F_{q}$ such that $\mu_C:=u-u^g\in\F_{3^l}^*$;
\item[(ii)] Take   $t_C\in\F_q^*$  such that
    $\lambda_C:=\tr_{\F_q/\F_3}(\mu_C t_C)\ne 0$;
\item[(iii)] Take $\mu_B\in\F_{3^l}^*$;
\item[(iv)] Take a tuple $(s_0,\,s_1,\cdots,s_{9l-1})$ with entries in $\F_{3^l}$ that satisfies
    \[
    -\mu_B s_i+s_{9l-i}^{3^i}\mu_B^{3^i}=\mu_C u^{3^i}-u\mu_C^{3^i},\;1\leq i\leq 9l-1;
    \]
    Set $S_2(x):=\sum_{i=0}^{3^2l-1}s_ix^{3^i}$, $Q(x):=-\tr_{\F_q/\F_3}(\mu_BxS_2(x))$ for $x\in\F_q$;
\item[(v)] Take $\alpha\in\F_q$, $\lambda\in\F_3$, and set
    $\nu_C:=S_2(t_C)+\mu_B^{-1}(\alpha^g-\alpha-\lambda_Cu-\lambda\mu_C)$.
\end{enumerate}
Set $\nu_B:=\sum_{i=0}^{2}g^i(\nu_C-S_2(t_C))$, and let $N_B$ be as  defined in  \eqref{eqn_Nc} with the prescribed $\nu_B$. Set $K:=\{z\in\F_q :\,\tr_{\F_q/\F_3}(\mu_C z)=0\}$.

For $a,\,b\in\F_q$ and $c\in K$, let $\theta_{a,b,c}$ and $T(a,b,c)$ be as defined in  Lemma \ref{lem_non_expthetaT}, and set $\cM_{a,b,c}=E(a,b,c, T(a,b,c))$, where $E$ is as in  \eqref{eqn_EMat}. Then $G_{K}:=\{\fg_{a,b,c}:\,a,\,b\in\F_q,\, c\in K\}$ is a  subgroup  of  order  $q^3/3$, where $\fg_{a,b,c}=(\cM_{a,b,c},\,\theta_{a,b,c})$. Take $\fg_{0,0,t_C}=(\cM_{0,0,t_C},\, g)$ with $\cM_{0,0,t_C}:=E(0,0,t_C,\nu_C)$.
Then $G:=\la G_K,\, \fg_{0,0,t_C}\ra$ is a point regular group of $\cQ^P$.
\end{construction}	
\begin{proof}
The number of tuples satisfying the conditions in (iv) equals $3^{(9l+1)l/2}$ by a  similar argument to that in the proof of Lemma \ref{lem_num}. By the same argument as in the proof of Construction \ref{Constquadodd}, we can show that $G_K$ is a group of order $q^3/3$. Let $G_1$ be the subgroup of $G_K$ of index $3$ with a trivial Frobenius part. Then we can check that $\fg_{0,0,t_C}$ normalizes $G_1$ and $\fg_{0,0,t_C}^3\in G_K$ by reversing the arguments in this subsection. Since $\fg_{0,0,t_C}^3$ has Frobenius part $g_1=g^3$, we deduce that $G_K=\la G_1,\fg_{0,0,t_C}^3\ra$.  It follows that $G_K$ is normalized by $\fg_{0,0,t_C}$. We thus conclude that $G$ is a group of order $q^3$.

The fact that $G$ is point regular on $\cQ^P$ can be derived in exactly the same way as in the case of Construction \ref{Conk0=k0}, i.e., by showing that the orbit $X_1$ of $\la (0,0,0,1)\ra$ under the action of $G_K$ has size $q^3/3$ and $\fg_{0,0,t_C}$ maps $\la (0,0,0,1)\ra$ to a point outside $X_1$.
\end{proof}

To summarize, we have thus proved the case $r_{A,B}=1$, $r_C=2$ of Theorem \ref{thm_nonlinear} in this subsection. This completes the proof of Theorem \ref{thm_nonlinear}.

\section{The isomorphism issue in the odd characteristic case}\label{sec_iso}

\subsection{The conjugacy within $\PGaSp(4,q)_P$}
In this subsection, we complete the proof of the classification theorem for $q$ odd, namely,
Theorem \ref{thm_conjugate}. It will follow from Theorem \ref{thm_linearPRG} (the linear case) and Theorem \ref{thm_nonlinear} (the nonlinear case) provided that we can prove that the groups arising from Constructions \ref{Conk0=k0}, \ref{Constquadodd} and \ref{Const_rC2} are conjugate to those in Constructions \ref{const_S2}-\ref{const_S4} within $\PGaSp(4,q)_P$ respectively. We observe that  Constructions \ref{const_S2}-\ref{const_S4} are special cases of Constructions \ref{Conk0=k0}, \ref{Constquadodd} and \ref{Const_rC2} respectively. We establish the conjugacy claim for each of the three cases in the sequel, and thus establish Theorem \ref{thm_conjugate}. \\

\noindent(A) In Construction \ref{Conk0=k0}, set $\nu:=\nu_C-S_1(t_C)$. Since $\tr_{\F_q/\F_{p^l}}(\nu)=0$, there exists $u\in\F_q$ such that $u^g-u=\nu$ by Lemma \ref{lem_ff1}. Take $\fg_1:= (E(0,0,0,u),\, 1)$, which lies in $\PGaSp(4,q)_P$ and stabilizes the quadrangle $\cQ^P$. The group $\tilde{G}:=\fg_1^{-1} \circ G \circ \fg_{1}$ is also a point regular group of $\cQ^P$, so is of the form as in Theorem \ref{Main} for some functions $T'$ and $\theta'$ by the analysis in Section 2.2. Let $\fg_{a,b,c}'$ be the element of $\tilde{G}$ that maps $\la(0,0,0,1)\ra$ to $\la(a,b,c,1)\ra$, and set $\sigma_c':=\theta_{0,0,c}'$, $M'(y):=T'(0,y,0)$ and $S'(z):=T'(0,0,z)$. For $a,b\in\F_q$ and $c\in K$, we compute that
\[
    \fg'_{a,b,c}=\fg_1^{-1}\circ\fg_{a,b-c\nu,c}\circ \fg_{1}
     =(E(a,b,c,S_1(c)),\,1).
\]
It follows that   $T'(a,b,c)=S_1(c)$, $\theta'(a,b,c)=1$ if $c\in K$. In particular, $M'\equiv 0$, $\theta'_{0,y,0}\equiv 1$. Similarly, we compute that
\[
      \fg'_{0,ut_C,t_C}=\fg_{1}^{-1}\circ \fg_{0,0,t_C} \circ \fg_1=(E(0,ut_C,t_C,S_1(t_C)),\,g).
\]
That is,  $T'(0,ut_C,t_C)=S_1(t_C)$, $\theta'_{0,ut_C,t_C}=g$. By  \eqref{eqn_theta6} and \eqref{eqn_T6}, we have  $\theta_{a,b,c}'=\theta_{0,b,0}'\sigma_c'$, $T'(a,b,c)=M'(b)^{\sigma_c'}+S'(c)$ for $a,b,c\in\F_q$. Therefore,
$\sigma_{t_C}'=\theta'_{0,ut_C,t_C}=g$ and  $S'(t_C)=T'(0,ut_C,t_C)=S_1(t_C)$. It follows that $\fg_{0,0,t_C}'=(E(0,0,t_C,S_1(t_C)),\,g)$.

Set $\tilde{G}_K:=\fg_1^{-1} \circ G_K \circ \fg_{1}$, the kernel of the group homomorphism $\psi':\,\fg_{a,b,c}'\mapsto\theta_{a,b,c}'$. Then $\tilde{G}=\la \tilde{G}_K,\,\fg_{0,0,t_C}'\ra$.
Take a  triple $(x,y,z)$. There exist  $a,b\in\F_q$, $c\in K$ and $i\ge 0$ such that $\fg_{x,y,z}'=\fg_{a,b,c}'\circ\fg_{0,0,t_C}'^i$.  We compute that $z=g^i(c)+\sum_{k=0}^{i-1}g^k(t_C)$,  $T'(x,y,z)=S_1(z)$ and $\theta_{x,y,z}'=g^{\tr_{\F_q/\F_p}(\mu_C' z)}$, where $\mu_C'=\mu_C\tr_{\F_q/\F_p}(\mu_C t_C)^{-1}$. Therefore, $\tilde{G}$ arises from Construction \ref{const_S2}, and this establishes the conjugacy claim for Construction \ref{Conk0=k0}.\\

\noindent(B) In Construction \ref{Constquadodd}, take $u:=\mu_B^{-1}\alpha$ so that  $\nu_B =u^{g_1}-u $ by (iii). We deduce that $N_B(\theta_{a,b,c})=u^{\theta_{a,b,c}}-u$ from $\nu_B =u^{g_1}-u$, where $N_B$ is as in  \eqref{eqn_Nc}. Taking conjugation by $\fg_1= (E(0,0,0,u),\, 1)\in\PGaSp(4,q)_P$, we calculate that $\fg_{a,b,c}':=\fg_1^{-1} \circ \fg_{a,b-uc,c}\circ\fg_1$ equals $(E(a,b,c, S_2(c)), g_1^{\frac{1}{2}Q(c)+\tr_{\F_q/\F_p}(\mu_Bb)})$. By changing the notation $S_2$ to $S_1$ to be consistent with previous constructions, we see that $\fg_1^{-1} \circ G\circ\fg_1$ takes the form in Construction \ref{const_S3}. This establishes the conjugacy claim for  Construction \ref{Constquadodd}.\\

\noindent(C) In  Construction \ref{Const_rC2}, set $\nu:= \nu_C-S_2(t_C)$, which equals $\mu_B^{-1}(\alpha^g-\alpha-\lambda_Cu-\lambda\mu_C)$ by (v). We deduce that $\tr_{\F_q/\F_{3^l}}(\nu)=0$ by the facts $(1-g)^2(u)=0$ and $\mu_C\in\F_{3^l}$ in (i).
There exists $u_0\in\F_q$ such that $\nu=u_0^g-u_0$ by Lemma \ref{lem_ff1}. Set $\lambda:=\lambda_C^{-1}\tr_{\F_q/\F_3}(\mu_B u_0 t_C)\in\F_3$ and  $u_1:=u_0-\lambda \mu_B^{-1} \mu_C$, so that $\tr_{\F_q/\F_3}(\mu_Bt_C u_1)=0$.  Since $\mu_B$ and $\mu_C$ are in $\F_{3^l}^*$, we deduce that $u_1^g-u_1=\nu$. It follows  that  $\nu_B=\sum_{i=0}^{2}g^i(\nu)=g_1(u_1)-u_1$, where $g_1=g^3$.   The following are some facts that we need below:
\begin{enumerate}
\item[(1)] Set $\alpha':=\alpha-u_1\mu_B$. We deduce that $g(\alpha')-\alpha'=\lambda_Cu+\lambda\mu_C$ from the facts $u_1^g-u_1=\nu$ and $\nu=\mu_B^{-1}(\alpha^g-\alpha-\lambda_Cu-\lambda\mu_C)$. Since $u-u^g\in\F_{3^l}^*$ and $(g-1)^3=g_1-1$, we further deduce that  $g_1(\alpha')-\alpha'=(g-1)^2(\lambda_Cu+\lambda\mu_C)=0$, i.e., $\alpha'\in\F_{3^{3l}}$.
\item[(2)] Since $\tr_{\F_q/\F_3}(\mu_Bt_C u_1)=0$, by  \eqref{eqn_theta6} we have $\theta_{0,-t_Cu_1,0}= g_1^{\tr_{\F_q/\F_3}(-\mu_B t_Cu_1)}=1$ and
    $\theta_{0,-t_C u_1,t_C}=\theta_{0,-t_Cu_1,0}\cdot \sigma_{t_C}= g$.
\item[(3)]  By   \eqref{eqn_T6},  $T(0,-t_Cu_1,t_C)=N_B(\theta_{0,-t_Cu_1,0})^g+\nu_C$. We have  $\theta_{0,-t_Cu_1,0}=1$, so $M(-t_Cu_1)=N_B(1)=0$. It follows that $T(0,-t_Cu_1,t_C)=\nu_C$.
\end{enumerate}
Let $\tilde{G}:=\fg_1^{-1} \circ G\circ\fg_1$ with $\fg_1= (E(0,0,0,u_1),\, 1)\in\PGaSp(4,q)_P$, and let $\fg_{a,b,c}'$ be the element of $\tilde{G}$ that maps $\la(0,0,0,1)\ra$ to $\la(a,b,c,1)\ra$. By the analysis in Section 2.2, the group $\tilde{G}$ arises from Theorem \ref{Main} for some functions $T'$ and $\theta'$. By exactly the same argument as in the preceding case, we deduce that the subgroup $\tilde{G}_K:=\fg_1^{-1} \circ G_K\circ\fg_1$ consists of $\fg_{a,b,c}'=(\cM_{a,b,c}',\,\theta_{a,b,c}')$ with $a,\,b\in\F_q$, $c\in K$, where $\theta_{a,b,c}'=g_1^{\frac{1}{2}Q(c)+\tr_{\F_q/\F_p}(\alpha'c+\mu_B b)}$ and $\cM'(a,b,c)=E(a,b,c,S_2(c))$. Similarly, $\fg_{0,0,t_C}'=\fg_1^{-1}\circ \fg_{0,-t_Cu_1,t_C} \circ\fg_1$, and its Frobenius part is $g$ by the fact (2) above.  We compare their matrix parts and get $T'(0,0,t_C)=\nu_C-g(u_1)+u_1=S_2(t_C)$, where we used the fact (3) above. By changing the notation $S_2$ to $S_1$ for consistency,  we see that $\tilde{G}=\fg_1^{-1} \circ G\circ\fg_1$ takes the form in Construction \ref{const_S4}. This establishes the conjugacy claim for Construction \ref{Const_rC2}.\\

To conclude, we have now established the conjugacy claims for all constructions. This completes the proof of Theorem \ref{thm_conjugate}.

\subsection{The group invariants of the point regular groups}

Our first goal in this subsection is to show that the four constructions in general yield non-isomorphic point regular groups. We start with some properties of $G$.
\begin{lemma}\label{lem_S4T}
In each of Constructions \ref{const_S1}-\ref{const_S4}, we have $T(x,y,z)=S_1(z)$.
\end{lemma}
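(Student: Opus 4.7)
For Constructions \ref{const_S1}, \ref{const_S2}, and \ref{const_S3}, the function $T$ is defined on all of $\F_q^3$ by $T(a,b,c):=S_1(c)$, so the claim is immediate from unwinding the construction; there is nothing to prove. The only substantive case is Construction \ref{const_S4}, where the equality $T(a,b,c)=S_1(c)$ is built in only when $c\in K$, and one must determine $T$ on the two remaining cosets of $G_K$ in $G$.

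My plan for Construction \ref{const_S4} is to exploit the decomposition $G=\bigcup_{k=0}^{2}G_K\circ\fg_{0,0,t_C}^{k}$ (valid because $\fg_{0,0,t_C}^{3}\in G_K$, as already observed in the construction) together with the multiplication rule of Theorem \ref{Main}. The first step is to compute the powers $\fg_{0,0,t_C}^{k}$ explicitly. Writing $\fg_{0,0,t_C}^{k}=\fg_{a_k,b_k,c_k}$ for $k\in\{1,2\}$, a straightforward induction using the product formula in Remark \ref{rem_EMult} should give that the $(4,3)$-entry of the underlying matrix is $c_k=\sum_{i=0}^{k-1}t_C^{g^i}$ and the $(3,2)$-entry is $\sum_{i=0}^{k-1}S_1(t_C)^{g^i}$. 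The key observation is that because $\cM_{0,0,t_C}$ has trivial $(4,1)$- and $(4,2)$-entries, the cross terms $-bz+cy-czw$ in Remark \ref{rem_EMult} vanish throughout the induction, so the $(4,3)$- and $(3,2)$-entries simply add. Since the coefficients of $S_1$ lie in $\F_{3^l}$ and are therefore fixed by $g$, the operator $S_1$ commutes with $g$; combined with the additivity of $S_1$, this identifies the $(3,2)$-entry with $S_1(c_k)$, yielding $T(a_k,b_k,c_k)=S_1(c_k)$.

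The second step is to promote this from the coset representatives to arbitrary $(x,y,z)$. Each element $\fg_{x,y,z}$ factors uniquely as $\fg_{a,b,c}\circ\fg_{0,0,t_C}^{k}$ with $c\in K$ and $k\in\{0,1,2\}$; equation \eqref{eqc1} and the formula for the third coordinate in Theorem \ref{Main} give
\[
T(x,y,z)=T(a,b,c)^{g^k}+T(a_k,b_k,c_k),\qquad z=c^{g^k}+c_k.
\]
Since $\fg_{a,b,c}\in G_K$ forces $T(a,b,c)=S_1(c)$, and since $g$ commutes with $S_1$ and $S_1$ is additive, the right-hand side collapses to $S_1(c^{g^k})+S_1(c_k)=S_1(c^{g^k}+c_k)=S_1(z)$, which is the desired identity.

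I expect no conceptual obstacle. The only place requiring genuine care is the inductive calculation of $\fg_{0,0,t_C}^{k}$, and even there the situation is benign because of the vanishing cross terms noted above. Everything ultimately hinges on two design features of Construction \ref{const_S4}: that $S_1$ is a linearized polynomial (so additive) and that its coefficients lie in $\F_{3^l}$ (so it commutes with $g$); these are precisely what is needed to push $S_1$ past both the Frobenius twist and the additive factorization $z=c^{g^k}+c_k$.
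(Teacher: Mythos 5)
Your proposal is correct and takes essentially the same route as the paper: the paper also dismisses Constructions \ref{const_S1}--\ref{const_S3} as immediate, and for Construction \ref{const_S4} writes $\fg_{x,y,z}=\fg_{a,b,c}\circ\fg_{0,0,t_C}^{i}$ with $c\in K$, then expands via Remark \ref{rem_EMult} using that $S_1$ is additive and commutes with $g$ to get $z=g^i(c)+\sum_{k}g^k(t_C)$ and $T(x,y,z)=S_1(z)$. One small inaccuracy in your write-up: the cross terms $-bz+cy-czw$ do \emph{not} vanish throughout the induction (already $\fg_{0,0,t_C}^{2}$ has nonzero first and second coordinates $-t_C^{g}t_CS_1(t_C)$ and $t_C^{g}S_1(t_C)$); this is harmless, however, because those cross terms only enter the $(4,1)$- and $(4,2)$-entries, while the $(4,3)$- and $(3,2)$-entries add by the very form of the product formula in Remark \ref{rem_EMult}, which is all your argument actually uses.
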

\begin{proof}
This is clear for the first three constructions, so assume that we are in the case of Construction \ref{const_S4}. Since the $s_i$'s are in $\F_{3^l}$, we have $S_1(g(x))=g(S_1(x))$ for $x\in\F_q$, i.e., $S_1$ and $g$ commutes. Since $G=\la G_K,\,\fg_{0,0,t_C}\ra$, for each triple $(x,\,y,\,z)$, there exist  $a,\,b\in\F_q$, $c\in K$ and $i\ge 0$ such that $\fg_{x,y,z}=\fg_{a,b,c}\circ\fg_{0,0,t_C}^i$. By direct expansion using the calculations in Remark \ref{rem_EMult} and the fact that $S_1$ and $g$ commutes, this equation yields  $z=g^i(c)+\sum_{k=0}^{i-1}g^k(t_C)$ and $T(x,y,z)=S_1(z)$. This proves the claim.
\end{proof}

\begin{lemma}\label{lem_P1P2}
Let $G$ be a group in either of Constructions \ref{const_S1}-\ref{const_S4}. Let $G_F$ be the subgroup
\begin{equation}\label{eqn_def_GF}
	G_F:=\{\fg_{a,b,c}:\,\theta_{a,b,c}=1\},
\end{equation}
and set $U:=\{c\in\F_q:\,\theta_{a,b,c}=1\textup{ for some }a,b\in \F_q\}$.
\begin{enumerate}
\item[(P1)] The set $U=\F_q$ for Constructions \ref{const_S1} and \ref{const_S3}, and $U$ is an $\F_p$-subspace of codimension $1$ for Constructions \ref{const_S2} and \ref{const_S4}.
\item[(P2)] For $\fg_{a,b,c}\in G_F$, it has order $p$ or $p^2$, and the latter occurs if and only if $p=3$ and $S_1(c)\ne0$.
\end{enumerate}
\end{lemma}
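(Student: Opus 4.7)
The statement has two parts, (P1) on the shape of $U$ and (P2) on the orders of elements of $G_F$, which I would handle separately.

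For (P1), the plan is a case-by-case analysis.  In Construction \ref{const_S1} one has $\theta_{a,b,c}\equiv 1$, so $U=\F_q$ trivially.  In Construction \ref{const_S3} the exponent $\tfrac12 Q(c)+\tr_{\F_q/\F_p}(\mu_B b)$ can always be killed by a suitable choice of $b$, since $\mu_B\in\F_{p^l}^*$ makes $b\mapsto\tr_{\F_q/\F_p}(\mu_B b)$ surjective onto $\F_p$; hence $U=\F_q$.  In Construction \ref{const_S2}, $\theta_{a,b,c}$ depends on $(a,b,c)$ only through $\tr_{\F_q/\F_p}(\mu_C c)$, so $U=\{c\in\F_q:\tr_{\F_q/\F_p}(\mu_Cc)=0\}$, a hyperplane of codimension $1$.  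In Construction \ref{const_S4} the inclusion $K\subseteq U$ is handled as in Construction \ref{const_S3} by solving for $b$; for the reverse, every $\fg_{x,y,z}\in G$ factors as $\fg_{a,b,c}\circ\fg_{0,0,t_C}^{\,i}$ with $c\in K$ and $i\in\{0,1,2\}$, whose Frobenius part equals $g_1^{\ast}\cdot g^i$ with $g_1=g^3$ of order $3$ and $g$ of order $9$; this equals $1$ only when $i=0$, forcing $z=c\in K$.

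For (P2), elements of $G_F$ multiply purely by matrix multiplication, and Lemma \ref{lem_S4T} together with the $\F_p$-linearity of $S_1$ shows that the form $E(a,b,c,S_1(c))$ is preserved under such products.  Starting from the product rule in Remark \ref{rem_EMult}, I would establish by induction on $k$ the closed formula
\[
\fg_{a,b,c}^{\,k}=\fg_{\,ka-\binom{k+1}{3}c^{2}S_1(c),\ kb+\binom{k}{2}cS_1(c),\ kc},
\]
the only subtle step being the hockey-stick identity $\sum_{j=1}^{k-1}\binom{j+1}{2}=\binom{k+1}{3}$ used when collecting the first coordinate from the $-czw$ term in Remark \ref{rem_EMult}.

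Specialising to $k=p$ with $p$ odd kills the third coordinate and the coefficient $\binom{p}{2}$ of $cS_1(c)$ in the second.  The first coordinate becomes $-\binom{p+1}{3}c^{2}S_1(c)=-\tfrac{(p+1)p(p-1)}{6}\,c^{2}S_1(c)$.  For $p\ge 5$ this vanishes in $\F_p$, so $\fg_{a,b,c}^{\,p}=1$ and the order is $1$ or $p$.  For $p=3$ one has $\binom{4}{3}=4\equiv 1\pmod{3}$, so $\fg_{a,b,c}^{\,3}=\fg_{-c^{2}S_1(c),0,0}$; this is trivial iff $c^{2}S_1(c)=0$, equivalently $S_1(c)=0$ (using $S_1(0)=0$), and otherwise $\fg_{-c^{2}S_1(c),0,0}$ itself has order $3$ because $E(\alpha,0,0,0)^{p}=E(p\alpha,0,0,0)=I$.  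Combining these gives the order dichotomy in (P2).  The main obstacle I foresee is the careful tracking of the binomial coefficients in the induction for the power formula; once this closed form is verified, all the remaining claims follow by straightforward specialisation to $k=p$ and case-splitting on whether $p=3$.
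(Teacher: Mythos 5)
Your proposal is correct and follows essentially the same route as the paper: for (P2) you derive the identical closed formula for $\fg_{a,b,c}^{\,k}$ (your $\binom{k+1}{3}$, $\binom{k}{2}$ coefficients match the paper's $\frac{(i^2-1)i}{6}$, $\frac{(i-1)i}{2}$) and specialise to $k=p$, reaching the same $p=3$ dichotomy. For (P1) the paper simply writes ``follows by a case by case check'' and omits the details, which your four-case analysis (including the factorisation through $\fg_{0,0,t_C}$ in Construction \ref{const_S4}) supplies correctly.
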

\begin{proof}
The claim (P1) follows by a case by case check and we omit the details. Set $T_1:=T(a,b,c)$ and $\fg_{a_i,b_i,c_i}:=\fg_{a,b,c}^i$. By using the calculations in Remark \ref{rem_EMult}, we deduce from $\fg_{a_{i+1},b_{i+1},c_{i+1}}=\fg_{a,b,c}\circ\fg_{a_i,b_i,c_i}$ that $c_{i+1}=c_i+c$, $b_{i+1}=b_i+c_i T_1$ and $a_{i+1}=a_i-b_ic+c_ib-c_icT_1$ for $i\ge 0$. Since $a_0=b_0=c_0=0$,  we deduce that
\[
c_i=ic,\,b_i=ib+\frac{(i-1)i}{2}cT_1,\,a_i=ia-\frac{(i^2-1)i}{6}c^2T_1.
\]
In particular, $\fg_{a,b,c}^p=\fg_{x,0,0}$ with $x=-\frac{(p^2-1)p}{6} c^2S_1(c)$, and $\fg_{a,b,c}^{p^2}=\fg_{x,0,0}^p=1$. Since $p$ is odd, $\frac{(p^2-1)p}{6}$ is nonzero in $\F_p$ if and only if $p=3$. Moreover, $S_1$ is additive, so $cS_1(c)\ne 0$ if and only if $S_1(c)\ne 0$. Therefore, $\fg_{a,b,c}$ has order $p^2$ if and only if $p=3$ and $S_1(c)\ne 0$. This establishes the claim (P2).
\end{proof}

\begin{thm}\label{thm_exponent}
	Let $G$ be the group in either of   Constructions \ref{const_S1}-\ref{const_S4}.
	\begin{enumerate}
		\item[(a)] In  Construction \ref{const_S1},  $\exp(G)=p$ or $p^2$, and the latter occurs if and only if $p=3$ and $S_1$ is not the zero map.
		\item[(b)] In  Construction \ref{const_S2} and \ref{const_S3}, $\exp(G)=p^2$ or $p^3$, and the latter occurs if and only if $p=3$ and the restriction of $S_1$ to $\F_{3^l}$ is not the zero map.
		\item[(c)] In  Construction \ref{const_S4}, $\exp(G)=3^3$ or $3^4$, and the latter occurs if and only if  the restriction of $S_1$ to $\F_{3^l}$ is not the zero map.
	\end{enumerate}
\end{thm}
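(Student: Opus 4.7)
The strategy is to analyze the order of a generic element $\fg = \fg_{a,b,c}$ using the iteration formula $c_n = \sum_{i=0}^{n-1} c^{\theta^i}$ for the $c$-coordinate of $\fg^n$, obtained by applying Theorem \ref{Main} to $\fg^{i+1} = \fg \circ \fg^i$ with $c_0 = 0$, combined with Lemma \ref{lem_S4T} and property (P2) of Lemma \ref{lem_P1P2}, which controls orders within the Frobenius kernel $G_F$. Throughout, the key identity is $o(\fg) = o(\theta_\fg) \cdot o(\fg^{o(\theta_\fg)})$, since $G_F$ is the kernel of the map $\pi:\fg_{a,b,c}\mapsto \theta_{a,b,c}$.

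For (a), since $\theta \equiv 1$ in Construction \ref{const_S1} we have $G = G_F$, and the statement is immediate from (P2). For (b), the element $g$ in Constructions \ref{const_S2} and \ref{const_S3} has order $p$, so $\fg^p \in G_F$ and $o(\fg) \le p \cdot p^2 = p^3$. When $\theta_\fg$ has order $p$, its fixed field is $\F_{p^l}$, so $c_p = \tr_{\F_q/\F_{p^l}}(c) \in \F_{p^l}$. For $\exp(G) \ge p^2$, one checks that $c_p \ne 0$ whenever $\theta_\fg \ne 1$: in Construction \ref{const_S2}, $\theta_{0,0,c} = g^{\tr_{\F_q/\F_p}(\mu_C c)} = g^{\tr_{\F_{p^l}/\F_p}(\mu_C c_p)}$, so $\theta \ne 1$ forces $c_p \ne 0$; in Construction \ref{const_S3} one adjusts $b$ to make $\theta_{0,b,c} \ne 1$ for any $c$ with $c_p \ne 0$. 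Hence $\fg^p \ne 1$, giving $o(\fg) \ge p^2$. Applying (P2) to $\fg^p \in G_F$ shows $o(\fg^p) = p^2$ requires $p = 3$ and $S_1(c_p) \ne 0$; since $c_p \in \F_{p^l}$, this is impossible when $S_1|_{\F_{p^l}} \equiv 0$, while for $p \ne 3$ it is already excluded by (P2), giving $\exp(G) = p^2$ in both cases. Conversely, if $p = 3$ and $S_1|_{\F_{p^l}} \not\equiv 0$, pick $d \in \F_{p^l}$ with $S_1(d) \ne 0$ and $\tr_{\F_{p^l}/\F_p}(\mu_C d) \ne 0$ (possible since two proper $\F_p$-subspaces of $\F_{p^l}$ cannot cover it), then pick $c \in \F_q$ with $\tr_{\F_q/\F_{p^l}}(c) = d$; the resulting element has order $p^3$.

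For (c), $g$ has order $9$ in Construction \ref{const_S4}, so $\fg^9 \in G_F$ and $o(\fg) \le 9 \cdot 9 = 81$. The $c$-coordinate $c_9$ lies in $\F_{3^l}$ in all cases: it equals $\tr_{\F_q/\F_{3^l}}(c)$ when $\theta_\fg$ has order $9$, and equals $0$ when $\theta_\fg$ has order $1$ or $3$ (the 9-fold sum collapses to $3 \cdot \tr_{\F_q/\F_\theta}(c) = 0$ in characteristic $3$). For $\exp(G) \ge 27$, the element $\fg_{0,0,t_C}$ has $\theta = g$ of order $9$ and $c_9 = \tr_{\F_q/\F_{3^l}}(t_C) \ne 0$ (forced by $\lambda_C \ne 0$ and $\mu_C \in \F_{3^l}$), so $\fg_{0,0,t_C}^9 \ne 1$ and $o(\fg_{0,0,t_C}) \ge 27$. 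If $S_1|_{\F_{3^l}} \equiv 0$, then $S_1(c_9) = 0$ for every $\fg$, so (P2) gives $o(\fg^9) \le 3$ and thus $o(\fg) \le 27$. The main obstacle is the converse step: exhibiting $\fg$ of order $81$ when $S_1|_{\F_{3^l}} \not\equiv 0$. Using the coset decomposition $G = G_K \cup G_K \fg_{0,0,t_C} \cup G_K \fg_{0,0,t_C}^2$, the elements with $\theta_\fg$ of order $9$ lie in the last two cosets, and a direct computation using the $g$-invariance of $K$ together with the identity $\tr_{\F_q/\F_{3^l}}(K) = W_0 := \{d \in \F_{3^l} : \tr_{\F_{3^l}/\F_3}(\mu_C d) = 0\}$ (a codimension-$1$ $\F_3$-subspace) shows that the attainable values of $c_9$ form $(\tau_0 + W_0) \cup (-\tau_0 + W_0)$ with $\tau_0 := \tr_{\F_q/\F_{3^l}}(t_C) \notin W_0$. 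If $S_1$ vanished on this entire union, then linearity would force $S_1|_{W_0} \equiv 0$ and $S_1(\tau_0) = 0$, whence $S_1|_{\F_{3^l}} \equiv 0$, contradicting the hypothesis; any such $\fg$ then satisfies $o(\fg^9) = 9$ by (P2), giving $o(\fg) = 81$.
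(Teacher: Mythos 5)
Your proposal is correct and follows essentially the same route as the paper's proof: reduce to the Frobenius kernel $G_F$ via $\fg^{o(\theta_\fg)}$, compute the third coordinate of that power as a relative trace landing in $\F_{p^l}$, invoke (P2) of Lemma \ref{lem_P1P2}, and use additivity of $S_1$ together with the coset structure $G=\cup_i G_K\circ\fg_{0,0,t_C}^i$ to produce an element with $S_1(c_{o(\theta)})\ne 0$ in the converse direction. The only differences are cosmetic (you detail cases (a)--(b) which the paper leaves as ``similar,'' use $\fg_{0,0,t_C}$ directly for the lower bound in (c), and span inside $\F_{3^l}$ using both nontrivial cosets rather than spanning $t_C+K$ inside $\F_q$).
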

\begin{proof}	
The analysis of each construction is similar, and we only explain the case of Construction \ref{const_S4} in detail. Let $G_F$ be as in  \eqref{eqn_def_GF}, and define $\psi:\,G\rightarrow\Aut(\F_q)$, $\fg_{a,b,c}\mapsto\theta_{a,b,c}$. Then $G_F=\ker(\psi)$.  We first make some observations.
\begin{enumerate}
\item[(1)] $\F_q=\la K,\,t_C\ra_{\F_3}$, since $t_C\not\in K$ and $K$ has codimension $1$. Moreover, $\la t_C+K\ra_{\F_3}$ contains $t_C$ and thus also $K$, so it equals $\F_q$.
\item[(2)] Since $[G:\,G_K]=3$ and $G_K=\{\fg_{a,b,c}:\,a,b\in\F_q,\,c\in K\}$, we have $G_K\circ\fg_{0,0,t_C}=\{\fg_{a,b,t_C+g(c)}:\,a,b\in\F_q,\,c\in K\}$.
\item[(3)] Since $\im(\psi)=\la g\ra$ has order $9$, we have $\fg^9\in G_F$ for any $\fg\in G$.
\end{enumerate}

We claim that if $S_1|_{\F_{3^l}}\ne0$, then there is $c\in K$ such that $S_1(w)\ne 0$, where $w=\tr_{\F_q/\F_{3^l}}(t_C+c)$. Otherwise, $t_C+K$ is in the kernel of the $\F_3$-linear map $x\mapsto S_1(\tr_{\F_q/\F_{3^l}}(x))$, and so $\la t_C+K\ra_{\F_3}=\F_q$ is also in the kernel. Since the trace map is surjective, we get a contradiction $\F_{3^l}\subseteq\ker(S_1)$. This proves the claim.

Let $(x,y,z)$ be a triple, and write $g^i=\theta_{x,y,z}$, $\fg_{u,v,w}=\fg_{x,y,z}^{9}$. We have $\fg_{u,v,w}\in G_F$ by (3). We also deduce that $w=\sum_{k=0}^8g^{ik}(z)$ by expansion. If further $z=t_C+g(c)$ for some $c\in K$, then $\fg_{x,y,z}\in G_K\circ\fg_{0,0,t_C}$ by (2) and so $\theta_{x,y,z}=g^{1+3j}$ for some $j$; correspondingly, $w=\sum_{k=0}^{8}\theta_{x,y,z}^k(z)=\tr_{\F_q/\F_{3^l}}(t_C+c)$ by the fact $\sum_{k=0}^8g^{k}(z)=\tr_{\F_q/\F_{3^l}}(z)$.

We claim that $3^3\le \exp(G)\le 3^4$. Since $\fg_{u,v,w}=\fg_{x,y,z}^{9}$ is in $G_F$, it has order at most $3^2$ by (P2) of Lemma \ref{lem_P1P2}, so $\exp(G)\le 3^4$. On the other hand, there exists $c\in K$ such that $\lambda:=\tr_{\F_q/\F_{3^l}}(t_C+c)\ne 0$ by (1). By the preceding paragraph, we have  $\fg_{0,0,t_C+c}^{9}=\fg_{u,v,\lambda}$ for some $u,v$, and so $\fg_{0,0,t_C+c}$ has order at least $3^3$. This proves the claim.

It remains to decide when there will be a triple $(x,y,z)$ such that $\fg:=\fg_{x,y,z}$ has order $3^4$. This is the case if and only if $\theta_{x,y,z}$ has order $9$ and $\fg_{u,v,w}=\fg^9$ also has order $9$ by (3). Suppose that both conditions are true. Since $[G:\,G_K]=3$, by replacing $\fg$ with $\fg^{-1}$ if necessary, we can assume that $\fg\in G_K\circ \fg_{0,0,t_C}$. It follows that $z=t_C+g(c)$ for some $c\in K$ by (2), and correspondingly $w=\tr_{\F_q/\F_{3^l}}(t_C+c)\in\F_{3^l}$ as we showed. The element $\fg_{u,v,w}$ has order $9$ if and only if $S_1(w)\ne 0$ by (P2) of Lemma \ref{lem_P1P2}. There is no such element if $S_1|_{\F_{3^l}}\equiv 0$. If $S_1$ is not constantly zero on $\F_{3^l}$, we showed earlier that there is $c\in K$ such that $S_1(w)\ne 0$, and so $\fg_{0,0,t_C+g(c)}^9$ has order $9$. The claim now follows.
\end{proof}

Theorem \ref{thm_exponent} does not help to distinguish Construction \ref{const_S2} and Construction \ref{const_S3}. We compute their Thompson subgroups which will do the work in general. Let $G$ be a group in either of Constructions \ref{const_S1}-\ref{const_S4}. Take $\fg_{a,b,c}\in G$. For $\fg_{x,y,z} \in C_G(\fg_{a,b,c})$,  by Theorem \ref{Main} we deduce from $\fg_{a,b,c}\circ\fg_{x,y,z}=\fg_{x,y,z}\circ \fg_{a,b,c}$ that
\begin{equation}\label{eqn_centereq}
\begin{array}{c}
a^{\theta_2}+x-b^{\theta_2}z+c^{\theta_2}y-c^{\theta_2}zS_1(z)=x^{\theta_1}+a-cy^{\theta_1}+bz^{\theta_1}-cz^{\theta_1}S_1(c);\\
b^{\theta_2}+y+c^{\theta_2}S_1(z)=b+y^{\theta_1}+z^{\theta_1}S_1(c),\\
c^{\theta_2}+z=c+z^{\theta_1}. \\
\end{array}
\end{equation}
where $\theta_{1}=\theta_{a,b,c}$, $\theta_{2}=\theta_{x,y,z}$. Here we used the fact $T(x,y,z)=S_1(z)$ in Lemma \ref{lem_S4T}.

\begin{thm}\label{thm_thompson}
	Let $G$ be the group in either of  Construction \ref{const_S2} or Construction \ref{const_S3}, and assume that $1<\deg(S_1)<q/p$. Then the Thompson subgroup of $G$ is $J(G)=\{\fg_{a,b,0}:\,a,\,b\in\F_q, \,\theta_{a,b,0}=1\}$, which has size $q^2$ in the former case and $q^2/p$ in the latter case.
\end{thm}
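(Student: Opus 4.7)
The plan is to show that $H:=\{\fg_{a,b,0}:\theta_{a,b,0}=1\}$ is an abelian subgroup of $G$ attaining the maximum order among abelian subgroups, and that it is the unique such subgroup; then $J(G)=H$ follows by definition. Verifying that $H$ is abelian uses Theorem~\ref{Main} directly: when $\theta_{a,b,0}=\theta_{x,y,0}=1$, Theorem~\ref{thm_matrix_G} gives $T(x,y,0)=M(y)=0$, so the multiplication rule collapses (with $c=z=0$) to $\fg_{a,b,0}\circ\fg_{x,y,0}=\fg_{a+x,b+y,0}$, which is symmetric. For Construction~\ref{const_S2} we have $\theta_{a,b,0}\equiv1$, so $H=G_{A,B}$ and $|H|=q^2$; for Construction~\ref{const_S3} we have $\theta_{a,b,0}=g^{\tr_{\F_q/\F_p}(\mu_B b)}$, which vanishes iff $b\in K_B$, giving $|H|=q^2/p$.

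For the upper bound on an arbitrary abelian $A\le G$, I plan to split on whether $A\subseteq G_{A,B}$. \emph{Case I}, $A\subseteq G_{A,B}$: for Construction~\ref{const_S2}, $G_{A,B}=H$ and the bound is immediate; for Construction~\ref{const_S3}, $G_{A,B}/H\cong\Z/p$ via $\fg_{a,b,0}\mapsto\theta_{a,b,0}$, and if $A\not\subseteq H$ then $A$ contains some $\fg_{a_1,b_1,0}$ with $\theta_{a_1,b_1,0}=g$, so the commutation relations~\eqref{eqn_centereq} applied to any $\fg_{x,y,0}\in A\cap H$ force $x,y\in\F_{p^l}=\mathrm{Fix}(g)$, giving $|A|\le p\cdot p^{2l}<q^2/p$ for $p\ge3$, $l\ge1$. \emph{Case II}, $A\not\subseteq G_{A,B}$: pick $\fg_0=\fg_{a_0,b_0,c_0}\in A$ with $c_0\ne0$, and target $|C_G(\fg_0)|<|H|$. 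The third equation of~\eqref{eqn_centereq} constrains the Frobenius exponent of a commuting $\fg_{x,y,z}$; substituting into the second yields a constraint of the form $c_0 S_1(z)-zS_1(c_0)=\Delta$, where $\Delta$ runs over a set of size at most $p$ (coming from the possible Frobenius exponents and the field-automorphism action on $b_0$). The linearized polynomial $c_0 S_1(X)-S_1(c_0)X$ has degree exactly $\deg(S_1)=p^k$ (using $\deg(S_1)>1$, so the leading coefficient of $S_1$ survives), so its $\F_p$-kernel has dimension at most $k\le m-2$ (using $\deg(S_1)<q/p$). Hence the admissible $z$ form a set of size at most $p^{k+1}$. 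The first equation of~\eqref{eqn_centereq} then determines $y$ modulo $\ker(\theta_0-1)=\F_{p^l}$, and $x$ is either free (if $\theta_0=1$) or confined to an $\F_{p^l}$-coset; multiplying the contributions gives $|C_G(\fg_0)|<|H|$. Combining the two cases, $H$ is the unique abelian subgroup of maximum order, so $J(G)=H$.

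The main obstacle is the centralizer estimate in Case~II, where the two degree hypotheses on $S_1$ play complementary roles: $\deg(S_1)>1$ prevents $c_0 S_1(X)-S_1(c_0)X$ from vanishing identically (if $S_1$ were $\F_q$-scalar multiplication the $z$-coordinate would be essentially free and $|C_G(\fg_0)|$ could exceed $|H|$), while $\deg(S_1)<q/p$ caps the kernel size at $p^{m-2}$. Several subcases must be handled with care --- whether $c_0\in\F_{p^l}$, whether $S_1(c_0)=0$, whether $b_0$ is fixed by the relevant Frobenius power, and whether $\theta_0$ is trivial --- and for Construction~\ref{const_S3} the sharper target $|C_G(\fg_0)|<q^2/p$ is reached by exploiting that $\theta_{x,y,z}=g^{\frac{1}{2}Q(z)+\tr_{\F_q/\F_p}(\mu_B y)}$ depends on $y$ through the trace term, which gives a further constraint on admissible pairs $(y,z)$.
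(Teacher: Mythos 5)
Your overall strategy --- exhibit $H$ as an abelian subgroup of the stated order and then rule out any larger or different abelian subgroup by centralizer estimates --- is the same as the paper's, and your Case I and the Construction \ref{const_S2} half of Case II go through. The gap is in Case II for Construction \ref{const_S3}. Your target there is $|C_G(\fg_0)|<|H|=q^2/p$ for every $\fg_0=\fg_{a_0,b_0,c_0}$ with $c_0\ne0$, but the count you outline does not reach it. In the subcase $\theta_{a_0,b_0,c_0}=1$ and $c_0\in\F_{p^l}^*$, the third equation of \eqref{eqn_centereq} puts no restriction on the Frobenius part $\theta_2=g^j$ of a commuting element; the second becomes $c_0S_1(z)-zS_1(c_0)=b_0-b_0^{g^j}$ (the $y$'s cancel because $\theta_1=1$), whose solution set for each of the $p$ values of $j$ has size at most $q/p^2$; the first then pins down $y$ (since $2c_0\ne 0$) and leaves $x$ free. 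That gives $q\cdot p\cdot(q/p^2)=q^2/p=|H|$ exactly, with no strict inequality. The consistency condition $\theta_{x,y,z}=g^j$ that you invoke to recover a factor of $p$ is an affine condition on $z$ inside each coset and can be vacuous: for $j=0$ it works out to $\tr_{\F_q/\F_p}\bigl(\mu_B(b_0c_0^{-1}-\tfrac12S_1(c_0))z\bigr)=0$, which holds identically when $b_0=\tfrac12c_0S_1(c_0)$ (a choice for which $\theta_{a_0,b_0,c_0}=1$ automatically). So the inequality you need is not established by the proposal, and verifying it would require a delicate, construction-specific cancellation analysis in each remaining subcase.

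The paper closes exactly this hole with a two-step reduction that your case split on $A\subseteq G_{A,B}$ misses. Step one: any abelian subgroup of maximal order contains no element with nontrivial Frobenius part, because if $\theta_{a,b,c}=g^i\ne1$ then, for each fixed Frobenius exponent $j$ of a commuting element, each of the three equations in \eqref{eqn_centereq} takes the form $w^{g^i}-w=(\text{known})$ successively in $z$, $y$, $x$, each with at most $p^l$ solutions, so $|C_G(\fg_{a,b,c})|\le p^{3l+1}<q^2/p$. This forces every maximal-order abelian subgroup into $G_F=\{\fg_{a,b,c}:\theta_{a,b,c}=1\}$. Step two then only needs to bound $C_{G_F}(\fg_0)$ for $\fg_0\in G_F$ with $c_0\ne0$: with both Frobenius parts trivial the conditions reduce to $cS_1(z)-zS_1(c)=0$ and a formula for $y$, giving at most $q/p^2$ admissible pairs $(y,z)$ and hence $|C_{G_F}(\fg_0)|\le q^2/p^2<q^2/p$. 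You should restructure your Case II accordingly --- split on whether $A\subseteq G_F$ rather than trying to bound the full centralizer $C_G(\fg_0)$ --- after which your argument is sound; the kernel-size estimate you give for $c_0S_1(X)-S_1(c_0)X$ using $1<\deg(S_1)<q/p$ is correct and is the same one the paper uses.
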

\begin{proof}
	We only deal with the case of  Construction \ref{const_S3} here, and the other case is similar. Let $d$ be the largest order of an abelian subgroup of $G$. It is routine to check that $G_{ab}:=\{\fg_{a,b,0}:\,a,\,b\in\F_q, \theta_{a,b,0}=1\}$ is an abelian subgroup of order $q^2/p$, so $d\ge q^2/p$.
	
	We first show that $J(G)\le G_F$, where $G_F$ is the set of elements of $G$ with a trivial Frobenius part. This is achieved by showing that any abelian subgroup of order $d$ is contained in $G_F$. Suppose to the contrary that $H$ is an abelian subgroup of order $d$ which contains an element $\fg_{a,b,c}\in G$ with $\theta_{a,b,c}=g^i\ne 1$. The subgroup $C_G(\fg_{a,b,c})$ contains $H$, so should have size at least $d\ge q^2/p$. We now estimate the size of $C_G(\fg_{a,b,c})$ in an alternative way. Fix an integer $j$, $0\le j\le p-1$. Suppose that $\fg_{x,y,z}\in C_G(\fg_{a,b,c})$ has Frobenius part $\theta_{x,y,z}=g^j$. By the third equation in \eqref{eqn_centereq}, we have $z^{g^i}-z=-c^{g^j}+c$. This equation in $z$ has at most $p^l$ solutions, since $o(g^i)=p$ and $q=p^{pl}$. For a given $z$, the second equation  in \eqref{eqn_centereq} has at most $p^l$ solutions in the variable $y$ for the same reason. Similarly, for a given pair $(y,z)$, the first equation in \eqref{eqn_centereq} has at most $p^l$ solutions in $x$. In total, we see that $|C_G(\fg_{a,b,c})|\le p^{3l+1}$. This number is less than $q^2/p=p^{2pl-1}$ by the fact $(2p-3)l\ge 3$.  This proves the claim.

	We next show that $J(G)\le G_{ab}$.  As in the previous paragraph, it suffices to show that for any $\fg_{a,b,c}\in G_F$ with $c\ne 0$ its centralizer in $G_F$ (not $G$) has size smaller that $q^2/p$. For  $\fg_{x,y,z}\in C_{G_F}(\fg_{a,b,c})$, it has a trivial Frobenius part, and the equations in
	\eqref{eqn_centereq} reduce to $cS_1(z)-zS_1(c)=0$ and $2y=2bc^{-1}z+zS_1(z)-zS_1(c)$. Recall that $S_1$ is  $\F_p$-linear. By the restriction on $\deg(S_1)$, we see that there are at most $q/p^2$ such $(y,z)$ pairs. Therefore, $C_{G_F}(\fg_{a,b,c})$ has size at most $q^2/p^2$ as desired. This proves the claim.
	
	Since $G_{ab}$ is abelian, we conclude that $J(G)=G_{ab}$ and it is the unique maximal abelian subgroup of order $q^2/p$. This completes the proof.
\end{proof}

Our next goal is to bound the nilpotency class of $G$. We start with the center of $G$. Recall that $G_A=\{\fg_{a,0,0,}:\,a\in\F_q\}$, cf. Notation \ref{notation_sigmaLMS}.
\begin{lemma}\label{lem_center}
	Let $G$ be the group in either of  Constructions \ref{const_S1}-\ref{const_S4}. Then its center is $Z(G)=\{\fg_{a,0,0}:\, a\in\F_{p^l}\}$.
\end{lemma}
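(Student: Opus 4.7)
I would prove both inclusions separately. For the forward inclusion $\{\fg_{a,0,0}:a\in\F_{p^l}\}\subseteq Z(G)$, I would verify by direct substitution into the commutation equations \eqref{eqn_centereq}. In each of the four constructions, inspection of the formulas shows $\theta_{a,0,0}=1$ (the exponent of $g$ or $g_1$ in the formula for $\theta$ vanishes when $b=c=0$), and $\im(\psi)$ lies inside $\langle g\rangle$, whose fixed field is $\F_{p^l}$. Hence $a^{\theta_1}=a$ for every $a\in\F_{p^l}$ and every $\theta_1\in\im(\psi)$. Combined with $T(a,0,0)=S_1(0)=0$ from Lemma \ref{lem_S4T}, substituting $(x,y,z)=(a,0,0)$ and $\theta_2=1$ into each of the three equations in \eqref{eqn_centereq} reduces them all to tautologies.

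For the reverse inclusion, take $\fg_{x,y,z}\in Z(G)$ and run three commutation tests. First, commuting with $\fg_{a,0,0}$ for every $a\in\F_q$: since $\theta_{a,0,0}=1$ uniformly, the first equation of \eqref{eqn_centereq} with $b=c=0$ reads $a^{\theta_{x,y,z}}+x=x+a$ for all $a\in\F_q$, forcing $\theta_{x,y,z}=1$. Second, commute with $\fg_{0,b,0}$ for varying $b$: using $\theta_2=1$ together with $T(0,b,0)=0$, the third and second equations collapse to $z^{\theta_{0,b,0}}=z$ and $y^{\theta_{0,b,0}}=y$, while the first simplifies to $x-x^{\theta_{0,b,0}}=2bz$. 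Choose $b\neq 0$ with $\theta_{0,b,0}=1$ (this is possible in every construction: either $\theta_{0,b,0}\equiv 1$, or $\{b:\theta_{0,b,0}=1\}$ has codimension $1$ and contains nonzero elements). Then $2bz=0$, and since $p$ is odd we conclude $z=0$.

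With $z=0$, I commute with $\fg_{0,0,c}$: the first equation of \eqref{eqn_centereq} becomes $x+cy=x^{\sigma_c}-cy^{\sigma_c}$, and for $c\in\cK_0^*$ the second equation reduces to $y^{\sigma_c}=y$, so $y$ is fixed by $g_1$. Restricting to $c\in\cK_0^*$ (where $\sigma_c\in\langle g_1\rangle$ fixes $y$), the first equation gives $2cy=0$ for every $c\in\cK_0^*$; since $\cK_0^*$ has positive dimension and $p$ is odd, $y=0$. The first equation then degenerates to $x^{\theta_1}=x$ for every $\theta_1\in\im(\psi)$, placing $x$ in the common fixed field $\F_{p^l}$. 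The main obstacle will be handling Construction \ref{const_S4} uniformly: there one must use commutation with $\fg_{0,0,t_C}$ (whose Frobenius part is the full generator $g$ of order $p^2$) in the final step, to force $x$ into $\F_{3^l}$ rather than the larger fixed field $\F_{3^{3l}}$ of $g_1$; this is precisely where the element $\fg_{0,0,t_C}\notin G_K$ pins down the correct subfield.
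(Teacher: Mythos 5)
Your overall strategy mirrors the paper's: analyze the commutation equations \eqref{eqn_centereq} and eliminate the data of a putative central element one piece at a time (Frobenius part, then third, second, first coordinate). The forward inclusion, the deduction $\theta_{x,y,z}=1$ by commuting with all of $G_A$ (which is in fact cleaner than the paper's counting argument comparing $\sqrt{q}$ with $q/p$), and the step forcing $z=0$ are all correct.

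There is, however, a genuine flaw in the step that forces $y=0$. Commuting the central element $\fg_{x,y,0}$ with $\fg_{0,0,c}$, the first equation of \eqref{eqn_centereq} combined with $y^{\sigma_c}=y$ gives
\[
2cy=x^{\sigma_c}-x,
\]
not $2cy=0$: you have silently dropped the term $x^{\sigma_c}-x$, and at this stage nothing constrains $x$, so this term need not vanish. The assertion ``$2cy=0$ for every $c\in\cK_0^*$'' is therefore unjustified. It happens to be harmless for Constructions \ref{const_S1} and \ref{const_S2}, where $\sigma_c=1$ for every $c\in\cK_0^*$, but it fails as written for Constructions \ref{const_S3} and \ref{const_S4}, where $\cK_0^*$ equals $\F_q$ (resp.\ $K$) while $\sigma_c=g_1^{\frac{1}{2}Q(c)+\cdots}$ is nontrivial for many $c\in\cK_0^*$. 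The repair is exactly what the paper does at the corresponding point: instead of the specific element $\fg_{0,0,c}$, commute with a test element $\fg_{a,b,c}$ having $c\ne 0$ \emph{and} trivial Frobenius part $\theta_{a,b,c}=1$; such elements exist by (P1) of Lemma \ref{lem_P1P2}, since $U$ is either $\F_q$ or a hyperplane. Then the first equation reads $a+x+cy=x+a-cy$, giving $2cy=0$ directly. Alternatively, keep your test elements and observe that if $y\ne0$ then $2cy$ takes at least $q/p>p$ distinct values as $c$ ranges over $\cK_0^*$, whereas $x^{\sigma_c}-x$ takes at most $p$ values since $\sigma_c$ ranges in a group of order $p$; this contradiction also yields $y=0$. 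With this step repaired, the remainder of your argument, including the observation that $\fg_{0,0,t_C}$ is needed in Construction \ref{const_S4} to pin $x$ into $\F_{3^l}$ rather than $\F_{3^{3l}}$, goes through.
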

\begin{proof}
Take  $\fg_{a,b,c}\in Z(G)$, and set $\theta_1:=\theta_{a,b,c}$. The equations in \eqref{eqn_centereq} hold for all triple $(x,y,z)$'s. First, we claim that $\theta_1=1$. Suppose that $\theta_1\ne 1$. For a triple $(x,\,y,\,z)$ such that $\theta_2:=\theta_{x,y,z}$ equals $1$, the third equation in  \eqref{eqn_centereq} reduces to $z=z^{\theta_1}$, i.e., $z$ lies in a proper subfield of $\F_q$. In particular, there are at most $\sqrt{q}$ $z$'s such that there is a triple $(x,\,y,\,z)$ with $\theta_{x,y,z}=1$. On the other hand, this number is at least $q/p$ by the property (P1) in Lemma \ref{lem_P1P2}: a contradiction. This proves the claim.
	
	Next, we show that $c=0$.  Take $(x,y,z)=(0,y_0,0)$ for a nonzero element $y_0$ such that $\theta_{0,y_0,0}=1$.  The first equation in \eqref{eqn_centereq} reduces to $2cy_0=0$, which gives $c=0$.
	
	By the facts $\theta_1=1$ and $c=0$, the equations in \eqref{eqn_centereq} reduce to $a^{\theta_2}-b^{\theta_2}z=a+bz$ and $b^{\theta_2}=b$, where $\theta_2=\theta_{x,y,z}$. For any triple $(x,y,z)$ with $z\ne 0$ and $\theta_{x,y,z}=1$, we deduce from the first equation that $2bz=0$, so $b=0$.  The equations in \eqref{eqn_centereq} further reduce to $a^{\theta_{x,y,z}}=a$ for all $x,\,y,\,z\in\F_q$. That is, $a^g=a$. This completes the proof.
\end{proof}

Let $g\in \Aut(\F_{q})$ be such that $g(x)=x^{p^l}$, $x\in\F_q$. We regard $\F_q$ as an $\F_{3^l}[\la g\ra]$-modules. Set $R_0:=\F_q$, and $R_i:=\{(1-g)^{i}(x):\,x\in\F_q\}$ for $1\le i\le p^e$.  By Lemma \ref{lem_ff1} and the fact that $\tr_{\F_q/\F_{p^l}}(x)=(1-g)^{p^e-1}(x)$, we have $R_{p^e-1}=\F_{p^l}$. There is a short exact sequence of $\F_{3^l}[\la g\ra]$-module homomorphisms
\[
0\longrightarrow\F_{p^l}\stackrel{1}\longrightarrow R_i\stackrel{1-g}\longrightarrow  R_{i+1}\longrightarrow 0,
\]
for each $0\le i\le p^e-1$. It follows that $\dim_{\F_p}R_i=(p^e-i)l$; in particular, $R_{p^e}=0$. In the sequel, we write $x\equiv y\pmod{R_i}$ if $x-y\in R_i$.
\begin{lemma}\label{lem_ZpeGA}
	Let $G$ be the group in either of  Constructions \ref{const_S1}-\ref{const_S4}, and let $p^e$ be the order of $g$. For the last three constructions, further assume that $l>1$. For $1\le i\le p^e$, we have $Z_{i}(G):=\{\fg_{a,0,0}:\,a\in (1-g)^{p^e-i}(\F_q) \}$. In particular, $Z_{p^e}(G)=G_A$.
\end{lemma}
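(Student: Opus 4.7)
The plan is to argue by induction on $i$. The base case $i = 1$ follows from Lemma \ref{lem_center} once we identify the center's description with the claimed one: by \eqref{eqn_1mgpow} we have $(1-g)^{p^e-1} = 1 + g + \cdots + g^{p^e-1} = \tr_{\F_q/\F_{p^l}}$, whose image is exactly $\F_{p^l} = R_{p^e-1}$, so $\{\fg_{a,0,0}: a \in \F_{p^l}\}$ coincides with $\{\fg_{a,0,0}: a \in R_{p^e-1}\}$.

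For the inductive step, the first task is to establish the commutator identity
\[
[\fg_{x,y,z}, \fg_{a,0,0}] = \fg_{(1-\theta_{x,y,z})(a),\,0,\,0}, \qquad a \in \F_q,\ \fg_{x,y,z} \in G.
\]
This I would verify by a direct calculation using Remark \ref{rem_EMult} together with the observation that in every one of Constructions \ref{const_S1}-\ref{const_S4} one has $T(a,0,0) = S_1(0) = 0$, hence $\fg_{a,0,0} = (E(a,0,0,0),\,1)$. As a consequence, $G_A \cong (\F_q,+)$ is a normal abelian subgroup of $G$ and the induced action of $G$ on $G_A$ factors through the Frobenius homomorphism $\psi\colon G \to \langle g\rangle$.

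With this formula in hand and the inductive hypothesis, the element $\fg_{a,0,0}$ belongs to $Z_{i+1}(G)$ if and only if $(1-g^j)(a) \in R_{p^e-i}$ for every $j$ with $g^j \in \im(\psi)$. In each construction $\im(\psi) = \langle g\rangle$, since $\fg_{0,0,t_C}$ carries Frobenius $g$; using the factorization $(1-g^j) = (1-g)(1+g+\cdots+g^{j-1})$, the condition is equivalent to $(1-g)(a) \in R_{p^e-i}$. Since $R_{p^e-i} = (1-g)^{p^e-i}(\F_q)$ and $\ker(1-g) = \F_{p^l} = R_{p^e-1} \subseteq R_{p^e-i-1}$, this translates into $a \in R_{p^e-i-1}$, matching the claim on $Z_{i+1}(G) \cap G_A$.

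The main obstacle will be the reverse containment $Z_{i+1}(G) \subseteq G_A$ for $i+1 \le p^e$. Given a candidate $\fg_{x,y,z} \in Z_{i+1}(G)$ with either $(y,z) \ne (0,0)$ or $\theta_{x,y,z} \ne 1$, my plan is to compute the commutators $[\fg_{x,y,z},\fg_{0,0,c}]$ and $[\fg_{x,y,z},\fg_{0,b,0}]$ explicitly via Remark \ref{rem_EMult} and then demand that their $(4,3)$- and $(3,2)$-entries, together with their Frobenius parts, vanish, since by induction $Z_i(G) \subseteq G_A$. These vanishing conditions, as $b$ and $c$ range over suitably chosen subsets, should force in turn $z = 0$, then $y = 0$, and finally $\theta_{x,y,z} = 1$, putting $\fg_{x,y,z}$ into $G_A$. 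The most delicate situation is Construction \ref{const_S4}, where the Frobenius part mixes the quadratic form $Q(c)$ with a linear term in $b$; there one has to exploit the $g$-invariance of $\cK_0^*$ together with the nondegeneracy of $\mu_B$ and $\mu_C$ to disentangle the $b$- and $c$-contributions, so that no hidden cancellation can spoil the argument.
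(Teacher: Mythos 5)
Your overall strategy is the same as the paper's: induct on $i$, get the base case from Lemma \ref{lem_center}, and at each step demand that commutators with arbitrary group elements land in $Z_i(G)=\{\fg_{a,0,0}:a\in R_{p^e-i}\}$. The half you actually carry out is correct: the identity $[\fg_{x,y,z},\fg_{a,0,0}]=\fg_{(1-\theta_{x,y,z})(a),0,0}$ does follow from Theorem \ref{Main} together with $T(a,0,0)=S_1(0)=0$ and $\theta_{a,0,0}=1$, and the reduction of ``$(1-g^j)(a)\in R_{p^e-i}$ for all $j$'' to $a\in R_{p^e-i-1}$ via $\ker(1-g)=\F_{p^l}=R_{p^e-1}\subseteq R_{p^e-i-1}$ is clean; this is a slightly tidier route to $Z_{i+1}(G)\cap G_A$ than the paper's direct manipulation of the three coordinate equations.

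The genuine gap is in the reverse containment $Z_{i+1}(G)\subseteq G_A$, which you only describe as a plan (``these vanishing conditions \dots should force $z=0$, then $y=0$''). This is exactly the delicate half, and your sketch omits the one idea that makes it work: a dimension count over $\F_p$. Concretely, if $\fg_{x,y,z}\in Z_{i+1}(G)$ then commuting with $\fg_{0,b,0}$ for $b$ ranging over the set $K_B$ of $b$'s with $\theta_{0,b,0}=1$ produces, in the $a$-coordinate of the commutator, terms of the form $2zb$ that must lie in $R_{p^e-i}$; since $\dim_{\F_p}(zK_B)\ge p^el-1$ while $\dim_{\F_p}R_{p^e-i}=il\le (p^e-1)l$, one gets a contradiction unless $z=0$ --- \emph{but only when $l>1$}. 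Your proposal never says where the hypothesis $l>1$ (which the lemma explicitly imposes for Constructions \ref{const_S2}--\ref{const_S4}) enters, and without that comparison the claim ``should force $z=0$'' is unsupported; for $l=1$ the containment $z K_B\subseteq R_{p^e-i}$ need not fail. A similar size comparison (using property (P1) of Lemma \ref{lem_P1P2} on the set $U$) is needed to kill $y$, with a separate treatment of the step $i=p^e-1$ where the $a$-coordinate condition degenerates. Two smaller points: membership of the commutator in $G_A$ constrains the $(4,3)$- and $(4,2)$-entries (the $c$- and $b$-coordinates), not the $(3,2)$-entry $T(a,b,c)$ that you cite; and the order of elimination matters --- the paper first forces the Frobenius part $\theta_{x,y,z}=1$ (from the $c$-coordinate equation $z^{\theta_1}=z$ holding on a set larger than any proper subfield), and only then the coordinate equations simplify enough for the counting argument. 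As written, the proposal is a correct outline of the paper's argument with its quantitative core left unproved.
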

\begin{proof}
The case $i=1$ follows from Lemma \ref{lem_center} and the fact $R_{p^e-1}=\F_{p^l}$. Assume that $1\le i\le p^e-1$. By a similar argument to that preceding Theorem \ref{thm_thompson}, we see that $\fg_{a,b,c}\in Z_{i+1}(G)$ if and only if
	\begin{equation}\label{eqn_newRel1}
	\begin{split}
	a^{\theta_2}+x-b^{\theta_2}z&+c^{\theta_2}y-c^{\theta_2}zS_1(z)\\
	&\equiv x^{\theta_1}+a-cy^{\theta_1}+bz^{\theta_1}-cz^{\theta_1}S_1(c)\pmod{R_{p^e-i}}.
	\end{split}
	\end{equation}
and the last two equations of \eqref{eqn_centereq} hold for all $x,\,y,\,z$, where $\theta_1=\theta_{a,b,c}$, $\theta_2=\theta_{x,y,z}$.

Suppose that $\fg_{a,b,c}\in Z_{i+1}(G)$. By the same argument as in the proof of Lemma \ref{lem_center}, we deduce that $\theta_1=1$. We claim that $c=0$. In the first two constructions, $\theta_{0,y_0,0}=1$ for all $y_0\in\F_q$; in the last two constructions, there are $q/p$ $y_0$'s in $\F_q$ such that $\theta_{0,y_0,0}=1$.  Take $(x,y,z)=(0,y_0,0)$ for such an element $y_0$,  \eqref{eqn_newRel1} reduces to $2cy_0\in R_{p^e-i}$, $1\leq i\leq p^e-1$. If $c\ne 0$, this leads to a contradiction by  comparing sizes. Here we used the assumption $l>1$ for the last two constructions. This proves the claim.
	
By the facts $\theta_1=1$ and $c=0$, the conditions now reduce to $b^{\theta_{x,y,z}}=b$ and $2bz\equiv a^{\theta_{x,y,z}}-a \pmod {R_{p^e-i}}$. Let $G_F$ and $U$ be as in Lemma \ref{lem_P1P2}. We consider two separate cases according as $i=p^e-1$ or not.
\begin{enumerate}
\item[(1)] First consider the case $i<p^e-1$. For each $z\in U$, take a triple $(x,y,z)$ with $\theta_{x,y,z}=1$, and  \eqref{eqn_newRel1} reduces to $2bz\in R_{p^e-i}$. By (P1) of Lemma \ref{lem_P1P2}, we deduce a contradiction by comparing sizes if $b\ne 0$. Hence $b=0$. The conditions further reduce to $a^{\theta_{x,y,z}}-a\in R_{p^e-i}$ for all $x,\,y,\,z$, or equivalently, $a^g-a\in R_{p^e-i}$. This holds if and only if $a\in R_{p^e-i-1}$ as desired.
\item[(2)] Next consider the case $i=p^e-1$.  In this case, $a^{g^k}-a=(g^k-1)(a)$ is always contained in $R_1=R_{p^e-i}$ for $k\ge 0$, so the conditions reduce to $b^{\theta_{x,y,z}}=b$ and $2bz\in R_{1}$ for all $x,\,y,\,z\in\F_q$. It follows that $b=0$.
\end{enumerate}
This completes the proof.
\end{proof}

Take the same notation and assumption as in Lemma \ref{lem_ZpeGA}. By \cite[(9.7)]{FGroup}, the nilpotency class of $H$ equals $1$ plus that of $H/Z(H)$ for any nilpotent group $H$. Inductively,  the nilpotency class of $H$ equals $i$ plus that of $H/Z_i(H)$. By Lemma \ref{lem_ZpeGA}, the nilpotency class of $G$ equals $p^e$ plus that of $G/G_A$, where $p^e=o(g)$. We now consider the nilpotency class of  $\bar{G}:=G/G_A=\{\overline{\fg_{a,b,c}}:\,a,b,c\in\F_q\}$. We make the observation that  \textit{$\overline{\fg_{a,b,c}}$ lies in $Z(\bar{G})$ if and only if the last three equations in  \eqref{eqn_centereq} hold for all $y,\,z\in\F_q$.}

\begin{thm}
	If $G$ is the group in  Construction \ref{const_S1}, then its nilpotency class is $2$ or $3$, and the latter occurs if and only if $\deg(S_1)>1$.
\end{thm}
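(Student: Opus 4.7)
Since $\theta_{a,b,c} \equiv 1$ in Construction~\ref{const_S1}, every element of $G$ has trivial Frobenius part. In particular the generator $g$ of the Frobenius image is trivial, so $p^e = 1$, and Lemma~\ref{lem_center} gives $Z(G) = \{\fg_{a,0,0} : a \in \F_q\} = G_A$. Since the nilpotency class of a nilpotent group $H$ equals one plus the class of $H/Z(H)$ (as cited in the text just before the statement of this theorem), it suffices to compute the nilpotency class of $\bar G := G/G_A$, then add one. I will also need the easy observation that $G$ itself is nonabelian, which follows from a direct use of Remark~\ref{rem_EMult} even when $S_1\equiv 0$ (the commutator of $\fg_{0,0,c}$ and $\fg_{0,b,0}$ has $a$-coordinate $2bc$).

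Quotienting by $G_A$ collapses the first coordinate, and the group law in Remark~\ref{rem_EMult} with $T(a,b,c) = S_1(c)$ projects to the product
\[
(b_1, c_1) \cdot (b_2, c_2) \;=\; (b_1 + b_2 + c_1 S_1(c_2),\; c_1 + c_2)
\]
on the set $\F_q \times \F_q$. The subgroup $Z := \{(b,0) : b \in \F_q\}$ is clearly central in $\bar G$, and $\bar G/Z \cong (\F_q,+)$ is abelian. A direct calculation using that $Z$ is central shows
\[
[(b_1,c_1),(b_2,c_2)] \;=\; (c_1 S_1(c_2) - c_2 S_1(c_1),\; 0) \in Z,
\]
so $\bar G' \le Z(\bar G)$ and the nilpotency class of $\bar G$ is at most $2$.

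It remains to characterize when $\bar G$ is abelian, equivalently when $c_1 S_1(c_2) = c_2 S_1(c_1)$ for all $c_1,c_2\in \F_q$. Writing the reduced linearized polynomial as $S_1(X) = \sum_{i=0}^{m-1} s_i X^{p^i}$, this expression equals
\[
\sum_{i=1}^{m-1} s_i \bigl(c_1 c_2^{p^i} - c_2 c_1^{p^i}\bigr).
\]
If $\deg(S_1) \le 1$ then $s_i = 0$ for all $i \ge 1$ and the expression vanishes identically, so $\bar G$ is abelian and $G$ has nilpotency class $2$. If $\deg(S_1) > 1$, then some $s_{i_0}$ with $i_0 \ge 1$ is nonzero; the resulting two-variable reduced polynomial has total degree strictly less than $q$ in each variable, and by uniqueness of the reduced polynomial representing a function on $\F_q^2$ (a standard consequence of \cite[Theorem~1.71]{LidlFF}, cited in the paper), it cannot vanish identically as a function on $\F_q \times \F_q$. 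Hence $\bar G$ is nonabelian of class $2$, and $G$ has nilpotency class $3$.

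\textbf{Main obstacle.} The only nontrivial step is showing the nonvanishing of the commutator bilinear expression as a \emph{function} on $\F_q \times \F_q$ whenever $\deg(S_1) > 1$; this is handled by the uniqueness of reduced polynomials, so in effect there is no substantive obstacle beyond bookkeeping. Everything else reduces to the short commutator computation in $\bar G$ and an application of Lemma~\ref{lem_center}.
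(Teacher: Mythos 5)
Your proof is correct and takes essentially the same route as the paper: both reduce to $\bar G = G/G_A$ (with $Z(G)=G_A$ from Lemma \ref{lem_center} since $p^e=1$), both hinge on the bilinear expression $c_1S_1(c_2)-c_2S_1(c_1)$, and both use uniqueness of reduced polynomials to show it vanishes identically exactly when $\deg(S_1)\le 1$. The only cosmetic difference is that you bound the class via $\bar G'\le Z(\bar G)$ while the paper computes $Z(\bar G)$ and observes $\bar G/Z(\bar G)$ is abelian; your explicit check that $G$ is nonabelian (the $2bc$ commutator, valid since $q$ is odd here) is a small point the paper leaves implicit.
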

\begin{proof}
In this case, we have $o(g)=p^e=1$. Since $\overline{\fg_{a,b,c}}$ is independent of $a$,  we write $\bar{\fg}_{b,c}:=\overline{\fg_{a,b,c}}$ for short.	 Suppose that $\bar{\fg}_{b,c}\in Z(\bar{G})$. Since $\theta_{x,y,z}\equiv 1$, the last two equations in  \eqref{eqn_centereq} reduce to $cS_1(z)=zS_1(c)$ for all $z\in\F_q$. If $c\ne 0$, then $S_1(z)=c^{-1}S_1(c)z$, so $\deg(S_1)\le 1$. Therefore, $Z(\bar{G})=\{\bar{\fg}_{b,0}:\,b\in\F_q\}$ if $\deg(S_1)>1$; moreover,  $\bar{G}/Z(\bar{G})$ is abelian in this case, i.e., $Z_2(\bar{G})=\bar{G}$. If $\deg(S_1)\le 1$, then $Z(\bar{G})=\bar{G}$, since $cS_1(z)=zS_1(c)$ holds for all $c,\,z\in\F_q$ in this case. This completes the proof.
\end{proof}	

For the last three constructions, the nilpotency class of the group $G$ varies in a large range, cf. Table \ref{table_ncODD}. It is infeasible to give an explicit description of the nilpotency class in general, so instead we give a reasonably tight bound under the assumption $l>1$. In the sequel, we change our strategy and consider the lower central series of $\bar{G}=G/G_A$. We introduce a chain of subgroups of $\bar{G}$ as follows:
\begin{equation}\label{eqn_barGi}
\bar{G}_i:=\{\overline{\fg_{a,b,c}}:\,a,b \in\F_q, \,c\in R_i\},\quad 1\le i\leq p^e,
\end{equation}
where $R_i=(1-g)^{i}(\F_q)$. In particular, $\bar{G}_{p^e}=\bar{G}_B$, where $G_B=\{\fg_{0,b,0}:\,b\in\F_q\}$. They will help to distinguish the entries of the lower central series $\{\gamma_{i}(\bar{G})\}$ of $\bar{G}$.

Take $\overline{\fg_{a,b,c}}\in \bar{G}$.  For $x,\,y,\,z\in\F_q$, we set
$\overline{\fg_{u,v,w}}:= \overline{\fg_{a,b,c}}^{-1} \circ\overline{\fg_{x,y,z}}^{-1}\circ \overline{\fg_{a,b,c}}\circ \overline{\fg_{x,y,z}}$. Here we are only concerned with the coordinate $w$, which we compute as
\begin{equation}\label{eqn_coordi_w}
w=(\theta_{0,y,z}-1)(c)-(\theta_{0,b,c}-1)(z).
\end{equation}

\begin{thm}\label{thm_G1nil}
	Let $G$ be the group in either of Constructions \ref{const_S2}-\ref{const_S4}, let  $p^e$ be the order of $g$, and assume $l>1$. Then the nilpotency class of $G$ lies in the range $[2p^e,3p^e]$.
\end{thm}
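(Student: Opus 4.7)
The plan is to use Lemma \ref{lem_ZpeGA} to reduce the problem to $\bar{G} := G/G_A$. Since $Z_{p^e}(G) = G_A$, the nilpotency class of $G$ equals $p^e$ plus that of $\bar{G}$, so it suffices to prove that the class of $\bar{G}$ lies in $[p^e, 2p^e]$. A useful initial observation is that every commutator in $G$ has trivial Frobenius part (since $\Aut(\F_q)$ is abelian), so for $k \geq 2$ the term $\gamma_k(G)$ lies in the linear subgroup $G_F$, and the analysis reduces to the commutator formula \eqref{eqn_coordi_w} together with its counterpart for the $b$-coordinate.

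For the upper bound I would build a two-stage filtration $\bar{G} = H_0 \supseteq H_1 \supseteq \cdots \supseteq H_{2p^e} = 1$ with $[\bar{G}, H_k] \subseteq H_{k+1}$ for all $k \geq 1$, which forces $\gamma_{k+1}(\bar{G}) \subseteq H_k$ and hence $\gamma_{2p^e + 1}(\bar{G}) = 1$. For $1 \leq k \leq p^e$ take $H_k := \bar{G}_k$ from \eqref{eqn_barGi}; the step $[\bar{G}, \bar{G}_k] \subseteq \bar{G}_{k+1}$ follows from \eqref{eqn_coordi_w}, since $(\theta_{0,y,z} - 1)(c) \in R_{k+1}$ (because $\theta - 1$ factors through $1 - g$ and each $R_k$ is $g$-stable), while $(\theta_{0,b,c} - 1)(z)$ vanishes whenever $c \in R_1$. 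In Construction \ref{const_S2} the latter fact follows at once from the $g$-invariance of $\mu_C$, which forces $\tr_{\F_q/\F_p}(\mu_C R_1) = 0$; in Constructions \ref{const_S3} and \ref{const_S4} a parallel check uses the $g$-invariance of $\mu_B$ and $\mu_C$ and the identities of Lemma \ref{lemma_norsufficond1p1} to ensure that both $\tr_{\F_q/\F_p}(\mu_B R_1) = 0$ and $Q(R_1) = 0$, so $\theta_{0,b,c}$ is again trivial. After $p^e$ steps we land in $\bar{G}_{p^e}$, the image of $G_{A,B}$ in $\bar{G}$. For $0 \leq j \leq p^e$ set $H_{p^e + j} := \{\overline{\fg_{0,b,0}} : b \in R_j\}$; a direct calculation analogous to \eqref{eqn_coordi_w} shows that the $b$-coordinate of a commutator equals $(\theta_2 - 1)(b)$ when $c = 0$ and $T(0,b,0) = 0$, so $[\bar{G}, H_{p^e + j}] \subseteq H_{p^e + j + 1}$, and the filtration terminates after $p^e$ further steps at $H_{2p^e} = 1$.

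For the lower bound I would exhibit an explicit iterated commutator witnessing $\gamma_{p^e}(\bar{G}) \neq 1$. In Construction \ref{const_S2} set $\fg^{(1)} := [\fg_{0,0,c_0}, \fg_{0,0,z_0}]$ with $\tr_{\F_q/\F_p}(\mu_C c_0) = \tr_{\F_q/\F_p}(\mu_C z_0) = 1$, and recursively $\fg^{(k)} := [\fg^{(k-1)}, \fg_{0,0,t_C}]$. Inductively applying \eqref{eqn_coordi_w}, and using that the $c$-coordinate of $\fg^{(k-1)}$ lies in $R_{k-1} \subseteq \ker \tr_{\F_q/\F_p}(\mu_C \cdot)$ so that the second term in \eqref{eqn_coordi_w} vanishes from the second step onward, the $c$-coordinate of $\fg^{(k)}$ simplifies to $(g-1)^k(c_0 - z_0)$. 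Since $(1-g)^{p^e - 1} = \tr_{\F_q/\F_{p^l}}$ by \eqref{eqn_1mgpow} and has nonzero image $\F_{p^l}$, one can choose $c_0$ and $z_0$ so that $(g-1)^{p^e - 1}(c_0 - z_0) \neq 0$; then $\fg^{(p^e - 1)} \in \gamma_{p^e}(\bar{G})$ is nontrivial. Parallel initial choices, adjusted so that the Frobenius parts generate $\langle g \rangle$ and the $c$-coordinate of $\fg^{(k)}$ still evolves by $g - 1$ at each step, handle Constructions \ref{const_S3} and \ref{const_S4}.

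The main obstacle I anticipate is uniformly verifying the filtration step $[\bar{G}, H_k] \subseteq H_{k+1}$ in Constructions \ref{const_S3} and \ref{const_S4}, where the Frobenius $\theta_{a,b,c}$ couples $b$ and $c$ through the quadratic form $Q$, so the vanishing of the second term in \eqref{eqn_coordi_w} is a joint condition on $b$ and $c$ rather than a pure $c$-condition. One may need to refine the first-stage filtration to track both coordinates simultaneously while keeping the total chain length at $2p^e$; the key technical input is that $Q(c)$ lies in a suitably lower-order piece of the filtration once $c \in R_1$, which should follow from the relations of Lemma \ref{lemma_norsufficond1p1} satisfied by the coefficients of $S_1$.
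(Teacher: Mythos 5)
Your overall architecture --- reduce to $\bar G=G/G_A$ via Lemma \ref{lem_ZpeGA}, bound the class of $\bar G$ by a length-$2p^e$ chain built from the $\bar G_i$ of \eqref{eqn_barGi} followed by a chain inside $\bar G_B$, and witness the lower bound by iterated commutators against $\fg_{0,0,t_C}$ --- is exactly the paper's, and your lower-bound computation is sound. The gap is in the upper bound: the inclusion $[\bar G,H_k]\subseteq H_{k+1}$ is false as stated for Constructions \ref{const_S3} and \ref{const_S4}, and the obstruction is not primarily the quadratic form $Q$ but the $b$-coordinate. An element $\overline{\fg_{a,b,c}}\in\bar G_k$ has $b$ ranging over all of $\F_q$, so its Frobenius part $g_1^{\frac12Q(c)+\tr_{\F_q/\F_p}(\mu_B b)+\cdots}$ is generally nontrivial even when $c\in R_k$; consequently the second term $(\theta_{0,b,c}-1)(z)$ of \eqref{eqn_coordi_w} does not vanish and lies only in $R_1$, not in $R_{k+1}$. (Your claim that $Q(R_1)=0$ is also unfounded: $Q(c)=-\tr_{\F_q/\F_p}(\mu_B cS_1(c))$ has no reason to vanish on $(1-g)(\F_q)$.) The same defect recurs, unflagged, in your second stage: for $b\in R_j$ with $\tr_{\F_q/\F_p}(\mu_B b)\ne 0$, the commutator $[\bar{\fg}_{b,0},\bar{\fg}_{y,z}]$ acquires the nonzero $c$-coordinate $-(\theta_{0,b,0}-1)(z)$ and therefore leaves $\bar G_B$ altogether, breaking $[\bar G,H_{p^e+j}]\subseteq H_{p^e+j+1}$.

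The repair is already implicit in your opening observation, and it is what the paper does: do not run a central-series-type filtration over the full subgroups $\bar G_k$, but induct directly on the terms $\gamma_i(\bar G)$. Since $\gamma_i(\bar G)\le\bar G'$ for $i\ge 2$, every $\bar{\fg}_{b,c}\in\gamma_i(\bar G)$ has $\theta_{0,b,c}=1$, so the offending term in \eqref{eqn_coordi_w} disappears and $w=(\theta_{0,y,z}-1)(c)\in R_i$ whenever $c\in R_{i-1}$; this yields $\gamma_{i+1}(\bar G)\le \bar G_i$ without any statement about all of $\bar G_i$. Likewise $\gamma_{p^e+1}(\bar G)$ lands in $H:=\{\bar{\fg}_{b,0}:\theta_{0,b,0}=1\}$, i.e.\ $b\in K_B$, and the terminal chain must be $E_i(H)=\{\bar{\fg}_{b,0}:b\in(1-g)^i(K_B)\}$ rather than your $\{\bar{\fg}_{b,0}:b\in R_j\}$; the $g$-invariance of $K_B$ (from $g(\mu_B)=\mu_B$) keeps this chain inside $H$, and it still terminates at step $p^e$ because $(1-g)^{p^e}=0$. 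With these two substitutions your argument closes and coincides with the paper's proof.
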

\begin{proof}
By Lemma \ref{lem_ZpeGA}, it suffices to show that the nilpotency class of $\bar{G}=G/G_A$ lies in the range $[p^e,2p^e]$. Let $U=\{c\in\F_q:\,\theta_{a,b,c}=1\textup{ for some }a,b\in \F_q\}$ be as in Lemma \ref{lem_P1P2}, write $\bar{\fg}_{b,c}:= \overline{\fg_{a,b,c}}$ for short, and let $\bar{G}_i$ be as defined in  \eqref{eqn_barGi}. We have  $e\ge 1$ in each of the three constructions.

We claim that  $\gamma_{i}(\bar{G})$ is contained in $\bar{G}_{i-1}$  but not in $\bar{G}_{i}$ for $2\leq i\leq p^e$, and $\gamma_{p^e+1}(\bar{G})$ is contained in $\bar{G}_{p^e}=\bar{G}_B$. We prove this by induction.
	\begin{enumerate}
		\item[(1)] First consider the case $i=2$. By the property (P1) and the assumption $l>1$, there exists $c\in U\setminus R_1$ by comparing sizes. Since $\theta_{a,b,c}$ is independent of $a$ by  \eqref{eqn_theta6}, there exists $b\in\F_q$ such that $\theta_{a,b,c}=\theta_{0,b,c}=1$ for such a $c$. For such a pair $(b,\,c)$ and $(y,\,z)=(0,\,t_C)$, we have $w=c^g-c$, where $w$ is as defined in  \eqref{eqn_coordi_w}. The element $w$ lies in $R_1\setminus R_2$, i.e., $[\bar{\fg}_{b,c},\,\bar{\fg}_{0,t_C}]\in\bar{G}_1\setminus\bar{G}_2$. On the other hand, for any $(b,\,c)$ and $(y,\,z)$, the corresponding $w$ always lies in $R_1$. This proves the case $i=2$.
		\item[(2)] Suppose that the claim has been established for $2\le i\le p^e-1$. Take $\bar{\fg}_{b,c}\in\gamma_i(\bar{G})$. We have $\theta_{0,b,c}=1$, since  $\gamma_i(\bar{G})$ is contained in $\bar{G}'$.  The equation \eqref{eqn_coordi_w} reduces to $w=(\theta_{0,y,z}-1)(c)$, which always lies in $R_{i}$ by induction.  There exists $\bar{\fg}_{b,c}\in\gamma_i(\bar{G})$ with $c\in R_{i-1}\setminus R_i$ by induction. Take $(y,\,z)=(0,\,t_C)$, and correspondingly $w=(g-1)(c)$. This element $w$ lies in $R_i\setminus R_{i+1}$. This proves the case $i+1$.
	\end{enumerate}
The claim on $\gamma_{p^e+1}(\bar{G})$ is proved in the same way as in (2). This completes the proof of the claim. In particular, $\gamma_{p^e+1}(\bar{G})$ is contained in the group $\bar{G}_B$, while $\gamma_{p^e}(\bar{G})$ is not. This gives the lower bound on the nilpotency class of $\bar{G}$.
	
The subgroup $\gamma_{p^e+1}(\bar{G})$ is contained in $\bar{G}'$, and thus has a trivial Frobenius part. Since it is also in $\bar{G}_B$,  it is a subgroup of $H:=\{\bar{\fg}_{b,0}:\,\theta_{0,b,0}=1\}$ of $\bar{G}_B$. Let $K_B$ be the set of $b\in\F_q$ such that $\bar{\fg}_{b,0}\in H$. In Construction \ref{const_S2}, we have $K_B=\F_q$.  In Construction \ref{const_S3} and \ref{const_S4}, we have $K_B=\{b:\,\tr_{\F_q/\F_p}(\mu_Bb)=0\}$.  We define $E_0(H):=H$, and inductively $E_{i+1}(H):=[E_{i}(H),\bar{G}]$ for $i\ge 1$. It is routine to check that $\bar{\fg}_{b,0}^{-1} \circ\bar{\fg}_{y,z}^{-1}\circ \bar{\fg}_{b,0}\circ \bar{\fg}_{y,z}=\bar{\fg}_{-b+b^{\theta_{0,y,z}},0}$. It readily follows by induction that $E_i(H)=\{\bar{\fg}_{b,0}:\,b\in\ (1-g)^i(K_B)\}$ for $1\leq i\leq p^e$. In particular, $E_{p^e}(H)=\{1\}$. From $\gamma_{p^e+1}(\bar{G})\le E_{0}(H)$,  we deduce that $\gamma_{p^e+i}(\bar{G})\le E_{i-1}(H)$ for $i\ge 1$ by induction.  Since $E_{p^e}(H)=\{1\}$, we have $\gamma_{2p^e+1}(\bar{G})=1$. This gives the desired upper bound on the nilpotency class of $\bar{G}$.
\end{proof}

In Table \ref{table_ncODD}, we have listed some explicit values of nilpotency classes for some special cases of Constructions \ref{const_S2} and  \ref{const_S3}. We demonstrate the calculations by the following example, and omit the details for the other cases in the table.

\begin{example}\label{examp_nc}We give the upper central series of the group $G$ arising from Construction \ref{const_S2}  in some special cases that are listed in Table \ref{table_ncODD}. The terms $Z_i(G)$, $1\le i\le p$, have been determined in Lemma \ref{lem_ZpeGA}, so we do not list them below. Recall that $q=p^{pl}$, $g(x)=x^{p^l}$, and we set $\mu_C=1$, $K=\{x\in \F_q : \tr_{\F_q/\F_p}(x)=0\}$. Further assume that $l>1$.  As usual, set  $R_0:=\F_q$ and $R_i:=(1-g)^i(\F_q)$ for $1\le i\le p$.
	\begin{enumerate}
		\item[(1)] If $S_1(z)\equiv 0$, then
		\[
		Z_{p+i}(G)=\{\fg_{a,b,c}:\,a\in\F_q, b,\,c\in (1-g)^{p-i}(\F_q)\},\quad 0\leq i\leq p.
		\]
		%In this case, its derived subgroup and Frattini subgroup are both equal to $\{\fg_{a,b,c}: a\in \F_q,\, b\in R_1, c\in (1-g)(K)\}.$ This is one of the rare cases where we can explicitly write down these two subgroups.
		\item[(2)] If $S_1(z)=z^{p^k}$ with $l\nmid k$, then
		\begin{align*}
		Z_{p+i}(G)&=\{\fg_{a,b,0}:\,a\in\F_q,\, b\in R_{p-i}\},\quad 0\leq i\leq p,\\
		Z_{2p+i}(G)&=\{\fg_{a,b,c}:\,a,\,b\in\F_q, \,c\in R_{p-i}\},\quad 0\leq i\leq p.
		\end{align*}
		\item[(3)] If $S_1(z)=(1-g)^k(z)$ for $1\leq k\leq p-1$, then the expression of $Z_{p+i}(G)$, $1\le i\le p-k$, is the same as in case (2), and
		\begin{align*}
		Z_{2p-k+j}(G)&=\{\fg_{a,b,c}:\,a\in\F_q,\,b\in R_{k-j},\,  c\in R_{p-j}\},\quad 1\leq j\leq k,\\
		Z_{2p+j}(G)&=\{\fg_{a,b,c}:\,a,\,b\in\F_q, \,c\in R_{p-k-j}\},\quad 1\leq j\leq p-k.
		\end{align*}
	\end{enumerate}
\end{example}

 \section{Concluding remarks}

In this paper, we have determined all the point regular groups of the Payne derived quadrangle $\cQ^P$ of the classical symplectic quadrangle $\cQ=W(q)$ in the case $q$ is odd.  We  have considered the isomorphism issues amongst the different constructions by calculating their group invariants such as exponents and Thompson subgroups. We also have obtained tight upper and lower bounds on the nilpotency classes of the resulting groups. As a corollary, we see that the finite groups that act regularly on the points of a finite generalized quadrangle can have unbounded nilpotency class. Prior to our work, the only known such groups have nilpotency class at most $3$ except for computer data for some groups of small orders.

In Section 5, we have also determined all the point regular subgroups of $\cQ^P$ contained in $\textup{PGL}(4,q)$ in the case $q$ is even and $q\ge 5$.  There are parallel results to Constructions \ref{const_S2}-\ref{const_S4} in the even characteristic case, which we do not include here. Furthermore, computer data indicates that there are examples with $r_{A,B}=2$ when $q=2^4$. It remains a challenging problem to fully classify the even characteristic case.\\

\noindent\textbf{Acknowledgement.} This work was supported by National Natural Science Foundation of China under Grant No. 11771392. The authors thank the reviewers for their detailed comments and valuable suggestions that helped to improve the presentation of the paper greatly.  \\

\scriptsize
 \setlength{\bibsep}{0.5ex}  % vertical spacing between references
\bibliographystyle{plain}
\footnotesize
\linespread{0.5}
\bibliography{PRSGRef}

\end{document}